\documentclass[12pt]{article}
\usepackage{color}
\usepackage{epsfig,psfrag,amsmath,amssymb,latexsym}
\usepackage{amscd}
\usepackage{amsfonts}
\usepackage{graphicx}
\pagestyle{plain}
\oddsidemargin0cm
\topmargin-.6cm
\textheight22cm
\textwidth16cm
\parindent0.5cm

\newtheorem{theorem}{Theorem}[section]

\newtheorem{corollary}{Corollary}[section]

\newtheorem{definition}{Definition}[section]

\newtheorem{lemma}{Lemma}[section]

\newtheorem{proposition}{Proposition}[section]
\newtheorem{remark}{Remark}[section]

\newcommand{\NN}{\mathbb{N}}
\newcommand{\RR}{\mathbb{R}}

\newcommand{\PP}{\mathbb{P}}

\newcommand{\EE}{\mathbb{E}}
\newcommand{\II}{\mathbb{I}}

\numberwithin{equation}{section}

\begin{document}

\title{\textbf{ULTRAMETRIC AND TREE POTENTIAL}}

\author{Claude DELLACHERIE
\thanks{Laboratoire Rapha\"el Salem,
UMR 6085, Universit\'e de Rouen, Site Colbert, 76821 Mont Saint
Aignan Cedex, France; Email: Claude.Dellacherie@univ-rouen.fr.
This author thanks support from Nucleus Millennium P04-069-F for
his visit to CMM-DIM at Santiago},
Servet MARTINEZ \thanks{CMM-DIM; Universidad
de Chile; Casilla 170-3 Correo 3 Santiago; Chile; Email:
smartine@dim.uchile.cl. The author's research is supported by
Nucleus Millennium Information and Randomness P04-069-F.},
Jaime SAN MARTIN \thanks{CMM-DIM; Universidad
de Chile; Casilla 170-3 Correo 3
Santiago; Chile; Email: jsanmart@dim.uchile.cl. The author's research
is supported by Nucleus Millennium Information and Randomness
P04-069-F.}
}

\maketitle

\begin{abstract}
We study infinite tree and ultrametric matrices, and their action
on the boundary of the tree. For each tree matrix we show the
existence of a symmetric random walk associated to it and we study
its Green potential. We provide a representation theorem for
harmonic functions that includes simple expressions for any
increasing harmonic function  and the Martin kernel. In the
boundary, we construct the Markov kernel whose Green function is
the extension of the matrix and we simulate it by using a cascade
of killing independent exponential random variables and
conditionally independent uniform variables. For ultrametric
matrices we supply probabilistic conditions to study its potential
properties when immersed in its minimal tree matrix extension.

\end{abstract}

\section {Introduction and Basic Notation}

\subsection{ Introduction}

Here we study ultrametric and tree matrices, the random walk they
induce on trees and its potential theory. There exists a broad
literature in this field (a complete state-of-the-art study can be
found in \cite{lyons}). The main difference between our work and most
part of this literature, is that our starting point is not a
random walk on a tree, but a tree matrix or more general, an
ultrametric matrix. In this viewpoint, the random walk is
constructed from  the matrix, a nontrivial fact, even in the
finite case. Hence, most of the concepts must be expressed with
respect to the matrix, that turns to have two representations. One
in the tree as the sum of  a potential  and a harmonic basis. The
other one in the boundary of the tree as the potential of a Markov
process. Our results are not a simple translation of well-known
results from walks on trees to matrices. New phenomenon appear:
the formula for monotone harmonic functions; the predictable
representation property of tree matrices, that is the keystone for
a wide class of relations including the Martin kernel at $\infty$;
the formula relating different levels of the process in the
boundary which allow us to simulate it, in a constructive way.
Below we give the framework of our work and summarize some of the
main results.

\medskip

An ultrametric matrix $U=(U_{ij}: i,j\in I)$ is a symmetric
nonnegative matrix verifying the ultrametric inequality $U_{ij}\ge
\min\{U_{ik},U_{kj}\}$ for all $i,j,k\in I$. When $I$ is finite it
was shown in \cite{martinez1994}, \cite{dell1996}, that the
inverse $U^{-1}$ of a nonsingular ultrametric matrix $U$ is a
diagonal dominant Stieltjes matrix (see \cite{nabben1994} for a
linear algebra proof of this fact). Then, $U$ is proportional to
the Green potential of a subMarkov kernel $P$, that is $U=\alpha\,
\sum_{n\ge 0}P^n$. Thus, if we consider for $i\neq j$ the distance
$d(i,j)=1/U_{ij}$, then $d$ is an ultrametric distance and $1/d$
is a Green potential (a phenomenon that happens in $\RR^3$ with
the Newtonian potential and the Euclidian distance, or in $\RR^d,
d\ge 3$, when we allow an increasing function of the Euclidian
distance).

\medskip

Tree matrices are a special case of ultrametric matrices. They are
defined by a rooted tree $(I,{\cal T})$ (with root $r$) and a
strictly increasing function $w:\{|k|: k\in I\}\to \RR_+$, where
$|k|$, the level of $k$, is the length of the geodesic from a site
$k$ to $r$. Then, the tree matrix $U$ is defined as
$U_{ij}=w_{|i\land j|}$, with $i\land j$ been the farthest vertex
from $r$ that is common to the geodesic from $i$ and $j$ to $r$.
When $I$ is finite, $U$ is the potential of a Markov process,
whose skeleton is a simple symmetric random walk on the tree, only
defective at the root. Me also mention here that every other
ultrametric matrix is obtained by restriction of this class (see
\cite{dell1996}). That is for every ultrametric matrix $U$ there
exists a minimal extension tree matrix $\widetilde U$, defined on
$(\widetilde I, \widetilde {\cal T})$, such that $U=\widetilde
U|_I$. This minimal tree $\widetilde {\cal T}$ has all the
information that is required to understand the one step
transitions of the Markov process associated to $U$. In fact,
$P_{ij}>0$ if and only if the geodesic in $\widetilde {\cal T}$
joining $i$ and $j$ does not contain other points in $I$.

\medskip

One of the purposes of this paper is to extend this study to
countably infinite ultrametric and tree matrices. Each ultrametric
matrix $U$ defines a natural kernel $W$ in the boundary
$\partial_\infty$ of the tree. This class of operators were
already considered in \cite{lyons1} and \cite{lyons2}, were a deep
study of potential properties is done, mainly in connection to
dimension and capacity on the boundary.

We show $W$ is a stochastic integral operator whose associated
filtration ${\cal F}=({\cal F}_k)$ is given  by the tree
structure, see Proposition \ref{prostoch}. The operator $W$ allows
to represent harmonic functions in the infinite tree (see
Corollary \ref{c201}). This representation is an alternative to
the well known Martin kernel representation, supplied for example
in the basic reference \cite{cartier} and in \cite{sawyer1997}. We
describe the set of increasing (along the branches) harmonic
functions  as those functions that can be written in terms of $U$,
see Theorem \ref{p302}. Also, we characterize the set of bounded
harmonic functions which are the difference of two harmonic
increasing functions (see Theorem \ref{representation}).

\medskip

In the finite setting, a tree  matrix $U$ is the potential of a
continuous time Markov chain,  the leaves of the tree being
reflecting states (see Proposition \ref{p2}). Nevertheless, in the
infinite transient case, each column of $U$ is the sum of a
potential and a nontrivial harmonic function, as follows from
relation (\ref{re21}). This last result uses two main ingredients.
The first one comes from the finite case analysis: when imposing
Dirichlet boundary conditions at the boundary, a finite tree
matrix is the sum of the potential matrix and a matrix whose
columns generate the harmonic functions (see Proposition
\ref{p3}). The second element is the exit measure $\mu$ at the
boundary.

\medskip

 We mainly consider the potential of tree matrices for Markov
semigroups defective at the root, because this is natural in the
finite case. But, in the transient infinite case  we can reflect
the process at the root as we do in section \ref{secc7} and by a
limit procedure we can represent the Martin kernel in a similar
way as for the absorbed chain, see Theorem \ref{th701}. Also
explicit computations for homogeneous trees are done, retrieving
some known formulae (\cite{sawyer1991}, \cite{sawyer1997}).

\medskip

In section \ref{ul5} we study ultrametric matrices $U=(U_{ij}:
i,j\in I)$. Under some explicit hypotheses, we associate to $U$ a
minimal tree matrix $\widetilde U=(\widetilde U_{\tilde {\imath}
\tilde {\jmath}}: \tilde {\imath}, \tilde {\jmath} \in \widetilde
I)$ extending it, with a natural immersion of the sites $I$ into
$\widetilde I$. In Theorem \ref{p100} we show that a canonical
generator $Q$ can be associated to $U$ with the help of the
generator $\widetilde Q$ associated to $\widetilde U$; and in
Theorem \ref{t1} it is shown that the harmonic functions defined
by $Q$ can be retrieved from the harmonic functions defined by
$\widetilde Q$. The key hypothesis is that a random walk starting
from $\widetilde I \setminus I$ is trapped at the cemetery or it
reaches $I$ with probability one.

\medskip

Let us turn to the process in the boundary of the tree. The fact
that $W$ is an stochastic integral operator reveals to be the main
property which allow us to study the generator $-W^{-1}$. In
Theorem \ref{nucleo} we describe the transition probability kernel
of the subMarkov semigroup $(e^{-tW^{-1}})$ acting on the boundary
and having $U$ as the kernel potential. In Theorem \ref{esc10} we
supply a recursive formula satisfied by the process in terms of:
the killing time, the process killed at a successor of the root,
and the process starting afresh from the distribution  $\mu$. This
allows us  to give a constructive simulation of the process in
terms of exponential random variables (killing times) and
independent random variables distributed $\mu$ conditioned to the
atoms of the natural filtration.

\medskip

There is a large literature on stochastic processes on the
$p$-adic field. See for example the works of \cite{albeverio1},
\cite{albeverio2}, \cite{albeverio3}, \cite{albeverio4}, \cite{Khr},
\cite{kochubei2} (see also the references therein). We notice that
in these works there is a natural measure in the boundary, the
Haar measure for the $p-$adic tree, or an absolutely continuous
probability measure with respect to the Haar measure for the $p-$adic field.
In our work the tree needs to be
locally finite, but no other hypothesis is needed as homogeneity.
Even with this generality, the exit measure at $\infty$ fulfills
the requirements allowing us to describe the process at the
boundary. We also mention here, among others, the works of
\cite{kochubei1} and \cite{evans1989} in local fields and the work
of \cite{aldous1999} in disconnected spaces.

\medskip

Ultrametricity is an important tool in applied areas: taxonomy
(see \cite{benzecri1973}); the problem of maximal flow on finite
graphs, namely the Theorem of Gomory-Hu (see \cite{berge1962});
statistical physics (see \cite{derr1981}) to explore the
ultrametric Parisi solution to spin-glass models (see
\cite{mezard1987}, \cite{Rammal} and references therein).

\medskip

One of the tools we use in this work is the notion of stochastic
integral operator (s.i.o.), which is the natural framework in
which ultrametricity appears in stochastic analysis. An operator
$Y$ acting on a space $L^2$ is an s.i.o. (see \cite{dell1977}) if
for some filtration ${\cal F}=({\cal F}_t)$, $Y$ can be written as
$$
Yf=\int_0^\infty H_t\; d\EE(f | {\cal F}_t) \hbox{ where } H=(H_t)
\hbox{ is a } {\cal F}-\hbox{predictable process}.
$$
The fact that $H$ is predictable will play a fundamental role in
the analysis of $W$. The characterization of s.i.o. on countable
spaces leaded to study the relations between ultrametric matrices
and filtrations (see \cite{dart1988}). On the other hand the
continuous version of ultrametric matrices needs to consider
operators of the form $V=\int_0^{\infty} \EE(\; |{\cal F}_t)
dG_t$, where $(G_t)$ is a bounded increasing and adapted process.
In \cite{dell1998} it is shown that these operators are Markov
potential kernels (a proof of it that uses backward stochastic
differential equations can be found in \cite{yu1998}). This result
is in the spirit and constitutes a generalization of the one
obtained in \cite{bouleau1989}.

\medskip

\subsection {Trees}

\medskip

Here we fix notation and recall some well-known notions on trees.
Let $(I,{\cal T})$ be a connected non-oriented and locally finite
tree. $I$ is the set of sites and ${\cal T}\subset I\times I$ is
the set of links. Two sites $i,j$ are neighbors if $(i,j)\in {\cal
T}$. The set of sites with a unique neighbor is called the
extremal set and is denoted by $\cal E$. The geodesic joining $i$
and $j$ is denoted by $\it{ geod}(i,j)$ and its length is written
$|i-j|$. In particular $g(i,i)$ only contains $i$ and its length
is $0$. We assume the tree is rooted by $r\in I$ and we write
$|i|=|i-r|$. We introduce the following order relation on $I$:
\begin{equation}
\label{orden}
i\preceq j \hbox{ if } i\in {\it geod}(r,j).
\end{equation}
The element $i\land j = \max ({\it geod }(r,i)\cap {\it geod }(r,j))$
denotes the $\preceq-$ minimum between $i$ and $j$. For $i\in
I\setminus \{ r\}$ there is a unique element $i^-$ verifying
$(i^-,i)\in {\cal T}$ and $i^-\preceq i$, called the
predecessor of $i$. It verifies $|i^-| = |i| - 1$. The
set of successors of $i\in I$ is denoted by
$S_i = \{ j\in I: j^-=i\}$, it is a finite set that
could be empty.
By $i^+$ we mean a generic element of $S_i$ and
${\cal L} = \{ i\in I: S_i = \phi\}$ is
the set of leaves of the tree. We
notice that ${\cal L}\subseteq {\cal E}$, and $r$ is the only point that
could be extremal without being a leaf. The branch of the tree
born at $i\in I$, is denoted by
$[i, \infty)=\{j\in I: i\preceq j\}$.

\medskip

Assume that $I$ is countably infinite.
An infinite path $(i_n\in I: n\in \NN)$ in the tree
with origin $i_0$, is such that $(i_n, i_{n+1})\in {\cal T}$ for
every $n\in \NN=\{0,1,2,\cdots\}$. If all the $i_n$ are different this path
is called an infinite chain.
The following relation
$$
(i_n: n\in \NN)\sim (j_n: n\in \NN) \Leftrightarrow |\{i_n: n\in \NN\}
\cap \{j_n: n\in \NN\}|=\infty ,
$$
is an equivalence relation in the set of chains.
The quotient set is the
boundary of the tree $(I,{\cal T})$ (see \cite{cartier})
and we denote it by $\partial_{\infty}$

\medskip

For every $i\in I$ and $\xi \in \partial_{\infty}$
there exists a unique chain of origin $i$ which is in the
equivalence class $\xi$, and it is called the {\it geodesic}
between $i$ and $\xi$, and denoted by ${\it geod}(i,\xi)$.
For a fixed $\xi\in \partial_{\infty}$ and
$n\in \NN$, we denote by $\xi(n)$ the unique point in the geodesic
${\it geod}(r,\xi)$
such that $|\xi(n)|=n$, thus $\xi(0)=r$.
Let $(i_n: n\ge 0)$ be an infinite path, the following criterion
stated in \cite{cartier}, is useful
to establish convergence to a point in the boundary,
\begin{equation}
\label{re3}
\left(\forall j\in I: \; |\{n\in \NN: i_n=j\}|<\infty \right)
\Rightarrow \exists ! \; \xi =
\lim\limits_{n\to \infty} i_n\in \partial_{\infty}.
\end{equation}
In this case there exists a subsequence $(k_n\!: \!n\!\ge \!0)$
verifying
${\it geod}(i_0,\xi) = (i_{k_n}\!:\! n\!\ge \!0)$.

\medskip

For $i\in I$,
$\xi \in \partial_{\infty}$ we put $i\preceq \xi$ if $i\in {\it
geod}(r,\xi)$. Hence we can extend $\land$ to $I\cup \partial_{\infty}$ by
\begin{equation}
\label{re1}
\xi \land \eta=\max\left({\it geod}(r,\xi)\cap {\it geod}(r,\eta)\right).
\end{equation}
Hence, $\xi \land \xi=\xi$ and
if $\xi\neq \eta$ then
$\xi \land \eta \in I$. In this last case  $\xi\land \eta=i$ if and only if
$\xi(|i|)= \eta(|i|)$ and $\xi(n)\neq \eta(n)$ for $n>|i|$.

\medskip

The extended subtree hanging from $i\in I$ is $[i, \infty]=\{z\in
I\cup \partial_{\infty}: i\preceq z\}$. The set $I\cup
\partial_{\infty}$ is endowed with the topology ${\bf T}$
generated by the basis of open sets ${\cal A}=\{[i, \infty]: i\in
I\} \cup \{\{i\}: i\in I\}$. The sets in ${\cal A}$ are open and
closed in ${\bf T}$. The topological space $(I\cup
\partial_{\infty}, {\bf T})$ is compact, totally discontinuous and
metrically generated, the trace topology on $I$ is the discrete
one and $I$ is an open dense subset in $I\cup \partial_{\infty}$.
Also ${\cal A}$ is a semi-algebra generating the Borel
$\sigma-$algebra $\sigma( {\bf T} )$. We use the following
notation
\begin{equation}
\label{re2}
\partial_\infty(i)=
[i, \infty]\cap \partial_{\infty}= \{\eta \in \partial_{\infty}:
i\preceq \eta\}.
\end{equation}
The class of sets ${\cal C}=\{\partial_\infty(i): i\in I\}$
is a basis of open
(and closed) sets generating ${\bf T}\cap \partial_{\infty}$ and
it is also a semi-algebra generating the trace of $\sigma({\bf T})$ on
$\partial_{\infty}$.
Therefore for $\xi \in \partial_{\infty}:\;
\xi=\lim\limits_{n\to \infty} \xi(n) \hbox{ and }
\partial_\infty(\xi(n))=\{\eta \in \partial_\infty:\;
|\xi\wedge \eta|\ge n\}$.

\medskip

It will be useful to add an state $\partial_r\not\in I$ and the
oriented link $(r,\partial_r)$. We put
$r^-=\partial_r$ and $|\partial_r|=-1$.

\medskip

In the sequel we adopt the following notation. For any nonempty subset
$J\subseteq I$ we denote by
$\II_{J\times J}$ the identity $J\times J$ matrix. If $M$ is an
$I\times I$ matrix and $J, K\subseteq I$ are nonempty, the matrix
$M_{JK}=(M_{jk}: j\in J, k\in K)$ (also denoted by $M_{J,K}$) is the
restriction of $M$ to $J\times K$.
By ${\bf 1}_A$ we mean the characteristic function of
a set $A$, and ${\bf 1}$ is the constant function taking the value
$1$ in its domain of definition.

\medskip

\section{Tree Matrices}

In \cite{dell1996} we have introduced the notion of tree matrices
in the finite case. Here we give a general version of it. Let $(I,
{\cal T})$ be a tree with root $r$. Put ${\bf N} = \{|i|: i\in I
\}$, which is equal to $\NN$ when the tree is infinite.

\medskip

\begin{definition}
\label{d1} A tree matrix $U=(U_{ij}:i,j\in I)$ is defined by an
strictly positive and strictly increasing function $w:{\bf
N}\to(0,\infty)$ as follows,
$$
U_{ij} = w_{|i\land j|} \hbox{ for } i,j\in I.
$$
\end{definition}

The matrix $U$ is  strictly positive and symmetric, and it verifies
$U_{ij} = U_{i\land j, i\land j}$. In particular $U_{i^-i} =
U_{i^-i^-}= w_{|i|-1}$ when $i^-\in I$. Notice that $U_{i^+i^+}=
w_{|i|+1}$ does not depend on the particular element $i^+\in S_i$.
We extend $U$ to $I\cup \{\partial_r\}$ by putting
$U_{i\partial_r} = U_{\partial_r i}=w_{-1}=0$ for
every $i\in I\cup \{\partial_r\}$.

\medskip

By using (\ref{re1}) we can extend $U$ to $I\cup \partial_{\infty}$
in the following way
\begin{equation}
\label{ext357}
\hbox{ for }\xi,\eta\in \partial_{\infty},\;\; U_{\xi \eta}=
w_{|\xi\land \eta|}  \hbox{ if } \xi\neq \eta \hbox{ and }
U_{\xi \xi}=\lim\limits_{n\to \infty} U_{\xi(n) \xi(n)}.
\end{equation}
This extension is continuous in both variables:
$U_{\xi \eta}=\lim\limits_{n\to \infty, m\to \infty}
U_{\xi(n) \eta(m)}$ for $\xi,\eta\in \partial_{\infty}$.

\medskip

We associate to $U$ a symmetric matrix
$Q = (Q_{ij}: i,j\in I)$
supported by the tree and the diagonal, that is
$Q_{ij}=0$ if $i\neq j$ and $(i,j) \not\in {\cal T}$. This matrix
$Q$ is given by
\begin{equation}
\label{re200}
\begin{array}{ll}
& Q_{ii^-} = Q_{i^-i}=
\big(w_{|i|}-w_{|i|-1}\big)^{-1} \hbox{ for }i^-,i\in I; \cr
&  \cr
& Q_{ii} = -\big( (w_{|i|}-w_{|i|-1})^{-1}+
|S_i|(w_{|i|+1}-w_{|i|})^{-1} \big)
 \hbox{ for }i\in I.
\end{array}
\end{equation}
Observe that $Q_{ii^+}=Q_{i^+i}=\big(w_{|i|+1}-w_{|i|}\big)^{-1}$
does not depend on $i^+\in S_i$.
When $i\in {\cal L}$ is a leaf, then
$Q_{ii} = -Q_{ii^-}$. The matrix $Q$ verifies
$Q_{ij}\ge 0$ if $j\neq i$ and $\sum_{j\in I}Q_{ij}\leq 0$ for
$i\in I$. Then $Q$ is a $q$-matrix,
it is conservative in the sites  $i\in I\setminus \{r\}$, that is
$\sum\limits_{j\in I}Q_{ij}=0$, and defective at $r$ since
$\sum\limits_{j\in I}Q_{rj}=-w^{-1}_0$.
We call $\widehat Q$ the extension of $Q$ to
$I\cup \{\partial_r\}$, given by
\begin{equation}
\label{re81}
{\widehat Q}_{r\partial_r}=w_0^{-1}\; \hbox{ and } \;
{\widehat Q}_{i\partial_r}= 0 \hbox{ for }
i\neq r, i\in I\cup \{\partial_r\}.
\end{equation}
This extension is a nonsymmetric conservative
$q-$matrix in $I\cup \{\partial_r\}$, having $\partial_r$ as an absorbing
state.

\bigskip

\centerline{\resizebox{10cm}{!}{\begin{picture}(0,0)%
\includegraphics{treematrix.pstex}%
\end{picture}%
\setlength{\unitlength}{4144sp}%
\begingroup\makeatletter\ifx\SetFigFont\undefined%
\gdef\SetFigFont#1#2#3#4#5{%
  \reset@font\fontsize{#1}{#2pt}%
  \fontfamily{#3}\fontseries{#4}\fontshape{#5}%
  \selectfont}%
\fi\endgroup%
\begin{picture}(8685,7008)(901,-6853)
\put(901,-61){\makebox(0,0)[lb]{\smash{\SetFigFont{17}{20.4}{\rmdefault}{\mddefault}{\updefault}{\color[rgb]{0,0,0}$w_{-1} = 0 \cdots$}%
}}}
\put(901,-961){\makebox(0,0)[lb]{\smash{\SetFigFont{17}{20.4}{\rmdefault}{\mddefault}{\updefault}{\color[rgb]{0,0,0}$w_0\cdots$}%
}}}
\put(901,-1880){\makebox(0,0)[lb]{\smash{\SetFigFont{17}{20.4}{\rmdefault}{\mddefault}{\updefault}{\color[rgb]{0,0,0}$w_1\cdots$}%
}}}
\put(901,-2761){\makebox(0,0)[lb]{\smash{\SetFigFont{17}{20.4}{\rmdefault}{\mddefault}{\updefault}{\color[rgb]{0,0,0}$w_2\cdots$}%
}}}
\put(901,-3661){\makebox(0,0)[lb]{\smash{\SetFigFont{17}{20.4}{\rmdefault}{\mddefault}{\updefault}{\color[rgb]{0,0,0}$w_3\cdots$}%
}}}
\put(4550,-1890){\makebox(0,0)[lb]{\smash{\SetFigFont{17}{20.4}{\rmdefault}{\mddefault}{\updefault}{\color[rgb]{0,0,0}$i\land j$}%
}}}
\put(6481,-961){\makebox(0,0)[lb]{\smash{\SetFigFont{17}{20.4}{\rmdefault}{\mddefault}{\updefault}{\color[rgb]{0,0,0}$r$}%
}}}
\put(5221,-4156){\makebox(0,0)[lb]{\smash{\SetFigFont{17}{20.4}{\rmdefault}{\mddefault}{\updefault}{\color[rgb]{0,0,0}$i$}%
}}}
\put(5986,-4126){\makebox(0,0)[lb]{\smash{\SetFigFont{17}{20.4}{\rmdefault}{\mddefault}{\updefault}{\color[rgb]{0,0,0}$j$}%
}}}
\put(3840,-6586){\makebox(0,0)[lb]{\smash{\SetFigFont{17}{20.4}{\rmdefault}{\mddefault}{\updefault}{\color[rgb]{0,0,0}$\xi$}%
}}}
\put(6481,-61){\makebox(0,0)[lb]{\smash{\SetFigFont{17}{20.4}{\rmdefault}{\mddefault}{\updefault}{\color[rgb]{0,0,0}$\partial_r$}%
}}}
\put(9586,-5776){\makebox(0,0)[lb]{\smash{\SetFigFont{17}{20.4}{\rmdefault}{\mddefault}{\updefault}{\color[rgb]{0,0,0}$\partial_\infty$}%
}}}
\put(4100,-5000){\makebox(0,0)[lb]{\smash{\SetFigFont{17}{20.4}{\rmdefault}{\mddefault}{\updefault}{\color[rgb]{0,0,0}$\xi\land\eta$}%
}}}
\put(5380,-6781){\makebox(0,0)[lb]{\smash{\SetFigFont{17}{20.4}{\rmdefault}{\mddefault}{\updefault}{\color[rgb]{0,0,0}$\eta$}%
}}}
\end{picture}
}}

\medskip

\centerline{Figure 1: Tree Matrix}

\bigskip

Observe that if $M$ is an $I\times I$ matrix then the formal products of
matrices $QM$ and $MQ$ are well defined because each line and column of
$Q$ has finite support.

\medskip

\begin{proposition}
\label{p1} The $q-$matrix $Q$ verifies $(-Q)U=U(-Q)=\II$.
\end{proposition}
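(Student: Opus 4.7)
The plan is to verify directly that $((-Q)U)_{ij}=\delta_{ij}$ for all $i,j\in I$. Since both $Q$ and $U$ are symmetric, once this is established we get $U(-Q)=((-Q)U)^{T}=\II$ automatically, so one identity suffices. The starting point is that each row of $Q$ is finitely supported: for $i\in I$, $Q_{ik}$ is nonzero only for $k\in\{i^-,i\}\cup S_i$ (with the convention $w_{-1}=0$, so the $i^-$ contribution disappears harmlessly when $i=r$). Hence
\[
((-Q)U)_{ij} = -Q_{ii^-}\,U_{i^-j} - Q_{ii}\,U_{ij} - \sum_{i^+\in S_i} Q_{ii^+}\,U_{i^+j},
\]
and the proof reduces to identifying $U_{i^-j}$, $U_{ij}$, $U_{i^+j}$ in terms of the $w_n$'s according to how $j$ sits relative to $i$ in the tree.

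I split the verification into three cases using the order $\preceq$. \textbf{Case 1} ($j=i$): here $U_{ii}=w_{|i|}$, $U_{i^-i}=w_{|i|-1}$, and $U_{i^+i}=w_{|i|}$ for every $i^+\in S_i$, since $i\wedge i^+=i$. Plugging in the values of $Q$ from (\ref{re200}), the contributions from the $S_i$ terms cancel the corresponding part of $-Q_{ii}\,w_{|i|}$, and $(w_{|i|}-w_{|i|-1})^{-1}(w_{|i|}-w_{|i|-1})=1$ gives the desired $1$. \textbf{Case 2} ($i\prec j$): let $j^\star$ denote the unique successor of $i$ with $j^\star\preceq j$. Then $U_{ij}=w_{|i|}$, $U_{i^-j}=w_{|i|-1}$, $U_{j^\star j}=w_{|i|+1}$, and $U_{i^+j}=w_{|i|}$ for $i^+\in S_i\setminus\{j^\star\}$. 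Grouping terms by the two increments $(w_{|i|}-w_{|i|-1})$ and $(w_{|i|+1}-w_{|i|})$, the former gives $1-1=0$ and the latter gives $|S_i|-(|S_i|-1)-1=0$, so the result is $0$. \textbf{Case 3} ($i\wedge j\prec i$, which forces $i\neq r$): here the short lemma to check is that $i^-\wedge j=i\wedge j=i^+\wedge j$ for every $i^+\in S_i$, because after the common ancestor $k=i\wedge j$ the geodesics from $r$ to $i^\pm$ and from $r$ to $j$ already diverge. Thus $U_{i^-j}=U_{ij}=U_{i^+j}=w_{|k|}$, and the sum factors as $w_{|k|}\sum_{l\in I}(-Q_{il})=0$ by conservativeness of $Q$ away from the root.

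The combinatorial lemma underlying Case 3 is the one place where one should be careful: it is a standard property of the meet operation on a rooted tree, but it is the logical core of the computation, so I would state and prove it as a one-line observation before starting the case analysis. The only other subtlety is the bookkeeping at $i=r$: using $w_{-1}=0$ and noting that only Cases 1 and 2 arise (since nothing lies strictly above the root) keeps the formula uniform without having to treat $r$ as a separate case. Once these two points are in place the three cases are each a few lines of arithmetic, and the identity $U(-Q)=\II$ follows by transposition from the symmetry of $U$ and $Q$.
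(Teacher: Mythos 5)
Your proposal is correct and follows the same direct, case-by-case verification of $(-Q)U=\II$ that the paper uses (with the three cases listed in a different order: your Case~1, 2, 3 correspond to the paper's Case~2, 3, 1); the short meet lemma you isolate for your Case~3 is exactly what the paper invokes implicitly before appealing to conservativeness at $i\neq r$, and the $w_{-1}=0$ convention at the root is handled the same way. One small bookkeeping slip in the write-up of Case~2: after grouping, the $(w_{|i|}-w_{|i|-1})^{-1}$-block contributes $+1$ (not $0$) and the $(w_{|i|+1}-w_{|i|})^{-1}$-block contributes $-1$ (not $0$); the desired $0$ comes from these two cancelling, not from each block vanishing separately.
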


\begin{proof}
From symmetry it suffices
to show $(-Q)U=\II$. For $i,k\in I$ we have
$$
(QU)_{ik}=Q_{ii^-}U_{i^-k}+Q_{ii}U_{ik}+Q_{ii^+}\sum_{j\in S_i}U_{jk} .
$$

If $k\land i \preceq i^-$ we have $i\neq r$ and
$k\land i = k\land i^- = k\land i^+$. Then $(QU)_{ik}=0$ because $Q$
is conservative at $i\in I$.

\medskip

For $k=i$ we have

\begin{eqnarray*}
(QU)_{ii} & \!=\! &Q_{ii^-}U_{i^-i}+Q_{ii}U_{ii}+|S_i| Q_{ii^+}U_{ii}\\
& \!=\! &
Q_{ii^-}U_{i^-i}-Q_{ii^-}U_{ii}-|S_i|Q_{ii^+}U_{ii}+|S_i|Q_{ii^+}U_{ii}
\!=\!-Q_{ii^-}(U_{ii}-U_{i^-i})=-1.
\end{eqnarray*}

The last case left to analyze is when $k\land i^+= i^+$
for some and a unique $i^+\in S_i$.
Then
$k\land i^-= i^-, k\land i = i = k\land j$
for $j\in S_i\setminus \{i^+\}$. Hence

\begin{eqnarray*}
(QU)_{ik} & =&Q_{ii^-}U_{i^-i^-}+Q_{ii}U_{ii}+(|S_i|-1)Q_{ii^+}U_{ii}
+Q_{ii^+}U_{i^+i^+}\\
& =& (QU)_{ii}+Q_{ii^+}(U_{i^+i^+}-U_{ii})=-1+1.
\end{eqnarray*}
$\Box$
\end{proof}

\medskip

\begin{remark}
\label{ly1} As we shall see $Q$ is a generator of a Markov process
with state space $I\cup \{\partial_r\}$. Its discrete skeleton has
transition probabilities given by
$$
p_{ij}={Q_{ij}\over \sum\limits_{k\in S_i \cup \{i^-\}} Q_{ik}}\;
\hbox{ for } j\in S_i \cup \{i^-\}.
$$
In the electrical circuits interpretation, this corresponds to a
chain whose conductances are given by $C_{ii^-}:=Q_{ii-}$ (see
\cite{KSK} section 9, and \cite{lyons2} section 2).
\end{remark}

Let us study more closely the case when $(I, {\cal T})$ is a finite
tree rooted at $r$. Since the state space is finite, the matrix
$Q=-U^{-1}$ is an infinitesimal
generator defective only at $r$. Let $(X_t:{0\le t<\zeta})$ be the
associated Markov process taking values on $I$, with lifetime
$\zeta$. We denote
$({\widehat X}_t:{0\le t<\infty})$ the Markov chain associated
to the extension ${\widehat Q}$. We notice that
$\partial_r$ is an absorbing state for ${\widehat X}$. Let
$T_{\partial_r}=\inf\{t\ge 0: {\widehat X}_t= \partial_r\}$.
Then, when starting from an state in $I$, the chains
$({\widehat X}_t: 0\le t<T_{\partial_r})$
and $(X_t:{0\le t<\zeta})$, have the same distribution. Hence
$\zeta=T_{\partial_r}$. Therefore, if necessary we can assume that
$X$ is defined in $I\cup \{\partial_r\}$.
%
%

\begin{proposition}
\label{p2}
Let $(I, {\cal T})$ be a finite tree rooted at $r$.
Then $U$ is the potential matrix of the chain
$(X_t:{0\le t<\zeta})$, that is
$U= \int^{\infty}_0 e^{tQ} dt$ or equivalently
$U_{ij}=\EE_i \Big(\int^\infty_0 {\bf 1}_{\{X_t=j\}}dt \Big)$.
\end{proposition}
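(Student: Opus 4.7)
The plan is to verify the identity $U = \int_0^\infty e^{tQ}\,dt$ as a straightforward matrix calculation, using Proposition \ref{p1} which already supplies $Q = -U^{-1}$, and then to translate it into the probabilistic form by Fubini.

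First I would integrate the matrix ODE. Since $\frac{d}{dt}e^{tQ}=Q e^{tQ}=e^{tQ}Q$ and $Q$ is invertible by Proposition \ref{p1}, for every $T>0$ one has
\begin{equation*}
\int_0^T e^{tQ}\,dt \;=\; Q^{-1}\bigl(e^{TQ}-\II\bigr) \;=\; U\bigl(\II - e^{TQ}\bigr).
\end{equation*}
So the whole statement reduces to showing that $e^{TQ}\to 0$ (entrywise, or equivalently in any matrix norm) as $T\to\infty$.

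Next I would establish this decay. Probabilistically, $(e^{TQ})_{ij}=\PP_i(X_T=j,\,T<\zeta)=\PP_i(\widehat X_T = j)$, and the question is whether $\PP_i(\zeta>T)\to 0$. Because $(I,{\cal T})$ is a finite connected tree and $\widehat Q$ is conservative everywhere except at $r$, where it loses mass at rate $w_0^{-1}>0$, the extended chain $\widehat X$ on $I\cup\{\partial_r\}$ admits $\partial_r$ as its unique absorbing state and, from any $i\in I$, can reach $\partial_r$ by following the (finite) geodesic to $r$ and then taking the defective transition $r\to\partial_r$. Standard finite Markov chain theory then gives $\EE_i[\zeta]<\infty$, and in particular $\PP_i(\zeta>T)\to 0$. (Equivalently, from the analytic side: $Q$ is a sub-generator so $\|e^{tQ}\|_\infty\le 1$ forces the eigenvalues of $Q$ to have nonpositive real part; invertibility of $Q$ excludes $0$, leaving strictly negative real parts, hence exponential decay of $e^{tQ}$.)

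Passing to the limit $T\to\infty$ in the ODE identity then yields $\int_0^\infty e^{tQ}\,dt = U$. The equivalent expectation form
\begin{equation*}
U_{ij}=\EE_i\!\left(\int_0^\infty {\bf 1}_{\{X_t=j\}}\,dt\right)
\end{equation*}
follows from Tonelli applied to the nonnegative integrand, together with $(e^{tQ})_{ij}=\PP_i(X_t=j)$.

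I do not expect any serious obstacle: everything rests on Proposition \ref{p1} and on the qualitative fact that a finite defective $q$-matrix with every state communicating with a defective row has finite mean absorption time. The only step that calls for a touch of care is justifying $e^{TQ}\to 0$; either the probabilistic reachability argument or the spectral one above does the job.
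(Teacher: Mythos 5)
Your proof is correct and takes essentially the same approach as the paper, which simply asserts $U=-Q^{-1}=\int_0^\infty e^{tQ}\,dt$ using Proposition \ref{p1} and the semigroup identification; you have merely supplied the standard details (integrating the ODE, verifying $e^{TQ}\to 0$ via finite-state absorption or spectral negativity, and Tonelli) that the paper leaves implicit.
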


\begin{proof}
Since $(e^{tQ})$ is the semigroup of $(X_t:{0\le t<\zeta})$ we get
$U=-Q^{-1}=\int^{\infty}_0 e^{tQ} dt$. $\Box$
\end{proof}

\medskip

We set $n+1=|{\bf N}|=\max\{ |i|: i \in I\}$. Consider the sets
\begin{equation*}
B^{n+1}=\{i\in I: |i|=n+1\} \hbox{ and }
{\widetilde B}^n=\{i\in I: |i|=n, S_i\neq \phi\}.
\end{equation*}
Hence $B^{n+1}=\cup_{i\in {\widetilde B}^n}S_i$. To avoid the
trivial situation we assume $n\ge 1$. We will also set $I^m=\{i\in
I: |i|\le m\}$, so $I=I^{n+1}$.
\medskip

We denote by $T_i= \inf \{t\ge 0: X_t=i \}$ the hitting time of
$i\in I$, and by $T_{{\widetilde B}^n}:=
\inf \{T_i: i\in {\widetilde B}^n\}$
and $T_{B^{n+1}}:= \inf \{T_i: i\in B^{n+1}\}$
the hitting times of ${\widetilde B}^n$ and $B^{n+1}$, respectively, .

\medskip

Let $Q_{I^n I^n}$ be the restriction of $Q$ to $I^n \times I^n$.
The chain $(X_t: t < T_{\partial_r}\land T_{B^{n+1}})$
killed at $B^{n+1}\cup \{\partial_r\}$ has
generator $Q_{I^n I^n}$ and
semigroup $(e^{t{Q_{I^n I^n}}})$. Its potential
$V^{(n)} :=-(Q_{I^n I^n})^{-1}$  verifies
$$
V^{(n)}_{ij} = \EE_i \Big(\int_0^{T_{\partial_r}\land T_{B^{n+1}}}
{\bf 1}_{\{X_t=j \}} dt \Big)
\hbox{ for } \; i,j\in I^n.
$$

Further consider the $q$-matrix ${\bar Q}^{(n)}$ defined in $I_n$ by
$$
{\bar Q}^{(n)}_{I^{n}\setminus {\widetilde B}^n, I^{n}}=
Q_{I^{n}\setminus {\widetilde B}^n, I^n}
\hbox{ and }
{\bar Q}^{(n)}_{{\widetilde B}^{n} I^n}=0.
$$

\begin{definition}
\label{d2} Given a $q$-matrix $Q$ on the set $I$, we say that a
function $h:I \to \RR$ is $Q-$harmonic if it verifies $ Q h = 0$.
\end{definition}

From the definition it is clear that $h$ is $ Q-$harmonic iff
$e^{t Q} h= h$, for all $t\ge 0$. In the next proposition we
present a result that we will need in what follows. Its proof is
standard and it is based on the Doob' sampling theorem.
\medskip


\begin{proposition}
\label{p1144}
A function $h:I^n \to \RR$
is ${\bar Q}^{(n)}-$harmonic if and only if
$$
\EE_{i} \big( h(X_{\tau \land T_{{\widetilde B}^n}})\big) =h(i)
\hbox{ for } i\in I^{n} \hbox{ and any stopping time } \tau\le \infty.
$$

\end{proposition}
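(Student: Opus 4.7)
The proof plan rests on the observation that $\bar Q^{(n)}$ is the infinitesimal generator of the process $X$ stopped at time $T_{\widetilde B^n}$. Indeed, for $i\in I^n\setminus\widetilde B^n$ we have either $|i|<n$, or $|i|=n$ with $S_i=\emptyset$; in both situations every neighbor of $i$ in the tree lies in $I^n$, so the $i$-th row of $\bar Q^{(n)}$ coincides with that of $Q$. The rows indexed by $\widetilde B^n$ are zero, which makes those sites absorbing. Consequently the associated subMarkov semigroup satisfies
$$
e^{t\bar Q^{(n)}}f(i)=\EE_i\!\big(f(X_{t\wedge T_{\widetilde B^n}})\big),\qquad i\in I^n,\ t\ge 0,
$$
where we extend any function $f$ on $I^n$ by $f(\partial_r)=0$ in order to handle the event $\{\zeta<T_{\widetilde B^n}\}$ on which $X$ is killed at $r$ before reaching $\widetilde B^n$.

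Granting this representation, the direct implication follows by a standard martingale argument. Assume $\bar Q^{(n)}h=0$; Dynkin's formula then shows that $M_t:=h(X_{t\wedge T_{\widetilde B^n}})$ is a martingale, and it is bounded because $I^n$ is finite. Doob's optional sampling theorem applied to the bounded stopping time $\tau\wedge T$ yields $\EE_i\!\big(h(X_{\tau\wedge T_{\widetilde B^n}\wedge T})\big)=h(i)$. Letting $T\to\infty$ and invoking dominated convergence — using that $\zeta<\infty$ almost surely, so that $X_{\tau\wedge T_{\widetilde B^n}\wedge T}$ stabilizes as $T\to\infty$ — produces the claimed identity.

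Conversely, applying the assumed identity with the deterministic stopping time $\tau=t$ gives $e^{t\bar Q^{(n)}}h(i)=h(i)$ for all $t\ge 0$ and $i\in I^n$; differentiating at $t=0$ yields $\bar Q^{(n)}h=0$.

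The main delicate point is the convention $h(\partial_r)=0$, needed to give meaning to $h(X_{\tau\wedge T_{\widetilde B^n}})$ on the event $\{\tau\wedge T_{\widetilde B^n}\ge\zeta\}$, together with the passage from bounded to unbounded stopping times in the optional sampling step; both are routine thanks to the finiteness of $I^n$ and to the a.s.\ finiteness of $\zeta$.
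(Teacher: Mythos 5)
Your proof is correct and takes essentially the approach the paper alludes to: the paper declines to give details, saying only that the proof ``is standard and it is based on the Doob' sampling theorem,'' and your argument is precisely that standard one. Identifying $e^{t\bar Q^{(n)}}$ as the semigroup of $X$ stopped at $T_{\widetilde B^n}$ (with the convention $h(\partial_r)=0$ to handle killing at $r$), deducing the bounded-martingale property of $h(X_{t\wedge T_{\widetilde B^n}})$ from $\bar Q^{(n)}h=0$, applying optional sampling and passing to the limit, and then differentiating $e^{t\bar Q^{(n)}}h=h$ at $t=0$ for the converse, is exactly the intended argument.
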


%
%

\medskip

The class of ${\bar Q}^{(n)}-$harmonic functions,
denoted by ${\cal H}^n$, is a linear space with dimension
$\hbox{ dim }{\cal H}^{n} =|{\widetilde B}^n|$.
Indeed, for each $k\in {\widetilde B}^n$
the function $h^k(i)=\EE_i({\bf 1}_k(X_{T_{{\widetilde B}^n}}))$
is the unique harmonic function which verifies
$h^k(j)=\delta_{k j}$ for $j \in {\widetilde B}^n$.
The class of these harmonic functions constitutes a basis for
${\cal H}^n$.

\medskip

\begin{proposition}
\label{p3}
Let $(I, {\cal T})$ be a finite tree rooted at $r$.
The matrix $H:= U_{I^n I^n}-V^{(n)}$ is
symmetric, and its columns
generate the space ${\cal H}^{n}$ of ${\bar Q}^{(n)}-$harmonic
functions. Moreover, the columns of
$\;U_{I^n \; {\widetilde B}^n}$
is a basis of this space.
\end{proposition}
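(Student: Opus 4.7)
The plan is to combine the algebraic identity $QU=-\II$ (Proposition \ref{p1}) with a block decomposition across $I^n\cup B^{n+1}$, and then read off symmetry, harmonicity, and rank. Symmetry of $H$ is immediate from symmetry of $U$ and of $V^{(n)}=-(Q_{I^n I^n})^{-1}$ (principal submatrix of a symmetric matrix, then inverted). The $I^n\times I^n$ block of $QU=-\II$ reads
$$
Q_{I^n I^n}U_{I^n I^n}+Q_{I^n B^{n+1}}U_{B^{n+1} I^n}=-\II;
$$
multiplying on the left by $V^{(n)}$ and using $V^{(n)}Q_{I^n I^n}=-\II$ yields $H=V^{(n)}N$ with $N:=Q_{I^n B^{n+1}}U_{B^{n+1} I^n}$. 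Since a site $i\in I^n$ has a neighbor in $B^{n+1}$ only when $i\in\widetilde B^n$, the row $N_{i\cdot}$ vanishes off $\widetilde B^n$. For $i\in\widetilde B^n$, $l\in S_i$ and $k\in I^n$ one has $l\wedge k=i\wedge k$ (as $l\notin I^n$), so $U_{lk}=U_{ik}$; this gives $N_{ik}=(w_{n+1}-w_n)^{-1}|S_i|\,U_{ik}$, i.e.\ $N_{\widetilde B^n,I^n}=D\,U_{\widetilde B^n,I^n}$ for a positive diagonal matrix $D$.

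For harmonicity I verify $\bar Q^{(n)}H=0$ and $\bar Q^{(n)}U_{I^n\widetilde B^n}=0$ row by row. Rows indexed by $\widetilde B^n$ are trivial because $\bar Q^{(n)}$ has zero rows there. For $i\in I^n\setminus\widetilde B^n$, every neighbor of $i$ lies in $I^n$, so $Q_{ij}=0$ whenever $j\in B^{n+1}$; hence
$$
\sum_{j\in I^n}Q_{ij}U_{jk}=(QU)_{ik}=-\delta_{ik}=(Q_{I^n I^n}V^{(n)})_{ik}
$$
for all $k\in I^n$. Subtracting gives $\bar Q^{(n)}_{i\cdot}H_{\cdot k}=0$. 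For $k\in\widetilde B^n$ and $i\in I^n\setminus\widetilde B^n$ one also has $i\ne k$ (since $|i|<n$, or $i$ is a leaf with $|i|=n$), so $\delta_{ik}=0$, yielding $\bar Q^{(n)}U_{I^n\widetilde B^n}=0$.

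For the basis and generation statements I use the equality $\dim\mathcal H^n=|\widetilde B^n|$ recalled just before the proposition. The principal submatrix $U_{\widetilde B^n\widetilde B^n}$ is symmetric, strictly positive and ultrametric; its constant diagonal $w_n$ strictly dominates the off-diagonal values $w_{|k\wedge k'|}<w_n$, so by the ultrametric-matrix nonsingularity theorem of \cite{dell1996} it is invertible. Therefore the $|\widetilde B^n|$ harmonic columns of $U_{I^n\widetilde B^n}$ are linearly independent, giving a basis of $\mathcal H^n$. For $H=V^{(n)}N$, invertibility of $V^{(n)}$ together with $N_{\widetilde B^n,I^n}=D\,U_{\widetilde B^n,I^n}$ (which has rank $|\widetilde B^n|$ since it contains the invertible block $DU_{\widetilde B^n\widetilde B^n}$) forces $\mathrm{rank}(H)=|\widetilde B^n|=\dim\mathcal H^n$, so the columns of $H$ generate $\mathcal H^n$. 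The only non-routine point is this invocation of the ultrametric-matrix theorem to secure nonsingularity of $U_{\widetilde B^n\widetilde B^n}$; everything else is bookkeeping around the identity $QU=-\II$.
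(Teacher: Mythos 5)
Your proof is correct and takes a genuinely different route from the paper's. The paper works probabilistically: it introduces the hitting-distribution matrices $W_{ik}=\PP_i\{X_{T_{\widetilde B^n}}=k\}$, $E_{i\ell}=\PP_i\{X_{T_{B^{n+1}}}=\ell\}$, $D_{ik}=\PP_i\{X_{T_{B^{n+1}}}\in S_k\}$, derives $U_{I^n\widetilde B^n}=WU_{\widetilde B^n\widetilde B^n}$ via the strong Markov property, and deduces $H=DU_{\widetilde B^n I^n}$ by splitting $U_{ij}=\EE_i\int_0^\infty{\bf 1}_{\{X_t=j\}}dt$ at the hitting time $T_{B^{n+1}}$. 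You instead block-decompose the algebraic identity $QU=-\II$ along $I^n\cup B^{n+1}$ and obtain the same object in the form $H=V^{(n)}N$, with $N$ supported on the rows $\widetilde B^n$ and $N_{\widetilde B^n,I^n}=D\,U_{\widetilde B^n,I^n}$ for the diagonal matrix $D_{ii}=|S_i|(w_{n+1}-w_n)^{-1}$. These are consistent --- the paper's probabilistic $D_{ik}$ is precisely $V^{(n)}_{ik}\,D_{kk}$ in your notation --- but your derivation is purely linear-algebraic, avoiding any appeal to the Markov chain, sample paths or hitting times. You also make explicit a step that the paper leaves implicit: to pass from $U_{I^n\widetilde B^n}=WU_{\widetilde B^n\widetilde B^n}$ to the rank count, one needs $U_{\widetilde B^n\widetilde B^n}$ invertible, and you correctly secure this from the strict ultrametric-matrix nonsingularity theorem of \cite{dell1996} (constant diagonal $w_n$ strictly dominating all off-diagonal entries $w_{|k\wedge k'|}<w_n$). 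Finally, your row-by-row verification that $\bar Q^{(n)}H=0$ and $\bar Q^{(n)}U_{I^n\widetilde B^n}=0$ (using that a site of $I^n\setminus\widetilde B^n$ has no neighbour in $B^{n+1}$, so $(QU)_{ik}=-\delta_{ik}$ over $I^n$, and $(Q_{I^nI^n}V^{(n)})_{ik}=-\delta_{ik}$) replaces the paper's reliance on Proposition \ref{p1144} and the characterization of $\mathcal H^n$ by the functions $h^k$. The trade-off is that the probabilistic proof gives an interpretation of $H_{ij}$ as $\EE_i(T_{B^{n+1}}<\infty,\,U_{X_{T_{B^{n+1}}}j})$, which is used again in (\ref{re60})--(\ref{re21}), whereas your argument is self-contained but does not supply this interpretation.
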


\begin{proof}
First, let us introduce the matrices
$W = (W_{ik}: i\in I^n, k\in {\widetilde B}^{n})$,
$E = (E_{i \ell} : i\in I^n, \ell\in B^{n+1})$,
$D= (D_{ik} : i\in I^n, k\in {\widetilde B}^n)$,
whose terms are
$$
W_{ik} = \PP_i \{ X_{T_{{\widetilde B}^n}} = k \}, \;
E_{i \ell} = \PP_i \{ X_{T_{B^{n+1}}} = \ell \}, \;
D_{ik} = \PP_i \{ X_{T_{B^{n+1}}}\in S_k \}.
$$
Let $W^k$ be the $k$ column of $W$, with
$k\in {\widetilde B}^n$. We notice that $h^k = W^k$ then
$(W^k: k\in {\widetilde B}^n)$
is a basis of ${\cal H}^{n}$. In particular
${\bar Q}^{(n)}W^ k=0$.

\medskip

From definition $D_{ik}=\sum_{\ell\in S_k } E_{i \ell}$,
or equivalently $D= E M^t$ where $M^t$ is the
transposed of the incidence matrix $M=(M_{k \ell} :
k\in {\widetilde B}^n, \ell\in B^{n+1})$,
with $M_{k \ell}=1$ if $\ell\in S_k$ and
$M_{k \ell}=0$ otherwise.

\medskip

Let $i\in I^n$ and  $k\in {\widetilde B}^n$. Since
$$
\PP_i \{ T_k < \infty \} = \sum_{j\in {\widetilde B}^n}
\PP_i \{ X_{T_{{\widetilde B}^n}} = j \} \PP_j \{ T_k < \infty \}
\hbox{ and }
U_{ik} =
\PP_i \{ T_k < \infty \} U_{kk},
$$
we find $U_{ik} =
\sum_{j\in {\widetilde B}^n}
\PP_i \{ X_{T_{{\widetilde B}^n}} = j \} U_{jk}$. Hence we obtain
\begin{equation}
\label{re6}
U_{I^n \; {\widetilde B}^n } =
WU_{{\widetilde B}^n \; {\widetilde B}^n} \hbox{ and so }
W=U_{I^n \; {\widetilde B}^n}
(U_{{\widetilde B}^n\; {\widetilde B}^n})^{-1}.
\end{equation}
Analogously we get $E = U_{I^n \; B^{n+1}}
(U_{B^{n+1} \; B^{n+1}})^{-1}$.
From the equality $D= E M^t$ we find \hfill\break
$D=U_{I^n \; B^{n+1}}
(U_{ B^{n+1}\; B^{n+1}})^{-1} M^t$.
Since
$U_{i \ell}=w_{|i|}=U_{ik}$ when $k\in {\widetilde B}^n$, $\ell\in S_k$,
we obtain
\begin{equation}
\label{re7}
U_{I^n \; B^{n+1}}= U_{I^n {\widetilde B}^n} M,
\end{equation}
and then
$D=U_{I^n \;{\widetilde B}^n} M (U_{B^{n+1}\; B^{n+1}})^{-1}
M^t$.
Let us show
\begin{equation}
\label{re8}
H = U_{I^n \; {\widetilde B}^n} M
(U_{B^{n+1} \; B^{n+1}})^{-1}
M^t U_{{\widetilde B}^n\; I^n},
\end{equation}
or equivalently $H = DU_{{\widetilde B}^n \;I^n}$. For $i,j\in I^n$ we
have
\begin{eqnarray*}
U_{ij} & =&
\EE_i \Big( \int_0^{\infty} {\bf 1}_{\{ X_t=j\}}dt \Big) \\
& =&
\EE_i \Big(\int_0^{T_{B^{n+1}}} {\bf 1}_{\{X_t=j\}}dt
\Big)
+\EE_i \Big(T_{B^{n+1}}<\infty, \EE_{X_{T_{B^{n+1}}}}
\Big( \int_0^{\infty} {\bf 1}_{\{X_t=j\}} dt \Big) \Big).
\end{eqnarray*}
Hence
$U_{ij} = V^{(n)}_{ij} + \sum_{\ell\in B^{n+1}}
\PP_i \{X_{T_{B^{n+1}}}=\ell\} U_{\ell j}$,
or equivalently
\begin{equation}
\label{re60}
U_{ij}= V^{(n)}_{ij}+\EE_i(T_{B^{n+1}}<\infty, U_{ X_{T_{B^{n+1}}} j}) \;.
\end{equation}
Then $H_{ij}=\EE_i(T_{B^{n+1}}<\infty, U_{ X_{T_{B^{n+1}}} j})$ and
by using (\ref{re7}) we find
$$
H_{ij} = \sum_{\ell\in B^{n+1}}
\PP_i \{X_{T_{B^{n+1}}}=
\ell\} U_{\ell j} = \sum_{k\in {\widetilde B}^n}
\PP_i \{X_{T_{B^{n+1}}}\in  S_k\} U_{kj}
\hbox{ for } i,j\in I^n\;,
$$
which gives us $H = DU_{{\widetilde B}^n \;I^n}$, that is (\ref{re8})
holds.
From (\ref{re8}) we deduce $\hbox{ rank } H =
\hbox{ rank } U_{{\widetilde B}^n \; I^n} = |{\widetilde B}^n| =
\hbox{ dim } {\cal H}^{n}$. On the other hand,
from (\ref{re6}) and (\ref{re8}) we get
\begin{equation}
\label{re9}
H = W U_{{\widetilde B}^n\; {\widetilde B}^n} M
(U_{B^{n+1} \; B^{n+1}})^{-1}
M^t U_{{\widetilde B}^n \; {\widetilde B}^n} W^t.
\end{equation}
From ${\bar Q}^{(n)}W =0$ we obtain ${\bar Q}^{(n)}H =0$.
Therefore, the columns of $H$ belong to the space ${\cal H}^n$.
Given that $rank(H)=dim({\cal H}^n)$ the columns of $H$
generate this space. On the other hand from
(\ref{re8})
the columns of $U_{I^n \; {\widetilde B}^n}$ generate
${\cal H}^n$. Since the rank
of this matrix is equal the dimension of ${\cal H}^n$ the
Proposition is shown. $\Box$
\end{proof}

\medskip

\section{Harmonic Functions and the Martin Kernel}

\medskip

From now on we assume that $(I, {\cal T})$ is an infinite rooted
tree. We also assume that each branch is infinite.
We consider the minimal transition semigroup ${\widehat
P}_t$ associated to $\widehat Q$ the extension of $Q$ to $I\cup
\{\partial_r\}$ made in (\ref{re81}). One way to construct this
semigroup is by truncating the state space by an increasing
sequence of finite sets and then use \cite{anderson} Proposition
2.14. Let $\widehat X=(\widehat X_t: 0\le t <\widehat \zeta)$ be a
time continuous Markov process with infinitesimal generator
$\widehat Q$ and lifetime $\widehat \zeta$. If we stop $\widehat
X$ at the hitting time of $\partial_r$ we obtain a Markov process
$X=(\widehat X_t: 0\le t <\zeta)$ whose state space is $I$ and
lifetime $\zeta=T_{\partial_r}\wedge\widehat \zeta$. The
infinitesimal generator for $X$ is given by $Q$. We denote by
$(P_t)$ the semigroup associated to $X$ and by $V=\int_0^\infty
P_t dt$, the potential induced on $I$. We will denote by $Y=(Y_n:
n \in \NN)$ the discrete skeleton on $I$ induced by $X$.

\medskip

Let $I^n=\{i\in I: |i|\le n\}$.
As in the previous section
$V^{(n)}$ is the potential associated to $Q_{I_n I_n}$ and
${\cal H}^n$ is the set of ${\bar Q}^{(n)}$-harmonic functions in $I^n$.
Consider the chain
$X^{(n)} :=(X_t: t<T_{\partial_r}\land T_{B^{n+1}})$
killed at $B^{n+1}\cup \{\partial_r\}$,
with generator $Q_{I^n I^n}$. The Markov semigroup is
$P^{(n)}_t=e^{tQ_{I^n I^n}}$
and $V^{(n)}= \int^\infty_0 P^{(n)}_t dt = -Q_{I^n I^n}^{-1}$
is the associated potential. Clearly we have
$(P^{(n)}_t)_{ij}\le (P^{(n+1)}_t)_{ij}$ and
$V^{(n)}_{ij}\le V^{(n+1)}_{ij}$
for $i,j \in I^n$. Moreover, by the Monotone Convergence Theorem
their limits are $(P_t)$ and $V$, respectively.
From (\ref{re60}) we get $V^{(n)}_{ij}\le U_{ij}$, then
$V\le U$.

\medskip

Let us see, by a classical procedure (for instance see \cite{cartier}),
that $X_{\zeta}$ is a well defined variable in
$I\cup \partial_{\infty}\cup \partial_r $. In the case
$T_{\partial_r}<\infty$
this is obvious because $T_{\partial_r}=\zeta$ and
$X_\zeta=\partial_r$. So we can assume $T_{\partial_r}=\infty$.
We define $R_n=\inf\{t\ge 0:\; |X_t|\ge n\}$ and
$R_\infty := \lim\limits_{n\to \infty} R_n$.
An argument based on Borel Cantelli Lemma
shows that the set of trajectories visiting a site $j\in I$ an
infinite number times by $(Y_n)$, has $\PP_i-$measure 0. In fact
for such trajectories we necessarily have $T_{\partial_r}<\infty$.
The trajectories that visit each site of $I$ only a finite number
of times and are not absorbed at $\partial_r$ must converge to a
point in the boundary $\partial_\infty$ (see (\ref{re3})).
Therefore $\zeta=T_{\partial_r}\wedge R_\infty$ and
$X_\zeta$ is well defined. It verifies
\begin{equation}
X_{\zeta}= \partial_r \hbox{ if }T_{\partial_r}<R_\infty
\hbox{ and }
X_{\zeta}=\lim\limits_{n\to \infty} X_{R_n}=
\lim\limits_{n\to \infty} X_{{\zeta}}(n)\in \partial_\infty
\hbox{ if }
R_\infty\le T_{\partial_r}\;.
\end{equation}
Here, as already introduced, $X_{\zeta}(n)$ is the
point at level $n$ in $geod(r, X_{\zeta})$.

\medskip

The tree matrix is said to be {\it transient} whenever
$\PP_r\{T_{\partial_r}<\infty\}<1$ or equivalently
$\PP_r\{X_{\zeta}\in \partial_\infty\}>0$. Otherwise, the tree
matrix is said to be {\it recurrent}. This classification
corresponds to the recurrence or transient property for the chain
reflected at $r$. For a simple criterion on transience see
\cite{tlyons}.

\medskip

Since $U_{i \partial_r}=
U_{\partial_r i}=0$ for every $i\in I\cup \{\partial_r\}$,
equality (\ref{re60})
can be written as
$$
U_{ij}=V^{(n)}_{ij}+\EE_i(U_{X_{T_{B^{n+1}}}\, j}).
$$
From $U_{X_{T_{B^{n+1}}} j}\le U_{jj}$ and
$\lim\limits_{n\to \infty} U_{X_{T_{B^{n+1}}}\, j}=U_{X_\zeta \, j}$
$\PP_i-$a.e., we obtain
$$
\lim\limits_{n\to \infty}\EE_i(U_{X_{T_{B^{n+1}}}\, j})=
\EE_i(U_{X_\zeta \, j}).
$$
By combining these relations with
$\lim\limits_{n\to \infty}V_{ij}=V^{(n)}_{ij}$,
allow us to get
\begin{equation}
\label{re21}
U_{ij}= V_{ij}+ \EE_i(U_{ X_{\zeta}\, j})=
V_{ij}+ \int_{\partial_{\infty}}
U_{ \eta j} \; \PP_i \{ X_{\zeta}\in d \eta\} \;.
\end{equation}
Given that $V_{ij}=V_{ji}=\PP_j\{T_i<\infty\} V_{ii}$
the following limit exists
\begin{equation}
\label{exis22}
V_{i \xi}:=\lim\limits_{j\to \xi} V_{ij}=V_{ii}\cdot
\lim\limits_{j\to \xi}\PP_j\{T_i<\infty\}\ge 0,
\hbox{ for } i\in I, \xi\in \partial_{\infty}.
\end{equation}
Therefore, passing to the limit $j\to \xi \in \partial_\infty$
in relation (\ref{re21}) and using
the Monotone Convergence Theorem lead to
\begin{equation}
\label{4568}
U_{i\xi}=V_{i\xi}+\int_{\partial_{\infty}} U_{\eta \xi}
\PP_i \{X_{\zeta}\in d \eta \} \;.
\end{equation}

\medskip

A conclusion derived from (\ref{re21}) is that the recurrent case
$\PP_r\{X_{\zeta}\in \partial_\infty\}=0$
is completely characterized by the equality $V=U$.
In particular the tree matrix $U$
is the potential of $(X_t)$.

\medskip

In the transient case we denote by $\mu$ the exit measure on the
boundary $\partial_\infty$, that is the probability
measure defined on $\partial_\infty$ by
\begin{equation}
\label{muab}
\mu(\bullet)=\PP_r\{X_{\zeta}\in \bullet \; \big | \; X_{\zeta}\in
\partial_\infty\}.
\end{equation}

\begin{remark}
\label{r11}
If $U$ is unbounded, that is
$w_n$ tends to infinity as $n$ increases, the measure $\mu$ is atomless.
In fact, from (\ref{4568}) we get
$$
\infty > w_0=U_{r \xi}\ge  \int_{\partial_{\infty}\setminus \{\xi\}}
U_{ \eta \xi} \PP_r \{ X_{\zeta}\in d \eta \}+ \infty \cdot \PP_r
\{ X_{\zeta} = \xi \}.
$$
\end{remark}

In what follows we concentrate on the transient case. Nevertheless,
when appropriate, we shall point out the corresponding results for
the recurrent case.

\medskip

\subsection{Harmonic Functions}
\label{subs11}

\medskip

In this subsection we study basic properties of the harmonic
functions on $I$. We notice that the restriction of a ${\widehat
Q}-$harmonic function to $I$ is not necessarily $Q-$har\-mo\-nic.
An example of this is the constant ${\bf 1}$ function. In fact,
the unique ${\widehat Q}-$harmonic functions whose restrictions
are $Q-$harmonics are those vanishing at $\partial_r$. Obviously
the reciprocal also holds, that is, the only ${\widehat
Q}-$harmonic extension of a $Q-$harmonic function is the one
extended by $0$ at $\partial_r$. In the sequel a harmonic function
is to be understood as a $Q-$harmonic function, and for a function
defined on a subset of $I\cup
\partial_\infty$ we assume implicitly that it takes the value 0 at
$\partial_r$, unless otherwise is specified.

\medskip

In what follows an important role is played by the function
\begin{equation}
\label{gfun}
\bar g(j)=\PP_j\{T_{\partial_r}<\infty\}, \; j\in I\cup\{\partial_r\}\, ,
\end{equation}
which is the Martin kernel for $\widehat Q$ at $\partial_r$.
We point out that both $\bar g$ and $1-\bar g$ are ${\widehat Q}-$harmonics, but
only $1-\bar g$ is $Q-$harmonic.
We also note that $\bar g$ is nonnegative and decreasing on each branch,
which allows to define for $\eta \in \partial_\infty$
$$
\bar g(\eta):=\lim\limits_{j\to \eta} \PP_j\{T_{\partial_r}<\infty\}.
$$

\medskip

Given $g:I\to {\overline \RR}$  an
extended real function defined on the tree, we consider the
sequence of functions $(g_n)$ defined on the boundary by
$$
g_n(\xi)=g(\xi(n)) \hbox{ for } n\in \NN
\hbox{ and } \xi\in \partial_{\infty}\;.
$$
This notion enable us to study limiting properties
on the boundary for functions defined on the extended tree.
\begin{definition}
\label{def41}
Let $g:I\to {\overline \RR}$ and
$\varphi:\partial_{\infty}\to {\overline \RR}$.
We put $\lim g= \varphi$
pointwise (respectively $\mu-$a.e.) if
$\lim\limits_{n\to \infty} g_n=\varphi$
pointwise (respectively $\mu-$a.e.).
\end{definition}

Let ${\bar R}_n:=\inf\{t\ge 0:\; |X_t|\ge n \hbox{ or }
X_t=\partial_r\}$.
A standard argument gives,
$$
h:I\to \RR \hbox{ is harmonic } \Leftrightarrow
\left[
\forall n\ge 1, \; \forall \tau \hbox{ stopping time }:\;
\forall i\in I ,\;
h(i)=\EE_i\left(h(X_{\tau\wedge {\bar R}_n})\right)\right]\,.
$$
%
%
%
In the transient case, an application of the Dominated Convergence
Theorem and the Fatou's Theorem  gives
that for any bounded harmonic function $h:I\to \RR$  the limit
$\varphi=\lim h$ exists $\mu-$a.e. and moreover
$$
h(i)=\EE_i\big(\varphi\big(X_{\zeta}\big)\big).
$$
Indeed, this is a consequence of Theorem 2.6 in \cite{cartier}, because $h$ is bounded
if and only if $h/(1-\bar g)$ is bounded. Thus, if $h_1,h_2$ are bounded harmonic
functions such that $\lim
h_1=\lim h_2$ $\mu-$a.e. then $h_1\equiv h_2$. Obviously  in the
recurrent case the unique bounded harmonic function is $h\equiv
0$.

\begin{proposition}
\label{c2}
If $U$ is bounded then the tree matrix is transient.
\end{proposition}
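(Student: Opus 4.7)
The plan is to combine identity (\ref{re21}) with the strong Markov decomposition $V_{rj}=\PP_r\{T_j<\infty\}\,V_{jj}$, applied along an infinite ray out of the root. Set $h:=\PP_r\{X_\zeta\in\partial_\infty\}$; transience is precisely the statement $h>0$. Writing $w_\infty:=\sup_n w_n<\infty$ by hypothesis, I aim for the quantitative lower bound $h\ge w_0/(2w_\infty)$.

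Fix $j\in I$ and apply (\ref{re21}) with $i=r$. Since $r\wedge j=r$, the left-hand side equals $U_{rj}=w_0$; on the right, $U_{\partial_r,j}=0$ while for $\xi\in\partial_\infty$ one has $U_{\xi j}=w_{|j\wedge\xi|}\le w_{|j|}$, so $\EE_r[U_{X_\zeta,j}]\le w_{|j|}\,h$, giving
\[
V_{rj}\ \ge\ w_0-w_{|j|}\,h .
\]
Applied at $i=j$, equation (\ref{re21}) also yields $V_{jj}\le U_{jj}=w_{|j|}$, and inserting $V_{rj}=\PP_r\{T_j<\infty\}V_{jj}$ gives $\PP_r\{T_j<\infty\}\ge w_0/w_{|j|}-h$. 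Since every branch is infinite, choose a ray $r=j_0,j_1,\dots$ with $|j_n|=n$; since a tree is acyclic, reaching $j_{n+1}$ forces a prior visit to $j_n$, so $A_n:=\{T_{j_n}<\infty\}$ is decreasing. Setting $A:=\bigcap_n A_n$ and letting $n\to\infty$ one obtains
\[
\PP_r(A)=\lim_n\PP_r(A_n)\ \ge\ \frac{w_0}{w_\infty}-h .
\]

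The decisive inclusion is $A\subseteq\{X_\zeta\in\partial_\infty\}$. On $A$ the chain visits every $j_n$ before dying, so $R_n\le T_{j_n}\le T_{\partial_r}$ for all $n$, whence $R_\infty\le T_{\partial_r}$; by the definition of $X_\zeta$ recalled just before (\ref{re21}), this forces $X_\zeta\in\partial_\infty$. (Equivalently, split into cases: if $T_{\partial_r}<\infty$ then visiting infinitely many distinct sites in finite time forces $R_\infty<T_{\partial_r}$, i.e.\ explosion to the boundary; if $T_{\partial_r}=\infty$ then the paper's Borel--Cantelli argument shows each site is visited finitely often, and (\ref{re3}) gives convergence in $\partial_\infty$.) Combining with the previous inequality yields $h\ge\PP_r(A)\ge w_0/w_\infty-h$, hence $h\ge w_0/(2w_\infty)>0$, and the tree matrix is transient. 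The only genuinely delicate step is the inclusion $A\subseteq\{X_\zeta\in\partial_\infty\}$, requiring the two-regime analysis above; every other manipulation is a direct consequence of (\ref{re21}) and the strong Markov property.
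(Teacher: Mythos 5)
Your proof is correct, and it takes a genuinely different route from the paper. The paper's argument is a one-liner: for any $\eta\in\partial_\infty$ the function $i\mapsto U_{i\eta}=w_{|i\wedge\eta|}$ is $Q$-harmonic (one checks $(QU_{\bullet\eta})(i)=0$ for all $i$, including $i=r$), bounded because $\sup_n w_n<\infty$, and nonzero since $U_{r\eta}=w_0>0$; by the dichotomy stated just before the proposition, a nonzero bounded harmonic function forces transience. Your argument avoids that dichotomy entirely and works directly from the probabilistic representation (\ref{re21}): you lower-bound $V_{rj}$ by $w_0-w_{|j|}h$ with $h=\PP_r\{X_\zeta\in\partial_\infty\}$, convert this into $\PP_r\{T_{j}<\infty\}\ge w_0/w_{|j|}-h$ via the strong Markov factorization $V_{rj}=\PP_r\{T_j<\infty\}V_{jj}\le\PP_r\{T_j<\infty\}w_{|j|}$, and then pass to the limit along a ray, using acyclicity of the tree to get a decreasing family of events and the fact that hitting every $j_n$ forces $R_\infty\le T_{\partial_r}$, hence $X_\zeta\in\partial_\infty$. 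The payoff of your approach is an explicit quantitative bound $h\ge w_0/(2w_\infty)$ that the paper's qualitative argument does not give; the cost is more bookkeeping. Both rely on the standing assumption that every branch is infinite. The paper's route has the advantage of revealing \emph{why} boundedness matters (it is exactly what makes $U_{\bullet\eta}$ a bounded harmonic function), whereas yours shows concretely how $\sup_n w_n<\infty$ forces a uniform-in-$j$ lower bound on $\PP_r\{T_j<\infty\}$.
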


\begin{proof}
The function $h(i)=U_{i\eta}$ is harmonic,
bounded and non-zero which implies that the tree matrix
must be transient. $\Box$
\end{proof}

\medskip

A distinguished class of harmonic functions is given by the Martin
kernel at $\infty$, see \cite{cartier}, \cite{KSK} or
\cite{massimo1987}.

\begin{definition}
\label{d31}
The Martin kernel (at $\infty$),
$\kappa:I\times \partial_{\infty}\to \RR$ is given by
$$
\kappa(i,\eta):=\lim\limits_{j\to \eta}\frac{V_{ij}}{V_{rj}},
\hbox{ for } i\in I, \eta\in \partial_{\infty}.
$$
\end{definition}

\medskip

It is well known that $\kappa(\bullet,\eta)$ is a well defined
harmonic function on $I$ (see \cite{cartier} or
\cite{massimo1987}).

Consider $i\in I,\, \xi \in \partial_\infty$ and $n> |i\land
\xi|$. Take $j=\xi(n)$ and denote $C^n=\partial_\infty(\xi(n))$.
The strong Markov property implies
$$
\PP_i\{X_{\zeta}\in C^n\}= \PP_i\{T_j<\infty\}\PP_j\{X_{\zeta}\in
C^n\}= {V_{ij}\over V_{jj}} \PP_{\xi(n)}\{X_{\zeta}\in C^n\}.
$$
On the other hand $\PP_i\{X_{\zeta}\in C^n\}= \PP_i\{T_{i\land
\xi}<\infty\}\PP_{i\land \xi} \{X_{\zeta}\in C^n\}$. Then
$$
{{V_{ij}}\over {V_{rj}}}={{\PP_i\{X_{\zeta}\in C^n\}} \over
{\PP_r\{X_{\zeta}\in C^n\}}}= \frac{\PP_i\{T_{i\land
\xi}<\infty\}} {\PP_r\{T_{i\land \xi}<\infty\}},
$$
Passing to the limit we get that
\begin{equation}
\label{rn111} \kappa(i,\xi)=\lim\limits_{j\to \xi}
{{\PP_i\{X_{\zeta}\in
\partial_\infty(j)\}} \over{\PP_r\{X_{\zeta}\in
\partial_\infty(j)\}}}
={\PP_i\{X_{\zeta}\in \partial_\infty(\xi(n))\}\over
\PP_r\{X_{\zeta}\in \partial_\infty(\xi(n))\}}
=\frac{\PP_i\{T_{i\land \xi}<\infty\}} {\PP_r\{T_{i\land
\xi}<\infty\}}.
\end{equation}

In particular $\kappa(i,\bullet)$ is the Radon-Nykodim derivative
of $\PP_i\{X_\zeta \in \bullet\}$ with respect to $\PP_r\{X_\zeta
\in \bullet\}$ (see \cite{cartier}) so
$$
U_{i \xi}=V_{i \xi}+\int_{\partial_\infty} U_{\xi \eta}\;
\kappa(i,\eta)\; \PP_r\{X_\zeta \in d\eta\}\;.
$$

\begin{remark}
When the tree is recurrent, that is $V=U$, the Martin kernel
is easily computed as
$$
\kappa(i,\eta)=\lim\limits_{j\to \eta} \frac{V_{ij}}
{V_{rj}}=\frac{U_{i \eta}}{w_0}.
$$
Therefore, $\{U_{\bullet \eta}/w_0: \eta \in \partial_\infty\}$ is
the Martin kernel.
\end{remark}

\medskip

\subsection{ Regular and Accessible Points}

\medskip

A close study  between $U$ and the potential $V$, in the transient
case, needs the description of the regular points on
$\partial_\infty$. In the classical setting regularity is needed
for the continuity up to the boundary for the Dirichlet boundary
problem (see for example \cite{chung1995}, Theorem 1.23). In our
context see Lemma \ref{70} (ii) .

\medskip

\begin{definition}
\label{d10}
A point $\eta \in
\partial_\infty$ is said to be regular if $\bar g(\eta)=0$, that is
$$
\lim\limits_{j\to \eta} \PP_j\{T_{\partial_r}<\infty\}=0,
$$
and is said to be accessible if it belongs to the closed
support of $\mu$, that is
$$
\PP_r\{X_{\zeta}\in [\eta(n),\infty]\}>0 \;\hbox{ for all } n.
$$
If $\eta$ is not regular we say it is irregular and if it is not
accessible we say it is inaccessible. We denote by
$\partial_\infty^{reg}$ the set of regular points and by
$\partial_\infty^{inac}$ the set of inaccessible points.
\end{definition}

\medskip

The classification on accessible and inaccessible points is the
same if instead of $\PP_r$, we use $\PP_i$  for any $i\in I$.
Similarly $\eta$ is regular if and only if $\lim\limits_{j\to
\eta}\PP_j\{T_i<\infty\}=0$ for all $i\in I$. From (\ref{exis22})
this is exactly the case when $V_{i\eta}=0$.

\medskip

\begin{lemma}
\label{70} (i) The measure $\mu$ concentrates on the set of
regular points: $\mu(\partial_\infty^{reg})=1$.

\medskip

\noindent (ii) A point $\eta\in \partial_\infty$ is regular if and
only if any bounded continuous real function $f$ defined in
$\partial_\infty\cup \{\partial_r\}$ with $f(\partial_r)=0$, verifies

\begin{equation}
\label{re20}
\lim\limits_{j\to \eta}\EE_j(f(X_{\zeta}))=f(\eta).
\end{equation}

\medskip

\noindent (iii) Every regular point is accessible.

\end{lemma}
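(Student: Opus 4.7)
The plan is to treat the three claims in order, each leveraging one property of the $\widehat Q$-harmonic function $\bar g$ defined in (\ref{gfun}). For (i) I would use a martingale argument: since $\widehat Q\bar g=0$, the process $\bar g(\widehat X_t)$ is a bounded $[0,1]$-martingale under $\PP_r$ and, by L\'evy's $0$--$1$ law,
$$
\lim_{t\to\widehat\zeta}\bar g(\widehat X_t)=\PP_r(T_{\partial_r}<\infty\mid {\cal F}_\infty)={\bf 1}_{\{T_{\partial_r}<\infty\}}\qquad \PP_r\text{-a.s.}
$$
On $\{T_{\partial_r}=\infty\}=\{X_{\zeta}\in\partial_\infty\}$ the limit is $0$, and since $\bar g$ is decreasing along the geodesic from $r$ to $X_{\zeta}$ (as the paper recalls), this limit must coincide with $\bar g(X_{\zeta})$. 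Hence $\bar g(X_{\zeta})=0$ $\PP_r$-a.s.\ on $\{X_{\zeta}\in\partial_\infty\}$, and normalizing by $\PP_r\{X_{\zeta}\in\partial_\infty\}$ gives $\mu(\partial_\infty^{reg})=1$.

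For (ii), the direction ``$\Leftarrow$'' is immediate by testing (\ref{re20}) with the function equal to $1$ on $\partial_\infty$ and $0$ at $\partial_r$: this gives $\EE_j(f(X_{\zeta}))=1-\bar g(j)\to 1$, hence $\bar g(j)\to 0$. For ``$\Rightarrow$'', fix $\epsilon>0$ and use continuity of $f$ at $\eta$ to pick $n$ with $|f(\xi)-f(\eta)|<\epsilon$ for every $\xi\in\partial_\infty(\eta(n))$. For $j$ close enough to $\eta$ one has $\eta(n-1)\preceq j$, and since $f(\partial_r)=0$ I would split
$$
\EE_j(f(X_{\zeta}))=\EE_j\big(f(X_{\zeta});X_{\zeta}\in\partial_\infty(\eta(n))\big)+\EE_j\big(f(X_{\zeta});X_{\zeta}\in\partial_\infty\setminus\partial_\infty(\eta(n))\big).
$$
The first term differs from $f(\eta)\PP_j\{X_{\zeta}\in\partial_\infty(\eta(n))\}$ by at most $\epsilon$. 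Every trajectory contributing either to $\{X_{\zeta}=\partial_r\}$ or to the second term must hit $\eta(n-1)$, and the strong Markov property yields $\bar g(j)=\PP_j\{T_{\eta(n-1)}<\infty\}\,\bar g(\eta(n-1))$, with $\bar g(\eta(n-1))>0$ since from any site there is a positive-probability direct path to $\partial_r$. Regularity forces $\bar g(j)\to 0$, hence $\PP_j\{T_{\eta(n-1)}<\infty\}\to 0$, which controls both the remainder expectation and $|1-\PP_j\{X_{\zeta}\in\partial_\infty(\eta(n))\}|$; letting $j\to\eta$ and then $\epsilon\to 0$ proves (\ref{re20}).

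For (iii), iterate the same strong-Markov identity along the geodesic to $\eta$ to obtain
$$
\bar g(\eta(m))=\bar g(r)\prod_{k=1}^{m}\PP_{\eta(k)}\{T_{\eta(k-1)}<\infty\}.
$$
Regularity of $\eta$ and $\bar g(r)>0$ force the product to vanish, so for every $n$ there exists $k>n$ with $\PP_{\eta(k)}\{T_{\eta(k-1)}<\infty\}<1$. For such $k$, with positive probability the chain started at $\eta(k)$ never leaves $[\eta(k),\infty)$ and therefore $X_{\zeta}\in\partial_\infty(\eta(k))\subseteq\partial_\infty(\eta(n))$; combined with $\PP_r\{T_{\eta(k)}<\infty\}>0$ this gives $\mu(\partial_\infty(\eta(n)))>0$ for every $n$, i.e.\ accessibility.

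The main obstacle is ``$\Rightarrow$'' of (ii): regularity is a single numerical condition on $\bar g$, but one needs to upgrade it to near-convergence of the exit distributions $\PP_j\{X_{\zeta}\in\cdot\}$ towards $\delta_{\eta}$. The factorization $\bar g(j)=\PP_j\{T_{\eta(n-1)}<\infty\}\bar g(\eta(n-1))$, together with the positivity of $\bar g$ at every site of $I$, is precisely the mechanism that makes this upgrade possible and is also what drives the proof of (iii).
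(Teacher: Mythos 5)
Your proposal is correct; parts (i) and (ii) follow the paper's line closely, while your argument for (iii) is genuinely different. In (i) you replace the paper's computation $\bar g(r)=\EE_r(\bar g(X_\zeta))$ (dominated convergence plus a decomposition over $\{X_\zeta=\partial_r\}$ and $\{X_\zeta\in\partial_\infty\}$) by the L\'evy $0$--$1$ law; both proofs ultimately hinge on identifying the martingale limit with $\bar g(X_\zeta)$ along the geodesic, and you state that identification explicitly. In (ii) your strong-Markov factorization $\bar g(j)=\PP_j\{T_{\eta(n-1)}<\infty\}\bar g(\eta(n-1))$ is exactly the mechanism hiding behind the paper's inequality involving $\PP_j\{T_{\eta(n)}<\zeta\}$; you have merely made explicit why that probability tends to $0$. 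One small slip there: you should require $\eta(n)\preceq j$, not $\eta(n-1)\preceq j$; if $j$ lies in a sibling subtree of $\eta(n)$ under $\eta(n-1)$, a trajectory can converge to a point of $\partial_\infty(\eta(n-1))\setminus\partial_\infty(\eta(n))$ without ever hitting $\eta(n-1)$, so the claim that every trajectory contributing to the remainder must hit $\eta(n-1)$ would fail. The substantive divergence is (iii): the paper derives accessibility as a corollary of (ii) by testing \eqref{re20} with $f={\bf 1}_{\partial_\infty(\eta(n))}$, which is continuous on $\partial_\infty\cup\{\partial_r\}$ since $\partial_r$ is isolated and $\partial_\infty(\eta(n))$ is clopen; you instead factor $\bar g(\eta(m))=\bar g(r)\prod_{k=1}^m\PP_{\eta(k)}\{T_{\eta(k-1)}<\infty\}$, observe that regularity and $\bar g(r)>0$ force infinitely many factors strictly below $1$, and turn each such factor into positive mass for $\mu(\partial_\infty(\eta(n)))$ via irreducibility. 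The paper's route is shorter once (ii) is in hand; your route is self-contained, avoids the topological continuity check, and exhibits the trapping mechanism directly.
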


\medskip

\begin{proof}
(i) The function $\bar g(j)=\PP_j\{T_{\partial_r}<\infty\}$ is
bounded and $\widehat Q-$harmonic and verifies $\bar
g(\partial_r)=1$. Using  that $\bar g(r)=\EE_r(\bar
g(X_{T_{B^n}\wedge T_{\partial_r}}))$, the Dominated Convergence
Theorem gives
$$
\bar g(r)=\EE_r(\bar g(X_{\zeta}))=\PP_r\{T_{\partial_r}<\infty\} +\int
\bar g(\xi) \PP_r\{X_{\zeta} \in d\xi\}.
$$
From this relation we conclude that $\bar g=0$ $\mu-$a.e..
Therefore $\mu(\partial_\infty^{reg})=1$.

\medskip

(ii) Since $f$ is continuous and bounded,
for every $\varepsilon>0$ fixed there exists $n$ such that
$|f(\xi)-f(\eta)|\le \varepsilon$ if $\xi\in [\eta(n),\infty]\cap
\partial_\infty$. Then for $j\in [\eta(n),\infty)$ we have
$$
|\EE_j(f(X_{\zeta}))-f(\eta)|\le 2 M \PP_j\{T_{\eta(n)}<
\zeta\}+2\varepsilon \PP_j\{\zeta \le T_{\eta(n)}\},
$$
where $M$ is any bound for $f$. From this inequality we conclude
that
$$
\limsup\limits_{j\to \infty} |\EE_j(f(X_{\zeta}))-f(\eta)|\le
2\varepsilon,
$$
and then we obtain the desired limit in (\ref{re20}).

\medskip

Conversely, assume now that (\ref{re20}) holds for $f={\bf
1}_{\partial_\infty}$ (so $f(\partial_r)=0$). Then
$$
\EE_j(f(X_{\zeta}))=\PP_j\{R_\infty\le
T_{\partial_r}\}=
1-\PP_j\{T_{\partial_r}<\infty\} \mathop{\longrightarrow}
\limits_{j\to \eta}1=f(\eta),
$$
proving that $\eta$ is regular.

\medskip

(iii) Let $\eta$ be a regular point. Take any $n$ and
consider $f$ the indicator function of
$A=\partial_\infty(\eta(n))$. For large $j$ we have
$\PP_j\{X_{\zeta}\in A\}>0$ which implies
$\PP_r\{X_{\zeta}\in A\}>0$ and $\eta$ is accessible. $\Box$
\end{proof}

\medskip

\subsection{Potential for inaccessible points}

\medskip

We will show that, in the set of inaccessible points, the
potential reduces to the recurrent case. For every inaccessible
point $\eta$ we denote by $N^\eta$ the smallest integer $n\ge 0$
for which $\mu(\partial_\infty(\eta(n)))=0$. Since
$\{\partial_\infty(\eta(N^\eta)):\; \eta \in
\partial_\infty^{inac}\}$ is an open cover of $\partial_\infty^{inac}$,
we can find a finite or countable set
$\{\eta_s:\; s\in {\cal N}\} \subseteq \partial_\infty^{inac}$ such that
$$
\partial_\infty^{inac}=
\biguplus\limits_{s \in {\cal N}} (\sqsubset_s\cap \; \partial_\infty),
$$
where $\sqsubset_s=[m_s,\infty]$ is the infinite tree
hanging from $m_s:=\eta_s(N^{\eta_s})$, $s\in {\cal N}$.

\medskip

\begin{lemma}
\label{l10} Let $j\in \sqsubset_s$ then
$\PP_j\{T_{m_s}<\infty\}=\PP_j\{T_{m_s}<\zeta\}=1$, that is, the
restriction of $U$ to the subtree hanging from $m_s$ is recurrent.
This also implies that every inaccessible point is irregular.
\end{lemma}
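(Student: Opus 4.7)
The plan is to reduce the hitting-time claim to showing that, from any $j\in\sqsubset_s$, the chain cannot exit into $\partial_\infty(m_s)$. By the definition $m_s=\eta_s(N^{\eta_s})$, we have $\mu(\partial_\infty(m_s))=0$, so in the transient case $\PP_r\{X_\zeta\in\partial_\infty(m_s)\}=0$. The Radon--Nikodym identity noted just after (\ref{rn111}), namely $\PP_j\{X_\zeta\in d\eta\}=\kappa(j,\eta)\PP_r\{X_\zeta\in d\eta\}$ on $\partial_\infty$, then yields $\PP_j\{X_\zeta\in\partial_\infty(m_s)\}=0$ for every $j\in I$.

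Next I would show that for $j\in\sqsubset_s$ with $j\neq m_s$ the inclusion $\{T_{m_s}=\infty\}\subseteq\{X_\zeta\in\partial_\infty(m_s)\}$ holds $\PP_j$-almost surely. The key observation is that any path leaving $\sqsubset_s$ towards the root must pass through $m_s$, so on $\{T_{m_s}=\infty\}$ the trajectory is confined to $\sqsubset_s\setminus\{m_s\}$ and in particular $T_{\partial_r}=\infty$. The Borel--Cantelli argument used in the discussion preceding (\ref{re21}) then forces every site to be visited only finitely often, hence by the convergence criterion (\ref{re3}) the trajectory converges to a point of $\sqsubset_s\cap\partial_\infty=\partial_\infty(m_s)$. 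Combining with the previous step gives $\PP_j\{T_{m_s}=\infty\}=0$; since $X_t$ is only defined for $t<\zeta$, the events $\{T_{m_s}<\infty\}$ and $\{T_{m_s}<\zeta\}$ coincide, proving the first statement.

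For the recurrence claim, I would note that $U':=U|_{\sqsubset_s\times\sqsubset_s}$ is itself a tree matrix rooted at $m_s$ with weights $w'_k=w_{|m_s|+k}$, and that its associated chain $X'$ has the same generator as $X$ at every site of $\sqsubset_s\setminus\{m_s\}$; hence, starting from any $j\in\sqsubset_s\setminus\{m_s\}$, the two chains have the same law until the first visit to $m_s$, so $X'$ reaches $m_s$ in finite time almost surely. A geometric iteration at $m_s$ (positive killing probability towards the cemetery at each visit) then yields recurrence of $U'$ in the sense defined after (\ref{muab}). Finally, for any inaccessible $\eta$ pick $s$ with $\eta\in\partial_\infty(m_s)$; by the strong Markov property at $T_{m_s}$ together with the first conclusion, for every $j\in\sqsubset_s$,
$$
\PP_j\{T_{\partial_r}<\infty\}=\PP_j\{T_{m_s}<\infty\}\,\PP_{m_s}\{T_{\partial_r}<\infty\}=\PP_{m_s}\{T_{\partial_r}<\infty\}>0,
$$
the positivity coming from the fact that the extended chain $\widehat X$ jumps directly from $r$ to $\partial_r$ with positive probability. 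Letting $j\to\eta$ gives $\bar g(\eta)>0$, so $\eta$ is irregular. The most delicate step is the Radon--Nikodym reduction, which silently invokes the transient hypothesis.
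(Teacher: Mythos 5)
Your argument is correct and follows the same overall skeleton as the paper's: first show $\PP_j\{X_\zeta\in\sqsubset_s\}=0$, then observe that on $\{T_{m_s}=\infty\}$ the chain is confined to $\sqsubset_s\setminus\{m_s\}$ and hence exits through $\partial_\infty(m_s)$, and conclude. The one genuine variation is in the first step: you derive $\PP_j\{X_\zeta\in\partial_\infty(m_s)\}=0$ from the Radon--Nikodym identity $\PP_j\{X_\zeta\in d\eta\}=\kappa(j,\eta)\,\PP_r\{X_\zeta\in d\eta\}$, whereas the paper simply uses the one-line strong-Markov inequality $\PP_r\{X_\zeta\in\sqsubset_s\}\ge\PP_r\{T_j<\infty\}\,\PP_j\{X_\zeta\in\sqsubset_s\}$. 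Both are legitimate (the Martin-kernel route is licensed because the absolute continuity $\PP_j\ll\PP_r$ on $\partial_\infty$ was already recorded after (\ref{rn111})), but the paper's inequality is more elementary and in fact is precisely what underlies the absolute continuity you invoke, so the detour through $\kappa$ does not buy anything. Where your write-up adds value is in making explicit the two conclusions that the paper dispatches with ``the result follows'': the recurrence of $U|_{\sqsubset_s\times\sqsubset_s}$ via the matching of generators on $\sqsubset_s\setminus\{m_s\}$ and a geometric-killing argument at $m_s$, and the irregularity of inaccessible points via the factorization $\bar g(j)=\PP_j\{T_{m_s}<\infty\}\PP_{m_s}\{T_{\partial_r}<\infty\}=\PP_{m_s}\{T_{\partial_r}<\infty\}>0$ constant on $\sqsubset_s$. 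These are sound and fill a real gap in the published proof.
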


\medskip

\begin{proof}
Observe that $\PP_j-$a.e. on the set $\{\zeta \le T_{m_s}\}$
we have $X_{\zeta}\in \; \sqsubset_s\cap\;
\partial_\infty \subseteq \partial_\infty^{inac}$. Since
$0=\PP_r\{X_{\zeta}\in \sqsubset_s\}\ge \PP_r\{T_j<\infty\}
\PP_j\{X_{\zeta}\in \sqsubset_s\}$,
we conclude $\PP_j\{\zeta \le T_{m_s}\}=0$ and the result follows.
$\Box$
\end{proof}

\medskip

\begin{proposition}
For inaccessible points the potential $V$ verifies
\begin{equation}
\label{la450}
V|_{\sqsubset_s\times\sqsubset_s}=
U|_{\sqsubset_s\times\sqsubset_s}-(U_{m_sm_s}-V_{m_sm_s})
\hbox{ and } V|_{\sqsubset_s\times\sqsubset_t}
\hbox{ is constant for } s\neq t.
\end{equation}
\end{proposition}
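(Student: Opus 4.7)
The plan is to combine the identity $U_{ij} = V_{ij} + \EE_i(U_{X_{\zeta}, j})$ from (\ref{re21}) with the strong Markov property applied at the hitting time $T_{m_s}$, which by Lemma \ref{l10} satisfies $\PP_i\{T_{m_s} < \zeta\} = 1$ for every $i \in \sqsubset_s$. This reduces expectations starting from $i \in \sqsubset_s$ to expectations starting from $m_s$, and a simple geometric observation then forces the resulting quantity to be independent of $j \in \sqsubset_s$.

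For the first identity, fix $i, j \in \sqsubset_s$. The strong Markov property at $T_{m_s}$ gives $\EE_i(U_{X_{\zeta}, j}) = \EE_{m_s}(U_{X_{\zeta}, j})$. I next claim $\PP_{m_s}\{X_{\zeta} \in \partial_\infty(m_s)\} = 0$: indeed, by the very definition of $m_s$ one has $\mu(\partial_\infty(m_s)) = 0$, hence $\PP_r\{X_{\zeta} \in \partial_\infty(m_s)\} = 0$; since the event $\{X_{\zeta} \in \partial_\infty(m_s)\}$ forces the trajectory to visit $m_s$ before lifetime, the strong Markov property at $T_{m_s}$ under $\PP_r$ combined with $\PP_r\{T_{m_s} < \zeta\} > 0$ yields the claim. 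Thus $\PP_{m_s}$-almost surely $X_{\zeta} \in \{\partial_r\} \cup (\partial_\infty \setminus \partial_\infty(m_s))$. For $X_{\zeta} = \partial_r$ one has $U_{\partial_r, j} = 0$, and for $X_{\zeta} = \eta \in \partial_\infty \setminus \partial_\infty(m_s)$ the conditions $m_s \not\preceq \eta$ and $m_s \preceq j$ force the geodesics from $r$ to $\eta$ and from $r$ to $j$ to diverge strictly above $m_s$, so $\eta \land j = \eta \land m_s$ and $U_{\eta, j} = w_{|\eta \land m_s|} = U_{\eta, m_s}$. Therefore $\EE_{m_s}(U_{X_{\zeta}, j}) = \EE_{m_s}(U_{X_{\zeta}, m_s})$, and substituting $i = j = m_s$ in (\ref{re21}) identifies this common quantity with $U_{m_s m_s} - V_{m_s m_s}$; plugging this back into (\ref{re21}) yields the first identity.

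For the second identity, take $s \neq t$, $i \in \sqsubset_s$ and $j \in \sqsubset_t$. Since $\sqsubset_s \cap \sqsubset_t = \emptyset$ and the only way for the trajectory to leave $\sqsubset_s$ is through $m_s$, the integrand ${\bf 1}_{\{X_t = j\}}$ in $V_{ij} = \EE_i \int_0^{\zeta} {\bf 1}_{\{X_t = j\}} dt$ vanishes on $[0, T_{m_s})$. The strong Markov property at $T_{m_s}$ together with $\PP_i\{T_{m_s} < \zeta\} = 1$ then gives $V_{ij} = V_{m_s, j}$. Using symmetry of $V$ and the analogous argument for $j \in \sqsubset_t$ relative to $m_t$ (with $m_s \notin \sqsubset_t$) yields $V_{m_s, j} = V_{j, m_s} = V_{m_t, m_s}$. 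Hence $V_{ij} = V_{m_s, m_t}$, constant on $\sqsubset_s \times \sqsubset_t$.

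The only delicate point is the geometric identity $\eta \land j = \eta \land m_s$ when $m_s \not\preceq \eta$ and $m_s \preceq j$; once this is isolated, the remainder is a routine application of the strong Markov property and Lemma \ref{l10}.
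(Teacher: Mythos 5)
Your proposal is correct. For the first identity you take essentially the paper's route: start from (\ref{re21}), use the strong Markov property at $T_{m_s}$ together with Lemma \ref{l10} to replace $\EE_i$ by $\EE_{m_s}$, and then observe that the integrand $U_{X_\zeta,j}$ agrees with $U_{X_\zeta,m_s}$ on the set that carries the exit distribution. Your justification for that last step (directly showing $\PP_{m_s}\{X_\zeta\in\partial_\infty(m_s)\}=0$ via $\mu(\partial_\infty(m_s))=0$) is a harmless variant of the paper's, which instead invokes Lemma \ref{70}: the exit measure lives on regular points, regular points are accessible, and accessible points lie outside $\sqsubset_s$; either way the geometric fact $\eta\land j=\eta\land m_s$ for $m_s\not\preceq\eta$, $m_s\preceq j$ is the key, and you isolate it cleanly. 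For the second identity your argument genuinely differs: you work directly with the Green potential $V_{ij}=\EE_i\int_0^\zeta{\bf 1}_{\{X_t=j\}}\,dt$, note that the integrand vanishes before $T_{m_s}$ because a trajectory from $i\in\sqsubset_s$ must pass through $m_s$ to reach $j\notin\sqsubset_s$, and then iterate with symmetry and $T_{m_t}$ to land on $V_{ij}=V_{m_s m_t}$. The paper instead reapplies (\ref{re21}) and exhibits the constant as $U_{m_sm_t}-\int_{\partial_\infty^{reg}}U_{m_t\eta}\PP_{m_s}\{X_\zeta\in d\eta\}$. Your version is somewhat more elementary (no boundary representation needed for this part) and produces the cleaner closed form $V_{m_sm_t}$; the paper's keeps the two halves of the proof parallel. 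Both are sound.
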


\begin{proof}
Consider $i,j \in \sqsubset_s$ we deduce from (\ref{re21})
and Lemma \ref{l10} that
$$
U_{ij}-V_{ij}=\int_{\partial_\infty^{reg}} U_{j\eta}
\PP_i\{X_{\zeta}\in d\eta\}=\int_{\partial_\infty^{reg}}
U_{m_s\eta} \PP_{m_s}\{X_{\zeta}\in d\eta\},
$$
which implies the first relation in (\ref{la450}).
Finally if $i\in \sqsubset_s,\; j\in \sqsubset_t$, $s\neq t$,
we obtain that
$$
V_{ij}=U_{m_sm_t}-\int_{\partial_\infty^{reg}} U_{m_t\eta}
\PP_{m_s}\{X_{\zeta}\in d\eta\},
$$
which implies the second part in (\ref{la450}). $\Box$
\end{proof}

\begin{remark}
Since $V|_{\sqsubset_s\times\sqsubset_s}$ is strictly positive and
it is equal to $U|_{\sqsubset_s\times\sqsubset_s}$ minus a
constant, it follows that the potential
$V|_{\sqsubset_s\times\sqsubset_s}$ is a tree matrix. We recall
that this is exactly the case when the tree matrix is recurrent as
it is $U|_{\sqsubset_s\times\sqsubset_s}$, see Lemma \ref{l10}.
\end{remark}

\medskip

\noindent {\bf Example}. The following example shows that not all
accessible points are regular. On figure 1 we have chosen a
particular tree, rooted at $r=0$, consisting on a special branch
determined by the nodes $0,1,2,...$ and subtrees $T_0,T_1,...$.

\bigskip

\centerline{\resizebox{6cm}{!}{\begin{picture}(0,0)%
\includegraphics{arbol1.pstex}%
\end{picture}%
\setlength{\unitlength}{3947sp}%
\begingroup\makeatletter\ifx\SetFigFont\undefined%
\gdef\SetFigFont#1#2#3#4#5{%
  \reset@font\fontsize{#1}{#2pt}%
  \fontfamily{#3}\fontseries{#4}\fontshape{#5}%
  \selectfont}%
\fi\endgroup%
\begin{picture}(5857,6153)(2106,-6673)
\put(3601,-2161){\makebox(0,0)[lb]{\smash{\SetFigFont{17}{20.4}{\rmdefault}{\mddefault}{\updefault}{\color[rgb]{0,0,0}$1$}%
}}}
\put(4231,-2761){\makebox(0,0)[lb]{\smash{\SetFigFont{17}{20.4}{\rmdefault}{\mddefault}{\updefault}{\color[rgb]{0,0,0}$2$}%
}}}
\put(6226,-4786){\makebox(0,0)[lb]{\smash{\SetFigFont{17}{20.4}{\rmdefault}{\mddefault}{\updefault}{\color[rgb]{0,0,0}$n$}%
}}}
\put(3101,-1636){\makebox(0,0)[lb]{\smash{\SetFigFont{17}{20.4}{\rmdefault}{\mddefault}{\updefault}{\color[rgb]{0,0,0}$0$}%
}}}
\put(6871,-5386){\makebox(0,0)[lb]{\smash{\SetFigFont{17}{20.4}{\rmdefault}{\mddefault}{\updefault}{\color[rgb]{0,0,0}$n+1$}%
}}}
\put(2476,-736){\makebox(0,0)[lb]{\smash{\SetFigFont{17}{20.4}{\rmdefault}{\mddefault}{\updefault}{\color[rgb]{0,0,0}$\partial_r$}%
}}}
\put(2176,-2536){\makebox(0,0)[lb]{\smash{\SetFigFont{17}{20.4}{\rmdefault}{\mddefault}{\updefault}{\color[rgb]{0,0,0}$T_0$}%
}}}
\put(2701,-3061){\makebox(0,0)[lb]{\smash{\SetFigFont{17}{20.4}{\rmdefault}{\mddefault}{\updefault}{\color[rgb]{0,0,0}$T_1$}%
}}}
\put(3301,-3661){\makebox(0,0)[lb]{\smash{\SetFigFont{17}{20.4}{\rmdefault}{\mddefault}{\updefault}{\color[rgb]{0,0,0}$T_2$}%
}}}
\put(5326,-5686){\makebox(0,0)[lb]{\smash{\SetFigFont{17}{20.4}{\rmdefault}{\mddefault}{\updefault}{\color[rgb]{0,0,0}$T_n$}%
}}}
\end{picture}
}}
\centerline{ Figure 2. }

\medskip

Each subtree $T_k$ is regular in the sense that any node $s\in
T_i$ at level $m$ measured from the root of $T_k$ (thus at level
$m+k+1$ measured from $r$) has a constant number of descendants
equal to $s_{m+k+2}$. The weight function $w_n$ verifies $w_0=1$
and $w_{n+1}-w_n=2^n(w_n-w_{n-1})$. Also we take $s_p=2^p$. In
this way
$$
{Q_{s\, s-}\over (-Q_{ss})}={(w_{m+k+1}-w_{m+k})^{-1} \over
(w_{m+k+1}-w_{m+k})^{-1}+s_{m+k+2}
(w_{m+k+2}-w_{m+k+1})^{-1}}=1/3.
$$
The level process on each subtree $T_i$ is clearly a birth and
death chain with birth rate $2/3$ and death rate $1/3$. Therefore
$T_i$ is transient and henceforth
$$
\PP_0\{X_{\zeta}\in \partial_\infty(r_i)\}\ge
\PP_0\{T_{r_i}<\infty\}\PP_{r_i}\{X_{\zeta}\in
\partial_\infty(r_i)\}>0.
$$

On the other hand
$$
{Q_{k \, k-1}\over(-Q_{kk})}={(w_k-w_{k-1})^{-1}\over
(w_k-w_{k-1})^{-1}+2(w_{k+1}-w_k)^{-1}}=1-{1\over 2^{n-1}+1}.
$$
This implies that $\prod\limits_{k=1}^\infty {Q_{k\,
k-1}\over(-Q_{kk})}=a\in (0,1)$. Since $\PP_\eta
\{T_{\partial_r}<\infty\}\ge {1\over
w_0}\left(\prod\limits_{k=1}^n
{Q_{k\,k-1}\over(-Q_{kk})}\right)\ge {a\over w_0}$, the point
$\eta\in \partial_\infty$ determined by the special branch, is
irregular but accessible.

\medskip

\subsection{The Kernel at the Boundary is a Filtered Operator}
\label{subs12}

\medskip

Let us introduce the operator $W$, acting on $L^p(\mu)$, with kernel $U$.
We point out that $U$ and $W$ acting on
$\partial_\infty$ where introduced in \cite{lyons1} section 4, and
they are used in \cite{lyons2} section 2.3 to study the capacity
function on the boundary.

\medskip

\begin{definition}
\label{d80}
For any (positive) bounded, real and
measurable function $f$ with domain in $\partial_{\infty}$ we
define
$$
Wf(\eta)=\int_{\partial_{\infty}}U_{ \eta \xi} f(\xi) \mu(d \xi)
$$
which is also a (positive) real and measurable function.
\end{definition}

\medskip

We notice that the integral defining $W$ can be made over
$\partial_\infty$ or $\partial_\infty^{reg}$, because this last
set is of full measure $\mu$. We have from  (\ref{4568}) and
$w_0=U_{r \eta}$ that
$$
W{\bf 1}(\eta)=\int_{\partial_\infty} U_{\eta\xi}
\mu(d \xi)={w_0-V_{r\eta} \over \PP_r\{X_{\zeta}\in
\partial_\infty\}}.
$$
Then $Wf$ is bounded for any bounded $f$. Since $V_{r\eta}=0$ for
any regular point $\eta$, we conclude that $W{\bf 1}$ is constant
$\mu$-a.e., where this constant, denoted by $\alpha$, is given by
$\alpha={w_0} / {\PP_r\{X_{\zeta}\in
\partial_\infty\}}$. In general we have $W{\bf 1}\le \alpha$ in
$\partial_\infty$.

\medskip

The action of $W$ on measures is given by
$\nu W(A)=\int W{\bf 1}_A(\xi) \nu(d\xi)$. It is
direct to see that $\mu W=\alpha \mu$.  Then $\alpha^{-1}W$ is a
Markov operator preserving $\mu$. Hence, for every $p\ge 1$, the
operator $W:L^p(\mu)\to L^p(\mu)$ is well defined,
$||W||_p=\alpha$ and $W$ is self adjoint in $L^2(\mu)$.

\medskip

Recall notations $\partial_\infty(i)=[i,\infty]\cap
\partial_{\infty}$ made in (\ref{re2}) and
$geod(r,\xi)=(\xi(k): k\in \NN)$ for
$\xi \in \partial_{\infty}$. We put
$$
C^k(\xi)=\partial_{\infty}(\xi(k))=\{\eta \in \partial_\infty:\;
\xi(k)=\eta(k)\}.
$$

We also consider
$$
\Delta_k(w)=w_k-w_{k-1} \hbox{ for }
k\in \NN,\; \Delta_{-1}(w)=0.
$$
Notice that $\Delta_k(w)>0$ for $k\in \NN$.
For $f\in L^1(\mu)$ it is verified
\begin{equation}
\label{re30}
Wf(\eta)=\sum_{k\in \NN} w_k
\int_{C^k(\eta)\setminus C^{k+1}(\eta)} f d\mu=
\sum_{k\in \NN} \Delta_k(w) \int_{C^k(\eta)}
f d\mu.
\end{equation}

The set function $C^k$, with domain $\partial_{\infty}$, takes a
finite number of values. We denote by ${\cal F}_k$ the
$\sigma-$field in $\partial_{\infty}$ generated by the sets
$(C^k)$. This sequence of $\sigma-$fields is increasing and
generating, that is ${\cal F}_\infty=\sigma(\bf T)$. Thus, ${\cal
F}=({\cal F}_k: k\in \NN)$ is a generating filtration in
$\partial_{\infty}$. With this notation equality (\ref{re30}) can
be written as
\begin{equation}
\label{re31}
Wf(\bullet)=\sum_{k\in \NN} \Delta_k(w) \mu (C^k(\bullet))
\EE_{\mu} (f | {\cal F}_k)(\bullet).
\end{equation}

Now, consider on $\partial_\infty$ the following process
\begin{equation}
\label{basta1}
G=(G_n: n\in \NN) \hbox{ where }G_n(\eta)=
\sum_{k\ge n} \Delta_k(w) \mu(C^k(\eta)).
\end{equation}
Since $G_0=W{\bf 1}\le \alpha$ we obtain that $G_0$ is a
convergent series. On the other hand, since every regular point is
accessible we conclude that $\mu(C^k(\xi))>0$ for every $k\in
\NN,\; \xi \in
\partial_{\infty}^{reg}$ and in particular
$G_n>0,\; \mu-$a.e. for every $n\in \NN$. We also have
$$
G_n(\eta)=G_0-\sum_{k=0}^{n-1} \Delta_k(w) \mu (C^k(\eta))
\hbox{ is } {\cal F}_{n-1} \hbox{ measurable }.
$$
Therefore if $|\xi \land \eta|\ge n$ we have
$G_i(\eta)-G_{i+1}(\eta)=G_i(\xi)-G_{i+1}(\xi),\; i=0,\dots,n$.
Moreover, if $\xi, \eta$ are regular points then
$G_0(\eta)=G_0(\xi)=\alpha$ and
\begin{equation}
\label{igual}
G_i(\xi)=G_i(\eta),\hbox{ for all } i\le |\xi\land \eta|.
\end{equation}

The process $(G_n)$ is
${\cal F}$-predictable, positive,
bounded by $\alpha$ and decreasing to $0$ as $n\to \infty$.
Then $G_n \EE_{\mu}( \; | {\cal F}_n)$
converges to $0$ in $L^p(\mu)$ for every $p\in [1, \infty]$.
Therefore, integration by parts on (\ref{re31}) gives
\begin{equation}
\label{fil88}
W=\sum_{n\in \NN} (G_n - G_{n+1})\EE_{\mu}(\; | {\cal F}_n)=
\sum_{n\in \NN} G_n \big(\EE_{\mu}(\; | {\cal F}_n)-\EE_{\mu}(\; |
{\cal F}_{n-1})\big).
\end{equation}
This equality being in the sense of operators. Thus,
we have shown the following result.

\medskip

\begin{proposition}
\label{prostoch}
The self adjoint operator $W$ acting on $L^2(\mu)$ is an stochastic integral
operator (or a filtered operator), that is, there exists a
filtration ${\cal F}=({\cal F}_n)$ and $G=(G_n)$ a
${\cal F}-$predictable process, such that
$W=\sum_{n\in \NN} G_n
\big(\EE_{\mu}(\; | {\cal F}_n)-\EE_{\mu}
(\; | {\cal F}_{n-1})\big)$.
\end{proposition}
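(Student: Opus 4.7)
The plan is to verify the representation by unrolling the defining integral for $Wf$ on the atoms of the natural tree filtration and then applying Abel summation. Since the desired conclusion is essentially an equivalent rewriting of (\ref{re30})--(\ref{re31}), most of the work is bookkeeping; the substantive steps are identifying the predictable process $G$ and justifying the limit that legitimises the integration by parts.

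First I would start from Definition \ref{d80}. Fixing $\eta\in\partial_\infty^{reg}$, I would decompose $\partial_\infty$ as the disjoint union $\bigcup_{k\in\NN}(C^k(\eta)\setminus C^{k+1}(\eta))$ (together with the $\mu$-null set $\{\eta\}$), and use that $U_{\eta\xi}=w_{|\eta\wedge\xi|}=w_k$ on $C^k(\eta)\setminus C^{k+1}(\eta)$. Telescoping $w_k=\sum_{j=0}^{k}\Delta_j(w)$ and interchanging the two summations (legitimate because $f$ is bounded and $W{\bf 1}\le\alpha$) yields formula (\ref{re30}). Then, because $\mathcal{F}_k$ is the finite $\sigma$-field whose atoms are the cylinders $C^k(\eta)$, the identity
$$\int_{C^k(\eta)}f\,d\mu=\mu(C^k(\eta))\,\EE_\mu(f\mid\mathcal{F}_k)(\eta)$$
holds by the very definition of conditional expectation, giving (\ref{re31}).

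Next I would check the three properties of $G_n=\sum_{k\ge n}\Delta_k(w)\mu(C^k(\cdot))$ that are needed: $\mathcal{F}_{n-1}$-measurability (since on a regular point the summands for $k\ge n$ depend only on $\eta(n-1)$ by (\ref{igual}), equivalently $G_n$ is constant on atoms of $\mathcal{F}_{n-1}$), the uniform bound $0\le G_n\le G_0\le\alpha$, and the monotone decrease $G_n\downarrow 0$ as $n\to\infty$ (since the series $G_0$ converges). Writing $\Delta_k(w)\mu(C^k)=G_k-G_{k+1}$ in (\ref{re31}) produces
$$W=\sum_{n\in\NN}(G_n-G_{n+1})\,\EE_\mu(\cdot\mid\mathcal{F}_n).$$
A discrete integration by parts then rearranges this as $\sum_{n\in\NN}G_n\bigl(\EE_\mu(\cdot\mid\mathcal{F}_n)-\EE_\mu(\cdot\mid\mathcal{F}_{n-1})\bigr)$, with boundary term $\lim_{N\to\infty}G_{N+1}\EE_\mu(\cdot\mid\mathcal{F}_N)$.

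The only genuine subtlety, and therefore the main obstacle, is justifying that this boundary term vanishes in the operator sense asserted. For $f\in L^p(\mu)$ with $p\in[1,\infty]$, the martingale $\EE_\mu(f\mid\mathcal{F}_N)$ converges in $L^p$ and is bounded in norm by $\|f\|_p$, while $G_{N+1}\to 0$ uniformly in the essential sup by dominated convergence applied to the tail of the series defining $G_0$; combining these gives $G_{N+1}\EE_\mu(f\mid\mathcal{F}_N)\to 0$ in $L^p(\mu)$. This justifies both the Abel rearrangement and the convergence of the resulting series in the strong operator topology on $L^2(\mu)$, completing the identification of $W$ as an s.i.o.\ with filtration $\mathcal{F}$ and predictable integrand $G$.
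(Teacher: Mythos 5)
Your proposal is correct and follows essentially the same path as the paper's own proof: unroll $W$ via the cylinder decomposition to obtain (\ref{re30})--(\ref{re31}), check that $G$ is ${\cal F}$-predictable, positive, bounded, and decreasing to zero, and Abel-sum, with the boundary term $G_{N+1}\EE_{\mu}(\cdot \mid {\cal F}_N)$ vanishing. The only imprecision is minor: the uniform statement $\|G_{N+1}\|_\infty\to 0$ is really a Dini's-theorem fact (the $G_n$ are continuous on the compact space $\partial_\infty$, nonnegative and decrease pointwise to zero) rather than a consequence of dominated convergence, although for the $L^2(\mu)$ claim of the Proposition the pointwise decrease of $G_n$ together with the uniform bound $G_n\le\alpha$ already suffices.
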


\medskip

For definitions and properties of stochastic integral operators
see \cite{dell1977}, and for its characterization
in the countable case see \cite{dart1988}.

\medskip

Let us consider
${\cal D}=\cup_{n\in \NN} L^2({\cal F}_n, \mu)$ the set
of simple functions over the algebra $\cup_{n\in \NN}{\cal F}_n$.
Clearly ${\cal D}$ is a dense subset in $L^2(\mu)$. Notice that
the operator
$L=\sum_{n\in \NN} G_n^{-1}
\big(\EE_{\mu}(\; | {\cal F}_n)-\EE_{\mu}(\; | {\cal F}_{n-1})\big)$
is well defined in ${\cal D}$.
As $G_n$ is ${\cal F}_{n-1}$ measurable
the following equalities hold on ${\cal D}$,
$$
LW=WL=\sum_{n\in \NN} \EE_{\mu}(\; | {\cal F}_n)-\EE_{\mu}(\; |
{\cal F}_{n-1})=\II_{\cal D}.
$$
Here $\II_{\cal D}$ is the identity on ${\cal D}$.
In particular, $Im(W)=W(L^2(\mu))$
contains ${\cal D}$, so $Im(W)$ is dense in $L^2(\mu)$. Since $W$
is a self adjoint operator, we get that $W$ is one-to-one.
Hence we can extend $L$ to $Im(W)$ by $Lg=f$ for $g\in Im(W),\;
g=Wf$. Therefore
$$
WL=\II _{Im(W)}, \;\; LW=\II _{L^2(\mu)}.
$$
We put $L=W^{-1}$ and we assume implicitly that its domain is
$Im(W)$, so
\begin{equation}
\label{re33}
W^{-1}=\sum_{n\in \NN} G_n^{-1}
\big(\EE_{\mu}(\; | {\cal F}_n)-\EE_{\mu}(\; | {\cal F}_{n-1})\big).
\end{equation}
Observe that $W^{-1}{\bf 1}=\alpha^{-1}$ $\mu-$a.e.. The operator
$-W^{-1}$ is a generator of a subMarkov kernel defined in the
boundary, that will be studied in section \ref{section600}.

\medskip

Let us compute $W^{-1}$ in $\cal D$. Fix s set
$C^n\in {\cal F}_n$. For $k\le n$ we denote by $C^k$ the
element in ${\cal F}_k$ such that $C^n\subseteq C^k$.
We also put $C^{-1}=\phi$.
From (\ref{re33}) we obtain
\begin{eqnarray}
\label{re34}
W^{-1} {\bf 1}_{C^n}&=&\sum_{k=0}^n \! G_k^{-1}
\big(\EE_{\mu}( {\bf 1}_{C^n} | {\cal F}_k)\!-\!\EE_{\mu}
({\bf 1}_{C^n} | {\cal F}_{k-1})\big)\\
\nonumber
&=&\;\sum_{k=0}^n \!G_k^{-1}
\Big(\!{{\mu (C^n)}\over {\mu (C^k)}}
{\bf 1}_{C^k}\!-\!{{\mu (C^n)}\over {\mu (C^{k-1})}}
{\bf 1}_{C^{k-1}}\!\Big)
\!=\!G_n^{-1} {\bf 1}_{C^n}\!+\!\sum_{k=0}^{n-1}\!
\big(\!G_k^{-1}\!- \!G_{k+1}^{-1}\! \big)
{{\mu (C^n)}\over {\mu (C^{k})}} {\bf 1}_{C^{k}}.
\end{eqnarray}

Let $\eta, \xi \in \partial_{\infty}$, $\eta\neq \xi$, and take
$n> |\eta \land \xi|$. Since $C^k(\xi)=C^{k}(\eta)$
for $k\le |\eta \land \xi|$ we get
\begin{equation}
\label{re32}
W^{-1} {\bf 1}_{C^n(\eta)}(\xi)=\sum_{k=0}^{|\eta \land \xi|}
\big( G_k^{-1}(\eta)- G_{k+1}^{-1}(\eta) \big) {{\mu (C^n(\eta))}
\over {\mu (C^{k}(\eta))}}.
\end{equation}
Then
$$
{{W^{-1} {\bf 1}_{C^n(\eta)}(\xi)}\over {\mu(C^n(\eta))}}=
-\sum_{k=0}^{|\eta \land \xi|} {{\Delta_k(w)}\over
{G_k(\eta)G_{k+1}(\eta)}}.
$$
Thus, for $\xi\neq\eta$ the following limit exists
\begin{equation}
\label{re35}
W^{-1}(\xi,\eta)=\lim\limits_{n \to \infty}
{W^{-1} {\bf 1}_{C^n(\eta)}(\xi)\over
{\mu(C^n(\eta))}}=-\sum_{k=0}^{|\eta \land \xi|} {{\Delta_k(w)}
\over {G_k(\eta)G_{k+1}(\eta)}}<0.
\end{equation}

\medskip

\begin{remark}
\label{ultima}
The operator
\begin{equation}
\label{Ma11}
{\underline {\bf W}}^{-1}:=W^{-1}-G_0^{-1}\EE_{\mu}=\sum_{n\ge 1}
G_n^{-1}
\big(\EE_{\mu}(\; | {\cal F}_n)-\EE_{\mu}(\; | {\cal F}_{n-1})\big)
\end{equation}
verifies ${\underline {\bf W}}^{-1}{\bf 1}=0$, and $-{\underline
{\bf W}}^{-1}$ is a generator of a Markov process in the boundary.
\end{remark}

In the next result we explicit the Dirichlet form associated to
$-W^{-1}$. More precisely, we get the Beurling-Deny formula
following closely the construction done in \cite{Fukushima1994}
(see Theorem 3.2.1). We compute it for simple functions using
mainly the fact that $(G_n)$ is predictable. Then it can be
extended by density arguments.

\begin{proposition}
\label{beurlingf}
Let ${\bf E}(f,g)=\int_{\partial_\infty} g\; W^{-1}f \, d\mu$
be the Dirichlet symmetric form associated to $-W^{-1}$ in
$L^2(\mu)$. Let $D=\{(\eta,\eta): \eta\in
\partial_\infty\}$ be the diagonal in $\partial_\infty^2$. Then
for all $f,g \in {\cal D}$, the set of simple functions, we have
$$
{\bf E}(f,g)=\frac{1}{2} \int\limits_{\partial_\infty\times
\partial_\infty \setminus D} (f(\eta)-f(\xi))(g(\eta)-g(\xi))\,
H(\eta,\xi)\, \mu\otimes \mu (d \eta,d \xi)+
\frac{1}{G_0}\int\limits_{\partial_\infty} f(\eta)g(\eta)\,
\mu(d\eta)\,,
$$
where
\begin{equation}
\label{Hdirichlet}
H=\sum_{n\ge 0}\sum_{j \in B^n}
\frac{1}{\mu(C_j)}\left(\frac{1}{G_{n+1}}-\frac{1}{G_{n}}\right)
{\bf 1}_{C_j\times C_j},
\end{equation}
with $C_j=\partial_\infty(j)$.
\end{proposition}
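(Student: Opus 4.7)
The plan is to expand both sides on the atoms $C_j=\partial_\infty(j)$ for $j\in B^n$, and to exploit the ${\cal F}$-predictability of $(G_n)$ in order to reduce ${\bf E}(f,g)$ to a sum of martingale-increment contributions. Write $X_n=\EE_\mu(f|{\cal F}_n)$, $Y_n=\EE_\mu(g|{\cal F}_n)$ and adopt the convention $X_{-1}=Y_{-1}=0$ implicit in (\ref{re33}). Since $G_n^{-1}$ is ${\cal F}_{n-1}$-measurable, the martingale identity $\int G_n^{-1}(X_n-X_{n-1})Y_{n-1}\,d\mu=0$ lets one replace $g$ in each summand by $Y_n-Y_{n-1}$, so that starting from (\ref{re33}) one obtains
\begin{equation*}
{\bf E}(f,g)=\int g\,W^{-1}f\,d\mu=\sum_{n\ge 0}\int G_n^{-1}(X_n-X_{n-1})(Y_n-Y_{n-1})\,d\mu.
\end{equation*}

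Introduce the atomic averages $\bar f_j:=\mu(C_j)^{-1}\int_{C_j}f\,d\mu$ (and similarly $\bar g_j$), set $m_j:=\mu(C_j)\bar f_j\bar g_j$ and $h_j:=\int_{C_j}fg\,d\mu$, and note that $X_n=\bar f_j$ on $C_j$ for $j\in B^n$. For $i\in B^{n-1}$ and $j\in S_i$, $X_n-X_{n-1}=\bar f_j-\bar f_i$ on $C_j$, while $G_n^{-1}$ is constant on $C_i$; denote its value there by $G_n^{-1}(i)$. Using the identity $\sum_{j\in S_i}\mu(C_j)(\bar f_j-\bar f_i)(\bar g_j-\bar g_i)=\sum_{j\in S_i}m_j-m_i$, which follows from $\mu(C_i)\bar f_i=\sum_{j\in S_i}\mu(C_j)\bar f_j$, and adding the $n=0$ contribution $G_0^{-1}\bar f_r\bar g_r$ produced by $X_{-1}=0$, I obtain
\begin{equation*}
{\bf E}(f,g)=G_0^{-1}\bar f_r\bar g_r+\sum_{n\ge 0}\sum_{i\in B^n}G_{n+1}^{-1}(i)\Bigl[\sum_{j\in S_i}m_j-m_i\Bigr].
\end{equation*}

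For the right-hand side of the claimed identity, since the integrand vanishes on the diagonal I can drop $D$ and compute directly $\int_{C_j\times C_j}(f(\eta)-f(\xi))(g(\eta)-g(\xi))\,d\mu\otimes d\mu=2\mu(C_j)(h_j-m_j)$, so the jump part equals $\sum_{n\ge 0}\sum_{j\in B^n}(G_{n+1}^{-1}(j)-G_n^{-1}(j))(h_j-m_j)$. The crucial step is to reindex the subsum involving $G_n^{-1}(j)$: ${\cal F}_{n-1}$-measurability of $G_n^{-1}$ gives $G_n^{-1}(j)=G_n^{-1}(j^-)$ for $j\in B^n$, so grouping $j$'s by their predecessor $i=j^-\in B^{n-1}$ and using the additivity $\sum_{j\in S_i}h_j=h_i$ yields
\begin{equation*}
\sum_{n\ge 0}\sum_{j\in B^n}G_n^{-1}(j)(h_j-m_j)=G_0^{-1}(h_r-m_r)+\sum_{n\ge 0}\sum_{i\in B^n}G_{n+1}^{-1}(i)\Bigl[h_i-\sum_{j\in S_i}m_j\Bigr].
\end{equation*}
Subtracting from the corresponding $G_{n+1}^{-1}(j)$ sum, the $h_i$-terms cancel and the jump part reduces to $-G_0^{-1}(h_r-m_r)+\sum_{n,i}G_{n+1}^{-1}(i)[\sum_{j\in S_i}m_j-m_i]$. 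Adding the killing term $G_0^{-1}h_r$ from the statement and using $m_r=\bar f_r\bar g_r$ (since $\mu(C_r)=1$) one exactly recovers the expression for ${\bf E}(f,g)$ obtained above.

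The main delicate point is the bookkeeping at the root: the $n=0$ piece of $W^{-1}$ contributes $G_0^{-1}\bar f_r\bar g_r$, while the killing term in the statement is $G_0^{-1}\int fg\,d\mu=G_0^{-1}h_r$; these are unequal in general, and the gap $G_0^{-1}(h_r-\bar f_r\bar g_r)$ must be matched by the boundary term that emerges when one reindexes $\sum_{n,j}G_n^{-1}(j)(h_j-m_j)$. Keeping this coherent, while remembering that $G_n^{-1}$ is constant on $C_{j^-}$ but not on $\partial_\infty$, is where most of the care is required; once the two conventions are lined up, the remaining computations are purely algebraic and the sums are finite because $f,g\in{\cal D}$ are ${\cal F}_N$-measurable for some $N$.
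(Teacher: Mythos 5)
Your proof is correct, and it takes a genuinely different route from the paper's. The paper follows the construction in Fukushima--Oshima--Takeda: it computes the approximating form $\beta\langle f,\beta R_\beta g\rangle$ from the resolvent $R_\beta$, lets $\beta\to\infty$ to extract the jumping kernel $H$ for $f,g$ with disjoint supports, and then recovers the killing term separately via $\mathbf{E}(\mathbf{1}_C,\mathbf{1})=\mu(C)/G_0$. You instead start from the explicit expansion $\mathbf{E}(f,g)=\sum_{n\ge 0}\int G_n^{-1}(X_n-X_{n-1})(Y_n-Y_{n-1})\,d\mu$, which follows from (\ref{re33}), the predictability of $(G_n)$, and the orthogonality of martingale increments, and then evaluate both sides of the claimed identity atom by atom using the quantities $\bar f_j,\ m_j,\ h_j$ together with the additivity relations $\mu(C_i)\bar f_i=\sum_{j\in S_i}\mu(C_j)\bar f_j$ and $h_i=\sum_{j\in S_i}h_j$. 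The two approaches buy different things: the resolvent route is the standard path through Dirichlet-form theory and makes the Beurling--Deny structure conceptually transparent; your argument is purely algebraic and self-contained, avoiding the resolvent and the general Fukushima machinery altogether, with all sums finite because $f,g\in\mathcal{D}$ are $\mathcal{F}_N$-measurable for some $N$. Your derivation also renders explicit the root bookkeeping --- the cancellation between the killing term $G_0^{-1}h_r$ and the boundary contribution $G_0^{-1}(h_r-m_r)$ left by the reindexing $j\mapsto j^-$ --- which the paper handles more implicitly.
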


\begin{proof}
We notice that $H(\xi, \eta)$ in (\ref{Hdirichlet}) is well defined for
$\xi\neq \eta$ and it is symmetric because $G_{n+1}, G_n$ are constant
over $C_j$ for $j \in B^n$.

\medskip

We denote by $\EE_n=\EE_{\mu}(\; | {\cal F}_n)$ and  by
$\langle,\rangle$ the inner product in $L^2(\mu)$.
We follow the construction of ${\bf E}$ given in
\cite{Fukushima1994}.

\medskip

The resolvent
$R_\beta=\int_0^\infty e^{-\beta t} e^{-tW^{-1}} \, dt$ is given by
$$
\beta R_\beta=\sum\limits_{n\ge 0}\frac{\beta G_n}{\beta G_n+1}
\big(\EE_n-\EE_{n-1}\big)=
\sum\limits_{n\ge 0} h_n^{(\beta)}\, \EE_n,
$$
where $h_n^{(\beta)}=\frac{\beta G_n}{\beta G_n+1}-
\frac{\beta G_{n+1}}{\beta G_{n+1}+1} \in {\cal F}_n.$

\medskip

We have
$$
\begin{array}{ll}
\langle f,\beta R_\beta g \rangle &= \sum\limits_{n\ge 0} \int f
h^{(\beta)}_n \EE_n g \, d\mu=
\sum\limits_{n\ge 0} \int h^{(\beta)}_n \EE_n(f)  \EE_n(g) \, d\mu \\
& \\
&=\sum\limits_{n\ge 0}\sum\limits_{j\in B^n} \int_{C_j}
h^{(\beta)}_n \Big[\int_{C_j} f(\xi) \mu(d\xi) / \mu(C_j)\;
\int_{C_j} g(\eta) \mu(d\eta) / \mu(C_j)\Big] \, d\mu\\
& \\
&=\sum\limits_{n\ge 0} \int f(\xi) g(\eta) H^{(\beta)}_n(\xi,\eta) \,
\mu\otimes\mu(d\xi,d\eta),
\end{array}
$$
where $H^{(\beta)}_n=\sum_{n\ge 0}\sum_{j \in B^n}
\frac{1}{\mu(C_j)}\left(\frac{\beta G_n}{\beta G_n+1}-
\frac{\beta G_{n+1}}{\beta G_{n+1}+1}\right)
{\bf 1}_{C_j\times C_j}$. Outside the diagonal we have that
$$
\beta H^{(\beta)}_n(\xi,\eta)
\mathop{\longrightarrow}\limits_{\beta \to \infty} H_n(\xi,\eta).
$$
Since for $f,g \in {\cal D}$ with disjoint support we have
$$
{\bf E}(f,g)=\lim\limits_{\beta \to \infty}
-\beta \langle f, \beta R_\beta g\rangle
=-\int f(\xi) g(\eta) H(\xi,\eta) \, \mu\otimes\mu(d\xi,d\eta).
$$
Then, the result holds in this case.

\medskip

The only thing left to compute is
${\bf E}({\bf 1}_C,{\bf 1}_C)$, for any $C$ an atom of some
${\cal F}_n, n\ge 0$. This is done by linearity and the
following fact, which is direct to show
$$
{\bf E}({\bf 1}_C,{\bf 1})=\frac{1}{G_0} \int {\bf 1}^2_C \, d\mu.
$$
$\Box$
\end{proof}

\medskip

Hence, the diffusive part in the Beurling-Deny formula vanishes,
so the subMarkov process associated to $-W^{-1}$, is a pure jump
process. This conclusion can be also obtained directly by using
the arguments developed in \cite{albeverio2} Theorem 4.1.

\begin{remark}
Let ${\underline {\bf E}}(f,g)=\int_{\partial_\infty} g\;
{\underline {\bf W}}^{-1} f \, d\mu$
be the Dirichlet symmetric form associated to $-{\underline {\bf W}}^{-1}$
in $L^2(\mu)$. Then
for all $f,g \in {\cal D}$, the set of simple functions, we have
$$
{\underline {\bf E}}(f,g)=
\frac{1}{2}\int\limits_{\partial_\infty\times
\partial_\infty \setminus D} (f(\eta)-f(\xi))(g(\eta)-g(\xi))\,
H(\eta,\xi)\, \mu\otimes \mu (d \eta,d \xi)\;,
$$
that is, in this case the killing part disappears, as it must
happen by construction of $-{\underline {\bf W}}^{-1}$.
\end{remark}

\subsection{The Martin Kernel for Accessible Points}
\label{subs1200}

\medskip

From (\ref{d31}) the Martin kernel for an irregular point $\xi$
is given by
\begin{equation}
\label{re22}
\kappa(i,\xi)={U_{i\xi}-\int_{\partial_{\infty}} U_{ \eta \xi}
\PP_i \{ X_{\zeta}\in d \eta \} \over w_0-\int_{\partial_{\infty}}
U_{ \eta \xi} \PP_r \{ X_{\zeta}\in d \eta \}}.
\end{equation}

The study of the Martin kernel for regular points needs an extra
work because numerator and denominator vanish. This constitutes
the main object of this section. Next formulae relate the operator
$W$ and the exit measure.

\medskip

\begin{proposition}
\label{p80}
For any $i,j\in I$ we have
\begin{equation}
\label{re36}
\PP_i\{X_{\zeta}\in
\partial_\infty(j)\}=\int_{\partial_{\infty}}U_{i \xi} (W^{-1}
{\bf 1}_{\partial_\infty(j)})(\xi) \mu(d \xi).
\end{equation}
\end{proposition}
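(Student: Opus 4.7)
The plan is to derive (\ref{re36}) from the extension identity (\ref{4568}) combined with the invertibility $WW^{-1}=\mathrm{Id}$ on simple functions, with the two integrals swapped via Fubini. The key observation is that the right-hand side of (\ref{re36}) can be rewritten using (\ref{4568}) and the vanishing of $V_{i\xi}$ on $\partial_\infty^{reg}$ as a double integral whose order we can reverse, landing on $W(W^{-1}\mathbf{1}_{\partial_\infty(j)})=\mathbf{1}_{\partial_\infty(j)}$, which integrated against $\PP_i\{X_\zeta\in d\eta\}$ yields exactly $\PP_i\{X_\zeta\in\partial_\infty(j)\}$.

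I first invoke (\ref{4568}), namely $U_{i\xi}=V_{i\xi}+\int_{\partial_\infty}U_{\eta\xi}\,\PP_i\{X_\zeta\in d\eta\}$, and recall from (\ref{exis22}) and the characterization below Definition \ref{d10} that $V_{i\xi}=0$ for every regular $\xi$, while Lemma \ref{70}(i) gives $\mu(\partial_\infty^{reg})=1$. Hence, for $\mu$-a.e.\ $\xi$,
\begin{equation*}
U_{i\xi}=\int_{\partial_\infty}U_{\eta\xi}\,\PP_i\{X_\zeta\in d\eta\}.
\end{equation*}
Since $\mathbf{1}_{\partial_\infty(j)}$ is $\mathcal{F}_{|j|}$-measurable it lies in $\mathcal{D}\subset \mathrm{Im}(W)$, so $W^{-1}\mathbf{1}_{\partial_\infty(j)}$ is well defined, and the explicit expression (\ref{re34}) shows it is a bounded simple function; denote an $L^\infty$-bound by $M$. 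Multiplying the displayed identity by $(W^{-1}\mathbf{1}_{\partial_\infty(j)})(\xi)$ and integrating against $\mu(d\xi)$, then interchanging the order of integration, gives
\begin{align*}
\int_{\partial_\infty}U_{i\xi}\,(W^{-1}\mathbf{1}_{\partial_\infty(j)})(\xi)\,\mu(d\xi)
&=\int_{\partial_\infty}\!\Bigl[\int_{\partial_\infty}U_{\eta\xi}\,(W^{-1}\mathbf{1}_{\partial_\infty(j)})(\xi)\,\mu(d\xi)\Bigr]\PP_i\{X_\zeta\in d\eta\}\\
&=\int_{\partial_\infty}W(W^{-1}\mathbf{1}_{\partial_\infty(j)})(\eta)\,\PP_i\{X_\zeta\in d\eta\}\\
&=\int_{\partial_\infty}\mathbf{1}_{\partial_\infty(j)}(\eta)\,\PP_i\{X_\zeta\in d\eta\}=\PP_i\{X_\zeta\in\partial_\infty(j)\},
\end{align*}
where the second-to-last equality uses $WW^{-1}=\mathrm{Id}$ on $\mathcal{D}$ from (\ref{re33}).

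The only real subtlety is justifying the interchange of integrals. For this I use the pointwise bound $W\mathbf{1}(\eta)\leq\alpha$ noted just before Proposition \ref{prostoch}: the iterated absolute integral is majorized by $M\int_{\partial_\infty}W\mathbf{1}(\eta)\,\PP_i\{X_\zeta\in d\eta\}\leq M\alpha<\infty$, so Fubini applies. Everything else is formal manipulation given (\ref{4568}), Lemma \ref{70}(i), and the identity $WW^{-1}=\mathrm{Id}$ on simple functions established in Subsection \ref{subs12}.
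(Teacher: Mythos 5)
Your proof is correct and takes a genuinely different route from the paper's. The paper introduces $h_1(i)=\PP_i\{X_\zeta\in\partial_\infty(j)\}$ and $h_2(i)=\int U_{i\xi}(W^{-1}\mathbf{1}_{\partial_\infty(j)})(\xi)\,\mu(d\xi)$, verifies that each is a \emph{bounded harmonic} function on $I$ whose $\mu$-a.e.\ boundary limit equals $\mathbf{1}_{\partial_\infty(j)}$, and then invokes the uniqueness principle for bounded harmonic functions (stated in Subsection \ref{subs11}, via Theorem 2.6 of \cite{cartier}) to conclude $h_1\equiv h_2$. This is a ``soft'' argument: the heavy lifting is hidden in the uniqueness theorem, and one must separately check harmonicity and boundedness of $h_2$ (the paper does this via (\ref{re34}) and Dominated Convergence). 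Your route is ``hard'' but shorter: starting from (\ref{4568}) and $V_{i\xi}=0$ on $\partial_\infty^{reg}$, you collapse the right-hand side by a direct Fubini swap and the identity $WW^{-1}=\II$ on $\mathcal{D}$, bypassing the uniqueness principle entirely. The technical overlap is the $L^\infty$-bound on $W^{-1}\mathbf{1}_{\partial_\infty(j)}$ and the bound $W\mathbf{1}\le\alpha$, which you both need: the paper for boundedness of $h_2$, you for Fubini. Your argument has the advantage that it essentially proves Corollary \ref{c201} directly, without Proposition \ref{p80} as an intermediary.

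One small point you leave implicit: $W(W^{-1}\mathbf{1}_{\partial_\infty(j)})=\mathbf{1}_{\partial_\infty(j)}$ holds only $\mu$-a.e.\ (as an identity in $L^2(\mu)$, or pointwise on regular points), while you integrate it against $\PP_i\{X_\zeta\in d\eta\}$. This is legitimate because $\PP_i\{X_\zeta\in\cdot\}\big|_{\partial_\infty}\ll\mu$ — by the strong Markov property $\PP_r\{X_\zeta\in A\}\ge\PP_r\{T_i<\infty\}\,\PP_i\{X_\zeta\in A\}$ and $\PP_r\{X_\zeta\in\cdot\}\big|_{\partial_\infty}$ is a multiple of $\mu$, as also reflected in (\ref{rn111}) — but a sentence acknowledging this would make the last step airtight.
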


\begin{proof}
The function $h_1(i)=\PP_i\{X_{\zeta}\in
\partial_\infty(j)\}$ is harmonic and
bounded. Moreover for any regular point $\eta$
we have
$$
\lim\limits_{i\to \eta} \PP_i\{X_{\zeta}\in
\partial_\infty(j)\}={\bf 1}_{\partial_\infty(j)}(\eta),
$$
which implies that $\lim h_1={\bf 1}_{\partial_\infty(j)}$
$\mu-$a.e..

\medskip

On the other hand, consider $h_2(i):=\int_{\partial_{\infty}}U_{i
\xi} (W^{-1} {\bf 1}_{\partial_\infty(j)})(\xi) \mu(d \xi)$. This
function is also harmonic because for every $\xi\in
\partial_{\infty}$ the function $U_{i \xi}$ is harmonic on $I$.
Let us show that $h_2$ is a bounded function. From (\ref{re32})
one checks that
$||W^{-1} {\bf 1}_{\partial_\infty(j)}||_{\infty} < \infty$.
Then
$$
|h_2(i)|\le ||W^{-1} {\bf 1}_{\partial_\infty(j)}||_{\infty}
\int_{\partial_{\infty}}U_{\eta \xi}\mu(d\xi)= ||W^{-1}
{\bf 1}_{\partial_\infty(j)}||_{\infty} W{\bf 1}(\eta)<\infty ,
$$
where  $\eta$ is any point in $\partial_\infty(i)$. Hence $h_2$ is
bounded. Finally, by the Dominated Convergence Theorem we conclude
the pointwise convergence
$$
\lim\limits_{i\to \eta}h_2(i)=\int_{\partial_{\infty}}U_{\eta \xi}
(W^{-1} {\bf 1}_{\partial_\infty(j)})(\xi) \mu(d\xi).
$$
The result follows from the equality
$\int_{\partial_{\infty}}U_{\eta \xi} (W^{-1}
{\bf 1}_{\partial_\infty(j)})(\xi) \mu(d\xi)
={\bf 1}_{\partial_\infty(j)}(\eta)$
$\mu-$a.e. in $\eta\in \partial_{\infty}$. $\Box$
\end{proof}

\begin{corollary}
\label{c201}
Let $\; h:I\to \RR$ be a harmonic
function such that $\lim h=\varphi\; \mu-$a.e. (for example if $h$ is bounded).
Assume $\varphi$
is a simple function, that is $\varphi \in {\cal D}$  (in
particular $\varphi$ is in the domain of $W^{-1}$). Then for all
$i\in I$
\begin{equation}
h(i)=\int U_{i\xi} (W^{-1}\varphi)(\xi) \mu(d\xi).
\end{equation}
\end{corollary}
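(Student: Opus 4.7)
The plan is to decompose the simple function $\varphi$ into atomic indicators of the generating filtration and then invoke Proposition \ref{p80} termwise. Since $\varphi\in{\cal D}=\bigcup_n L^2({\cal F}_n,\mu)$, pick $n$ such that $\varphi$ is ${\cal F}_n$-measurable. The atoms of ${\cal F}_n$ are exactly the non-empty sets $\partial_\infty(j)$ with $|j|=n$, so one can write
$$
\varphi\,=\,\sum_{j:\,|j|=n} c_j\,{\bf 1}_{\partial_\infty(j)},
$$
a finite linear combination in which $c_j$ is the constant value of $\varphi$ on $\partial_\infty(j)$.

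Next I would use the linearity of $W^{-1}$ on ${\cal D}$ (well-defined by (\ref{re33})) and of the $\mu$-integral, together with Proposition \ref{p80} applied to each atom, to transform the candidate right-hand side:
$$
\int U_{i\xi}\,(W^{-1}\varphi)(\xi)\,\mu(d\xi)
=\sum_{j} c_j\int U_{i\xi}\,(W^{-1}{\bf 1}_{\partial_\infty(j)})(\xi)\,\mu(d\xi)
=\sum_{j} c_j\,\PP_i\{X_\zeta\in\partial_\infty(j)\}
=\EE_i\!\big(\varphi(X_\zeta)\big).
$$

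It then remains to identify this quantity with $h(i)$. Since $\varphi$ is simple it is bounded, so the candidate $\tilde h(i):=\EE_i(\varphi(X_\zeta))$ is a bounded, $Q$-harmonic function on $I$; applying Lemma \ref{70}(ii) to the indicator of each atom and combining with the Dominated Convergence Theorem yields $\lim\tilde h=\varphi$ $\mu$-a.e. In the bounded regime advertised by the parenthetical "for example if $h$ is bounded", $h$ and $\tilde h$ are two bounded harmonics sharing the same $\mu$-a.e.\ boundary limit, hence coincide by the uniqueness recorded just before Proposition \ref{c2}, giving $h(i)=\EE_i(\varphi(X_\zeta))$ and closing the argument.

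The main obstacle is precisely this final uniqueness step: outside the bounded case it is not automatic that a harmonic function with prescribed simple $\mu$-a.e.\ boundary limit is unique, which is why the parenthetical restricts attention to bounded $h$. Under that restriction the proof reduces, essentially, to the purely algebraic linearity manipulation combined with Proposition \ref{p80}, while all the analytic content (boundary limits, Dirichlet-type continuity at regular points) is absorbed into Lemma \ref{70}(ii) and the earlier representation $h=\EE_\bullet(\varphi(X_\zeta))$ for bounded harmonics.
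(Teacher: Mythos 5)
Your proof follows essentially the same route as the paper: decompose the simple function $\varphi$ into a finite linear combination of indicator functions of cylinder sets $\partial_\infty(j)$, invoke Proposition \ref{p80} termwise by linearity of $W^{-1}$ and of the integral, and then identify $\EE_i(\varphi(X_\zeta))$ with $h(i)$ via the bounded-harmonic uniqueness recorded just before Proposition \ref{c2}. The paper's one-line proof (``direct from (\ref{re36}) by decomposing $\varphi$'') leaves the last identification implicit; you have correctly made that step explicit and have accurately noted that it is where the boundedness (or, more generally, the representation $h=\EE_\bullet(\varphi(X_\zeta))$) is actually used.
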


\begin{proof}
It is direct from (\ref{re36}) by decomposing
$\varphi$ as a finite linear combination of indicator functions
based on the sets $C^{n_1}(\eta_1),\cdots , \; C^{n_k}(\eta_k)$.
$\Box$
\end{proof}

\medskip

\begin{remark}
Then, in a "dense" class of harmonic functions we have the
representation $h(i)=\int U_{i\xi} d\nu(\xi)$ with $d\nu(\xi)=
W^{-1}\varphi (\xi) \mu(d\xi)$. This representation is similar to
the one using the Martin kernel as in \cite{cartier}.
Nevertheless, there are some differences. Even if $h$ is positive,
$\nu$ may be a signed measure. On the other hand the
characterization $d\nu=W^{-1}\varphi\; d\mu$ gives additional
information on this signed measure. We recall that in the Martin
representation, $\varphi$ is the Radon-Nikodym derivative of the
absolute continuous part of the representing measure with respect
to $\mu$ (see for example \cite{sawyer1997}).
\end{remark}


Recall that a real function $f$ is increasing in the tree, which
we denote by $\preceq-$increasing, if $i\preceq j$ implies
$f(i)\le f(j)$.

\medskip

\begin{theorem}
\label{p302}
A function $h:I\to \RR_+$ is
harmonic and $\preceq$-increasing if and only there exists a
finite (nonnegative) measure $\nu$ in $\partial_{\infty}$ such
that
\begin{equation}
\label{re37}
h(i)=\int_{\partial_{\infty}}U_{i \xi} d\nu(\xi)
\hbox{ for every } i\in I.
\end{equation}

\end{theorem}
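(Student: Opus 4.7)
My approach is to prove the two implications separately, constructing the measure in the harder direction as the boundary trace of the discrete ``gradient'' of $h$ along the tree.

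\emph{Sufficiency.} The key remark is that for each fixed $\xi\in\partial_\infty$ the column $i\mapsto U_{i\xi}=w_{|i\wedge\xi|}$ is itself $Q$-harmonic and $\preceq$-increasing. Monotonicity is immediate: if $i\preceq j$ then ${\it geod}(r,i)\cap{\it geod}(r,\xi)\subseteq{\it geod}(r,j)\cap{\it geod}(r,\xi)$, so $|i\wedge\xi|\le|j\wedge\xi|$, and $w$ is strictly increasing. Harmonicity I would verify by a case split using (\ref{re200}): when $i\not\preceq\xi$, $U_{i\xi}$ coincides with $U_{i^-\xi}$ and with every $U_{i^+\xi}$, so conservativity of $Q$ at $i\neq r$ gives zero; when $i\preceq\xi$, the unique child $i_\xi^+\preceq\xi$ contributes $w_{|i|+1}$ while the others contribute $w_{|i|}$, and a short telescoping shows $(QU_{\bullet\xi})(i)=0$; the root case uses the defect $-w_0^{-1}$. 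Once this is in hand, Fubini and monotone convergence give harmonicity and $\preceq$-monotonicity of $h(i)=\int U_{i\xi}\,d\nu(\xi)$ for any finite $\nu\ge 0$.

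\emph{Necessity.} With the conventions $h(\partial_r):=0$ and $w_{-1}:=0$, set
\begin{equation*}
\phi(i):=\frac{h(i)-h(i^-)}{w_{|i|}-w_{|i|-1}}\geq 0,\qquad i\in I,
\end{equation*}
so that $\phi(r)=h(r)/w_0$. Expanding $(Qh)(i)=0$ and regrouping the terms involving $i^-$ and those involving $S_i$ yields directly the conservation law
\begin{equation*}
\phi(i)=\sum_{i^+\in S_i}\phi(i^+),\qquad i\in I,
\end{equation*}
with the root case absorbing the defect $w_0^{-1}$ in the natural way. I then define a nonnegative set function on the semi-algebra $\mathcal{C}=\{\partial_\infty(i):i\in I\}$ by $\nu(\partial_\infty(i))=\phi(i)$. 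Because $\partial_\infty(i)=\biguplus_{i^+\in S_i}\partial_\infty(i^+)$ and $S_i$ is finite, the conservation law gives finite additivity, which iterates to finite additivity on the Boolean algebra generated by $\mathcal C$. Countable additivity is then automatic: in $(\partial_\infty,\mathbf{T})$ every $\partial_\infty(i)$ is both compact and open, so any disjoint cover of such a set by elements of $\mathcal C$ must in fact be finite. Carathéodory's theorem produces the extension to a finite nonnegative Borel measure of total mass $\phi(r)=h(r)/w_0$.

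\emph{Verification and obstacle.} Given $i\in I$ with ${\it geod}(r,i)=(r=i_0,i_1,\dots,i_n=i)$, the level sets of $\xi\mapsto i\wedge\xi$ partition $\partial_\infty$ as $\{i\wedge\xi=i_k\}=\partial_\infty(i_k)\setminus\partial_\infty(i_{k+1})$ for $k<n$, and $\partial_\infty(i_n)$ for $k=n$. An Abel summation, together with the telescope $(w_k-w_{k-1})\phi(i_k)=h(i_k)-h(i_{k-1})$, collapses the integral to
\begin{equation*}
\int_{\partial_\infty}U_{i\xi}\,d\nu(\xi)=w_0\phi(r)+\sum_{k=1}^n(w_k-w_{k-1})\phi(i_k)=h(r)+\sum_{k=1}^n\bigl(h(i_k)-h(i_{k-1})\bigr)=h(i).
\end{equation*}
The only delicate step is the Carathéodory extension: in principle the mass associated to $\phi$ could ``leak to infinity'' along an increasingly fine partition of $\partial_\infty(i)$; it is the compactness of $(\partial_\infty,\mathbf T)$ combined with the clopen nature of the sets in $\mathcal C$ that forces every countable $\mathcal C$-decomposition to be finite, and hence excludes such leakage.
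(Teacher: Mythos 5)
Your proof is correct and follows the same basic strategy as the paper's: extract the nonnegative ``gradient'' $\phi(i)=\frac{h(i)-h(i^-)}{w_{|i|}-w_{|i|-1}}$, observe it is additive along the tree, and package it as a Borel measure on $\partial_\infty$. The one genuine difference is how you get there. The paper invokes Proposition~\ref{p3} for each finite level $n$, obtaining the unique representation $h(i)=\sum_{j\in B^n}U_{ij}\alpha^{(n)}(j)$ for $|i|\le n$ and reading off $\alpha^{(n)}(i^+)=\phi(i^+)$; the consistency relation $\alpha^{(n)}(k)=\sum_{j\in S_k}\alpha^{(n+1)}(j)$ then comes for free from uniqueness, and the representation $h(i)=\int U_{i\xi}\,d\nu$ is inherited directly from the finite-level identity because $\xi\mapsto U_{i\xi}$ is $\mathcal F_n$-measurable once $|i|\le n$. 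You instead bypass Proposition~\ref{p3} entirely: you derive the conservation law $\phi(i)=\sum_{i^+}\phi(i^+)$ by expanding $Qh=0$ against (\ref{re200}) directly, and you verify the representation formula at the end by Abel summation. This is more self-contained (no reliance on the finite-matrix theory) at the cost of having to re-derive what Proposition~\ref{p3} would have handed you. You also make explicit the measure-extension step that the paper passes over silently; your observation that a countable disjoint decomposition of the compact clopen set $\partial_\infty(i)$ into clopen pieces of $\mathcal C$ is necessarily finite is exactly the right justification for applying Carathéodory, and is a worthwhile addition.
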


\begin{proof}
If $h$ verifies (\ref{re37}) then it is harmonic and increasing
since $U_{\bullet \xi}$ is so. Let us assume now that $h$ is a
nonnegative harmonic and increasing function. From Proposition
\ref{p3} proven for finite matrices we get
$$
\forall n \;\; \exists ! \;\; \alpha^{(n)} : B^n\to \RR
\hbox{ such that if }
|i|\le n: \; h(i)=\sum_{j\in B^n}U_{ij}\alpha^{(n)}(j).
$$
In particular if $|i|=n-1$ we find
$$
h(i^+)=\!\sum_{j\in B^n}\!\!U_{i^+ j}\;\alpha^{(n)}(j)=\!\!\!\!
\sum_{j\in B^n, j\neq i^+}\!\!\!\!U_{i j}\alpha^{(n)}(j)+ U_{i^+
i^+}\alpha^{(n)}(i^+)=h(i)+(U_{i^+ i^+}-U_{i i^+})\alpha^{(n)}(i^+).
$$
Therefore
$$
\alpha^{(n)}(i^+)={{h(i^+)-h(i)}\over {U_{i^+ i^+}-U_{i i^+}}},
$$
and $\alpha^{(n)}$ is a measure in $B^n$. Let us show that these
measures verify the consistence property. We have
$$
\hbox{ for } |i|\le n: h(i)=
\sum_{j\in {\widetilde B}^{n+1}}U_{ij}\alpha^{(n+1)}(j)=
\sum_{k\in B^n}U_{ik}\Big(\sum_{j\in S_k}
\alpha^{(n+1)}(j)\Big) =\sum_{k\in B^n}U_{ik}\alpha^{(n)}(k).
$$
From uniqueness of $\alpha^{(n)}$
we deduce $\alpha^{(n)}(k)=\sum_{j\in S_k}\alpha^{(n+1)}(j)$. Then
the consistence property is verified. The total mass of $\alpha^{(n)}$ is given by
$h(r)=w_0 \sum_{j\in B^n} \alpha^{(n)}(j)$. Then there exists a finite
measure in the boundary such that
$h(i)=\int_{\partial_{\infty}}U_{i \xi} d\nu (\xi)$, for $i\in I$.
$\Box$
\end{proof}

\medskip

\begin{remark}
The measure $\nu$ in the
previous result can be singular with respect to $\mu$. For
example, it is enough to take $\xi$ an inaccessible point and consider
the function $h(i)=U_{i\xi}$, which is represented by the measure
$\nu=\delta_\xi$.
\end{remark}

\medskip

The next result is a representation  as an integral of $U$,
of all harmonic functions that satisfies a certain finite variation condition.

\begin{theorem}
\label{representation}
Assume that $h:I\to \RR$ is
a bounded harmonic function. Then, there exists a finite signed measure $\nu$
such that
\begin{equation}
\label{repres}
h(i)=\int_{\partial_{\infty}}U_{i \xi} d\nu(\xi)
\hbox{ for every } i\in I,
\end{equation}
if and only if the following condition holds
\begin{equation}
\label{finitevariation}
\sup\limits_{n\ge 1} \frac{1}{w_{n}-w_{n-1}}\sum\limits_{j\in B^{n}} |h(j)-h(j^-)| < \infty.
\end{equation}
In particular if this condition holds then $h=h^+-h^-$
is the difference of two increasing nonnegative harmonic functions
$h^+, h^-$ given by the positive and negative part of $\nu$.
\end{theorem}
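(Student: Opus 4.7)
The plan is to reuse the coefficient construction from the proof of Theorem~\ref{p302}. Applying Proposition~\ref{p3} to the truncated tree $I^{n+1}$, the harmonic function $h$ produces, for each $n\ge 1$, unique numbers $\alpha^{(n)}(j)$, $j\in B^n$, with $h(i)=\sum_{j\in B^n}U_{ij}\,\alpha^{(n)}(j)$ whenever $|i|\le n$. The same argument as in Theorem~\ref{p302} gives the explicit formula $\alpha^{(n)}(j)=(h(j)-h(j^-))/(w_n-w_{n-1})$ together with the consistency relation $\alpha^{(n)}(k)=\sum_{j\in S_k}\alpha^{(n+1)}(j)$ for $k\in B^n$. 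Hypothesis (\ref{finitevariation}) is then exactly the statement that $M:=\sup_n \sum_{j\in B^n}|\alpha^{(n)}(j)|<\infty$.

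The forward implication reduces to a one-line computation. A case analysis on whether $j$ lies on the geodesic from $r$ to $\xi$ shows $U_{j\xi}-U_{j^-\xi}=(w_n-w_{n-1})\,{\bf 1}_{\partial_\infty(j)}(\xi)$ for every $\xi\in\partial_\infty$ and $j\in B^n$. Integrating against $\nu$ yields $h(j)-h(j^-)=(w_n-w_{n-1})\,\nu(\partial_\infty(j))$, so the left side of (\ref{finitevariation}) equals $\sum_{j\in B^n}|\nu(\partial_\infty(j))|\le |\nu|(\partial_\infty)<\infty$.

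For the converse I would build a finite signed Borel measure $\nu$ with $\nu(\partial_\infty(j))=\alpha^{(|j|)}(j)$. The inequality $|\alpha^{(n)}(k)|\le \sum_{j\in S_k}|\alpha^{(n+1)}(j)|$ makes $\beta^{(m)}(k):=\sum_{j\in B^m,\,k\preceq j}|\alpha^{(m)}(j)|$ nondecreasing in $m\ge|k|$ and bounded by $M$; finiteness of $S_k$ lets us commute limit and sum, so $\bar\beta(k):=\lim_m \beta^{(m)}(k)$ satisfies $\bar\beta(k)=\sum_{k'\in S_k}\bar\beta(k')$ as well as $\bar\beta(k)\ge |\alpha^{(|k|)}(k)|$. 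Hence $\bar\nu_0(\partial_\infty(j)):=\bar\beta(j)$ and $\nu_0(\partial_\infty(j)):=\alpha^{(|j|)}(j)$ extend additively to set functions on the algebra $\mathcal{A}$ generated by $\mathcal{C}$, with $|\nu_0|\le \bar\nu_0$ on $\mathcal{A}$. The main technical step, and the one I expect to be the main obstacle, is upgrading these finitely-additive set functions to countably-additive Borel measures. This is resolved by the topology introduced in subsection~1.2: every basis set $\partial_\infty(j)$ is clopen in the compact space $\partial_\infty$, so every element of $\mathcal{A}$ is compact, and any decreasing sequence in $\mathcal{A}$ with empty intersection is eventually empty. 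Therefore the positive set functions $\bar\nu_0\pm\nu_0$ are countably additive on $\mathcal{A}$, and Carath\'eodory extends them to finite positive Borel measures whose difference yields the desired $\nu$.

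It remains to identify $\nu$ with the representing measure and obtain the final decomposition. For $|i|\le n$ and $\xi\in\partial_\infty(j)$ with $j\in B^n$, the first $n$ steps of the geodesic from $r$ to $\xi$ coincide with those of the geodesic from $r$ to $j$, so $i\land\xi=i\land j$ and $U_{i\xi}=U_{ij}$; consequently $\int U_{i\xi}\,d\nu(\xi)=\sum_{j\in B^n}U_{ij}\,\nu(\partial_\infty(j))=\sum_{j\in B^n}U_{ij}\,\alpha^{(n)}(j)=h(i)$, which is (\ref{repres}). Taking the Jordan decomposition $\nu=\nu^+-\nu^-$, the functions $h^\pm(i):=\int U_{i\xi}\,d\nu^\pm(\xi)$ are nonnegative, $\preceq$-increasing and harmonic by Theorem~\ref{p302}, and satisfy $h=h^+-h^-$.
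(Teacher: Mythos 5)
Your proposal is correct, and in the converse direction it takes a genuinely different route from the paper. The forward implication and the final Jordan decomposition step via Theorem~\ref{p302} match what the paper does. For the converse, the paper defines the discretized signed measures $\nu_n$ supported on the level-$n$ partition, uses the uniform total-variation bound to extract a weak-$*$ convergent subsequence $\nu_{n_k}\to\nu$, and then passes to the limit in $\int U_{i\xi}\,d\nu_{n_k}(\xi)=h(i)$ using that $U_{i\bullet}$ is bounded and continuous. You instead construct $\nu$ directly: you build the finitely additive set functions $\nu_0$ and the dominating envelope $\bar\nu_0$ on the clopen algebra $\mathcal{A}$, observe that every set in $\mathcal{A}$ is compact (clopen in a compact totally disconnected space), deduce that any decreasing sequence with empty intersection is eventually empty so that $\bar\nu_0\pm\nu_0$ are automatically countably additive, and then apply Carath\'eodory. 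Both arguments are valid; the paper's is shorter once one grants weak-$*$ compactness, while yours is more elementary and explicit (it produces $\nu$ with $\nu(\partial_\infty(j))=\alpha^{(|j|)}(j)$ directly rather than as a subsequential limit, making uniqueness transparent), and it handles arbitrary bounded $h$ at once, bypassing the paper's preliminary reduction to strictly positive $h$ via translation by $C\ell$ with $\ell=1-\bar g$.
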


\begin{proof} Let us first assume that $h$ is strictly positive.
If (\ref{repres}) holds, then
$$
h(i)-h(i^-)=\int (U_{i \xi}-U_{i^- \xi}) d\nu(\xi)=(U_{ii}-U_{i^- i})\nu(\partial_\infty(i)),
$$
from which we obtain
$$
|h(i)-h(i^-)|\le (w_n-w_{n-1})|\nu|(\partial_\infty(i)).
$$
Summing over $B^n$ this inequality yields
$$
\frac{1}{w_{n}-w_{n-1}}\sum\limits_{i\in B^{n}} |h(i)-h(i^-)|\le |\nu|(\partial_\infty)<\infty.
$$
Let us now assume that (\ref{finitevariation}) holds. As in the
proof of Theorem \ref{p302} we have that for all $n$ and all $
i\in I$ such that $|i|\le n$
$$
h(i)=\sum_{j\in B^n}U_{ij}\, \alpha^{(n)}(j),
$$
where
$$
\alpha^{(n)}(j)=\frac{h(j)-h(j^-)}{U_{j j}-U_{j j^-}}=\frac{h(j)-h(j^-)}{w_n-w_{n-1}}.
$$
Let us  define the signed measure $\nu_n$ by $\nu_n(\partial_\infty(j))=\alpha^n(j)$. Then
we obtain that
$\nu_n(\partial_\infty)=h(r)/w_0>0$ and
$$
\sup_n |\nu_n|(\partial_\infty)<\infty.
$$
Therefore, there exists a subsequence
$(\nu_{n_k})$ converging weakly to a finite signed measure $\nu\neq 0$. Moreover,
$\nu(\partial_\infty)=h(r)/w_0$ and since $U_{i\bullet}$ is a bounded continuous
function we get
$$
h(i)=\lim\limits_k \int U_{i\xi} d\nu_{n_k}(\xi)=\int U_{i\xi} d\nu(\xi)
$$

\medskip

For the general case remind that
$\ell(i)=:1-\bar g(i)=\PP_i( X_\zeta \in \partial_\infty)$
is a nonnegative harmonic function. $\ell$ is also an
increasing harmonic function with limit $1$ in the boundary, then
$$
\ell(i)=\int U_{i\xi} (W^{-1} {\bf 1})(\xi) \, d\mu(\xi)=
\frac{\ell(r)}{w_0} \int U_{i\xi} d\mu(\xi)=\int U_{i\xi} d\nu(\xi),
$$
where $\nu$ is the finite measure $\frac{\ell(r)}{w_0} \mu$. Since $\ell(i)\ge \ell(r)>0$ we
can take a large constant $C$ such that the function $\bar h=h+C\ell$ is a
nonnegative bounded harmonic
function. It is direct to check that $h$ satisfies (\ref{finitevariation})
if and only if $\bar h$ satisfies it, from where
the result holds.
$\Box$
\end{proof}

\bigskip

We notice that, since $h$ is harmonic we have
$$
\frac{1}{w_{n}-w_{n-1}} (h(j)-h(j^{-}))=
Q_{j j^-}(h(j)-h(j^-))=\sum_{j^+} Q_{jj^+}(h(j^+)-h(j)).
$$
Then,
$$
\frac{1}{w_{n}-w_{n-1}} |h(jn)-h(j^{-})|\le \sum_{j^+} \frac{1}{w_{n+1}-w_{n}} |h(j^+)-h(j)|,
$$
implying that $\frac{1}{w_{n}-w_{n-1}} \sum_{j \in B^n} |h(j)-h(j^{-})|$ is monotone in n.

\bigskip

Let us give a formula for the Martin kernel in
terms of $U$ and $\mu$. For this reason we first prove the
following result.

\medskip

\begin{proposition}
\label{p90}
For $\eta\in
\partial_{\infty},\; i\in I,\; n\ge 1$ we have

\begin{eqnarray*}
\PP_i\{X_{\zeta}\in C^n(\eta)\}= \mu(C^n(\eta))\Big[&
{U_{i\eta}\over G_{|i\wedge \eta|+1}(\eta)}\;
{\bf 1}_{I\setminus [\eta(n),\infty)}(i)+{1\over G_n(\eta)}
\EE(U_{i\bullet}|{\cal F}_n)(\eta)\;{\bf 1}_{[\eta(n),\infty)}(i) \\
+&\!\!\!\!\!\!\!\sum\limits_{k=0}^{n-1}
\left({1\over G_k(\eta)}-{1\over G_{k+1}(\eta)}\right)
\EE(U_{i\bullet}|{\cal F}_k)(\eta)\;
{\bf 1}_{[\eta(k),\infty)}(i) \Big].
\end{eqnarray*}

In particular, if $\eta$ is accessible and $n>|i\land \eta|$ we get
\begin{equation}
\label{re38}
{\PP_i\{X_{\zeta}\in C^n(\eta)\}\over \mu(C^n(\eta))}=
{U_{i\eta}\over G_{|i\wedge \eta|+1}(\eta)}+
\sum\limits_{k=0}^{|i\wedge \eta|}\left({1\over G_k(\eta)}
-{1\over G_{k+1}(\eta)}\right)
\EE(U_{i\bullet}|{\cal F}_k)(\eta).
\end{equation}
\end{proposition}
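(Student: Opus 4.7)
The plan is to apply Proposition \ref{p80} to the set $\partial_\infty(\eta(n))=C^n(\eta)$ and then plug in the explicit expansion of $W^{-1}\mathbf{1}_{C^n(\eta)}$ obtained in (\ref{re34}). Writing $C^k = C^k(\eta)$ for brevity, that identity gives
\begin{equation*}
W^{-1}\mathbf{1}_{C^n}=G_n^{-1}\mathbf{1}_{C^n}+\sum_{k=0}^{n-1}\bigl(G_k^{-1}-G_{k+1}^{-1}\bigr)\,\frac{\mu(C^n)}{\mu(C^k)}\,\mathbf{1}_{C^k}.
\end{equation*}
Substituting into (\ref{re36}) and using that $G_k$ is $\mathcal{F}_{k-1}$-measurable (so constant on $C^k$ with value $G_k(\eta)$), I would factor the $G_k(\eta)^{-1}$ out of each integral and recognize
\begin{equation*}
\frac{1}{\mu(C^k)}\int_{C^k} U_{i\xi}\,\mu(d\xi)=\EE(U_{i\bullet}\mid \mathcal{F}_k)(\eta).
\end{equation*}
This already yields
\begin{equation*}
\frac{\PP_i\{X_\zeta\in C^n\}}{\mu(C^n)}=\frac{\EE(U_{i\bullet}\mid\mathcal{F}_n)(\eta)}{G_n(\eta)}+\sum_{k=0}^{n-1}\Bigl(\frac{1}{G_k(\eta)}-\frac{1}{G_{k+1}(\eta)}\Bigr)\EE(U_{i\bullet}\mid\mathcal{F}_k)(\eta).
\end{equation*}

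The remaining work is to rewrite this in the claimed form, which amounts to examining how $\EE(U_{i\bullet}\mid\mathcal{F}_k)(\eta)$ depends on the position of $i$ relative to $\eta$. The key observation is that $i\in[\eta(k),\infty)$ if and only if $k\leq|i\wedge\eta|$. Indeed, when $k>|i\wedge\eta|$, every $\xi\in C^k$ agrees with $\eta$ up to level $k$, so $i\wedge\xi=i\wedge\eta$ and consequently $U_{i\xi}=w_{|i\wedge\eta|}=U_{i\eta}$ is constant on $C^k$; thus $\EE(U_{i\bullet}\mid\mathcal{F}_k)(\eta)=U_{i\eta}$. When $k\leq|i\wedge\eta|$, no such simplification occurs and the conditional expectation keeps its general form. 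This is exactly what the indicators $\mathbf{1}_{[\eta(k),\infty)}(i)$ in the statement encode.

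I would then split the computation into two cases. If $i\in[\eta(n),\infty)$, i.e.\ $|i\wedge\eta|\geq n$, then all the indicators are $1$ and the identity above matches the claim directly. If $i\notin[\eta(n),\infty)$, set $m=|i\wedge\eta|<n$; for every $k$ with $m<k\leq n$ the term $\EE(U_{i\bullet}\mid\mathcal{F}_k)(\eta)$ collapses to $U_{i\eta}$, while for $k\leq m$ it remains as is. The $U_{i\eta}$ contributions telescope:
\begin{equation*}
\frac{U_{i\eta}}{G_n(\eta)}+U_{i\eta}\sum_{k=m+1}^{n-1}\Bigl(\frac{1}{G_k(\eta)}-\frac{1}{G_{k+1}(\eta)}\Bigr)=\frac{U_{i\eta}}{G_{m+1}(\eta)},
\end{equation*}
and what survives in the remaining sum is $\sum_{k=0}^{m}(G_k^{-1}-G_{k+1}^{-1})\EE(U_{i\bullet}\mid\mathcal{F}_k)(\eta)$, which is precisely the restriction of the sum in the statement by the indicators $\mathbf{1}_{[\eta(k),\infty)}(i)$. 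Combining the two cases gives the displayed formula, and the ``In particular'' statement is the second case rewritten under the assumptions that $\eta$ is accessible (so $\mu(C^n(\eta))>0$) and $n>|i\wedge\eta|$.

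The only delicate step is the bookkeeping in the case split and telescoping; once one notes the equivalence $i\in[\eta(k),\infty)\Leftrightarrow k\leq|i\wedge\eta|$ and the resulting collapse of $\EE(U_{i\bullet}\mid\mathcal{F}_k)(\eta)$ to $U_{i\eta}$, the rest is a direct verification from Proposition \ref{p80} and (\ref{re34}).
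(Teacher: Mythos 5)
Your proof is correct and follows essentially the same route as the paper: apply Proposition \ref{p80} to $\mathbf{1}_{C^n(\eta)}$, substitute the expansion (\ref{re34}) of $W^{-1}\mathbf{1}_{C^n(\eta)}$, identify $\frac{1}{\mu(C^k)}\int_{C^k}U_{i\xi}\,\mu(d\xi)=\EE(U_{i\bullet}\mid\mathcal F_k)(\eta)$, then use the equivalence $i\in[\eta(k),\infty)\Leftrightarrow k\le|i\wedge\eta|$ (so that $\EE(U_{i\bullet}\mid\mathcal F_k)(\eta)=U_{i\eta}$ for $k>|i\wedge\eta|$) and telescope. The paper organizes the computation slightly differently, splitting the integral $\rho^k(i)=\int_{C^k}U_{i\xi}\,\mu(d\xi)$ into the cases $i\in A^k$ and $i\notin A^k$ before summing, whereas you first write the uniform conditional-expectation formula and only then specialize; the underlying argument and the telescoping step are the same.
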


\begin{proof}
Let $\eta$ and $n$ be fixed. We denote
$C^k=C^k(\eta)=\partial_\infty(\eta(k))$ and
$A^k=[\eta(k),\infty)$. From (\ref{re36})
we have
$$
h_\eta(i):=\PP_i\{X_{\zeta}\in C^n\}=\int_{\partial_{\infty}}
U_{i \xi} (W^{-1} {\bf 1}_{C^n})(\xi) \mu(d\xi).
$$
Now, let us compute
$$
\rho^k(i):=\int_{\partial_{\infty}}U_{i \xi}
{\bf 1}_{C^k}(\xi) \mu(d\xi).
$$
We examine two different cases. If $i\not\in A^k$ then
$U_{i \xi}=U_{i \; \eta(k)}$ for every $\xi\in C^k$, and so
$\rho^k(i)=U_{i \; \eta(k)}\mu(C^k)$. If $i\in A^k$
then $\rho^k(i)=\sum\limits_{{j\in A^k}\atop{|j|=|i|}}
U_{i j}\; \mu(\partial_\infty(j))$. We
summarize these relations in
\begin{equation}
\rho^k(i)=U_{i \eta(k)}\mu(C^k)\; {\bf 1}_{I\setminus A^k}(i)+
\sum\limits_{{j\in A^n}\atop{|j|=|i|}} U_{i j}\;
\mu(\partial_\infty(j))\; {\bf 1}_{A^k}(i).
\end{equation}
Now we use (\ref{re34}) to get
\begin{eqnarray*}
&\int U_{i\xi} (W^{-1} {\bf 1}_{C^n})(\xi) \mu(d\xi)=
{1\over G_n}\left[ U_{i\eta(n)} \mu(C^n)\; {\bf 1}_{I\setminus
A^n}(i)+\!\!\!\sum\limits_{{j\in A^n}\atop{|j|=|i|}}\!\!\! U_{i j}\;
\mu(\partial_\infty(j))\; {\bf 1}_{A^n}(i)\right]\\
&\;\;+\sum\limits_{k=0}^{n-1} \left({1\over G_k}-
{1\over G_{k+1}}\right)
{\mu(C^n)\over \mu(C^k)}\left[ U_{i\eta(k)} \mu(C^k)\;
{\bf 1}_{I\setminus A^k}(i)+
\sum\limits_{{j\in A^k}\atop{|j|=|i|}} U_{i j}\;
\mu(\partial_\infty(j))\; {\bf 1}_{A^k}(i)\right].
\end{eqnarray*}
From
$$
\EE(U_{i\bullet}|{\cal F}_n)(\eta)=
\begin{cases}
\; U_{i \eta(k)}  &\hbox{ if } i\notin A^k \\
\sum\limits_{{j\in A^n}\atop{|j|=|i|}} U_{ij} \mu(\partial_\infty(j))
&\hbox{ if } i\in A^k\;,
\end{cases}
$$
to get
\begin{eqnarray*}
&{}& \int U_{i\xi} (W^{-1} {\bf 1}_{C^n})(\xi) \mu(d\xi)\\
&{}&\; = \mu(C^n)\left[
\sum\limits_{k=0}^{n-1}\left({1\over G_k}-{1\over G_{k+1}}\right)
\EE(U_{i\bullet}|{\cal F}_k)(\eta)\; {\bf 1}_{A^k}(i)+
{1\over G_n} \EE(U_{i\bullet}|{\cal F}_n)(\eta)\;
{\bf 1}_{A^n}(i)\right]\\
&{}& \; + \mu(C^n)\left[\sum\limits_{k=0}^{n-1}\left({1\over G_k}-
{1\over G_{k+1}}\right) U_{i\eta(k)} \;
{\bf 1}_{I\setminus A^k}(i)+ {1\over G_n}
U_{i\; \eta(n)}\; {\bf 1}_{I\setminus A^n}(i)\right].
\end{eqnarray*}
Now $i\in I\setminus A^k$ implies $k> |i\wedge\eta|$.
Since for $k\ge |i\wedge\eta|$ we have
$U_{i\eta(k)}=U_{i\eta}$, we can
simplify the last term in the previous equation to
$$
{\mu(C^n)\over G_{|i\wedge \eta|+1}}U_{i\eta}\;
{\bf 1}_{I\setminus A^n}(i).
$$
Then we get
\begin{eqnarray*}
\PP_i\{X_{\zeta}\in C^n(\eta)\}=\mu(C^n)\Big[&
{U_{i\eta}\over G_{|i\wedge \eta|+1}} \; {\bf 1}_{I\setminus A^n}(i)
+{1\over G_n} \EE(U_{i\bullet}|{\cal F}_n)(\eta)\; {\bf 1}_{A^n}(i)+\\
&\sum\limits_{k=0}^{n-1}
\left({1\over G_k(\eta)}-{1\over G_{k+1}(\eta)}\right)
\EE(U_{i\bullet}|{\cal F}_k)(\eta)\; {\bf 1}_{A^k}(i)\Big].
\end{eqnarray*}
$\Box$
\end{proof}

\begin{theorem}
\label{p91}
Let $i\in I$ and $\eta$ be an accessible point. Then
the Martin kernel has the representation
\begin{equation}
\label{re39}
\kappa(i,\eta)={1\over \PP_r\{X_{\zeta}\in
\partial_\infty \}} \sum\limits_{k=0}^{|i\wedge \eta|+1}
{1\over G_k(\eta)}
\big(\EE(U_{i\bullet}
|{\cal F}_k)(\eta)-\EE(U_{i\bullet}|{\cal F}_{k-1})(\eta)\big),
\end{equation}
where by convention $\EE(\; |{\cal F}_{-1})=0$.
\end{theorem}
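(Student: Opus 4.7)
The plan is to combine the identification of $\kappa$ given in (\ref{rn111}) with the explicit expression for $\PP_i\{X_\zeta\in C^n(\eta)\}$ furnished by Proposition \ref{p90}, and then rearrange by a discrete integration by parts. Fix an accessible $\eta$, a site $i$, and pick any $n>m:=|i\wedge\eta|$. By (\ref{rn111})
$$
\kappa(i,\eta)=\frac{\PP_i\{X_\zeta\in C^n(\eta)\}}{\PP_r\{X_\zeta\in C^n(\eta)\}}.
$$
The denominator is simply $\PP_r\{X_\zeta\in\partial_\infty\}\,\mu(C^n(\eta))$ by the defining identity (\ref{muab}) of $\mu$, so after dividing, one only has to evaluate the numerator.

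For the numerator I would apply equation (\ref{re38}) of Proposition \ref{p90}, which is valid precisely because $n>m$ and $\eta$ is accessible. This yields
$$
\frac{\PP_i\{X_\zeta\in C^n(\eta)\}}{\mu(C^n(\eta))}=\frac{U_{i\eta}}{G_{m+1}(\eta)}+\sum_{k=0}^{m}\Big(\frac{1}{G_k(\eta)}-\frac{1}{G_{k+1}(\eta)}\Big)\EE(U_{i\bullet}|\mathcal{F}_k)(\eta).
$$
The key observation, which is what makes the telescoping work, is that
$$
U_{i\eta}=\EE(U_{i\bullet}|\mathcal{F}_{m+1})(\eta).
$$
Indeed, for any $\xi\in C^{m+1}(\eta)$ one has $\xi(m+1)=\eta(m+1)$; since $|i\wedge\eta|=m$ the point $\eta(m+1)$ does not precede $i$, which forces $i\wedge\xi=i\wedge\eta$ and hence $U_{i\xi}=w_m=U_{i\eta}$. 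Thus $U_{i\bullet}$ is constant on the atom $C^{m+1}(\eta)$, so its conditional expectation on $\mathcal{F}_{m+1}$ at $\eta$ equals $U_{i\eta}$.

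Substituting this identity into the isolated term $U_{i\eta}/G_{m+1}(\eta)$ and writing $A_k:=\EE(U_{i\bullet}|\mathcal{F}_k)(\eta)$, the expression to simplify becomes
$$
\frac{A_{m+1}}{G_{m+1}}+\sum_{k=0}^{m}\Big(\frac{1}{G_k}-\frac{1}{G_{k+1}}\Big)A_k.
$$
Splitting the sum, shifting $k\mapsto k-1$ in the piece containing $1/G_{k+1}$, and combining the resulting $A_{m+1}/G_{m+1}-A_m/G_{m+1}$ with the remainder is the standard Abel summation and yields
$$
\sum_{k=0}^{m+1}\frac{A_k-A_{k-1}}{G_k(\eta)}
$$
under the stated convention $\EE(\,\cdot\,|\mathcal{F}_{-1})=0$. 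Dividing by $\PP_r\{X_\zeta\in\partial_\infty\}$ gives (\ref{re39}).

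I do not anticipate a serious obstacle: the argument is a direct chaining of (\ref{rn111}), (\ref{muab}) and (\ref{re38}) with a routine summation by parts. The only delicate point is the boundary bookkeeping, namely recognising that $U_{i\eta}$ is exactly the "missing" $(m+1)$-st conditional expectation needed to close the Abel telescope, and respecting the convention at $k=-1$ so that the $k=0$ term $A_0/G_0$ survives intact.
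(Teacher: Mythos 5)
Your argument is correct and follows essentially the same route as the paper: identify $\kappa(i,\eta)$ as the ratio in (\ref{rn111}), invoke (\ref{re38}), and close by Abel summation. The one step you spell out that the paper leaves implicit in its "and the result follows" is the identity $U_{i\eta}=\EE(U_{i\bullet}\,|\,\mathcal{F}_{|i\wedge\eta|+1})(\eta)$, which is exactly what makes the telescope close; your justification of it (constancy of $U_{i\bullet}$ on the atom $C^{|i\wedge\eta|+1}(\eta)$ because $\eta(|i\wedge\eta|+1)\not\preceq i$) is correct.
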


\begin{proof}
We use Proposition \ref{p90} and the equality
$\frac{U_{r\eta}}{G_0}=\frac{w_0}{G_0}=
\PP_r\{X_{\zeta} \in \partial_\infty\}$ to
get
$$
\kappa(i,\eta)={1\over \PP_r\{X_{\zeta}\in \partial_\infty \}}
\left[{U_{i\eta}\over G_{|i\wedge \eta|+1}(\eta)}+
\sum\limits_{k=0}^{|i\wedge \eta|}
\left({1\over G_k(\eta)}-{1\over G_{k+1}(\eta)}\right)
\EE(U_{i\bullet}|{\cal F}_k)(\eta)\right],
$$
and the result follows. $\Box$
\end{proof}

\medskip

\begin{corollary}
\label{conuevo}
For $i\in I$ fixed, the Martin kernel $\kappa(i,\bullet)$
is the image of $U_{i\bullet}$ by an stochastic integral operator,
in fact
\begin{equation*}
\kappa(i,\eta)=\sum\limits_{k=0}^{\infty}
{\widetilde G}_k^{(i)}(\eta) \big(\EE(U_{i\bullet}
|{\cal F}_k)-\EE(U_{i\bullet}|{\cal F}_{k-1})\big)(\eta),
\end{equation*}
where ${\widetilde G}^{(i)}=({\widetilde G}^{(i)}_k: k\in \NN)$
is a ${\cal F}-$predictable process.
\end{corollary}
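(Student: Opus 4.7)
The plan is to start from Theorem \ref{p91}, which already exhibits $\kappa(i,\eta)$ as a finite linear combination of the martingale increments of $\mathbb{E}(U_{i\bullet}\mid\mathcal{F}_k)(\eta)$, and to extend that finite sum to an infinite one by recognising that all terms beyond $k=|i\wedge\eta|+1$ are automatically zero. With that in hand the candidate predictable integrand is simply
$$\widetilde{G}^{(i)}_k(\eta)=\frac{1}{\PP_r\{X_\zeta\in\partial_\infty\}\,G_k(\eta)},\qquad k\in\NN.$$

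First I would verify the vanishing of the tail increments. Fix $i\in I$ and let $m=|i\wedge\eta|$. For any $k\ge m+1$ and any $\xi\in C^k(\eta)$ the geodesics $geod(r,\xi)$ and $geod(r,\eta)$ coincide up to level $k$, so in particular they coincide up to level $m+1$; hence $\xi\wedge i=i\wedge\eta$ if $i\not\preceq\eta$, while $\xi\wedge i=i$ if $i\preceq\eta$. In either case $U_{i\xi}$ is constant on $C^k(\eta)$, equal to $U_{i\eta}$ from the extension (\ref{ext357}). Since $C^k(\eta)$ is the atom of $\mathcal{F}_k$ containing $\eta$, this gives $\mathbb{E}(U_{i\bullet}\mid\mathcal{F}_k)(\eta)=U_{i\eta}$ for all $k\ge m+1$, and consequently
$$\bigl(\mathbb{E}(U_{i\bullet}\mid\mathcal{F}_k)-\mathbb{E}(U_{i\bullet}\mid\mathcal{F}_{k-1})\bigr)(\eta)=0\qquad\text{for every }k\ge m+2.$$

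Next, plugging this into the finite sum of Theorem \ref{p91} shows that it equals the formally infinite sum
$$\kappa(i,\eta)=\sum_{k=0}^{\infty}\frac{1}{\PP_r\{X_\zeta\in\partial_\infty\}\,G_k(\eta)}\bigl(\mathbb{E}(U_{i\bullet}\mid\mathcal{F}_k)-\mathbb{E}(U_{i\bullet}\mid\mathcal{F}_{k-1})\bigr)(\eta),$$
with only finitely many non-zero summands (whose number depends on $\eta$ through $|i\wedge\eta|$). This is exactly the claimed representation with $\widetilde{G}^{(i)}_k$ as defined above.

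Finally I would check predictability. The constant $\PP_r\{X_\zeta\in\partial_\infty\}>0$ plays no role, so it suffices to show that $1/G_k$ is $\mathcal{F}_{k-1}$-measurable. This was already observed after (\ref{basta1}): $G_n=G_0-\sum_{k=0}^{n-1}\Delta_k(w)\mu(C^k(\cdot))$ depends only on the atom of $\mathcal{F}_{n-1}$, and $G_k>0$ $\mu$-a.e.\ because every regular point is accessible. Hence $\widetilde{G}^{(i)}=(\widetilde{G}^{(i)}_k)_{k\in\NN}$ is $\mathcal{F}$-predictable and the representation above has the stochastic-integral form announced. The only point that requires a small amount of care is the ``stabilisation'' step above, where one must separate the cases $i\preceq\eta$ and $i\not\preceq\eta$; everything else is bookkeeping on top of Theorem \ref{p91}.
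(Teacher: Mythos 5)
Your proof is correct and establishes the result by a route that is noticeably cleaner than the paper's. The paper simply takes ${\widetilde G}^{(i)}_k={\bf 1}_{D^{(i)}_k}\,\PP_r\{X_{\zeta}\in \partial_\infty\}^{-1}G_k^{-1}$ with $D^{(i)}_k=\{\xi: |\xi\land i|\ge k-1\}$, i.e.\ it inserts an explicit ${\cal F}_{k-1}$-measurable indicator to cut the sum off at $k=|i\wedge\eta|+1$, so that the infinite sum reduces to the finite one of Theorem~\ref{p91} by fiat. You instead observe (correctly, by the two-case analysis $i\preceq\eta$ versus $i\not\preceq\eta$) that $\EE(U_{i\bullet}\mid{\cal F}_k)(\eta)=U_{i\eta}$ already for all $k\ge |i\wedge\eta|+1$, so the martingale increments vanish automatically for $k\ge |i\wedge\eta|+2$ and no cutoff is needed; the same formal infinite sum then equals the finite sum of Theorem~\ref{p91}. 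What this buys you is a strictly simpler predictable integrand ${\widetilde G}^{(i)}_k=\PP_r\{X_{\zeta}\in \partial_\infty\}^{-1}G_k^{-1}$ that in fact does not depend on $i$ at all (so the superscript $(i)$ is vacuous), while the paper's choice is genuinely $i$-dependent but has the minor advantage of being a bounded process for each fixed $i$ (it is supported on $k\le|i|+1$), whereas yours grows like $1/G_k$. Since the corollary only asserts a pointwise representation for each fixed $i$, both choices are equally valid, and your vanishing-tail observation is the more informative route.
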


\begin{proof}
It suffices to take
${\widetilde G}^{(i)}_k={\bf 1}_{D^{(i)}_k}
{\PP_r\{X_{\zeta}\in \partial_\infty\}}^{-1}{G_k}^{-1}$,
where
$D^{(i)}_k=\{\xi\in \partial_\infty: \xi\land i\ge k-1\}$
is a ${\cal F}_{k-1}-$measurable set. $\Box$
\end{proof}

\medskip

\medskip

Let us revisit the Martin kernel for an irregular point $\eta$.
From (\ref{rn111}), if $\xi \in \partial_\infty^{reg}$ the kernel
$\kappa(i,\xi)$ is the Radon-Nykodim derivative of $\PP_i\{X_\zeta
\in d\xi\}$ with respect to $\PP_r\{X_\zeta \in d\xi\}$. Therefore
if $\eta$ is an accessible irregular point we obtain from
(\ref{re22})
$$
\kappa(i,\eta)= {U_{i\eta}-\int_{\partial_{\infty}^{reg}} U_{ \eta
\xi} \PP_i \{ X_{\zeta}\in d \xi \} \over
w_0-\int_{\partial_{\infty}} U_{ \eta \xi} \PP_r \{ X_{\zeta}\in d
\xi \}}= {U_{i\eta}-\int_{\partial_{\infty}^{reg}} U_{ \eta \xi}
\; \kappa(i,\xi) \PP_r \{ X_{\zeta}\in d \xi \} \over
w_0-\int_{\partial_{\infty}} U_{ \eta \xi} \PP_r \{ X_{\zeta}\in
d\xi \}}.
$$

\medskip

\section{Trees Potential without Absorption}
\label{secc7}

\medskip

\subsection{Reflecting at the root}
\label{subsecc71}

\medskip

Let $(I,{\cal T})$ be a tree rooted at $r$.
In this section we consider the case when $r$ is a
reflecting barrier.
As before we take a strictly positive and
strictly increasing sequence
$(w_n: n\in {\bf N})$ and consider a symmetric $q-$matrix $Q$ on
$I\times I$,
supported on the tree and the diagonal,
defined as in (\ref{re200}) except at the pair
$(r,r)$, where
$Q_{rr}=-{|S_r|\over w_1-w_0}$.
It is direct to check that $Q$ is conservative:
$\sum\limits_{j\in I} Q_{ij}=0$ for every $i\in I$.
We assume the Markov process $(X_t)$ associated to $Q$ is transient, that is
$\PP_r\{X_{\zeta}\in \partial_\infty\}=1$, and that
all points in $\partial_\infty$ are regular.

\medskip

The aim is to obtain
a representation of the potential $V$ for this process as well as
for the Martin kernel, in terms of the tree matrix
$U=(U_{ij}=w_{|i\wedge j|}: i,j\in I)$.
For this purpose, consider the translated matrix
$U^{(a)}:=U+a$, for $a>0$, which is the tree matrix associated to the
level function $w^{(a)}_n=w_n+a$. Define the matrix $Q^{(a)}$ on
$I\times I$ as in (\ref{re200}) with respect to this level function.
At $(r,r)$ it takes the value
$Q^{(a)}_{rr}=Q_{rr}-{1\over w_0+a}$.
We also put $Q^{(a)}_{r\partial_r}={1\over w_0+a}$. We notice that
the matrices $Q^{(a)}$ and $Q$ in $I\times I$, only differ at $(r,r)$.

\medskip

As $a$ tends to infinity,
$Q^{(a)}$ converges to $Q$, and the associated processes also
converge. In fact, a coupling argument allows us to construct an
increasing sequence of stopping times $\; T^{(a)}
\mathop{\uparrow}\limits_{a\to \infty} \infty\; $ such that
$$
X^{(a)}_t=X_t \hbox{ if } t<T^{(a)}
\hbox{ and } X^{(a)}_t= \partial_r \hbox{ if } t\ge
T^{(a)},
$$
is a Markov process with generator $Q^{(a)}$.
Notice that the lifetime variables $\zeta^{(a)}$
and $\zeta$ associated respectively to $X$ and $X^{(a)}$, verify
$\zeta^{(a)}=\zeta\wedge T^{(a)}$.
From this
representation it also follows immediately that
the potentials $V^{(a)}$ and $V$,
associated to $Q^{(a)}$ and $Q$, respectively, verify
$\forall i,j,$ $V^{(a)}_{ij} \mathop{\uparrow}\limits_{a\to
\infty} V_{ij}$. Therefore, the
representation (\ref{re21}) reads as follows
$$
U^{(a)}_{ij}-V^{(a)}_{ij}=\int_{\partial_\infty} U^{(a)}_{\eta j}
\; \PP_i\{X_{\zeta}\in d\eta, \zeta \le T^{(a)}\},
$$
or equivalently
$$
U_{ij}-V^{(a)}_{ij}=\int_{\partial_\infty} U_{\eta j} \;
\PP_i\{X_{\zeta}\in d\eta, \zeta \le T^{(a)}\}-a\PP_i\{T^{(a)}<
\zeta\}.
$$
Passing to the limit $a\to \infty$ we obtain that
$\lim\limits_{a \to \infty} a\PP_i\{T^{(a)}< \zeta\}$
exits and moreover
$$
U_{ij}-V_{ij}=\int_{\partial_\infty} U_{\eta j} \;
\PP_i\{X_{\zeta}\in d\eta \}- \lim\limits_{a \to \infty}
a\PP_i\{T^{(a)}< \zeta\}.
$$
Substituting $j$ by $r$ in the last equality and using that
$U_{ir}=U_{\eta r}=w_0$, we find $\lim\limits_{a \to
\infty} a\PP_i\{T^{(a)}< \zeta\}=V_{ir}$, and therefore we get
\begin{equation}
\label{re605}
U_{ij}-V_{ij}=\int_{\partial_\infty} U_{\eta j} \;
\PP_i\{X_{\zeta}\in d\eta \}-V_{ir}.
\end{equation}
Now, if we take $j\to \xi \in \partial_\infty^{reg}$ we obtain
$$
V_{ir}=\int_{\partial_\infty} U_{\eta \xi} \;
\PP_i\{X_{\zeta}\in d\eta \}-U_{i\xi}=
\int_{\partial_\infty} (U_{\eta \xi} -U_{i\xi})\;
\PP_i\{X_{\zeta}\in d\eta \}.
$$
Thus, we have proven that the following equality holds
\begin{equation}
\label{re61}
U_{ij}-V_{ij}=\int_{\partial_\infty}
(U_{\eta j}+U_{i\xi}-U_{\eta \xi}) \;
\PP_i\{X_{\zeta}\in d\eta \},
\end{equation}
which is independent of $\xi \in \partial_\infty^{reg}$.
Integrating (\ref{re61}) with respect to $\PP_j\{X_{\zeta} \in
d\xi\}$ gives
\begin{eqnarray*}
\label{eq12}
U_{ij}-V_{ij}=&\int_{\partial_\infty} U_{\eta j}\;
\PP_i\{X_{\zeta}\in d\eta \}+ \int_{\partial_\infty} U_{i\xi}\;
\PP_j\{X_{\zeta} \in d\xi\}-\\
&\int_{\partial_\infty}
\int_{\partial_\infty}U_{\eta \xi}\; \PP_j\{X_{\zeta} \in d\xi\}
\PP_i\{X_{\zeta} \in d\eta\}.
\end{eqnarray*}

The Martin kernel $\kappa^{(a)}$ associated to $Q^{(a)}$ can be
computed as in Theorem \ref{p91}. Take $i\in I$, $\eta\in
\partial_{\infty}$ and $n>|i\wedge \eta|$ then
$$
\kappa^{(a)}(i,\eta)={\PP_i\{X_{\zeta}\in C^n(\eta), \zeta\le
T^{(a)}\}\over \PP_r\{X_{\zeta}\in C^n(\eta), \zeta\le T^{(a)}\}}.
$$
Therefore, there is also continuity of the Martin kernel with
respect to $a$. Passing to the limit $a\to \infty$ and using the
representation (\ref{re39}) we obtain
\begin{equation}
\kappa(i,\eta)=\lim\limits_{a\to \infty}
\sum\limits_{k=0}^{|i\wedge \eta|+1} {1\over G^{(a)}_k}
\big(\EE_{\mu^{(a)}}(U^{(a)}_{i\bullet}|{\cal
F}_k)(\eta)-\EE_{\mu^{(a)}}(U^{(a)}_{i\bullet}|
{\cal F}_{k-1})(\eta)\big),
\end{equation}
where
$$
G^{(a)}_k=G^{(a)}_k(\eta)=\sum\limits_{n\ge k}
(w^{(a)}_n-w^{(a)}_{n-1}) \; \mu^{(a)} (C^n(\eta)),
$$
\begin{equation}
\label{mudea}
\mu^{(a)}(\bullet)={\PP_r\{X_{\zeta}\in \bullet, \zeta \le
T^{(a)}\}\over \PP_r\{\zeta \le T^{(a)}\}}.
\end{equation}
We notice
$$
G^{(a)}_0={w_0+a\over \PP_r\{\zeta \le T^{(a)}\}},
$$
and
\begin{eqnarray*}
G^{(a)}_k&=& G^{(a)}_0-\sum\limits_{n=0}^{k-1}
(w^{(a)}_n-w^{(a)}_{n-1}) \; \mu^{(a)} (C^n(\eta))\\
&=&
{w_0+a\over \PP_r\{\zeta \le T^{(a)}\}}
-(w_0+a)-\sum\limits_{n=1}^{k-1} (w_n-w_{n-1}) \;
\mu^{(a)}(C^n(\eta))\\
&=&(w_0+a){\PP_r\{T^{(a)}< \zeta \} \over
\PP_r\{\zeta \le T^{(a)}\}}-\sum\limits_{n=1}^{k-1} (w_n-w_{n-1})
\; \mu^{(a)}(C^n(\eta)).
\end{eqnarray*}
Therefore, the previous computations show the following result.

\medskip

\begin{theorem}
\label{th701} Let $\mu(\bullet)=\PP_r\{X_{\zeta}\in \bullet \}$.
Consider $G_0(\eta) := \int U_{\eta \xi} \PP_r\{X_{\zeta}\in d\xi
\}$ and $G_k :=\lim\limits_{a \to \infty} G^{(a)}_k$. Then
$G_0(\eta) = V_{rr}+w_0$ is a constant and $(G_k: k\ge 1)$ is a
positive decreasing predictable process that verifies
$$
G_k(\eta)=G_0-\sum\limits_{n=0}^{k-1}
\Delta_n(w)\; \mu(C^n(\eta))=\sum\limits_{n\ge k}
\Delta_n(w) \; \mu(C^n(\eta)) \hbox{ for } k\ge 1;
$$
and the following representation holds
\begin{equation}
\label{re64}
\kappa(i,\eta)=
1+ \sum\limits_{k=1}^{|i\wedge \eta|+1} {1\over G_k(\eta)}
\big(\EE_{\mu}(U_{i\bullet}|{\cal F}_k)(\eta)-
\EE_{\mu}(U_{i\bullet}|{\cal F}_{k-1})(\eta)\big).
\end{equation}
\end{theorem}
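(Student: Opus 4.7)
My plan is to push $a\to\infty$ in the expressions for $G_k^{(a)}$ and $\kappa^{(a)}(i,\eta)$ already displayed in the excerpt for the absorbed chain associated to $Q^{(a)}$, using the convergences of $\mu^{(a)}$ and of $a\PP_i\{T^{(a)}<\zeta\}$ established immediately before (\ref{re605}).

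To identify $G_0$, I set $i=r$ in (\ref{re605}) to obtain $\int U_{\eta j}\,\mu(d\eta)=U_{rj}-V_{rj}+V_{rr}$, then let $j\to\xi\in\partial_\infty$. Using $U_{r\xi}=w_0$ (since $r\wedge \xi=r$) and $V_{r\xi}=0$ (which holds for every regular $\xi$, as noted after (\ref{exis22}); by hypothesis every boundary point is regular), one gets $G_0(\xi)=w_0+V_{rr}$. For $k\ge 1$, I start from the expression
$$G_k^{(a)}=(w_0+a)\,\frac{\PP_r\{T^{(a)}<\zeta\}}{\PP_r\{\zeta\le T^{(a)}\}}-\sum_{n=1}^{k-1}\Delta_n(w)\,\mu^{(a)}(C^n(\eta))$$
already derived. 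The limit $a\PP_r\{T^{(a)}<\zeta\}\to V_{rr}$ together with $\PP_r\{\zeta\le T^{(a)}\}\to 1$ and $\mu^{(a)}(C^n(\eta))\to\mu(C^n(\eta))$ (by the coupling that defines $X^{(a)}$ and dominated convergence) give
$$G_k=V_{rr}-\sum_{n=1}^{k-1}\Delta_n(w)\,\mu(C^n(\eta))=G_0-\sum_{n=0}^{k-1}\Delta_n(w)\,\mu(C^n(\eta)),$$
using $\Delta_0(w)\mu(C^0(\eta))=w_0\cdot 1=w_0$. The tail form $G_k=\sum_{n\ge k}\Delta_n(w)\mu(C^n(\eta))$ then follows from $G_0=\int U_{\eta\xi}\mu(d\xi)=\sum_{n\ge 0}\Delta_n(w)\mu(C^n(\eta))$, which is (\ref{re30}) applied to $f\equiv 1$. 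From this tail form, positivity (every $\mu(C^n(\eta))>0$ as regular points are accessible), monotonicity in $k$, and $\mathcal{F}_{k-1}$-measurability are immediate, so $(G_k)_{k\ge 1}$ is a positive decreasing predictable process.

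For the Martin kernel I apply (\ref{re39}) to the $Q^{(a)}$-chain, whose exit probability from $r$ is $\PP_r\{\zeta\le T^{(a)}\}$, yielding
$$\kappa^{(a)}(i,\eta)=\frac{1}{\PP_r\{\zeta\le T^{(a)}\}}\sum_{k=0}^{|i\wedge\eta|+1}\frac{1}{G_k^{(a)}}\Bigl(\EE_{\mu^{(a)}}(U^{(a)}_{i\bullet}|\mathcal{F}_k)-\EE_{\mu^{(a)}}(U^{(a)}_{i\bullet}|\mathcal{F}_{k-1})\Bigr)(\eta).$$
For each $k\ge 1$ the additive constant $a$ in $U^{(a)}=U+a$ cancels in the increment, and $\mu^{(a)}\to\mu$ together with $G_k^{(a)}\to G_k>0$ sends the $k$-th term to the corresponding summand of (\ref{re64}). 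The main obstacle is the $k=0$ term: both $1/G_0^{(a)}=\PP_r\{\zeta\le T^{(a)}\}/(w_0+a)\to 0$ and $\EE_{\mu^{(a)}}(U^{(a)}_{i\bullet}|\mathcal{F}_0)(\eta)=a+\int U_{i\xi}\mu^{(a)}(d\xi)\to\infty$, yet their product, multiplied by $1/\PP_r\{\zeta\le T^{(a)}\}$, telescopes to $\bigl(a+\int U_{i\xi}\mu^{(a)}(d\xi)\bigr)/(w_0+a)$, which tends to $1$ because $w_0\le\int U_{i\xi}\mu^{(a)}(d\xi)\le w_{|i|}$ is bounded. Since the sum in $k$ is finite (bounded by $|i\wedge\eta|+1$), the limit passes through the sum and (\ref{re64}) follows.
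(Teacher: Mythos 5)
Your proposal is correct and takes essentially the same route as the paper, which simply concludes Theorem \ref{th701} from the computations immediately preceding it ("the previous computations show the following result"); you have spelled those computations out in detail, including the cancellation of the diverging constant $a$ in the $k\ge 1$ increments and the careful identification of the $k=0$ term of (\ref{re39}) with the constant $1$ in the limit, which the paper glosses over.
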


\medskip

\begin{remark}
It can be shown that $\mu^{(a)}$ defined in
(\ref{mudea}) does not depend on $a\ge 0$
(recall that $\mu^{(0)}$ is the measure
defined in (\ref{muab}) in subsection \ref{subs11}
for the chain absorbed at $\partial_r$). Indeed this follows
from the independence relation
$$
\PP_r\{X_{\zeta}\in \bullet, \zeta \le T^{(a)}\}=
\PP_r\{X_{\zeta}\in \bullet\}
\PP_r\{\zeta \le T^{(a)}\},
$$
then
$$
\mu^{(a)}=\mu \hbox{ for } a\ge 0, \;\; \hbox{ where }
\mu(\bullet)=\PP_r\{X_{\zeta}\in \bullet \} .
$$
Further, if $N^*_r$ is the number of visits in the strict future
to $r$ of the discrete skeleton of $(X_t)$, then
a simple argument shows that
$\mu(\bullet)=\PP_r\{X_{\zeta}\in \bullet | N^*_r=0\}$.
\end{remark}

\medskip

\begin{remark}
\label{ultima2}
If we take
$$
W^{(a)}=
\sum_{n\in \NN} G_n^{(a)}
\big(\EE_{\mu^{(a)}}(\; | {\cal F}_n)-\EE_{\mu^{(a)}}
(\; | {\cal F}_{n-1})\big),
$$
then
$$
\lim\limits_{a\to \infty}(W^{(a)})^{-1}=
\sum_{n\ge 1} G_n^{-1} \big(\EE_{\mu}(\; | {\cal F}_n)
-\EE_{\mu}(\; | {\cal F}_{n-1})\big).
$$
coincides with the operator
${\underline {\bf W}}^{-1}:=W^{-1}-G_0^{-1}\EE_{\mu}$
defined in (\ref{Ma11}) in Remark \ref{ultima}, it verifies
$\lim\limits_{a\to \infty}(W^{(a)})^{-1}{\bf 1}=0$
and it is the generator of a Markov process defined in the boundary
$\partial_\infty$ that will be studied in section \ref{section600}.
\end{remark}

\medskip

\subsection{Potential for Homogeneous Trees}
\label{subsecc72}

\medskip

In this section we consider standard random walk on a homogeneous
tree of degree $p+1\!\ge \!3$ and we show that in this case the
previous calculations give a close form to the Martin kernel. Some
of these computations are well known, see for instance
\cite{sawyer1997}. We assume $\cal T$ is an infinite rooted tree,
with $|S_r|=p+1$ and $|S_i|=p$ for $i\neq r$. As a weight function
we take $w_n=n+1$. Finally, we assume that $r$ is reflecting. In
this way we have
$$
Q_{ii^+}=1, \; Q_{ii}=-(p+1) \hbox{ for }i\in I
\hbox{ and } Q_{ii^-}=1 \hbox{ for }i\neq r.
$$
It is well known that this
tree matrix is transient for all $p\ge 2$.

\medskip

From symmetry considerations $\mu$ is the uniform measure on
$\partial_\infty$ and it is easy to see that all points in
$\partial_\infty$ are regular.
Let us know compute the quantities involved on (\ref{re64}).

\medskip

We fix $i\in I$, $\eta\in \partial_\infty$ and put $n=|i\wedge \eta|$,
$|i|=m$. We assume $m \ge 1$ because for $m=0$ we have $i=r$ and
$\kappa(r,\eta)=1$.
We set $C^k=C^k(\eta)=[\eta(k),\infty]\cap \partial_\infty$.
Therefore, $\mu(C^k)=((p+1) p^{k-1})^{-1}$ for all
$k\ge 1$, and $\mu(C^0)=1$. Then,
$$
G_k(\eta)=\sum\limits_{l\ge k} (w_l-w_{l-1}) \; \mu(C^l(\eta))=
\sum\limits_{l\ge k} {1\over (p+1)p^{l-1}}=
{1\over (p^2-1)p^{k-2}}\;\hbox{ for } k\ge 1.
$$
We need to compute
$\EE_{\mu}(U_{i\bullet}|{\cal F}_k)(\eta)$ when $k\le n+1$.
By definition we have
$$
\EE_{\mu}(U_{i\bullet}|{\cal F}_k)(\eta)\!=\!
{1\over \mu(C^k)}\int_{C^k} \!\!U_{i\xi} \mu(d\xi)=
\begin{cases}
{1\over (p+1)p^{m-1}}
\sum\limits_{{j\in I}\atop{|j|=m}} U_{ij}
&\hbox{ if }k=0;\\
{(p+1)p^{k-1}\over (p+1)p^{m-1}}
\sum\limits_{{j\in C^k}\atop{|j|=m}} U_{ij}
&\hbox{ if } 1\le k\le n+1.
\end{cases}
$$

If $k=n+1$ this gives
$\EE_{\mu}(U_{i\bullet}|{\cal F}_{n+1})(\eta)=n+1$.
When $0\le k \le n$, the values of $\{U_{ij}: j\in C^k, |j|=m\}$
range from $k+1$ to $m+1$. For a given integer $t$ in this range
denote by $M^k_t$ the number of sites $j$ for which $U_{ij}=t$.
We have $M^k_{m+1}=1$ and
$$
M^k_m\!=\!p\!-\!1,\;
M^k_{m-1}\!=\!(p\!-\!1)p,..,M^k_t\!=\!(p\!-\!1)p^{m-t},..,
M^k_{k+1}\!=\!(p\!-\!1)p^{m-(k+1)} \hbox{ for }k\ge 1;
$$
$$
M^0_m=p-1,\;
M^0_{m-1}\!=\!(p-1)p,..,M^0_t\!=\!(p-1)p^{m-t},...,
M^0_2\!=\!(p-1)p^{m-2},\;M^0_{1}\!=\!p^m \hbox{ for }k=0.
$$

From these expressions we obtain
\begin{eqnarray*}
\EE_{\mu}(U_{i\bullet}|{\cal F}_0)(\eta)&=&
{1\over (p+1)p^{m-1}} \left(m+1+p^m+(p-1)\sum\limits_{t=2}^m
tp^{m-t}\right), \\
\EE_{\mu}(U_{i\bullet}|{\cal F}_k)(\eta)&=& p^
{k-m}\left(m+1+(p-1)\sum\limits_{t=k+1}^m tp^{m-t}\right)
\hbox{ for }1\le k\le n.
\end{eqnarray*}
Hence we get
\begin{eqnarray*}
&{}& \EE_{\mu}(U_{i\bullet}|{\cal F}_1)(\eta)-
\EE_{\mu}(U_{i\bullet}|{\cal F}_{0})(\eta)=
p^2{1-p^{-m}\over p^2-1} \\
&{}& \EE_{\mu}(U_{i\bullet}|{\cal F}_k)(\eta)-
\EE_{\mu}(U_{i\bullet}|{\cal F}_{k-1})(\eta)= 1-p^{k-m-1} \;
\hbox{ for }2\le k\le n;
\\
&{}& \EE_{\mu}(U_{i\bullet}|{\cal F}_{n+1})(\eta)-
\EE_{\mu}(U_{i\bullet}|{\cal F}_n)(\eta)={p^{n-m}-1 \over p-1}.
\end{eqnarray*}

Finally we obtain (see for example \cite{sawyer1997} Theorem 8.1)
$$
\kappa(i,\eta)=p^{2n-m},\; \hbox{ where } m=|i|, n=|i\land \eta|.
$$

In particular if
$|i\wedge \eta|=0$ we get for $k\ge 1$
$$
p^{-m}=\kappa(i,\eta)={\PP_i\{X_{\zeta}\in C^k(\eta)\}\over
\PP_r\{X_{\zeta}\in C^k(\eta)\}}=\PP_i\{T_r<\infty\}.
$$
In a similar way we obtain
$\PP_i\{T_{i-}<\infty\}=p^{-1}$.
From (\ref{re605}) we have
$$
V_{rj}-V_{rr}=V_{rr}(\PP_j\{T_r<\infty\}-1)=1-\int
U_{j\eta}\PP_r\{X_{\zeta}\in d\eta\},
$$
where for the last integral we assume $|j|=m\ge 1$, and we get
$$
\int U_{j\eta}\PP_r\{X_{\zeta}\in d\eta\}={1\over (p+1)p^{m-1}}
\sum\limits_{t=1}^{m+1} t M^0_t.
$$
From this expression we find
$$
V_{rr}={p\over (p+1)(p-1)} \; \hbox{ and } V_{jr}={p^{1-m}\over
(p+1)(p-1)}.
$$
A simple argument based on time reversal shows that
$\PP_r\{T_j<\infty\}=\PP_j\{T_r<\infty\}=p^{-|j|}$,
and in general
$$
\PP_i\{T_j<\infty\}=p^{-|geod(i,j)|}.
$$
Since $V_{jr}=\PP_r\{T_j<\infty\}V_{jj}$ we deduce that
$$
V_{jj}={p\over (p+1)(p-1)},
$$
which can be also obtained from the invariance of the tree under
translations. Using the same argument,  if $|geod(i,j)|=m=|k|$ we
have
$$
V_{ij}=V_{kr}={p^{1-m}\over (p+1)(p-1)}={p^{1-|geod(i,j)|}\over
(p+1)(p-1)}.
$$
Finally, from (\ref{re605}) we get that
$$
\int U_{j\eta}\; \PP_i\{X_{\zeta}\in d\eta\}=|i\wedge
j|+1+p{p^{-|i|}-p^{-|geod(i,j)|}\over (p+1)(p-1)}.
$$

\medskip

\section{Ultrametricity}
\label{ul5}

There is a wide literature concerning ultrametricty, but it is not
a common notion in potential theory. So, we supply some basic
properties following from the ultrametric inequality (in our
notation, the ultrametric inequality is the one verified by $1/d$,
being $d$ an ultrametric distance). The core of this section are
subsections \ref{genultr} and \ref{boundultr}, where the Markov
semigroup and the harmonic functions emerging from the ultrametric
matrix, in terms of the minimal tree matrix extension are
constructed.

\medskip

\subsection{Basic Notions and the Minimal Rooted Tree Extension}

We impose conditions in order that an ultrametric matrix can be
immersed in a countable and locally finite tree. It is known that
a tree structure is behind an ultrametric (for a deep study of
this relation see \cite{hughes}), but we prefer here to give an
explicit construction because it allows a better understanding of
the main results of this section.

We note that up to Lemma \ref{le101} the set $I$ will have no
restriction. Most of the properties we present are easily deduced
from the ultrametric inequality, so they are established without a
proof.

\medskip

\begin{definition}
\label{d60}
$U=(U_{ij}: i,j\in I)$ is an ultrametric
arrangement if its is symmetric, that is $U_{ij}=U_{ji}$ for any couple
$i,j\in I$, and verifies the ultrametric inequality
$$
U_{ij}\ge \min\{U_{ik}, U_{kj}\} \hbox{ for any } i,j,k \in I\;.
$$
\end{definition}

In particular $U_{ii}\ge U_{ij}$ for any $i,j \in I$, so
$U_{ij}=U_{ii}\Rightarrow U_{jj}\ge U_{ii}$.

\medskip

Observe that for any triple $i_1, i_2, i_3\in I$ there exists a
permutation $\varphi$ of $\{1,2,3\}$ such that
$$
U_{i_{\varphi(1)}i_{\varphi(2)}}=
\min\{U_{i_{\varphi(2)}i_{\varphi(3)}},
U_{i_{\varphi(3)}i_{\varphi(1)}}\}.
$$
Hence, $U_{ik}> U_{kj}\Rightarrow U_{ik}> U_{kj}=U_{ij}$ and
$U_{ik}=U_{kj}\Rightarrow U_{ij}\ge U_{ik}=U_{kj}$.

\medskip

Let us introduce the equivalence relation
$$
i\sim j \Leftrightarrow \left(\forall k\in I:\;\; U_{ik}= U_{jk} \right).
$$
Notice that
$i\sim j \Leftrightarrow  U_{ii}=U_{ij}=U_{jj}$.
%
%
Let us introduce the relation
\begin{equation}
\label{orden2}
i\preceq j \Leftrightarrow  U_{ij}= U_{ii}.
\end{equation}
From
$U_{jk}\ge \min\{U_{ji}, U_{ik}\}=\min\{U_{ii}, U_{ik}\}=U_{ik}$.
we get
$$
i\preceq j \Leftrightarrow U_{i \bullet}\le U_{j \bullet}\;
(\hbox{ that is }\forall k\in I: U_{ik}\le U_{jk}),
$$
so the relation $\preceq$ is a preorder, that is it is
reflexive and transitive.
The equivalence relation associated to the preorder $\preceq$ is
$\sim$, this means
$\big[i\preceq j \hbox{ and } j\preceq i\big]\Leftrightarrow i\sim j$.
On the other hand $i\preceq j  \Rightarrow U_{ii}\le U_{jj}$.

\medskip

Now, we denote  $i{\cal G} j$ if $i,j$ are comparable, that is
$i\preceq j$ or $j\preceq i$.
We have
$i{\cal G} j \Leftrightarrow  U_{ij}\ge \min\{U_{ii}, U_{jj}\}$.
From definition we also get
$i\sim j \Leftrightarrow
\big[U_{ii}=U_{jj} \hbox{ and } i{\cal G} j\big]$.
The left and the right intervals defined by $i\in I$ are
respectively
$$
[i,\infty)^U=\{j\in I: i\preceq j\} \;\hbox{ and }
(-\infty, \;i]^U=\{j\in I: j\preceq i\}\;.
$$
Notice that for any $i\in I$ the set $(-\infty,\; i]^U$
is $\preceq-$totally preordered. This means that for
$$
\forall  j, k\in (-\infty,\; i]^U \hbox{ we have } j{\cal G} k.
$$
%

Some elementary properties deduced from the ultrametric
hypothesis are summarized below, they are easily proven by
analysis of cases. The first
two relations reveal a hierarchical structure.

\medskip


\noindent (i) $\big[i\!\!\not\!{\cal G} j,\; k\in [i,\infty)^U,\;
\ell \in [j,\infty)^U\big]$ implies $k\!\!\not\!{\cal G} \ell$.

\medskip

\noindent (ii) If $i\!\!\not\!{\cal G} j$ then
$[i,\infty)^U\cap [j,\infty)^U=\emptyset$.

\medskip

\noindent (iii) $i\preceq j$ and $k\preceq \ell$ imply
$U_{j \ell}\ge U_{ik}$.

\medskip

\noindent (iv) $i\preceq j$ implies that  for any $k\in I$ it
holds $\left(i\preceq k \hbox{ or } U_{jk}=U_{ik}\right)$.
\medskip

\noindent (v) $i\preceq j$ and $k\preceq \ell$ imply
$(i{\cal G}k \hbox{ or } U_{j \ell}=U_{ik})$.

\medskip

In the sequel we will assume the following condition holds
$$
i\sim j \Leftrightarrow i=j\;.  \eqno(H1)
$$
Property $(H1)$
is equivalent to the fact that
$\preceq$ is an order, or equivalently to the relation
$i\neq j \Rightarrow U_{ij}<\max\{U_{ii}, U_{jj}\}$.
We point out that if $I$ is finite and $U>0$ condition
($H1$) is equivalent to the nonsingularity of $U$ (see \cite{dell1996},
\cite{mcdonald1995} or \cite{nabben1995}).

\medskip

We denote ${\cal W}=\{U_{ij}: i,j\in I\}$ the set of values of $U$.
To every $w\in {\cal W}$ we associate the nonempty set
$J(w)=\{i\in I: U_{ii}\ge w\}$ and the relation
$$
i\equiv_w j \Leftrightarrow U_{ij}\ge w.
$$
The ultrametric inequality implies that $\equiv_w$ is an
equivalence relation in $J(w)$. By $E^w$ we mean an equivalence
class of $\equiv_w$, and $E^w_i$ denotes the equivalence
class containing
$i\in J(w)$. In the case $U_{ii}< w$,
that is $i\notin J(w)$, we put
$E^w_i=\phi$. As usual $J(w)/\equiv_w$ denotes the set of
equivalence classes of elements of $J(w)$.

\medskip

Let us introduce the following set
$$
\widetilde I=\{(E^w,w): E^w \in J(w)/\equiv_w,\; w\in W\}.
$$
The function
\begin{equation*}
{\bf i}^U: I\to \widetilde I, \;
{\bf i}^U(i)=(E^{U_{ii}}_i, U_{ii})
\end{equation*}
is one-to-one. In fact, if  ${\bf i}^U(i)={\bf i}^U(j)$,
then $U_{ij}\ge U_{ii}=U_{jj}$. From condition $(H1)$ we deduce
$i=j$. In this way we identify $i\in I$ with
${\bf i}^U(i)=(E^{U_{ii}}_i,U_{ii})\in \widetilde I$.

\medskip

Observe that
$E^{U_{ii}}_i=[i,\infty)^U$, for every  $i\in I$. Also it holds
\begin{equation*}
\left[w\le w' \Rightarrow E^{w'}\subseteq E^w\right]
\hbox{ and }
\left[\left(w\le w', E^{w'}\neq E^w\right)
\Rightarrow w< w' \right].
\end{equation*}

\begin{lemma}
\label{l55}
If $E^{w'}\not\subseteq E^w$ and
$E^w\not\subseteq E^{w'}$, then
$$
\forall k, k'\in E^w,\; \forall \ell, {\ell}'\in E^{w'}:\;\;
U_{k \ell}=U_{k' {\ell}'}<\min\{w,w'\} \hbox{ and }
E^w\cap E^{w'}=\phi.
$$
\end{lemma}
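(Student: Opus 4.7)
The plan is to establish the three assertions of the lemma in order: disjointness of $E^w$ and $E^{w'}$, the strict upper bound $U_{k\ell} < \min\{w,w'\}$, and finally the constancy $U_{k\ell} = U_{k'\ell'}$. Throughout I would rely on the elementary consequence of the ultrametric inequality already mentioned in the text, namely the ``isoceles property'': among any three values $U_{ab}, U_{bc}, U_{ca}$, the two smallest are equal; equivalently, $U_{ab} > U_{bc}$ forces $U_{ac} = U_{bc}$.

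First I would prove disjointness by contradiction. Suppose $i \in E^w \cap E^{w'}$ and, without loss of generality, $w \le w'$. Then $E^w_i = E^w$ and $E^{w'}_i = E^{w'}$. Any $j \in E^{w'}$ satisfies $U_{jj} \ge w' \ge w$ and $U_{ji} \ge w' \ge w$, so $j \in J(w)$ and $j \equiv_w i$, hence $j \in E^w$. This gives $E^{w'} \subseteq E^w$, contradicting the hypothesis; by symmetry the case $w' \le w$ is also excluded, so $E^w \cap E^{w'} = \phi$.

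Next, for the strict upper bound, take $k \in E^w$, $\ell \in E^{w'}$, and again assume $w \le w'$. Then $\ell \in J(w') \subseteq J(w)$, and if $U_{k\ell} \ge w$ were to hold, then $k \equiv_w \ell$, placing $\ell$ in $E^w_k = E^w$, contrary to the disjointness just established. Hence $U_{k\ell} < w = \min\{w, w'\}$, and the case $w' \le w$ is handled symmetrically, so the strict inequality holds in all cases for every choice of $k \in E^w$ and $\ell \in E^{w'}$.

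Finally, for the constancy, I would invoke the isoceles property twice. Fixing $\ell \in E^{w'}$ and taking $k, k' \in E^w$: the triple $(k,k',\ell)$ satisfies $U_{kk'} \ge w$ while $U_{k\ell}, U_{k'\ell} < \min\{w,w'\} \le w$ by the previous step, so $U_{kk'}$ is strictly the largest of the three values and the isoceles property forces $U_{k\ell} = U_{k'\ell}$. Similarly, fixing $k' \in E^w$ and varying $\ell, \ell' \in E^{w'}$, the triple $(k',\ell,\ell')$ has $U_{\ell \ell'} \ge w'$ as the strict maximum (since $U_{k'\ell}, U_{k'\ell'} < \min\{w,w'\} \le w'$), giving $U_{k'\ell} = U_{k'\ell'}$. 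Chaining the two yields $U_{k\ell} = U_{k'\ell'}$. There is no deep obstacle here; the only care needed is tracking which of $w, w'$ is smaller when appealing to $\equiv_w$ versus $\equiv_{w'}$, so that the relevant class membership is preserved.
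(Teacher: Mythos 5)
Your proof is correct, but its structure differs from the paper's in a way worth noting. The paper begins with the witnesses $k\in E^w\setminus E^{w'}$ and $\ell\in E^{w'}\setminus E^w$ provided by the non-inclusion hypothesis, gets $U_{k\ell}<\min\{w,w'\}$ directly from the definitions of the equivalence classes (this is where you need the witnesses rather than arbitrary elements), then propagates the constant value $U_{k'\ell'}=U_{k\ell}$ over all $k'\in E^w$, $\ell'\in E^{w'}$ by the ultrametric inequality, and only at the end reads off disjointness as a byproduct: since $U_{k'\ell}=U_{k\ell}<w'$ for every $k'\in E^w$, no $k'$ can lie in $E^{w'}$. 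You instead front-load disjointness with a direct argument that exploits the nesting of equivalence classes across levels (normalizing $w\le w'$ to show that a common element would force $E^{w'}\subseteq E^w$), then use disjointness to get the strict bound for \emph{arbitrary} $k\in E^w$, $\ell\in E^{w'}$, and finish with two applications of the isoceles property for constancy. Your ordering is more modular and avoids ever singling out special witnesses; the paper's is more compressed and does the bookkeeping on two fixed elements before lifting to the general pair. Both rest on the same two facts --- that $\equiv_w$ is an equivalence relation on $J(w)$, and the isoceles/min form of the ultrametric inequality --- and both are complete.
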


\medskip

\begin{proof}
Let $k \in E^w\setminus E^{w'}$ and $l \in
E^{w'}\setminus E^w$. Also take $k'\in E^w$, $l'\in E^{w'}$. Since
$\equiv_w, \equiv_{w'}$ are equivalent relations on their
respective domains, we get $U_{kl'}<w'$ and $U_{k'l}<w$.
In particular $U_{kl}<\min\{w,w'\}$. On the other hand
the definition of $E^w$ implies $U_{kk'}\ge w$.
Using the ultrametric property we get
$U_{k'l}\ge \min\{U_{k'k},U_{kl}\}=U_{kl}$,
and similarly $U_{kl}\ge U_{k'l}$, from which the equality
$U_{kl}= U_{k'l}$ holds. In an analogous way it is deduced the
equality $U_{kl}= U_{kl'}$, and we get that
$$
k' \in E^w\setminus E^{w'} \hbox{ and } l' \in E^{w'}\setminus
E^w.
$$
This implies that $E^w\cap E^{w'}=\emptyset$. Again using the
ultrametricity we find
$$
U_{k'l'}\ge \min\{U_{k'l},U_{ll'}\}=U_{k'l}=U_{kl}.
$$
By exchanging the roles of $k$ with $k'$ and $l$ with $l'$, we
deduce the result. $\Box$
\end{proof}

\medskip

The previous result implies that two classes $E^w$ and $E^{w'}$
are either disjoint or one is included in the other. Now we define
$\widetilde U$, an extension of $U$ to $\widetilde I$.

\medskip

\begin{definition}
\label{d77}
Let $\tilde {\imath}=(E^w,w)\in \widetilde I$,
$\tilde {\jmath}=(E^{w'},w')\in \widetilde I$. If $E^{w'}\subseteq E^w$
or $E^w\subseteq E^{w'}$ we put
${\tilde U}_{\tilde {\imath} \tilde {\jmath}} =\min\{w, w'\}$.
On the contrary, that is $E^w \cap E^{w'}=\phi$, we put
${\tilde U}_{\tilde {\imath} \tilde {\jmath}}=U_{k \ell}$,
where $k\in E^w$ and $\ell\in E^{w'}$.
\end{definition}

\medskip

From Lemma \ref{l55}, $\widetilde U$ is well defined. On the other hand
it is direct to prove that for any $i,j \in I$ it holds
$U_{ij}={\widetilde U}_{{\bf i}^U(i) \; {\bf i}^U(j)}$. In this way
$\widetilde U$ is an extension of $U$. Also, if $i\in E^w, j\in E^{w'}$
then $U_{ij}\ge {\widetilde U}_{\alpha \beta}$, where
$\alpha=(E^w,w),\; \beta=(E^{w'},w')$.

\medskip

\begin{lemma}
\label{le101}
$\widetilde U=(\widetilde U_{\tilde {\imath}\, \tilde {\jmath}}:
\tilde {\imath}, \tilde {\jmath} \in \widetilde I)$ is ultrametric.
\end{lemma}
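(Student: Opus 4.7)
The plan is to reduce the ultrametric inequality for $\widetilde U$ to the ultrametric inequality for $U$ by choosing representatives from each equivalence class. Symmetry is immediate from Definition \ref{d77}, since both branches of the definition (nested and disjoint) are manifestly symmetric in $(E^w,w)$ and $(E^{w'},w')$, and in the disjoint case Lemma \ref{l55} ensures the value $U_{k\ell}$ does not depend on the choice of representatives.

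The key technical step I would establish first is the uniform formula
\begin{equation*}
\widetilde U_{\tilde{\imath}\tilde{\jmath}}=\min\{w,w',U_{ij}\}
\qquad\text{for any representatives } i\in E^w,\; j\in E^{w'},
\end{equation*}
where $\tilde{\imath}=(E^w,w)$ and $\tilde{\jmath}=(E^{w'},w')$. This is proved by splitting into the two cases of Definition \ref{d77}. If $E^w\cap E^{w'}=\emptyset$, Lemma \ref{l55} gives $U_{ij}<\min\{w,w'\}$, so the minimum equals $U_{ij}=\widetilde U_{\tilde{\imath}\tilde{\jmath}}$. If instead (say) $E^{w'}\subseteq E^w$, then $w\le w'$ and both $i,j\in E^w$, so $i\equiv_w j$, giving $U_{ij}\ge w$; hence $\min\{w,w',U_{ij}\}=w=\widetilde U_{\tilde{\imath}\tilde{\jmath}}$. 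The symmetric nested case is identical.

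Granted this formula, the ultrametric inequality is immediate. Given $\tilde{\imath}=(E^{w_1},w_1)$, $\tilde{\jmath}=(E^{w_2},w_2)$, $\tilde k=(E^{w_3},w_3)$ with representatives $i,j,k$, we have
\begin{equation*}
\min\{\widetilde U_{\tilde{\imath}\tilde k},\widetilde U_{\tilde k\tilde{\jmath}}\}
=\min\{w_1,w_2,w_3,U_{ik},U_{kj}\}.
\end{equation*}
Each of $w_1$, $w_2$ trivially dominates this quantity, and by ultrametricity of $U$, $U_{ij}\ge\min\{U_{ik},U_{kj}\}\ge\min\{w_1,w_2,w_3,U_{ik},U_{kj}\}$. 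Thus every term in $\min\{w_1,w_2,U_{ij}\}=\widetilde U_{\tilde{\imath}\tilde{\jmath}}$ dominates $\min\{\widetilde U_{\tilde{\imath}\tilde k},\widetilde U_{\tilde k\tilde{\jmath}}\}$, yielding the required inequality.

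The main (very mild) obstacle is just bookkeeping in the preliminary formula: one has to be careful that the nesting in $\widetilde I$ corresponds to the reverse inequality of levels (larger level means smaller class), and that representatives can be chosen in the class indexed by the larger level to force $U_{ij}\ge w$. Once this is set up, the three-point case analysis collapses into a single line via the $\min$-formula, so no further case distinctions are needed at the ultrametric step itself.
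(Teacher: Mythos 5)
Your proof is correct, and it takes a genuinely different route from the paper's. The paper argues directly by cases: first the case $E^u\cap E^v=\emptyset$, where it uses only the one-sided inequality $U_{ij}\ge\widetilde U_{\tilde{\imath}\tilde{\jmath}}$ (which holds for any representatives) together with ultrametricity of $U$; then a second case $E^u\subseteq E^v$, which splits further according to whether $E^w$ meets $E^v$ or not. You instead isolate the sharper identity $\widetilde U_{\tilde{\imath}\tilde{\jmath}}=\min\{w,w',U_{ij}\}$ valid for arbitrary representatives, after which the three-point ultrametric inequality becomes a single computation with no further case distinctions: $\min\{\widetilde U_{\tilde{\imath}\tilde k},\widetilde U_{\tilde k\tilde{\jmath}}\}=\min\{w_1,w_2,w_3,U_{ik},U_{kj}\}$, and each of $w_1$, $w_2$, $U_{ij}$ dominates this. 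Your route front-loads the work into a cleaner intermediate lemma and is, if anything, easier to verify; the paper's is shorter to state but leaves more small cases open at the final step.

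One small imprecision in your preliminary formula: you assert that $E^{w'}\subseteq E^w$ forces $w\le w'$, but this can fail in the degenerate situation $E^w=E^{w'}$ with $w>w'$ (the paper's observation gives only the contrapositive for strict inclusion). The conclusion you need, namely $U_{ij}\ge\min\{w,w'\}$, is still true there, since both $i,j$ then lie in $E^w\cap E^{w'}$ and so $U_{ij}\ge\max\{w,w'\}$. The cleanest fix is to normalize by $w\le w'$ rather than by a chosen inclusion.
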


\medskip

\begin{proof}
\noindent For $u,v,w \in W$ consider the following elements of
$\widetilde I$:
$\tilde {\imath}=(E^u,u)$, $\tilde {\jmath}=(E^v,v)$ and
$\tilde k=(E^w,w)$.
Take
$i\in E^u, j\in E^v, k\in E^w$. The proof is divided into
two cases.

\medskip

\noindent {\bf Case 1}. We assume $E^u\cap E^v=\emptyset$. The
ultrametric property
satisfied by $U$ and the definition of $\widetilde U$ imply
${\widetilde U}_{\tilde {\imath} \tilde {\jmath}}=
U_{ij}\ge \min\{U_{ik},U_{kj}\}\ge
\min\{{\widetilde U}_{\tilde {\imath} \tilde k},
{\widetilde U}_{\tilde k \tilde {\jmath}}\}$.
Then the property holds.

\medskip

\noindent {\bf Case 2}. We assume, without lost of generality that
$E^u\subseteq E^v$ and $v\le u$. If $E^w\cap E^v=\emptyset$ one gets
that ${\widetilde U}_{\tilde {\jmath} \tilde k}=U_{jk}<v=
{\widetilde U}_{\tilde {\imath} \tilde {\jmath}}$
and the property is verified. Finally, if
$E^w\cap E^v\neq \emptyset$ then
${\widetilde U}_{\tilde {\jmath} \tilde k}=
\min\{v,w\}\le v={\widetilde U}_{\tilde {\imath} \tilde {\jmath}}$.
$\Box$
\end{proof}

\bigskip

In the sequel we shall assume $I$ is countable and the following
hypothesis holds
$$
{\cal W}=\{U_{ij}: i,j\in I\}\subset \RR^*_+ \hbox{ has no finite
accumulation point.}  \eqno(H2)
$$
We put ${\cal W}=\{w_n: n\in \NN\}$ where $(w_n)$
increases with $n\in \NN$, $w_0>0$.
Under $(H2)$ we are able to define in  $\widetilde I$ the following
binary relation ${\widetilde{\cal T}}$. For $u, v \in {\cal W}$
we set
$$
((E^u,u), (E^v,v))\in {\widetilde{\cal T}} \Leftrightarrow \exists
n\in \NN:\; \{u,v\}=\{w_n,w_{n+1}\} \hbox{ and }E^u \cap E^v\neq
\emptyset\;.
$$
Two points $\tilde {\imath}, \tilde {\jmath} \in \widetilde I$ are
said to be neighbors in ${\widetilde{\cal T}}$ if
$(\tilde {\imath},\tilde {\jmath})\in {\widetilde{\cal T}}$.

\medskip

Observe that if $((E^{w_n},w_n),(E^{w_{n+1}},w_{n+1}))\in
{\widetilde{\cal T}}$, then
$E^{w_{n+1}}\subseteq E^{w_n}$.
The strict inclusion $E^{w_{n+1}}\neq E^{w_n}$ holds
if and only if there exists a unique $i\in E^{w_n}$
such that $w_{n}=U_{ii}$. Indeed, it suffices to show the uniqueness.
Let $i \in E^{w_n}\setminus E^{w_{n+1}}$
then $w_n\le U_{ii} < w_{n+1}$. For any other $k\in
E^{w_n}$ for which $U_{kk}=w_n$ it holds $U_{ik}\ge w_n$. We get
$i\sim k$ and from $(H1)$ we conclude $i=k$.

\medskip

It is easy to see that $({\widetilde{I}},{\widetilde{\cal T}})$ is a
tree rooted at $\tilde r$, where ${\widetilde U}_{\tilde r \tilde r}=w_0$.
This point $\tilde r$ exists (and it is unique) because either
there exists $i_0\in I$ verifying $U_{i_0 i_0}=w_0$
in which case $\tilde r={\tilde {\imath}}_0$, or in the contrary,
our construction  adds a point
$\tilde r \in \widetilde I\setminus I$ such that
${\widetilde U}_{\tilde r \tilde r}=w_0$.

\medskip

By construction $\widetilde U$ is the minimal tree matrix
extending $U$, that is we can immerse $\widetilde U$ in any other
tree extension of $U$. The tree $(\widetilde I, {\widetilde{\cal
T}})$, supporting this minimal extension, is locally finite if and
only if the following assumption is verified
$$
\forall \; w\in {\cal W} \hbox{ it holds } |J(w)/\equiv_w|<\infty\;.
\eqno(H3)
$$

\medskip

Since $(\widetilde I, {\widetilde{\cal T}})$ is a rooted tree, all
the concepts defined in the Introduction applied to it. In particular
we denote by $\widetilde \preceq$ the order relation introduced
in (\ref{orden}); by $\widetilde \land$ the associated minimum,
by $[\tilde {\imath},\infty)$ the branch born at $\tilde {\imath}$
and by $geod(\tilde {\imath}, \tilde {\jmath})$  the geodesic between
two points in $\widetilde I$. Since we have identified
$i\in I$ with ${\bf i}^U(i)\in \widetilde I$,
all these concepts  have a meaning
for elements in $I$. In particular $\widetilde \preceq$
is an extension of the order relation
$\preceq$ defined on $I$ on (\ref{orden2}), and we have the equality
$[i,\infty)^U=[i,\infty)\cap I$.

\medskip

Observe that the $\widetilde\preceq-$minimum in $(\widetilde I,
\widetilde{\cal T})$ is characterized as follows. Take
$(E^u,u),(E^v,v) \in \widetilde I$, and any $i\in E^u$, then
$(E^u,u)\widetilde \wedge(E^v,v)=E_i^w$, where $w=\sup\{z\in {\cal
W}:  z\le u, \; E_i^z\supseteq E^v\}$. Notice that $E_i^{w_0}=I$.

\medskip

\subsection{Neighbor Relation}
\label{neirel}

\medskip

We will assume that hypotheses $(H1)$-$(H3)$ are fulfilled.
The next definition is a notion of neighbor
on $I$ giving a better understanding of the embedding
$I$ in $\widetilde I$, in particular to describe how the
elements in $\widetilde I \setminus I$ are surrounded by $I$.

\medskip

\begin{definition}
\label{de1}
Let $i\in I$.

\medskip

\noindent (i) The set ${\cal V}(i)=\{j\in I: j\neq i,
geod(i,j)\cap I=\{i,j\}\}$ is called the set of $U-$neighbors
of $i$. We will also put ${\cal V}^*(i)={\cal V}(i)\cup \{i\}$.

\medskip

\noindent (ii) The set ${\cal B}(i)=\{\tilde {\jmath} \in \widetilde I:
geod(i,\tilde {\jmath})\cap I \subseteq \{i,\tilde {\jmath} \}\}$ is
called the attraction basin of $i$.

\end{definition}

\medskip

Notice that ${\cal V}^*(i)\subseteq {\cal B}(i)$.
In the next result we summarize some useful properties of
${\cal B}(i)$, ${\cal V}(i)$ and ${\cal V}^*(i)$.

\medskip

\begin{lemma}
\label{le31}

\noindent (i) $\tilde {\jmath} \in {\cal  B}(i)\setminus {\cal V}(i)$
if and only if $geod(\tilde {\jmath},i)\cap I=\{i\}$.
Moreover ${\cal V}^*(i)={\cal  B}(i)\cap I$ and
${\cal  B}(i)\setminus {\cal V}^*(i)={\cal B}(i)\setminus I$.

\medskip

\noindent (ii) If $\tilde {\jmath} \in {\cal  B}(i)\setminus
{\cal V}^*(i)$
then
all its neighbors in $(\widetilde I,\widetilde{\cal T})$ belong
to ${\cal B}(i)$. Thus,
$({\cal B}(i),
\widetilde {\cal T}|_{{\cal B}(i)\times {\cal B}(i)})$
is a tree. If we fix the root of this tree at $i$ then
the set of leaves is ${\cal V}(i)$.

\medskip

\noindent (iii) For every $\tilde {\jmath} \notin {\cal B}(i)$ there
exists a unique $k=k(i)\in {\cal V}(i)$ such that
$geod(i,\tilde {\jmath})\cap {\cal V}^*(i)=\{i,k\}$.
This unique $k$ also verifies that
$k\in geod(\tilde l,\tilde {\jmath})\cap {\cal V}^*(i)$
for every $\tilde l \in {\cal B}(i)$.

\medskip

\noindent (iv) For every $\tilde {\jmath} \in \widetilde I$ there exists
$i\in I$ such that $\tilde {\jmath} \in {\cal B}(i)$.

\medskip

\noindent (v) For $j\in {\cal V}(i)$ either
$(i,j)\in \widetilde{\cal T}$, that
is $i,j$ are neighbors on $\widetilde{\cal T}$, or there is a unique
$\tilde k \in \widetilde I\setminus I$ such that
$(\tilde k,i)\in \widetilde{\cal T}$
and $\tilde k \in {\cal B}(j)\cap geod(i,j)$.
\end{lemma}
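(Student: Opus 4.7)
The plan is to dismantle the five assertions in order, working directly from the defining inclusion $geod(i,\tilde\jmath)\cap I\subseteq\{i,\tilde\jmath\}$ that characterizes ${\cal B}(i)$, and exploiting two elementary tree facts in $(\widetilde I,\widetilde{\cal T})$: if $x\in geod(y,z)$ then $geod(y,x)\subseteq geod(y,z)$, and for any three vertices there is a unique median vertex lying on each of the three pairwise geodesics.

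Part (i) is a direct unfolding of the definitions: $\tilde\jmath\in{\cal B}(i)\setminus{\cal V}(i)$ forces $\tilde\jmath=i$ or $\tilde\jmath\in\widetilde I\setminus I$, and in both cases $geod(i,\tilde\jmath)\cap I=\{i\}$; the converse is immediate. The two displayed equalities then follow by partitioning ${\cal B}(i)$ according to membership in $I$. For part (ii), given $\tilde\jmath\in{\cal B}(i)\setminus I$ and a $\widetilde{\cal T}$-neighbor $\tilde k$ of $\tilde\jmath$, I split according to whether $\tilde k$ lies on $geod(i,\tilde\jmath)$ or extends it; in either event $geod(i,\tilde k)\cap I\subseteq(geod(i,\tilde\jmath)\cap I)\cup\{\tilde k\}\subseteq\{i,\tilde k\}$, so $\tilde k\in{\cal B}(i)$. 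Connectivity of $({\cal B}(i),\widetilde{\cal T}|_{{\cal B}(i)\times{\cal B}(i)})$ then follows because every initial segment of $geod(i,\tilde\jmath)$ lies itself in ${\cal B}(i)$. The identification of ${\cal V}(i)$ as the leaves uses the previous step for the interior (non-$I$) vertices, and observes that any $j\in{\cal V}(i)$ admitting a ${\cal B}(i)$-successor $\tilde\ell$ would put $j$ in $geod(i,\tilde\ell)\cap I\setminus\{i,\tilde\ell\}$, contradicting $\tilde\ell\in{\cal B}(i)$.

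For part (iii), given $\tilde\jmath\notin{\cal B}(i)$ let $k$ be the first vertex of $I\setminus\{i\}$ met along $geod(i,\tilde\jmath)$; minimality gives $k\in{\cal V}(i)$, and any second candidate $\ell\in geod(i,\tilde\jmath)\cap{\cal V}^*(i)$ lying past $k$ would force $k\in geod(i,\ell)\cap I\setminus\{i,\ell\}$, contradicting $\ell\in{\cal V}^*(i)$. The main step is showing $k\in geod(\tilde\ell,\tilde\jmath)$ for every $\tilde\ell\in{\cal B}(i)$, and here I invoke the median $m$ of $\{i,\tilde\ell,\tilde\jmath\}$. The cases $m=i$ and $m=\tilde\ell$ reduce to direct inspection; $m=\tilde\jmath$ is ruled out because it would place $\tilde\jmath$ on $geod(i,\tilde\ell)$ and hence inside ${\cal B}(i)$; and the generic case $m\notin\{i,\tilde\ell,\tilde\jmath\}$, in which necessarily $m\notin I$, splits according to whether $m$ precedes or follows $k$ on $geod(i,\tilde\jmath)$: the former puts $k\in geod(m,\tilde\jmath)\subseteq geod(\tilde\ell,\tilde\jmath)$ as desired, while the latter forces $k\in geod(i,m)\subseteq geod(i,\tilde\ell)$ and so $k=\tilde\ell$, which then contradicts the assumption that $m$ is interior. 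Part (iv) is immediate: for $\tilde\jmath\in I$ take $i=\tilde\jmath$, and otherwise pick any $i_0\in I$ and take $i$ to be the last vertex of the finite set $geod(i_0,\tilde\jmath)\cap I$, which then satisfies $geod(i,\tilde\jmath)\cap I=\{i\}$. Part (v) isolates the unique $\widetilde{\cal T}$-neighbor $\tilde k$ of $i$ on $geod(i,j)$; since $geod(i,j)\cap I=\{i,j\}$ all intermediate vertices sit in $\widetilde I\setminus I$, and the inclusion $geod(j,\tilde k)\subseteq geod(j,i)\setminus\{i\}$ then yields $\tilde k\in{\cal B}(j)$.

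The main obstacle I expect is the median-point argument in part (iii): every other assertion reduces to bookkeeping about which vertices fall on which geodesic, whereas there one needs a genuine three-point tree analysis to ensure that the gateway $k$ survives as $\tilde\ell$ varies over all of ${\cal B}(i)$, and to rule cleanly out the configuration in which the median sits strictly beyond $k$ on the road from $i$ to $\tilde\jmath$.
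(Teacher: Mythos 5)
Your argument is correct and follows the same overall route as the paper's: unfold the defining inclusion $geod(i,\tilde{\jmath})\cap I\subseteq\{i,\tilde{\jmath}\}$ and do geodesic bookkeeping. The one place your technique genuinely differs is part (iii), where you run a four-case analysis around the median of $\{i,\tilde{\ell},\tilde{\jmath}\}$; the paper instead invokes the single inclusion $geod(\tilde{\jmath},i)\subseteq geod(\tilde{\ell},\tilde{\jmath})\cup geod(\tilde{\ell},i)$, which shortcuts the split: since $k\in geod(\tilde{\jmath},i)$ it lies on one of the two arcs, and if it lies on $geod(\tilde{\ell},i)$ then $k\in geod(\tilde{\ell},i)\cap I\setminus\{i\}\subseteq\{\tilde{\ell}\}$, so $k=\tilde{\ell}$ and trivially $k\in geod(\tilde{\ell},\tilde{\jmath})$. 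The median is exactly the junction of those two arcs, so the two arguments carry the same content; the inclusion just packages it with fewer cases. One detail to tighten in your part (ii): the "previous step" shows only that a $\widetilde{\cal T}$-neighbor of a non-$I$ vertex of ${\cal B}(i)$ stays in ${\cal B}(i)$, so to conclude such vertices are not leaves of the rooted subtree you still need that every $\tilde{\jmath}\in\widetilde I\setminus I$ has at least one successor in $\widetilde I$. This holds by construction of the minimal tree extension — if $(E^{w_n},w_n)\notin I$ then no element of $E^{w_n}$ attains $w_n$ on the diagonal, so $E^{w_n}\subseteq J(w_{n+1})$ and a successor class exists (and when $\tilde r\notin I$ the root has at least two successors because $w_0$ is attained off the diagonal) — but this is worth stating, as it is precisely what keeps non-$I$ vertices off the leaf set.
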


\medskip

\begin{proof}

\noindent $(i)$ and $(ii)$ are direct from the definitions.
%
%

\medskip

\noindent $(iii)$ Take $\tilde {\jmath} \notin {\cal B}(i)$. If
$geod(\tilde {\jmath},i)\cap {\cal V}^*(i) =\{i\}$
then $geod(\tilde {\jmath},i)\cap I=\{i\}$.
In fact, if this intersection contains another point $\ell\in I$
and if we take $m \in (geod(\ell,i)\cap I)\setminus \{i\}$, the
closest point to $i$, we obtain $m\in {\cal V}^*(i)$ which is a
contradiction.
Therefore, $geod(\tilde {\jmath},i)\cap I=\{i\}$ and then
$\tilde {\jmath} \in {\cal B}(i)$ which is also a contradiction.

\medskip

Thus we can assume $|geod(\tilde {\jmath},i)\cap {\cal V}^*(i)|\ge 2$.
If this intersection has at least 3 points, from the inclusion
$geod(\tilde {\jmath},i)\subseteq geod(\tilde \ell,\tilde {\jmath})
\cup geod(\tilde \ell,i)$ for any $\tilde \ell \in \widetilde I$,
we would find a point $k \in {\cal V}^*(i)$ for which
$geod(k,i)\cap I$ contains at least 3 points. This is
a contradiction, and the result follows.

\medskip

\noindent $(iv)$ For $\tilde {\jmath}$ and $k\in I$ we consider
$geod(\tilde {\jmath},k)$. The first point in this geodesics
(when starting from $\tilde {\jmath}$) belonging to $I$
makes the job.

\medskip

\noindent $(v)$ If $i,j$ are not neighbors in
$\widetilde{\cal T}$ then
$geod(i,j)$ contains strictly
$\{i,j\}$. Take $\tilde k\neq i$ the
closest point to $i$ in $geod(i,j)$. Clearly
$\tilde k \in \widetilde I\setminus I$, otherwise
$j\notin {\cal V}^*(i)$. By the
same reason $geod(\tilde k,j)\cap I=\{j\}$ and therefore
$\tilde k \in {\cal B}(j)$. $\Box$
\end{proof}

\medskip

Let us fix some $\tilde {\jmath} \in \widetilde I\setminus I$.
From Lemma \ref{le31}
there exists $i\in I$ such that $\tilde {\jmath} \in {\cal B}(i)$.
Then the following set is well defined and
the following equality holds,
\begin{equation}
\label{ye1}
{\widetilde I} (\tilde {\jmath}):=
\bigcap\limits_{i\in I: \tilde {\jmath} \in {\cal B}(i)} {\cal B}(i)=
\{\tilde k \in \widetilde I:\;
geod(\tilde {\jmath}, \tilde k)\cap (I\setminus\{\tilde k\})=\emptyset \}.
\end{equation}
The set ${\widetilde I} (\tilde {\jmath})$
endowed with the set of edges
$\widetilde{\cal T}\cap \left({\widetilde I} (\tilde {\jmath})\times
{\widetilde I}(\tilde {\jmath})\right)$, is
the smallest subtree containing $\tilde {\jmath}$ and whose
extremal points
${\cal E}(\tilde {\jmath})
=\{\tilde k \in {\widetilde I} (\tilde {\jmath}):
\tilde k \hbox{ has a unique neighbour in }
{\widetilde I}(\tilde {\jmath})\}$
are all in $I$.

\medskip

The property that every point in $I$ has a
finite number of $U$-neighbors supplies a good
example for the next section. Observe that
the sets ${\cal V}(i)$ are finite for $i\in I$,
is clearly equivalent to
the fact that
$ {\cal B}(i)$ are finite, for $i\in I$.
This property can be easily expressed in
terms of $U$.

\begin{lemma}
\label{le1}
The sets ${\cal B}(i)$ are finite for all $i\in I$
if and only if
\begin{equation}
\label{proo}
\forall w\in {\cal W}\; \exists I^w\subset I \hbox{ finite}
\hbox{ such that: }
\forall \; i\in I\setminus I^w, \;
\max\{U_{ij}\!:\! j\in I^w, U_{ij}\!=\!U_{jj}\}> w.
\end{equation}
\end{lemma}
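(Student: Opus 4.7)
I would first reformulate the finiteness of $\mathcal{B}(i)$ for all $i$ as follows: ``$\mathcal{B}(i)$ is finite for every $i\in I$'' is equivalent to ``for every $\tilde{k}\in\widetilde{I}\setminus I$, the island $\widetilde{I}(\tilde{k})$ is finite''. Indeed, any island contains at least one $I$-boundary point $j$ (since $\widetilde{I}$ is connected and $I\neq\emptyset$), and chasing Lemma \ref{le31}(ii) starting from $j$ gives $\widetilde{I}(\tilde{k})\subseteq\mathcal{B}(j)$; conversely, each $\mathcal{B}(i)$ decomposes as $\{i\}$ together with, for each of its finitely many $\widetilde{\mathcal T}$-neighbours $\tilde{m}$ (finite by $(H3)$), either $\{\tilde{m}\}$ when $\tilde{m}\in I$ or a subset of $\widetilde{I}(\tilde{m})$ when $\tilde{m}\in\widetilde{I}\setminus I$.

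For the direct implication, fix $w=w_n\in\mathcal{W}$ and set
$$I^w:=\{i\in I:|i|\le n+1\}\;\cup\;\bigcup_{\substack{\tilde{k}\in\widetilde{I}\setminus I\\ |\tilde{k}|=n+1}}\bigl(\widetilde{I}(\tilde{k})\cap I\bigr).$$
This is finite: the first piece is a subset of the locally finite tree $\widetilde{I}$ at depth bounded by $n+1$, and the second is a finite union (over the level-$(n+1)$ vertices of $\widetilde{I}$) of finite sets by the reformulated hypothesis. For $i\in I\setminus I^w$ one has $|i|>n+1$; let $a_i$ be the ancestor of $i$ in $\widetilde{I}$ at level $n+1$. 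If $a_i\in I$, take $j:=a_i\in I^w$. Otherwise the geodesic from $a_i$ down to $i$ meets $I$ first at some $j$ lying on the boundary of $\widetilde{I}(a_i)$, so $j\in I^w$. In both cases $j\preceq i$ and $|j|>n$, yielding (\ref{proo}).

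For the converse, assume (\ref{proo}) and suppose some $\mathcal{B}(i)$ is infinite. By the reformulation, some island $\widetilde{I}(\tilde{k})$ is infinite. Since all its $I$-points are extremal, König's lemma applied to this locally finite subtree rooted at $\tilde{k}$ produces an infinite ray $\tilde{k}=\tilde{v}_0,\tilde{v}_1,\ldots$ lying entirely in $\widetilde{I}\setminus I$. Fix $k_0$ such that $(|\tilde{v}_k|)_{k\ge k_0}$ is strictly increasing, and let $E_k\subseteq I$ be the equivalence class associated with $\tilde{v}_k$. The nested chain $E_{k_0}\supseteq E_{k_0+1}\supseteq\cdots$ cannot eventually stabilize, for otherwise any $j$ in the common value would satisfy $U_{jj}\ge w_{|\tilde{v}_k|}$ for all large $k$, contradicting $w_n\to\infty$ (a consequence of $(H2)$ together with $|\mathcal{W}|=\infty$). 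Hence for infinitely many $k$ one has $E_k\supsetneq E_{k+1}$, so $\tilde{v}_k$ has a child in $\widetilde{I}$ distinct from $\tilde{v}_{k+1}$, and I may pick $j_k\in I$ in the corresponding sibling subtree. The $j_k$ are pairwise distinct since $j_k$ does not descend from $\tilde{v}_{k+1}$ while every later $j_{k'}$ with $k'>k$ does. Now apply (\ref{proo}) with $w=w_{|\tilde{v}_{k_0}|}$: infinitely many $j_k$ fall outside the finite set $I^w$, and each such $j_k$ admits an $I$-ancestor $\ell_k\in I^w$ with $|\ell_k|>|\tilde{v}_{k_0}|$. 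But the ancestors of $j_k$ at levels in $(|\tilde{v}_{k_0}|,|\tilde{v}_k|]$ are exactly $\tilde{v}_{k_0+1},\ldots,\tilde{v}_k$, none of which is in $I$; so $\ell_k$ must sit strictly below $\tilde{v}_k$ on $\mathrm{geod}(\tilde{v}_k,j_k)$, forcing $|\ell_k|\ge|\tilde{v}_k|+1\to\infty$. This contradicts the boundedness of $|\ell|$ for $\ell$ in the finite set $I^w$.

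The main difficulty lies in the converse direction. Deducing that $\mathcal{V}(i)$ is finite under (\ref{proo}) is straightforward, since any proper ancestor of a $j\in\mathcal{V}(i)$ in $I$ would produce an extra $I$-point on $\mathrm{geod}(i,j)\cap I=\{i,j\}$; the delicate point is ruling out an infinite ``inner skeleton'' of $\mathcal{B}(i)$ hidden in $\widetilde{I}\setminus I$. The crucial ingredient for this is the non-stabilization of the nested classes $E_k$, which is forced by $w_n\to\infty$ and supplies the escape-to-infinity witnesses $(j_k)$ against which (\ref{proo}) must ultimately fail.
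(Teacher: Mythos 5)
Your proof is correct and takes a genuinely different route. In the forward direction the paper builds $I^w=\bigcup_{j\in L}\mathcal{V}^*(j)$ with $L=\{j\in I:U_{jj}\le w\}$ and then argues along $geod(i,\tilde r)$, whereas you cut $\widetilde I$ horizontally at level $n+1$ and adjoin the $I$-traces of the finitely many islands hanging there; both are valid and of comparable weight. The real divergence is in the converse. The paper's converse is a short contradiction that only establishes $\mathcal{V}^*(i)\subseteq I^w$, and then relies on the sentence just before the lemma asserting that ``the sets $\mathcal{V}(i)$ are finite\ldots is clearly equivalent to the fact that $\mathcal{B}(i)$ are finite.'' You identify this as the genuinely delicate step, and you are right to: a locally finite tree with finitely many leaves can still be infinite, so one must actually exclude a ray of $\mathcal{B}(i)$ confined to $\widetilde I\setminus I$, and that is exactly what your island reformulation, the K\"onig ray, and the non-stabilization of the nested classes $E_k$ (forced by $w_n\to\infty$, hence by $(H2)$ together with $I$ infinite) achieve. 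Your route is therefore longer but self-contained on a point the paper leaves to assertion. One possible streamlining: your witnesses $j_k$ in fact land in $\mathcal{E}(\tilde k)$, and any two extremal points of an island are mutual $U$-neighbours, so the $j_k$ exhibit directly that some $\mathcal{V}(j)$ is infinite; from there the paper's own three-line converse closes the contradiction and the ancestors $\ell_k$ become unnecessary. But the direct argument you give via (\ref{proo}) and the $\ell_k$ is equally sound.
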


\medskip

\begin{proof}
Assume ${\cal B}(i)$ are finite. Clearly, it is enough to prove
(\ref{proo}) for large $w\in W$. We shall assume that the finite set
$L=\{j\in I: U_{jj}\le w\}$ is non empty and we define
$I^w=\cup_{j\in L} {\cal V}^*(j)$.

\medskip

Fix $i_0 \in L$ as one of the
closest points in $I$ to the root $\tilde r$. For
$i \in I\setminus I^w$, the geodesic $geod(i,\tilde r)$ must contain
points on $I^w$, otherwise $geod(i,i_0)=\{i,i_0\}$ which implies
$i\in {\cal V}^*(i_0)$, a contradiction. Take
$k \in geod(i,\tilde r)\cap I^w$
the farthest point from $\tilde r$. It is clear that
$U_{ik}=U_{kk}$. Assume $U_{kk}\le w$, so $k\in L$.
If $geod(k,i)\cap I =\{k,i\}$
then $i\in I^w$ which is a contradiction. Therefore,
there is at least one $m\in (geod(k,i)\cap I)\setminus \{i,k\}$.
Take $m$ the closest of such points to $k$. Clearly
$m\in {\cal V}^*(k)\subseteq I^w$
contradicting the maximality of $k$. Then
$U_{kk}>w$, proving the desired property.

\medskip

Conversely, take $i\in I$ and consider $w=U_{ii}$. We shall prove
that ${\cal V}^*(i)\subseteq I^w$. In fact, take $j \in{\cal
V}^*(i)\setminus I^w$. By hypothesis there is $k\in I^w$ such that
$U_{kk}=U_{jk}>w$. Since $U_{jj}\ge U_{jk}=U_{kk}>w=U_{ii}$ and
$j\in {\cal V}^*(i)$, we conclude $k\in geod(i,j)$ and $k\neq i$.
Since $k\neq j$, because $k\in I^w$, we arrive to a contradiction
with the definition of ${\cal V}^*(i)$, proving the result. $\Box$
\end{proof}

\subsection{Generator and harmonic functions of an Ultrametric Matrix}
\label{genultr}

\medskip

In this section we associate to an ultrametric matrix  $U$ a $q$-matrix
through its extension $\widetilde U$.
Consider the $q$-matrix $\widetilde Q$
given by (\ref{re200}), which satisfies
$\widetilde Q \widetilde U=\widetilde U \widetilde Q=-\II_{\widetilde I}$.
We can also assume that $\widetilde Q$ is defined
in ${\widetilde I}\cup {\partial_{\tilde r}}$ as in (\ref{re81}).
Further, we consider
${\widetilde X}$  the Markov process
associated to $\widetilde Q $ with lifetime $\widetilde \zeta$.

\medskip

We assume that ${\widetilde X}$ is transient.
We denote by $\tilde \mu$
the probability measure defined on ${\tilde {\partial}}_{\infty}$,
the boundary of $(\widetilde I, {\widetilde{\cal T}})$, that
is proportional to the exit distribution of ${\widetilde X}$.

\medskip

Consider
$$
\tau:=\inf\{ t>0: \; \widetilde X_t \in I\cup
\partial_{\tilde r}\} \wedge \widetilde \zeta,
$$
We point out that $\widetilde X_{\tau}$ belongs to
$I\cup \partial_{\tilde r}\cup {\widetilde{\partial}_\infty}$
with probability one.
Notice that if $\widetilde X(0)=\tilde j \in \widetilde I\setminus I$
then
$\tau=\inf\{ t>0: \;
\widetilde X_t \in {\cal E}(\tilde {\jmath})
\cup \partial_{\tilde r}\} \wedge {\widetilde \zeta}$.

\medskip

Our main assumption is
$$
\forall \tilde {\jmath} \in \widetilde I\setminus I:\;\;
\PP_{\tilde {\jmath}}\{ \widetilde X_{\tau}
\in I\cup \partial_{\tilde r}\}=1\;.
\eqno(H4)
$$
We can also write $(H4)$ as
$\PP_{\tilde {\jmath}}\{ \tau<\widetilde \zeta\}=1$
for every $\tilde {\jmath} \in \widetilde I\setminus I$.
This is also equivalent to
$\PP_{\tilde {\jmath}}
\{ \widetilde X_{\tau} \in
{\widetilde{\partial}_\infty}\}=0$
for every $\tilde {\jmath} \in \widetilde I\setminus I$.

\medskip
In the next Theorem we associate a $q$-matrix to a general
ultrametric matrix verifying $(H1)$-$(H4)$.
\medskip

\begin{theorem}
\label{p100}
Assume $U$ satisfies $(H1)$-$(H4)$, then
there exists a matrix $Q:I\times I\to \RR$ such that
$QU=UQ=-\II_I$. Moreover $Q_{ij}\neq 0$ if and only if $j \in {\cal
V}^*(i)$,
and we have
\begin{equation}
\label{re50}
Q_{ij}=\widetilde Q_{ij}+
\sum\limits_{\tilde k \in \widetilde I\setminus I}
\widetilde Q_{i\tilde k} \PP_{\tilde k}(\widetilde X_\tau=j).
\end{equation}
For $i\neq j$ this formula takes the form
$$
Q_{ij}=\widetilde Q_{ij} \hbox{ if } (i,j)\in \widetilde {\cal T}
\hbox{ and }\;
Q_{ij}=\widetilde Q_{i\tilde k}\PP_{\tilde k}(\widetilde X_\tau=j)
\hbox{ if }
(i,j)\notin \widetilde {\cal T},\; j\in {\cal V}^*(i),
$$
where $\tilde k\in {\widetilde I}\setminus I$ is the unique neighbor of
$i$ in $\widetilde {\cal T}$, that belongs to $geod(i,j)$.
\end{theorem}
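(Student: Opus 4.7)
The plan is to take \eqref{re50} as the definition of $Q:I\times I\to \RR$ and to verify the three claims in turn. First, the sum in \eqref{re50} is finite: by $(H3)$ only the finitely many $\widetilde{\cal T}$-neighbors $\tilde k\in \widetilde I\setminus I$ of $i$ contribute $\widetilde Q_{i\tilde k}\neq 0$, and by $(H4)$ each $\PP_{\tilde k}\{\widetilde X_\tau\in \bullet\}$ is a probability measure on $I\cup\{\partial_{\tilde r}\}$.

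For the support pattern I would examine, using Lemma \ref{le31}(v), which $j\in I$ can be reached from such a $\tilde k$ without passing through other $I$-vertices: these are $i$ itself together with the $j\in {\cal V}(i)$ for which $\tilde k\in geod(i,j)$. Case by case this yields $Q_{ij}=\widetilde Q_{ij}$ when $j\in I$ and $(i,j)\in \widetilde{\cal T}$ (no $\tilde k\in \widetilde I\setminus I$ neighbor of $i$ can reach $j$ without revisiting $i$); $Q_{ij}=\widetilde Q_{i\tilde k}\PP_{\tilde k}\{\widetilde X_\tau=j\}$ for the unique $\tilde k$ on $geod(i,j)$ provided by Lemma \ref{le31}(v) when $j\in {\cal V}(i)$ and $(i,j)\notin \widetilde{\cal T}$; and $Q_{ij}=0$ when $j\notin {\cal V}^*(i)$, because every $\widetilde{\cal T}$-neighbor $\tilde k\in \widetilde I\setminus I$ of $i$ is separated from $j$ by an intermediate element of $I$.

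The main computation is $QU=-\II_I$. Fix $k\in I$ and extend $\widetilde U_{\cdot\, k}$ to $\partial_{\tilde r}$ by $0$. Since $\widetilde U_{\tilde m\, k}=w_{|\tilde m\widetilde\wedge k|}\le w_{|k|}$ is uniformly bounded, and $\widetilde U_{\cdot\, k}$ is $\widetilde Q$-harmonic at every $\tilde m\neq k$ by Proposition \ref{p1} applied to the tree matrix $\widetilde U$, the stopped process $M_t:=\widetilde U_{\widetilde X_{t\wedge \tau}\, k}$ is a bounded $\PP_{\tilde l}$-martingale for any $\tilde l\in \widetilde I\setminus I$. Hypothesis $(H4)$ guarantees $\tau<\widetilde\zeta$ a.s., so optional stopping combined with dominated convergence yields $\widetilde U_{\tilde l\, k}=\sum_{j\in I}\PP_{\tilde l}\{\widetilde X_\tau=j\}\,\widetilde U_{jk}$ (the absorbing term at $\partial_{\tilde r}$ contributes zero). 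Plugging this into \eqref{re50} and invoking Fubini--Tonelli on the nonnegative entries produces
$$
(QU)_{ik}=\sum_{j\in I}\widetilde Q_{ij}\widetilde U_{jk}+\sum_{\tilde l\in \widetilde I\setminus I}\widetilde Q_{i\tilde l}\widetilde U_{\tilde l\, k}=(\widetilde Q\widetilde U)_{ik}=-\delta_{ik},
$$
the last equality again being Proposition \ref{p1} applied to $\widetilde U$.

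To obtain $UQ=-\II_I$ I would argue that $Q$ is symmetric. Since $\widetilde Q$ is symmetric, $\widetilde X$ is reversible with respect to counting measure on $\widetilde I$, and the matrix $Q$ coincides with the generator of the trace of $\widetilde X$ on $I$ obtained by collapsing each excursion into $\widetilde I\setminus I$ through its exit distribution at $\tau$ (well defined by $(H4)$); this operation preserves reversibility with respect to counting measure on $I$, so $Q_{ij}=Q_{ji}$ and consequently $UQ=(QU)^\top=-\II_I$. The main obstacle of the argument is the optional-stopping step, which combines the uniform bound $\widetilde U_{\cdot\, k}\le w_{|k|}$ with $(H4)$ to prevent any mass from leaking to $\widetilde\partial_\infty$; the symmetry assertion, although probabilistically transparent, is the next most delicate point, and would be the place to check most carefully in a fully rigorous write-up.
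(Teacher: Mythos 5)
Your proposal is correct and tracks the paper's proof closely. Both compute $QU=-\II_I$ by reducing to the identity $\widetilde U_{\tilde l\,k}=\sum_{j\in I}\PP_{\tilde l}\{\widetilde X_\tau=j\}\,\widetilde U_{jk}$ for $\tilde l\in\widetilde I\setminus I$; the paper obtains it by observing that $U_{\bullet k}$ restricted to $\widetilde I(\tilde l)$ is harmonic for the generator $C$ absorbed at the extremal points, while you phrase it as optional stopping of the bounded martingale $\widetilde U_{\widetilde X_{t\wedge\tau}\,k}$, with $(H4)$ ruling out escape of mass to $\widetilde\partial_\infty$ — the same argument in two dialects, and the uniform bound $\widetilde U_{\bullet k}\le w_{|k|}$ you highlight is exactly the domination the paper uses implicitly.

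The one place you genuinely diverge is the symmetry of $Q$, which yields $UQ=(QU)^\top=-\II_I$. The paper proves it by a bare-hands path-reversal computation in the discrete skeleton $\widetilde Y$: the weight of a trajectory running from $\tilde k$ to $\tilde\ell$ through $\widetilde I\setminus I$ equals that of the reversed trajectory, using only the symmetry of $\widetilde Q$. You instead invoke the general principle that the Dirichlet-form trace of a reversible chain on a subset, taken through harmonic extension, remains reversible with respect to the restricted counting measure. That principle is correct and is a clean conceptual shortcut, but as written it leaves the key identification unverified — namely that the trace form on $L^2(I)$ is $-\langle Qf,g\rangle$ with $Q$ given by \eqref{re50}. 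This is so because $\widetilde Q(Hf)$ vanishes on $\widetilde I\setminus I$ for the harmonic extension $Hf(\tilde m)=\EE_{\tilde m}\bigl(f(\widetilde X_\tau)\bigr)$, so that $-\sum_{\tilde m,\tilde n}\widetilde Q_{\tilde m\tilde n}(Hf)(\tilde m)(Hg)(\tilde n)=-\sum_{i\in I}(\widetilde Q Hf)(i)g(i)=-\sum_{i,j\in I}Q_{ij}f(j)g(i)$. If you want a self-contained write-up, the paper's explicit trajectory reversal is the more elementary route; if you prefer your route, you should display that one identity.
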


\medskip

\begin{proof}
We set
$A\!=\!\tilde Q_{II}$, $B\!=\!\widetilde Q_{I, \widetilde I\setminus I}$
and $V\!=\!\widetilde U_{\widetilde I\setminus I, I}$. Since
$\widetilde U_{II}\!=\!U$ we get $AU\!+\!BV\!=\!-\II_I$.

\medskip

The crucial step in the proof is to get a
$(\widetilde I\setminus I) \times I$
matrix $Z$ whose rows
are summable and  verifies $ZU=V$, which means
$$
\widetilde U_{\tilde {\jmath} i}=
\sum\limits_{k \in I} Z_{\tilde {\jmath} k} U_{k i},\;
\hbox{ for all }
\tilde {\jmath} \in \widetilde I\setminus I,\; i\in I.
$$
For any $\tilde {\jmath} \in \widetilde I\setminus I$
consider the subtree
$\widetilde J:=\widetilde{I} (\tilde {\jmath})$
given by (\ref{ye1}). We denote
by ${\cal E}\subset I$ the set of extremal points
of $\widetilde J$. Note that
$\widetilde J\setminus {\cal E} \subseteq \widetilde I$.
We consider the following $q-$matrix on
$\widetilde J \times \widetilde J$
$$
C_{\tilde l \tilde k}=\widetilde Q_{\tilde l \tilde k} \hbox{ if }
\tilde l\in \widetilde J\setminus {\cal E}
\hbox{ and } C_{\tilde l \tilde k}=0 \hbox{ otherwise }.
$$
By definition of $\tau$,
the Markov process induced by $C$ is just the stopped process
$\widetilde X^{\tau}$. From the property
$\widetilde Q \widetilde U=-\II_{\widetilde I}$
it is deduced that for each $i\in I$ the
restriction  of $U_{\bullet i}$ to $\widetilde J$, is a $C$-harmonic
function. Therefore,
$$
U_{\tilde {\jmath} i}=\EE_{\tilde {\jmath}}(U_{\widetilde X_\tau i})=
\sum_{k\in {\cal E}} \PP_{\tilde {\jmath}}(\widetilde X_\tau=k) U_{ki},
$$
which gives the desired matrix $Z$.
Since $B$ is finitely supported and the rows of $Z$ are
summable we get
\begin{equation}
\label{re52}
(A+BZ)U=-\II_I,
\end{equation}
then $Q=A+BZ$ should be the desired
$q-$matrix. The explicit formula for $Q$ is
\begin{equation}
\label{re53}
Q_{ij}=\widetilde Q_{ij}+
\sum\limits_{\tilde k\in \widetilde I\setminus I}
\widetilde Q_{i\tilde k}Z_{\tilde k j}=
\widetilde Q_{ij}+
\sum\limits_{\tilde k\in \widetilde I\setminus I}
\widetilde Q_{i\tilde k}\PP_{\tilde k}(\widetilde X_\tau = j).
\end{equation}
From the structure of $\widetilde Q$ the last sum in (\ref{re53}) runs
over $\tilde k \in \widetilde I\setminus I$
which are neighbors of $i$ with respect to
$\widetilde {\cal T}$. From the shape of $Z$ these values of
$\tilde k$ are further restricted to the set ${\cal V}^*(j)$.
According to the Lemma \ref{le31} part ($v$)
the set of such points is
not empty when $(i,j)\notin \widetilde {\cal T}$ and moreover this set
contains exactly one point $\tilde k \in \widetilde I$.
In summary, we have for $i\neq j$
$$
Q_{ij}=\widetilde Q_{ij} \hbox{ if } (i,j)\in \widetilde {\cal T}
\hbox{ and } Q_{ij}=
\widetilde Q_{i\tilde k}\PP_{\tilde k}(\widetilde X_\tau=j) \hbox{ if }
(i,j)\notin \widetilde {\cal T},\; j\in {\cal V}(i);
$$
where in the last case, $\tilde k$ is the unique neighbor of $i$
in $\widetilde {\cal T}$ belonging to $geod(i,j)$. From this
formula we deduce that for $i\neq j$ we have
$Q_{ij}>0$ if and only if $j\in {\cal V}(i)$.
From (\ref{re52}) we deduce that $Q_{ii}<0$. Also
we get
$$
Q_{ii}=\widetilde Q_{ii}+\sum\limits_{\tilde k\in \widetilde I:
\; (\tilde k,i) \in
\widetilde {\cal T}} \widetilde Q_{i\tilde k}
\PP_{\tilde k}(\widetilde X_\tau=i).
$$
Now, let us prove that $Q$ is a $q-$matrix. Let
$k \in {\cal V}(i)$ be
such that $U_{ki}=\min\{U_{ji}:\; j\in {\cal V}(i)\}$.
This minimum is attained because the set $\{ w \in {\cal W}:
w\le U_{ii}\}$ is finite.
From the ultrametric property of $U$ we have
$U_{jk}\ge \min\{U_{ji},U_{ik}\}=U_{ik}$
for $j\in {\cal V}^*(i)$. Then, by using (\ref{re52}) we
deduce that
$$
0\ge Q_{ii}U_{ik}+\sum\limits_{j\in {\cal V}(i)}
Q_{ij}U_{jk}\ge U_{ik}(\sum_{j\in I} Q_{ij}).
$$
Hence $Q$ is a $q$-matrix.

\medskip

To finish the proof it is enough to show that $Q$ is a symmetric
matrix. This
is equivalent to prove that
\begin{equation}
\label{re54}
\widetilde Q_{i\tilde k}\PP_{\tilde k}(\widetilde X_\tau =j)=
\widetilde Q_{j\tilde l} \PP_{\tilde l}(\widetilde X_\tau=i),
\hbox{ for } j\in {\cal V}(i),\;
(j,i)\notin \widetilde {\cal T},
\end{equation}
where $\tilde k$ (respectively $\tilde l$) is the unique neighbor
in $\widetilde {\cal T}$ of $i$ (of $j$ respectively)
given by Lemma \ref{le31} part (v). The probabilities appearing in
(\ref{re54}) can be
computed in terms of $\widetilde Y=(\widetilde Y_n)_{n\in \NN}$, the
discrete skeleton
of the Markov chain on $\widetilde X$
taking values on $\widetilde I$. The
transition probabilities for this chain are
$$
\PP(\widetilde Y_1=y_1|\widetilde Y_0=y_0)=
{\widetilde Q_{y_0 y_1}\over (-\widetilde Q_{y_0 y_0})}.
$$
If we define $N=\min\{n\ge 0:\; \widetilde Y_n \in I\cup
\{\partial_{\tilde r}\}\}$ then
$$
\PP_{\tilde k}(\widetilde X_\tau =j)=
\PP_{\tilde k}(\widetilde Y_N =j).
$$
This last probability can be computed by summing up all possible
trajectories
$\widetilde Y_0=\tilde k,\, \widetilde Y_1=y_1,...,
\widetilde Y_{n-2}=y_{n-2},\, \widetilde Y_{n-1}={\tilde \ell},\,
\widetilde Y_n=j$, which
do not visit $I$ at any intermediate state. The probability of such
trajectory is
$$
{\widetilde Q_{\tilde k y_1}\over (-\widetilde Q_{\tilde k \tilde k})}
{\widetilde Q_{y_1 y_2}\over (-\widetilde Q_{y_1 y_1})}
\cdots
{\widetilde Q_{y_{n-2} {\tilde \ell}}\over
(-\widetilde Q_{y_{n-2} y_{n-2}})}
{\widetilde Q_{{\tilde \ell} j} \over
(-\tilde Q_{{\tilde \ell} {\tilde \ell}})}
$$
The probability of the reverse trajectory
$\widetilde Y_0={\tilde \ell},\, \widetilde Y_1=y_{n-2},
...,\widetilde Y_{n-2}=y_1,\, \widetilde Y_{n-1}=\tilde k,\,
\widetilde Y_n=i$, is
$$
{\widetilde Q_{{\tilde \ell} y_{n-2}}\over
(-\widetilde Q_{{\tilde \ell} {\tilde \ell}})}
{\widetilde Q_{y_{n-2} y_{n-3}}\over
(-\widetilde Q_{y_{n-2} y_{n-2}})}
\cdots
{\widetilde Q_{y_1 {\tilde k}}\over (-\widetilde Q_{y_1 y_1})}
{\widetilde Q_{\tilde k i}\over (-\widetilde Q_{\tilde k \tilde k})}.
$$
The symmetry of $\widetilde Q$ implies that (\ref{re54}) holds. Therefore,
$Q$ is symmetric and we deduce that $UQ=-\II_I$.
This finishes the proof. $\Box$
\end{proof}

\medskip

As usual we say that a function $h:I\to \RR$ is $Q-$harmonic if
$Qh=0$.
Our main result in relation with harmonic functions
for ultrametric matrices is the following one.

\medskip

\begin{theorem}
\label{t1}
Assume $U$ satisfies $(H1)$-$(H4)$.
Given a bounded  $Q$-harmonic function $h$ defined
on $I$ there exists a unique $\widetilde Q$-harmonic function
$\tilde h$ defined on $\widetilde I$, which is an extension of $h$.
\end{theorem}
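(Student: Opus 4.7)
The plan is to construct $\tilde h$ explicitly by stopping the walk $\widetilde X$ at its first entrance to $I\cup\{\partial_{\tilde r}\}$, and then to observe that formula (\ref{re50}) of Theorem \ref{p100} is exactly what transmutes $Q$-harmonicity of $h$ on $I$ into $\widetilde Q$-harmonicity of $\tilde h$ at sites of $I$. For uniqueness, the key ingredient will be $(H4)$, used via optional stopping on the discrete jump chain of $\widetilde X$.

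For existence, with the standing convention $h(\partial_{\tilde r})=0$, set
$$
\tilde h(\tilde \jmath) \;:=\; \EE_{\tilde \jmath}\bigl(h(\widetilde X_\tau)\bigr),\qquad \tilde \jmath\in \widetilde I.
$$
If $\tilde \jmath=i\in I$, then $\tau=0$, so $\tilde h(i)=h(i)$, and $\|\tilde h\|_\infty\le \|h\|_\infty$ by $(H4)$. Harmonicity on $\widetilde I\setminus I$ follows from the strong Markov property applied at the first jump of $\widetilde X$, accounting for the defect of $\widetilde Q$ at $\tilde r$ as a transition to $\partial_{\tilde r}$ (contributing $0$ by the convention): this yields the one-step mean-value identity $(-\widetilde Q_{\tilde \jmath \tilde \jmath})\,\tilde h(\tilde \jmath)=\sum_{\tilde k\ne \tilde \jmath}\widetilde Q_{\tilde \jmath\tilde k}\,\tilde h(\tilde k)$, which is $\widetilde Q\tilde h(\tilde \jmath)=0$. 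For $i\in I$ one uses $\tilde h(\tilde k)=\sum_{j\in I}\PP_{\tilde k}(\widetilde X_\tau=j)\,h(j)$ on $\widetilde I\setminus I$, swaps the sum over $j$ with the finite sum over neighbors of $i$ (rows of $\widetilde Q$ are finitely supported, $h$ is bounded), and invokes (\ref{re50}) to get
$$
\widetilde Q\tilde h(i) \;=\; \sum_{j\in I} h(j)\Bigl(\widetilde Q_{ij}+\sum_{\tilde k\in \widetilde I\setminus I}\widetilde Q_{i\tilde k}\,\PP_{\tilde k}(\widetilde X_\tau=j)\Bigr) \;=\; \sum_{j\in I} Q_{ij}\,h(j) \;=\; 0.
$$

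For uniqueness, suppose $\tilde h_1,\tilde h_2$ are two bounded $\widetilde Q$-harmonic extensions of $h$; their difference $\tilde h_0:=\tilde h_1-\tilde h_2$ is a bounded $\widetilde Q$-harmonic function vanishing on $I$. Passing to the discrete skeleton $(\widetilde Y_n)$ of $\widetilde X$, the function $\tilde h_0$ is harmonic for the jump chain on $\widetilde I\cup\{\partial_{\tilde r}\}$ (the defect at $\tilde r$ becomes a transition to the absorbing state $\partial_{\tilde r}$ where $\tilde h_0=0$). Under $(H4)$ the exit time $N:=\inf\{n\ge 0:\widetilde Y_n\in I\cup\{\partial_{\tilde r}\}\}$ is $\PP_{\tilde \jmath}$-a.s.\ finite for every $\tilde \jmath\in \widetilde I\setminus I$, so optional stopping applied to the bounded martingale $(\tilde h_0(\widetilde Y_{n\wedge N}))_{n\ge 0}$ gives $\tilde h_0(\tilde \jmath)=\EE_{\tilde \jmath}(\tilde h_0(\widetilde Y_N))=0$, since $\tilde h_0$ vanishes on both $I$ and $\partial_{\tilde r}$. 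The main obstacle is precisely this uniqueness step: $(H4)$ is what rules out the possibility that $\tilde h_0$ extracts information from $\widetilde \partial_\infty$ invisible to the restriction to $I$, and without it genuine non-uniqueness could occur because the optional-stopping identity would acquire an uncontrolled boundary-at-infinity term.
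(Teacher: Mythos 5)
Your existence argument coincides with the paper's proof: the same construction $\tilde h(\tilde\jmath)=\EE_{\tilde\jmath}(h(\widetilde X_\tau))$, the same use of the strong Markov property at the first jump to obtain $\widetilde Q$-harmonicity on $\widetilde I\setminus I$, and the same algebraic manipulation via the identity (\ref{re50}) of Theorem \ref{p100} to verify $\widetilde Q\tilde h(i)=\sum_j Q_{ij}h(j)=0$ for $i\in I$. Worth noting: the paper's proof actually \emph{omits} the uniqueness part altogether, so your optional-stopping argument on the jump chain is a genuine addition that the statement needs. It is correct for what it proves, namely uniqueness among \emph{bounded} $\widetilde Q$-harmonic extensions: $\tilde h_0$ bounded is what makes $(\tilde h_0(\widetilde Y_{n\wedge N}))$ a uniformly integrable martingale, and $(H4)$ guarantees $N<\infty$ a.s.\ from any $\tilde\jmath\in\widetilde I\setminus I$, so $\tilde h_0(\tilde\jmath)=\EE_{\tilde\jmath}(\tilde h_0(\widetilde Y_N))=0$. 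One small precision you might add: since the constructed $\tilde h$ is automatically bounded by $\|h\|_\infty$ (again by $(H4)$), ``bounded'' is the right class in which to state uniqueness, and without a boundedness or positivity restriction on the competing extension the optional-stopping step would indeed break, as you observe in your closing remark.
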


\medskip

\begin{proof}
Consider the function
$$
\tilde h(\tilde {\imath})=\EE_{\tilde {\imath}}
\left(h(\widetilde X_\tau)\right), \; \tilde {\imath} \in \widetilde I.
$$
Clearly $\tilde h$ is an extension
of $h$. Using the strong Markov property for the time of first
jump of $\widetilde X$ we deduce that $\tilde h$ is
$\widetilde Q$-harmonic at every
$\tilde {\jmath} \in \widetilde I\setminus I$. Now, for
$i\in I$ we have

\begin{eqnarray*}
\sum\limits_{\tilde {\jmath} \in \widetilde I}
\widetilde Q_{i \tilde {\jmath}}
\tilde h (\tilde {\jmath})&=& \sum\limits_{j\in I} \widetilde Q_{i j}
h(j)+\sum\limits_{\tilde {\jmath} \in \widetilde I\setminus I}
\widetilde Q_{i \tilde {\jmath}} \tilde h (\tilde {\jmath})=
\sum\limits_{j \in I} \widetilde Q_{i j} h(j)+
\sum\limits_{\tilde {\jmath} \in \widetilde I\setminus I}
\widetilde Q_{i \tilde {\jmath}}
\EE_{\tilde {\jmath}}(h(\widetilde X_\tau))\\
&=&\sum\limits_{j \in I} \widetilde Q_{i j} h (j)+
\sum\limits_{\tilde {\jmath} \in \widetilde I \setminus I}
\widetilde Q_{i \tilde {\jmath}} \Big(\sum\limits_{k\in I}
\PP_{\tilde {\jmath}}(\widetilde X_\tau=k)h(k)\Big) \\
&=&\sum\limits_{j\in I} \Big(\widetilde Q_{ij}
+\sum\limits_{\tilde k \in \widetilde I \setminus I}
\widetilde Q_{i \tilde k}
\PP_{\tilde k}(\widetilde X_\tau=j)\Big) h(j)=
\sum\limits_{j\in I} Q_{ij} h(j),
\end{eqnarray*}
where the last equality follows from (\ref{re50}). Since $h$ is
$Q$-harmonic we get
$\sum\limits_{\tilde {\jmath} \in \widetilde I}
\widetilde Q_{i \tilde {\jmath}} \tilde h (\tilde {\jmath})=0$.
Then $\tilde h$ is $\widetilde Q$-harmonic at $i\in I$. $\Box$
\end{proof}

\medskip

\subsection{The Boundary of an Ultrametric Matrix}
\label{boundultr}

Recall that ${\tilde {\partial}}_{\infty}$
can be identified with
$$
{\tilde {\partial}}_{\infty}=\{({\tilde {\imath}}_n: n\ge 0):
{\tilde {\imath}}_0=\tilde r,
\forall n\in \NN, |{\tilde {\imath}}_n|=n \hbox{ and }
({\tilde {\imath}}_n, {\tilde {\imath}}_{n+1})
\in {\widetilde {\cal T}}
\}.
$$
endowed with the topology generated by the sets
${\widetilde {\cal C}}=
\{\tilde {\partial}_{\infty}(\tilde {\imath})
=[\tilde {\imath}, \infty]\cap {\tilde {\partial}}_{\infty}:
\tilde {\imath}\in \widetilde I\}$. We denote by
${\widetilde {\cal F}}_\infty$ the associated $\sigma-$field.
We will denote by
${\cal F}_\infty$ the $\sigma-$field
on ${\tilde {\partial}}_{\infty}$
generated by the sets
$\{\tilde {\partial}_{\infty}(i): i\in I\}$. We have
${\cal F}_\infty\subseteq {\widetilde {\cal F}}_\infty$, and as
we shall see further in an example, this inclusion can be strict.

\medskip

The following definition of the boundary $\partial_{\infty}^U$
associated to an ultrametric matrix extends the one for a
tree. An infinite path $(i_n: n\in \NN)$ in $I$
is called a $\preceq-$chain if $i_n\prec i_{n+1}$
for every $n\in \NN$, and the $\preceq-$chain is maximal
if we cannot add any element of $I$ to it in order that it
continues to be a $\preceq-$chain. In a tree a
$\preceq-$chain $(i_n: n\in \NN)$ is maximal, if and only
if $i_0=r$ and $|i_n|=n$ for every $n\in \NN$.
The boundary of $I$ with respect to the ultrametric matrix $U$ is
defined as
$\partial_{\infty}^U=
\{(i_n: n\in \NN) \hbox{ is a maximal $\preceq-$chain }\}$.
We endowed $\partial_{\infty}^U$ with the trace topology
from  $\tilde \partial_{\infty}$.
From the equality
$$
\partial_{\infty}^U=
\cap_{n\ge 0}\Big(
\cup_{m\ge n} \cup_{i\in I: |i|=m}\{\xi\in {\widetilde{\partial}_\infty}:
\xi(m)=i \}\Big),
$$
we get that $\partial_{\infty}^U\in {\widetilde {\cal F}}_{\infty}$.

\medskip

The function
given by
\begin{equation}
\label{fronteras}
{\bf i}^U_\infty:\partial_{\infty}^U\to {\widetilde {\partial}}_{\infty},
\;
{\bf i}^U_\infty((i_n: n\!\ge \!0))\!=\!({\tilde {\imath}}_n: n\!\ge \!0)
\Leftrightarrow \{i_n: n\!\ge \!0\}
\subseteq \{{\tilde {\imath}}_n: n\!\ge \!0\},
\end{equation}
is a well-defined one-to-one function.
We will identify $\partial_{\infty}^U$ and
${\bf i}^U_\infty(\partial_{\infty}^U)$.

\medskip

In general,
${\bf i}^U_\infty$ is not onto as shows the following example.

\medskip

\noindent {\bf Examples}.
Let $A$ be a finite set (an alphabet), we set $A^*$ the set of
finite words. In particular the empty word, denoted
by $r$ is an element of  $A^*$.
The length of a word $i$ is denoted by $|i|$, so
$|r|=0$. If $|i|\ge 1$ and $1\le m\le |i|$ we denote by
$i[1,m]$ the set of first $m$ coordinates of $i$. For any two words
$i$, $j$ we define the function $N(i,j)$ by
$N(i,j)=0$ if $i=r$ or $j=r$, and $N(i,j)=\max\{m\le \min\{|i|,|j|\}:
i[1,m]=j[1,m]\}$ when $i$ and $j$ are not $r$.
Let $w:\NN\to \RR_+$ be
a positive strictly increasing function.
For an infinite subset
$I\subseteq A^*$ we define the matrix $U$ by
$U_{i j}=w({N(i,j)})$,
for $i, j \in I$. In the sequel we fix $A=\{0,1,2\}$.

\medskip

\noindent {\it Example 1}. Let $I$ be the set of finite words finishing by
$1$. Then it is easy to see that
the minimal tree extension can be identified with the rooted tree
$(\widetilde I, {\widetilde{\cal T}})$ where
$\widetilde I=\{0,1,2\}^*$ and such that
two points $i, j$
are ${\widetilde{\cal T}}-$neighbors if $||i|-|j||=1$ and
$N(i,j)=\min\{|i|,|j|\}$.
Therefore ${\tilde {\partial}}_{\infty}$ can be identified
with $\{0,1,2\}^\NN$ and $\partial_{\infty}^U$ with the
set of infinite sequences in $\{0,1,2\}^\NN$ containing an
infinite number of $1's$. In this example
${\widetilde {\cal F}}_\infty$
does not coincide
with ${\cal F}_\infty$ on ${\tilde {\partial}}_{\infty}$,
because in this last $\sigma-$field
all the infinite sequences in $\{0,2\}^\NN$ cannot be separated.

\medskip

\noindent {\it Example 2}. Let $I$ be the set of finite words of the form
$\{0,2\}^* 1$, that is they finish by $1$ and all the other letters
are $0$ or $2$. Then in the minimal tree extension we can identify
$\widetilde I=I$, and ${\tilde {\partial}}_{\infty}$
with $\{0,2\}^\NN$. Nevertheless, $\partial_{\infty}^U$ is empty.
$\Box$

\bigskip

\centerline{\resizebox{5cm}{!}{\begin{picture}(0,0)%
\includegraphics{figura3.pstex}%
\end{picture}%
\setlength{\unitlength}{4144sp}%
\begingroup\makeatletter\ifx\SetFigFont\undefined%
\gdef\SetFigFont#1#2#3#4#5{%
  \reset@font\fontsize{#1}{#2pt}%
  \fontfamily{#3}\fontseries{#4}\fontshape{#5}%
  \selectfont}%
\fi\endgroup%
\begin{picture}(4549,4104)(1501,-5053)
\put(1501,-2746){\makebox(0,0)[lb]{\smash{\SetFigFont{17}{20.4}{\rmdefault}{\mddefault}{\updefault}{\color[rgb]{0,0,0}$i$}%
}}}
\put(2356,-3676){\makebox(0,0)[lb]{\smash{\SetFigFont{17}{20.4}{\rmdefault}{\mddefault}{\updefault}{\color[rgb]{0,0,0}$j$}%
}}}
\put(3256,-4546){\makebox(0,0)[lb]{\smash{\SetFigFont{17}{20.4}{\rmdefault}{\mddefault}{\updefault}{\color[rgb]{0,0,0}$k$}%
}}}
\put(5926,-4981){\makebox(0,0)[lb]{\smash{\SetFigFont{17}{20.4}{\rmdefault}{\mddefault}{\updefault}{\color[rgb]{0,0,0}$\xi$}%
}}}
\end{picture}
}}

\medskip

\centerline{Figure 3: $i,\ j,\ k\in I$;\ \
$\xi \in\widetilde\partial_\infty\setminus
\partial^U_\infty$}

\bigskip

Then, in general $\partial_{\infty}^U$ is small compare to
${\widetilde {\partial}}_{\infty}$, but as the following result
shows, under $(H4)$ it has full $\tilde \mu$-measure.

\begin{lemma}
\label{lefr1}
Property $(H4)$ is equivalent to $\tilde \mu(\partial_{\infty}^U)=1$.
\end{lemma}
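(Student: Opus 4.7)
The lemma asserts an equivalence; I prove both implications.

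For the direction $\tilde\mu(\partial_\infty^U)=1\Rightarrow(H4)$, I argue by contrapositive. If $(H4)$ fails at some $\tilde\jmath\in\widetilde I\setminus I$, then $p:=\PP_{\tilde\jmath}(\widetilde X_\tau\in\widetilde\partial_\infty)>0$, and on this positive-probability event the exit point $\xi$ has $\tilde\jmath\in{\it geod}(\tilde r,\xi)$ with its geodesic past $\tilde\jmath$ lying entirely in $\widetilde I\setminus I$, so $\xi\in\widetilde\partial_\infty\setminus\partial_\infty^U$. Since $\widetilde X$ is irreducible on the connected locally finite tree $\widetilde I$, one has $\PP_{\tilde r}(T_{\tilde\jmath}<\widetilde\zeta)>0$, and the strong Markov property at $T_{\tilde\jmath}$ yields $\PP_{\tilde r}(\widetilde X_{\widetilde\zeta}\in\widetilde\partial_\infty\setminus\partial_\infty^U)\ge\PP_{\tilde r}(T_{\tilde\jmath}<\widetilde\zeta)\cdot p>0$, contradicting $\tilde\mu(\partial_\infty^U)=1$.

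For the converse $(H4)\Rightarrow\tilde\mu(\partial_\infty^U)=1$, equivalently $\PP_{\tilde r}(\widetilde X_{\widetilde\zeta}\in\widetilde\partial_\infty\setminus\partial_\infty^U)=0$, I decompose
$$
\widetilde\partial_\infty\setminus\partial_\infty^U=\bigcup_{\tilde\jmath\in\widetilde I\setminus I}A_{\tilde\jmath},\quad A_{\tilde\jmath}:=\{\xi\in\widetilde\partial_\infty:\tilde\jmath\in{\it geod}(\tilde r,\xi),\ \xi(m)\notin I\ \forall m>|\tilde\jmath|\}.
$$
Countability of $\widetilde I\setminus I$ reduces the task to showing $\PP_{\tilde r}(\widetilde X_{\widetilde\zeta}\in A_{\tilde\jmath})=0$ for each $\tilde\jmath$, and strong Markov at $T_{\tilde\jmath}$ further reduces this to $h(\tilde\jmath):=\PP_{\tilde\jmath}(\widetilde X_{\widetilde\zeta}\in A_{\tilde\jmath})=0$. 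Under $(H4)$ the first hit $\tau$ of $I\cup\{\partial_{\tilde r}\}$ from $\tilde\jmath$ is a.s.\ finite and $\widetilde X_\tau\in{\cal E}(\tilde\jmath)\cup\{\partial_{\tilde r}\}$, so by the strong Markov property at $\tau$ (and $h(\partial_{\tilde r})=0$) one has $h(\tilde\jmath)=\sum_{k\in{\cal E}(\tilde\jmath)}\PP_{\tilde\jmath}(\widetilde X_\tau=k)\,h(k)$.

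The key step is the estimate $h(k)\le\PP_k(T_{\tilde\jmath}<\widetilde\zeta)\,h(\tilde\jmath)$: every $\widetilde I$-geodesic from $k\in{\cal E}(\tilde\jmath)$ to a point of $A_{\tilde\jmath}$ should pass through $\tilde\jmath$, so by the strong Markov property at $T_{\tilde\jmath}$ applied to the process starting at $k$, the exit probability in $A_{\tilde\jmath}$ factors through the return probability to $\tilde\jmath$. Combining with the previous display yields $h(\tilde\jmath)\le\rho\,h(\tilde\jmath)$ with $\rho:=\sum_{k\in{\cal E}(\tilde\jmath)}\PP_{\tilde\jmath}(\widetilde X_\tau=k)\PP_k(T_{\tilde\jmath}<\widetilde\zeta)<1$ (strict inequality forced by transience of $\widetilde X$: the return probability to a transient state is strictly less than one). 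Therefore $h(\tilde\jmath)=0$.

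The main obstacle is the geometric-series step, specifically the inequality $h(k)\le\PP_k(T_{\tilde\jmath}<\widetilde\zeta)\,h(\tilde\jmath)$: when $k\notin[\tilde\jmath,\infty]$ it is automatic because the geodesic from $k$ to any descendant of $\tilde\jmath$ must cross $\tilde\jmath$, but when $k\in[\tilde\jmath,\infty]\cap{\cal E}(\tilde\jmath)$ the geodesic from $k$ to some $\xi\in A_{\tilde\jmath}$ may meet ${\it geod}(\tilde r,\xi)$ at a strictly deeper point $\xi(l)\in\widetilde I\setminus I$ (since all intermediate nodes of ${\it geod}(\tilde\jmath,k)$ are in $\widetilde I\setminus I$) rather than at $\tilde\jmath$. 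Handling this subcase requires either an inductive argument over $|\tilde\jmath|$, treating first the $\tilde\jmath$ for which $[\tilde\jmath,\infty]\cap{\cal E}(\tilde\jmath)=\emptyset$ and then lifting the conclusion by recursion into $A_{\tilde\jmath'}$ for deeper $\tilde\jmath'$, or an equivalent enlargement of the sets $A_{\tilde\jmath}$; this is the delicate point of the proof.
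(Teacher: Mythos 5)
Your first direction ($\tilde\mu(\partial_\infty^U)=1\Rightarrow(H4)$) is essentially the paper's argument and is correct, with one small inaccuracy: it need not be true that $\tilde\jmath\in{\it geod}(\tilde r,\xi)$, since the process killed on $I\cup\{\partial_{\tilde r}\}$ and started at $\tilde\jmath$ can wander \emph{above} $\tilde\jmath$ inside ${\widetilde I}(\tilde\jmath)$ before escaping. What is actually used (and what the paper uses) is that the escape point $\xi$ is in the closure of ${\widetilde I}(\tilde\jmath)\setminus{\cal E}(\tilde\jmath)\subset\widetilde I\setminus I$, so the tail of ${\it geod}(\tilde r,\xi)$ avoids $I$ and hence $\xi\notin\partial_\infty^U$; the strong Markov transfer from $\PP_{\tilde\jmath}$ to $\PP_{\tilde r}$ that you add is a detail the paper leaves implicit and is correct.

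For the converse direction you set up a contraction $h(\tilde\jmath)\le\rho\,h(\tilde\jmath)$ and you are right that the step $h(k)\le\PP_k(T_{\tilde\jmath}<\widetilde\zeta)\,h(\tilde\jmath)$ is problematic precisely when $k\in{\cal E}(\tilde\jmath)\cap[\tilde\jmath,\infty)$: then $k\wedge\xi$ can be a node of ${\it geod}(\tilde\jmath,k^-)$ strictly deeper than $\tilde\jmath$, so the process can re-enter $A_{\tilde\jmath}$ without ever returning to $\tilde\jmath$, and your inequality fails. This gap is genuine and the recursion you are tempted to set up does not obviously terminate: iterating the decomposition pushes the problem to deeper vertices $\tilde\jmath'=k\wedge\xi$ and the bound does not close by itself. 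It is worth noting that the paper's own proof of this direction is a single assertion, namely that $\PP_{\tilde\imath}\{\widetilde X_{\widetilde\zeta}\in\widetilde\partial_\infty,\ \widetilde X_{\widetilde\zeta}(\ell)\notin I\ \forall\ell\}>0$ ``contradicts $(H4)$'' — and this is subject to the very same subtlety you flag: the fact that the \emph{exit geodesic} avoids $I$ does not imply that the \emph{trajectory} avoids $I$, since under $(H4)$ the process does hit $I$ and may nonetheless exit through an $I$-free geodesic by approaching $\xi$ from the side. So your proposal matches the paper's approach in both directions, you are more explicit about the estimate the paper only asserts, and you have correctly isolated the delicate point; but neither your proposal nor the sentence in the paper actually closes that point, so as it stands the second direction of your proof is incomplete.
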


\medskip

\begin{proof}
First notice that if for some $\tilde {\jmath} \in \widetilde I\setminus I$
it holds
$\PP_{\tilde {\jmath}}\{ \widetilde X_{\tau}
\in I\cup \partial_{\tilde r}\}<1$, then
$\PP_{\tilde {\jmath}}\{ {\widetilde X}_{\tilde \zeta}
\in {\widetilde{\partial}_{\infty}}\setminus
\partial_{\infty}^U\}>0$. Hence, the condition is necessary for $(H4)$.
For the reciprocal,
assume that
$\tilde \mu\Big({\widetilde{\partial}_{\infty}}\setminus
\partial_{\infty}^U\Big)>0$.
Therefore there exists $n\ge 0$ such that
$\PP_{\tilde r}\{A_n\}>0$, where
$A_n=
\cap_{m\ge n} \cap_{i\in I: |i|=m}\{\xi\in {\widetilde{\partial}_\infty}:
\xi(m)\neq i \}\Big)$. Take any
${\tilde {\imath}}\in \widetilde I\setminus I$, $|{\tilde {\imath}}|=n$
such that $\widetilde \partial_{\tilde {\imath}}(\infty)\cap A_n$
has positive $\PP_{\tilde r}$-measure. Then we have
$\PP_{{\tilde {\imath}}}\{{\widetilde X}_{\tilde \zeta}\in
{\widetilde{\partial}_{\infty}},\,
 {\widetilde X}_{\tilde \zeta}(\ell)\notin I,\; \forall \ell\ge 0\}>0$,
which contradicts hypothesis $(H4)$. $\Box$
\end{proof}

\medskip

\begin{theorem}
\label{t2}
Assume $U$ satisfies $(H1)$-$(H4)$.
Let $h$ be a bounded $Q$-harmonic function
such that $\lim_{i\to \xi} h(i)=\varphi(\xi)$ for every
$\xi\in \partial^U_{\infty}$. Then, there exists
$\tilde \varphi=\lim \tilde h \; \mu-$a.e.,
where $\tilde h$ is the harmonic function
associated to $h$ in Theorem \ref{t1}. Moreover, if
$\tilde \varphi$ is in the domain of ${\widetilde W}^{-1}$,
then $h$ has the representation
\begin{equation}
\label{repr01}
h(i)=\int_{{\tilde \partial}_\infty} U_{i,\eta}
({\widetilde W}^{-1} \tilde \varphi) (\eta) {\tilde \mu}(d\eta).
\end{equation}
\end{theorem}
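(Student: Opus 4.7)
My plan is to lift the problem to the minimal tree extension $(\widetilde I, \widetilde{\cal T})$ via Theorem \ref{t1}, apply the radial-limit / Poisson-type representation of Section \ref{subs11} there, and then restrict back to $I$.

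First I would note that the extension $\tilde h(\tilde{\imath}) = \EE_{\tilde{\imath}}(h(\widetilde X_\tau))$ produced in Theorem \ref{t1} is bounded by $\|h\|_\infty$ and $\widetilde Q$-harmonic on $\widetilde I$. Since $\widetilde X$ is assumed transient, the standard Fatou / dominated-convergence argument for bounded harmonic functions on a transient tree (recalled just before Proposition \ref{c2}), applied now on $\widetilde I$ to $\tilde h$, gives existence of the radial limit $\tilde\varphi = \lim \tilde h$ $\tilde\mu$-a.e.\ on $\widetilde\partial_\infty$.

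Second, I would identify $\tilde\varphi$ with $\varphi$. Fix $\xi\in\partial_\infty^U$ viewed inside $\widetilde\partial_\infty$ via (\ref{fronteras}). By the very definition of $\partial_\infty^U$ and of the embedding, the geodesic from $\tilde r$ to $\xi$ in $\widetilde I$ contains an infinite increasing $\preceq$-chain $(i_{n_k})\subset I$ with $i_{n_k}\to\xi$. Along this subsequence $\tilde h(i_{n_k}) = h(i_{n_k}) \to \varphi(\xi)$ by hypothesis, and whenever the full radial limit exists it must agree with any subsequential limit, so $\tilde\varphi(\xi) = \varphi(\xi)$ for $\tilde\mu$-a.e.\ $\xi\in\partial_\infty^U$. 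Lemma \ref{lefr1} gives $\tilde\mu(\partial_\infty^U) = 1$, so in fact $\tilde\varphi = \varphi$ $\tilde\mu$-a.e.\ on $\widetilde\partial_\infty$; in particular the a.e.-existence statement of the theorem is proven.

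Finally, assuming $\tilde\varphi$ lies in the domain of $\widetilde W^{-1}$, I would apply Corollary \ref{c201} in the tree $(\widetilde I, \widetilde{\cal T})$ (with data $\widetilde U, \widetilde W, \tilde\mu$) to the bounded $\widetilde Q$-harmonic function $\tilde h$, obtaining
$$\tilde h(\tilde{\imath}) = \int_{\widetilde\partial_\infty} \widetilde U_{\tilde{\imath},\eta}\,(\widetilde W^{-1}\tilde\varphi)(\eta)\,\tilde\mu(d\eta), \qquad \tilde{\imath}\in \widetilde I.$$
Restricting to $\tilde{\imath} = i\in I\subset\widetilde I$ and using $\widetilde U_{i,\eta} = U_{i,\eta}$ (Definition \ref{d77}) together with $\tilde h(i) = h(i)$ yields (\ref{repr01}). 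The main obstacle is that Corollary \ref{c201} is stated only for simple functions $\varphi\in{\cal D}$, while the hypothesis here is only $\tilde\varphi\in\mathrm{dom}(\widetilde W^{-1})$; the required extension should follow by approximating $\tilde\varphi$ in $L^2(\tilde\mu)$ by simple functions and passing to the limit, but one must be careful that the pointwise identity survives the $L^2$-approximation, which uses the boundedness of the kernel $\widetilde U_{\tilde{\imath},\bullet}$ in each fixed row to control the integral on the right.
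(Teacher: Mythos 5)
Your proposal is correct and follows the same overall route as the paper: lift $h$ to the extension $\tilde h$ via Theorem \ref{t1}, use Lemma \ref{lefr1} to see $\tilde\mu(\partial_\infty^U)=1$, and deduce the integral formula from Corollary \ref{c201} applied on $(\widetilde I,\widetilde{\cal T})$ together with the observation that $\widetilde U_{i\eta}=U_{i\eta}$ for $i\in I$ and $\tilde\mu$-a.e.\ $\eta$. The one genuine difference is the step establishing a.e.\ existence of the radial limit $\tilde\varphi=\lim\tilde h$. You invoke the Fatou--Cartier boundary theorem for bounded harmonic functions on the transient tree $\widetilde I$ (as recalled before Proposition \ref{c2}) to obtain existence abstractly, and then identify the limit with $\varphi$ along the subsequence of $I$-points in the geodesic. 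The paper instead argues directly: for $\tilde\mu$-a.e.\ $\xi$ infinitely many $\xi(n)$ lie in $I$, and for $\xi(n)\notin I$ the value $\tilde h(\xi(n))=\EE_{\xi(n)}(h(\widetilde X_\tau))$ is a convex average of $h$ over exit points in $[\xi(k(n)),\infty)\cap I$, all of which converge to $\xi$ as $n\to\infty$, so the limit along $I$-points given by the hypothesis propagates to the full radial sequence. Your route costs an additional appeal to the Fatou theorem on the extended tree but is otherwise equivalent; the paper's is slightly more self-contained, relying only on the hypothesis that $h$ has boundary limits on $\partial_\infty^U$ and the convexity of the extension. Your final remark about Corollary \ref{c201} is well taken: the corollary is stated only for $\varphi\in{\cal D}$, whereas the theorem merely assumes $\tilde\varphi\in\mathrm{dom}(\widetilde W^{-1})$, and the paper simply cites the corollary without addressing the approximation; your flag that one must pass to the limit in $L^2(\tilde\mu)$ and control the pointwise identity using summability of $\widetilde U_{i\bullet}$ row-wise is exactly the care that step requires.
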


\medskip

\begin{proof}
From Lemma \ref{lefr1}
we have
$\partial^U_{\infty}={\tilde \partial}_{\infty},\; \mu-$a.e. and
therefore (almost) every
point $\xi\in {\widetilde \partial}_\infty$ verifies
$|\{n\in \NN: \xi(n)\in I\}|=\infty$.
Also, from the hypothesis there exists
$a=\lim\limits_{{n\to \infty}\atop{\xi(n)\in I}}
h(\xi(n)$. For the first part of the statement it suffices to show
$a=\lim\limits_{n\to \infty}h(\xi(n))$. Let us consider the
subsequence $k(n)=\max\{m\le n: \xi(m)\in I \}$. We have
$\lim\limits_{n\to \infty}k(n)=\infty$. On
the other hand, for large $n$, ${\cal V}(\xi(n))\subset [\xi(k(n)),\infty)$, then
$\tilde h(\xi(n))=\EE_{\xi(n)}
\left(h(\widetilde X_\tau)\right)$ belongs to the convex closure of
the set $\{h(\xi(m)):\; \xi(m)\in I,\;  m\ge k(n)\}$. Hence
the result follows.

\medskip

Now we are able to show relation (\ref{repr01}). It suffices
to notice that for every
$i\in I\cup {\widetilde \partial}_\infty$
and $\tilde \mu-$a.e. in $\eta \in {\widetilde \partial}_\infty$,
it holds ${\widetilde U}_{i \eta}=U_{i \eta}$. Then the
proof follows from Corollary \ref{c201}. $\Box$
\end{proof}

\medskip

\begin{remark} From a topological point of view
$\partial_{\infty}^U$ is dense in
${\widetilde {\partial}}_{\infty}$
if for all $i\in I$ there exists $j\in I, j\neq i$,
such that $U_{ij}=U_{ii}$
(that is, if  for all $i\in I$ the set $[i,\infty)^U$
is infinite).
In fact, it suffices to note that
by definition of the minimal tree, for every
$\xi\in {\widetilde {\partial}}_{\infty}$ and
$n\ge 1$, there exists some $i\in I$ such that
$i\in {\widetilde {\partial}}_{\infty}(\xi(n))$. The desired
density follows by taking any $\eta \in \partial_{\infty}^U$
hanging from $i$.
\end{remark}

\medskip

\section{The Process in the Boundary}
\label{section600}

In this section we describe the process at the boundary. In
Theorem \ref{nucleo} we explicit the kernel of the process
and in Theorem \ref{esc10} we relate the behavior of the
processes at different levels, that is when we killed it
deeper and deeper in the tree. This allows to get
exit times from the elementary  pieces of the boundary, and
further to construct a simulation of the process. We emphasize
that no regularity on the tree is imposed.

\subsection{Definition and Description of the Process}

\medskip

In the sequel we put $Z\sim \exp[\lambda]$
to mean that $Z$ is a random variable exponentially
distributed with mean  $1/\lambda\in [0,\infty]$
and we denote $B\sim Ber(a)$
a Bernoulli random variable $B$ with
$\PP\{B=1\}=a\in [0,1]$.

\medskip

First, let us describe the transition probability of
the process at the boundary.

\medskip

\begin{theorem}
\label{nucleo}
Consider the symmetric kernel
\begin{equation}
\label{dens1} p(t,\xi,\eta)=\sum\limits_{n=0}^{|\xi\land \eta|}
{e^{-t/G_n(\xi)}-e^{-t/G_{n+1}(\xi)} \over \mu(C^n(\xi))},\;
(\xi,\eta)\in \partial_\infty^{reg}\times \partial_\infty^{reg},\;
t>0.
\end{equation}
This kernel is sub-Markovian with total mass
\begin{equation}
\label{dens100} e^{-t/G_0}=\int p(t,\xi,\eta) \mu(d\eta),
\end{equation}
and it is also a Feller transition kernel.

The sub-Markov semigroup $P^W_t f (\xi)=\int p(t,\xi,\eta) f(\eta)
\mu(d\eta)$, induced by this kernel in $L^2(\mu)$ verifies
$$
P^W_t  f= \sum\limits_{n\ge 0} e^{-t/G_n} \Big(\EE_{\mu} (f |
{\cal F}_n)- \EE_{\mu} (f | {\cal F}_{n-1})\Big).
$$
The infinitesimal generator of this semigroup is an extension of
$\, -W^{-1}$ defined on $\cal D$, and its potential is $W$.
Moreover the Green's kernel of this semigroup is $U$, that is
$$
U_{\xi\eta}=\int\limits_0^\infty p(t,\xi,\eta) dt \hbox{ for }
\xi, \eta\in \partial_\infty.
$$
\end{theorem}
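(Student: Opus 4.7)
The plan is to derive the kernel $p(t,\xi,\eta)$ from an operator-calculus representation of $P^W_t := e^{-tW^{-1}}$, using the decomposition of $W$ already obtained in Proposition \ref{prostoch}. Write $D_n := \EE_\mu(\cdot\mid{\cal F}_n) - \EE_\mu(\cdot\mid{\cal F}_{n-1})$. Because the filtration $({\cal F}_n)$ is increasing with ${\cal F}_\infty = \sigma({\bf T})$, the $D_n$ are mutually orthogonal self-adjoint projections on $L^2(\mu)$ with $\sum_n D_n = I$ (martingale convergence). Since each $G_n$ is ${\cal F}_{n-1}$-measurable, multiplication by $G_n$ commutes with both $\EE_\mu(\cdot\mid{\cal F}_n)$ and $\EE_\mu(\cdot\mid{\cal F}_{n-1})$, hence with $D_n$. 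The identity $W = \sum_n G_n D_n$ from (\ref{fil88}) therefore behaves like a spectral decomposition, and I will \emph{define} the candidate semigroup by
$$
P^W_t := \sum_{n\in\NN} e^{-t/G_n}\, D_n,
$$
which is bounded since $|e^{-t/G_n}|\le 1$. The semigroup property and the fact that its generator is $-W^{-1}$ on ${\cal D}$ follow directly from $D_nD_m = \delta_{nm}D_n$ and the commutation of $G_n$ with $D_n$; moreover $\int_0^\infty P^W_t\,dt = \sum_n G_n D_n = W$, so the potential is $W$ at the operator level.

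Next I would extract the pointwise kernel (\ref{dens1}) by evaluating on a function $f$ in the dense class ${\cal D}$. Writing $D_n f(\xi) = \frac{1}{\mu(C^n(\xi))}\!\int_{C^n(\xi)}\! f\,d\mu - \frac{1}{\mu(C^{n-1}(\xi))}\!\int_{C^{n-1}(\xi)}\!f\,d\mu$ (as in (\ref{re34})) and applying Abel summation in $n$ to the series for $P^W_t f(\xi)$ transforms it into
$$
P^W_t f(\xi) = \sum_{n\in\NN} \big(e^{-t/G_n(\xi)}-e^{-t/G_{n+1}(\xi)}\big)\, \frac{1}{\mu(C^n(\xi))}\int_{C^n(\xi)} f\,d\mu.
$$
Exchanging the order of summation and integration (the series is absolutely convergent since $G_n$ is decreasing to $0$ by (\ref{basta1})) and using the basic equivalence $\eta\in C^n(\xi)\iff |\xi\wedge\eta|\ge n$ yields the formula (\ref{dens1}).

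For the sub-Markov, symmetry and Feller assertions: non-negativity of each summand follows from $G_n\ge G_{n+1}$ (again (\ref{basta1})), which gives $e^{-t/G_n}\ge e^{-t/G_{n+1}}$. Symmetry $p(t,\xi,\eta) = p(t,\eta,\xi)$ follows from predictability of $G$ together with (\ref{igual}): for $n\le |\xi\wedge\eta|+1$ one has $G_n(\xi)=G_n(\eta)$, and for $n\le|\xi\wedge\eta|$ one has $\mu(C^n(\xi))=\mu(C^n(\eta))$, exactly the range of indices appearing in the sum. The total mass is $P^W_t{\bf 1} = e^{-t/G_0}D_0{\bf 1} = e^{-t/G_0}$ because $D_n{\bf 1}=0$ for $n\ge 1$ and $G_0=\alpha$ is constant $\mu$-a.e.\ (Subsection \ref{subs12}); integrating $p(t,\xi,\cdot)$ against $\mu$ must give the same answer, which I would verify by plugging in $f\equiv 1$ in the displayed identity above. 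The Feller property is the most delicate point: both $G_n(\xi)$ and $\mu(C^n(\xi))$ are constant on the atoms $C^{n-1}$ and $C^n$ respectively, so for $f\in{\cal D}$ the function $\xi\mapsto P^W_t f(\xi)$ is itself in ${\cal D}$; since ${\cal D}$ is dense in $C(\partial_\infty)$ (the basis ${\cal C}$ of clopen sets generates the topology) and $P^W_t$ is a contraction, continuity in $L^\infty$ propagates.

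Finally, the Green kernel identity $U_{\xi\eta}=\int_0^\infty p(t,\xi,\eta)\,dt$ is an immediate calculation: integrating term by term,
$$
\int_0^\infty p(t,\xi,\eta)\,dt = \sum_{n=0}^{|\xi\wedge\eta|} \frac{G_n(\xi)-G_{n+1}(\xi)}{\mu(C^n(\xi))} = \sum_{n=0}^{|\xi\wedge\eta|}\Delta_n(w) = w_{|\xi\wedge\eta|} = U_{\xi\eta},
$$
where the second equality uses $G_n-G_{n+1}=\Delta_n(w)\,\mu(C^n)$ from (\ref{basta1}) and the third is telescoping with $w_{-1}=0$. The main obstacle in the whole argument is really the first paragraph: justifying the functional-calculus manipulations with operator-valued ``eigenvalues'' $G_n$ and proving strong continuity of $P^W_t$ at $t=0$; once that framework is in place, every other statement in the theorem reduces to the algebraic identity $W=\sum_n G_n D_n$ and the telescoping identity for $G_n$.
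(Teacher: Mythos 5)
Your proposal is correct and follows essentially the same route as the paper: both construct the semigroup as $\sum_n e^{-t/G_n}D_n$ using the orthogonality of the martingale-difference projections $D_n=\EE_\mu(\cdot\,|\,{\cal F}_n)-\EE_\mu(\cdot\,|\,{\cal F}_{n-1})$ and the predictability of $(G_n)$ (so multiplication by $G_n$ commutes with $D_n$), extract the pointwise kernel by evaluating on ${\cal D}$, and obtain the Green-kernel identity by telescoping $G_n-G_{n+1}=\Delta_n(w)\,\mu(C^n)$. The only stylistic difference is that the paper introduces the operator series as $\lim_k(I-tW^{-1}/k)^k$ and explicitly separates out the atomic case $\mu(\{\xi^*\})>0$ in the Fubini exchange, whereas you define $P^W_t$ directly and leave atoms implicit; also, the strong continuity at $t=0$ you flag as the hardest point is in fact immediate for $f\in{\cal D}$ (a finite sum of terms continuous in $t$) and then extends by density via the contraction bound $\|P^W_t\|\le e^{-t/G_0}$.
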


\begin{proof}
We first notice that by integrating (\ref{dens1}) we obtain
$e^{-t/G_0}=P^W_t {\bf 1}$, that is (\ref{dens100}) holds.
Consider the following family of operators acting on $\cal D$
\begin{equation}
\label{4001}
e^{-tW^{-1}}f:=
\lim\limits_{k\to \infty}
\left(I_{\cal D}-{tW^{-1}\over k}\right)^k f
=\sum\limits_{n\ge 0} e^{-t/G_n} \Big(\EE_{\mu} (f | {\cal F}_n)-
\EE_{\mu} (f | {\cal F}_{n-1})\Big),
\end{equation}
where the last equality follows from the fact that $(G_n)$ is
predictable, that is $G_n$ is ${\cal F}_{n-1}$-measurable.
Moreover since $(G_n)$ is decreasing and positive we
also obtain
$$
\|e^{-tW^{-1}} f\|_2 \le e^{-t/G_0} \|f\|_2.
$$
Therefore, $e^{-tW^{-1}}$ has a unique continuous extension to
$L^2(\mu)$ whose norm is bounded by $e^{-t/G_0}$. Clearly
$e^{-tW^{-1}} {\bf 1}=e^{-t/G_0}$, which implies that the norm of
$e^{-tW^{-1}}$ is $e^{-t/G_0}$. It can be also proven that
$(e^{-tW^{-1}}:\; t\ge 0)$ is a sub-Markovian semigroup acting on
$L^2(\mu)$.

\medskip

A simple computation yields for $\xi\neq \eta$ and
$m\ge |\xi\land \eta|$
\begin{equation}
\label{eqq29} e^{-tW^{-1}} {\bf 1}_{C^m(\eta)}
(\xi)\!=\!\mu(C^m(\eta))\! \sum\limits_{n=0}^{|\xi\land \eta|}
{e^{-t/G_n(\xi)}-e^{-t/G_{n+1}(\xi)} \over
\mu(C^n(\xi))}=\!\!\int\! \! p(t,\xi,\eta) {\bf
1}_{C^m(\eta)}(\eta) \mu(d\eta).
\end{equation}

In the case $\mu(\{\xi^*\})>0$, the series
$$
\sum\limits_{n=0}^{\infty}
{e^{-t/G_n({\xi^*})}-e^{-t/G_{n+1}({\xi^*})}
\over \mu(C^n({\xi^*}))}\le {1\over \mu(\{{\xi^*}\})}
\sum\limits_{n=0}^{\infty}
e^{-t/G_n({\xi^*})}-e^{-t/G_{n+1}({\xi^*})}=
{e^{-t/G_0({\xi^*})}\over \mu(\{{\xi^*}\})},
$$
is convergent and
\begin{equation}
\label{eqq30}
e^{-tW^{-1}} {\bf 1}_{\{{\xi^*}\}} (\xi)=\int
p(t,\xi,\eta) {\bf 1}_{\{{\xi^*}\}}(\eta) \mu(d\eta).
\end{equation}

From equations (\ref{eqq29}) and (\ref{eqq30}) we deduce that
$$
e^{-tW^{-1}}f=P^W_t f \;\; \mu-a.e.,
\hbox{ for all } f\in \cal{D}.
$$
Thus $P^W_t$ is a pointwise representation of $e^{-tW^{-1}}$ in
$L^2(\mu)$.

\medskip

Notice that from (\ref{fil88}) the equalities
$$
\int_0^\infty p(t,\xi,\eta) dt=\sum_{n=0}^{|\xi \land \eta|}
(G_n(\xi)-G_{n+1}(\xi))/\mu(C^n(\xi))=U_{\xi \eta}
$$
hold for all $\xi, \, \eta$.
Therefore,
for any $f\ge 0$  in $L^2(\mu)$ we have by Fubini's Theorem
$$
\int_0^\infty P^W_t f (\xi) dt=
\int \int_0^\infty p(t,\xi,\eta) dt\; f(\eta) \mu(d\eta)=
\int U_{\xi \eta} f(\eta) \mu(d\eta)=W f(\xi).
$$
Also a direct computation shows that for any $f \in \cal{D}$
$$
{d\over dt}P^W_t f|_{t=0}=-W^{-1} f.
$$

The Feller property of the transition kernel $p$ is direct to
check and it follows from the fact that for a simple function $f$
we have $P_t^W f$ is also simple (in particular continuous) and
$P_t^W f\to f$ as $t\to 0$ . $\Box$
\end{proof}
\bigskip

\begin{remark} It is easy to show that for any $t>0$ fixed,
the kernel $p(t,\xi,\eta)$ given by (\ref{dens1}) verifies the
ultrametric inequality
$$
p(t,\xi,\eta)\ge \min\{p(t,\xi,\delta), p(t,\delta,\eta)\}, \hbox{
for every } \xi, \eta, \delta\in \partial_\infty^{reg}.
$$
\end{remark}

To the semigroup $P^W$ we associate a Markov process denoted by
$(\Xi_t: 0\le t \le \Upsilon)$, where $\Upsilon=\inf\{t>0: \Xi_t
\notin \partial_\infty^{reg}\}$ is its lifetime. The coffin state
of this process is written $\dag$, that is $\Xi_\Upsilon=\dag$. By
$\Xi^\nu$ we denote a copy of this Markov process with initial
distribution $\nu$ in $\partial_\infty$ and  when it starts from
$\xi$ we put $\Xi^\xi:= \Xi^{\delta_{\xi}}$. The Feller property
of $p$ implies that $\Xi$ has a right continuous with left limits
version (see \cite{Blumenthal1968}, Theorem I.9.4). We shall
always take that version. On the other hand, and as we have already
pointed out, by using
Proposition \ref{beurlingf} or by the arguments developed in
\cite{albeverio2} Theorem 4.1, the
diffusive part in the Beurling-Deny formula vanishes, so $\Xi$ is a
pure jump process.

\medskip

Let us describe more precisely the process $\Xi$, in which a main role
is played by the killing times.
Since the total mass verifies $P_t^W{\bf 1}=e^{-t/G_0}$,
the random time $\Upsilon$ is exponentially distributed:
$\Upsilon\sim \exp[1/G_0]$. By using this fact and
the symmetry of the kernel $p(t,\cdot, \cdot)$, we can check that
$\mu$ is a quasi-stationary distribution for $\Xi$, that is
\begin{equation}
\label{qsd1}
\PP_\mu\{ \Xi_t \in A\}=e^{-t/G_0}\mu(A)
\hbox{ for any measurable } A\subseteq \partial_\infty.
\end{equation}

\medskip

We will interpret the formula (\ref{dens1}) for the transition kernel
$p(t,\xi,\eta)$ in a recursive way.
Let $r_1\in S_r$ be a successor of the root $r$ such that
$\mu(\partial_\infty(r_1))>0$.
Let $\left(\bar I, \bar {\cal T}\right)$ be the subtree
rooted by $r_1$, where
$\bar I=[r_1,\infty)$ and
${\bar {\cal T}}={\cal T}\cap {\bar I}\times {\bar I}$.
We add an absorbing state identified with $r$
and we denote by
${\bar \partial}_\infty=\partial_\infty(r_1)$ the
boundary of the tree $\left(\bar I, \bar {\cal T}\right)$.
The induced level function is $||k\land l||:=|k\land l|-1$,
$k, l\in \bar I$. We also note that $\bar C^n(\xi)=C^{n+1}(\xi)$
for $\xi\in {\bar \partial}_\infty$.
Consider the tree matrix $\bar U$ induced by the weight
function $\bar\omega_k$ satisfying the recursion
$$
\bar\omega_{-1}=0\; \hbox{ and }
\Delta_k ({\bar \omega})=
\mu({\bar\partial}_\infty) \Delta_{k+1}(\omega)  \;
\hbox{ for } k\ge 0.
$$
The limit probability measure on ${\bar \partial}_\infty$ is given
by the conditional measure $\bar \mu=\mu(\bullet  |  {\bar
\partial}_\infty)$.
From this definition the new sequence of $\sigma-$ fields is
${\bar {\cal F}}_n= \sigma(C^{n+1}\cap {\bar\partial}_\infty:
C^{n+1}\in {\cal F}_n)$. Also by definition $\bar G_n=G_{n+1}$ for
all $n\ge 0$, and $(\bar G_n)$ is $({\bar {\cal
F}}_n)-$predictable.

\medskip

From the strong Markov property we obtain
for any measurable $C\subseteq {\bar \partial}_\infty$
$$
\PP_{r_1}\{X_\zeta \in C, T_{r}=\infty\}=
\PP_{r_1}\{X_\zeta \in C \}-
\PP_{r_1}\{T_{r}<\infty\}\PP_{r}\{X_\zeta \in C \}.
$$
Also we have
$$
\PP_{r_1}\{X_\zeta \in C\}={\PP_r\{X_\zeta \in C\} \over
\PP_r\{T_{r_1}<\infty\}}.
$$
Hence, we deduce that
$$
\PP_{r_1}\{X_\zeta \in C, T_{r}=\infty\}=
a\bar \mu(C),
$$
where
$$
a=\PP_r\{X_\zeta \in \partial_\infty(r_1)\}
\left({1\over
\PP_r\{T_{r_1}<\infty\}}-\PP_{r_1}\{T_r<\infty\}\right).
$$
The constant $a$ is positive because
$\mu(\partial_\infty(r_1))>0$. Then, the set of regular points
${\bar\partial}_\infty^{reg}$ is exactly the set
$\partial_\infty^{reg}\cap
\partial_\infty(r_1)$.
\medskip

Consider the operator $\bar W$ acting on
$L^2({\bar \partial}_\infty,\bar \mu )$
given by
$$
{\bar W} f(\xi)=\int \bar U_{\xi\eta} f(\eta)
\bar \mu(d\eta)
$$
Denote by $(\bar P_t:\; t\ge 0)$ the semigroup associated to $\bar
W$, and $(\bar \Xi_t: 0\le t\le \bar \Upsilon)$ the induced Markov
process on ${\bar \partial}_\infty$ with coffin state $\bar \dag$.
We denote by $\bar \Xi^\xi$ a copy of $\bar \Xi$ starting from
$\xi\in {\bar \partial}_\infty^{reg}$.

\medskip

The transition kernel for this semigroup in ${\bar
\partial}_\infty^{reg}$ is given by (see Theorem \ref{nucleo})
\begin{equation}
\label{ja1}
\bar p(t,\xi,\eta)=\sum\limits_{n=0}^{||\xi\land\eta||}
{e^{-t/\bar G_n}-e^{-t/\bar G_{n+1}}\over
\bar \mu(\bar C^n (\xi))}=
\mu({\bar \partial}_\infty)(p(t,\xi,\eta)-(e^{-t/G_0}-e^{-t/G_1})).
\end{equation}
The total mass for this kernel is $e^{-t/\bar G_0}=e^{-t/G_1}$ and
therefore $\bar \Upsilon\sim \exp[1/G_1]$, that is
$\PP_\xi\{\bar \Upsilon >t\}=e^{-t/G_1}$.

\medskip

\begin{theorem}
\label{esc10}
Fix $\xi \in {\bar \partial}_\infty^{reg}$ and
consider $\bar \Xi^{\xi}$, $\Xi^\mu$ two random independent
elements ($\Xi^\mu$ is a copy of the process $\Xi$ with initial
distribution $\mu$). Let $B\sim Ber(1-G_1/G_0)$ be a Bernoulli
variable independent of $\Xi^{\mu}$ and $\bar \Xi^{\xi}$. Under
$\PP_\xi$ the following Markov process
\begin{equation}
\label{repre1}
\widetilde\Xi^{\xi}_t=
\begin{cases}
\bar\Xi^\xi_t &\hbox{ if } t<\bar\Upsilon\\
\dag &\hbox{ if } t\ge \bar\Upsilon \hbox{ and } B=0\\
\Xi^\mu_{t-\bar \Upsilon} &\hbox{ otherwise }
\end{cases}
\end{equation}
is a copy of $\Xi^\xi$ (that is $\widetilde\Xi^{\xi}$ and
$\Xi^{\xi}$ are identically
distributed).
\end{theorem}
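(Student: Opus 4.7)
The strategy is to show that $\widetilde\Xi^\xi$ is a Markov process with initial state $\xi$ and transition semigroup $P^W_t$ of Theorem \ref{nucleo}. Since $\Xi^\xi$ has the same characterization and both processes admit right-continuous versions, this identifies their laws on path space. I would carry out the identification in two steps: first compute the one-dimensional marginals of $\widetilde\Xi^\xi$ and match them with $P^W_t f(\xi)$; then upgrade to the Markov property.

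For the one-dimensional marginal, I would decompose $\EE_\xi[f(\widetilde\Xi^\xi_t)]$ for $f$ bounded measurable with $f(\dag)=0$ according to the three events $\{t<\bar\Upsilon\}$, $\{t\ge\bar\Upsilon,\,B=0\}$, $\{t\ge\bar\Upsilon,\,B=1\}$. The first event contributes $\bar P_t f(\xi)$ by definition of $\bar P$; the second contributes $0$; and for the third I would use that $\bar\Upsilon\sim\exp[1/G_1]$ (since $\bar P_t \mathbf{1}=e^{-t/G_1}$) is independent of $B$ and $\Xi^\mu$, together with the quasi-stationarity identity $\int P^W_{t-s}f\,d\mu = e^{-(t-s)/G_0}\int f\,d\mu$ from (\ref{qsd1}), and the value $\PP(B=1)=(G_0-G_1)/G_0$. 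A short integration in $s\in[0,t]$ against the exponential density then gives $\EE_\xi[f(\widetilde\Xi^\xi_t)] = \bar P_t f(\xi) + (e^{-t/G_0}-e^{-t/G_1})\int f\, d\mu$. To identify this with $P^W_t f(\xi)$, I would split the defining integral over $\partial_\infty$ into the pieces on $\bar\partial_\infty$ and its complement. For $\eta\notin\bar\partial_\infty$ one has $|\xi\wedge\eta|=0$, so (\ref{dens1}) collapses to $p(t,\xi,\eta)=e^{-t/G_0}-e^{-t/G_1}$; for $\eta\in\bar\partial_\infty$ the identity (\ref{ja1}) converts $p$ into $\bar p$ at the cost of the same additive term and recovers $\bar P_t f(\xi)$ upon integration against $f\,d\mu$. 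The two expressions for the marginal then coincide.

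To upgrade to the full Markov property, I fix $s<t$, let $\mathcal{G}_s$ denote the natural filtration of $\widetilde\Xi$, and treat three cases. On $\{s<\bar\Upsilon\}$, the strong Markov property of the Feller process $\bar\Xi^\xi$ at $s$ combined with the memoryless property of the exponential $\bar\Upsilon$ shows that the post-$s$ process is again an instance of the construction (\ref{repre1}) started from $\bar\Xi^\xi_s=\widetilde\Xi_s$, so the marginal identity established above delivers $\EE[f(\widetilde\Xi_t)\mid\mathcal{G}_s]=P^W_{t-s}f(\widetilde\Xi_s)$. On $\{s\ge\bar\Upsilon,\,B=0\}$ both sides vanish. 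On $\{s\ge\bar\Upsilon,\,B=1\}$ the strong Markov property of the independent fresh process $\Xi^\mu$ at time $s-\bar\Upsilon$ yields the same identity. Assembling the cases gives the Markov property, and hence the theorem. The main obstacle is the first case of this verification: one must carefully justify that, conditional on $\mathcal{G}_s\cap\{s<\bar\Upsilon\}$, the residual lifetime $\bar\Upsilon-s$ is still $\exp[1/G_1]$-distributed and independent of the $\sigma$-algebra generated by $\bar\Xi^\xi$ up to time $s$ together with the auxiliary variables $B$ and $\Xi^\mu$. This memorylessness is built into $\bar\Xi$ through the constant killing rate $1/G_1=-\log(\bar P_t\mathbf{1})/t$, but the decoupling of the trajectory of $\bar\Xi$ from its terminal time is not automatic and constitutes the key technical point.
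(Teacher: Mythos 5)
Your overall strategy --- show that $\widetilde\Xi^\xi$ is a Markov process with transition semigroup $P^W_t$ started at $\xi$ and conclude by uniqueness of laws --- is a genuinely different route from the paper, which verifies equality of all finite-dimensional distributions directly by a recursive peeling-off of the last time step (splitting on whether the intermediate points lie in $\bar\partial_\infty$ or not). Your one-dimensional marginal calculation is correct and reproduces the paper's $k=1$ case.

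There is, however, a gap in the upgrade to the Markov property. You condition on $\mathcal{G}_s$, the natural filtration of $\widetilde\Xi$, and decompose into the events $\{s<\bar\Upsilon\}$, $\{s\ge\bar\Upsilon,\,B=0\}$, $\{s\ge\bar\Upsilon,\,B=1\}$. But these are defined in terms of $\bar\Upsilon$ and $B$, which are not in general functions of the path $(\widetilde\Xi_u: u\le s)$: when $\mu$ has atoms, $\Xi^\mu_0$ may coincide with $\bar\Xi_{\bar\Upsilon -}$ with positive probability, so the regime change at $\bar\Upsilon$ leaves no trace in $\widetilde\Xi$ and none of the three events need be $\mathcal{G}_s$-measurable. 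Thus "$\EE_\xi[f(\widetilde\Xi_t)\mid\mathcal{G}_s]$ on $\{s<\bar\Upsilon\}$" is not a legitimate object to compute case by case. The standard remedy is to first prove the Markov property with respect to an enlarged filtration, for instance $\mathcal{H}_s=\sigma(\bar\Xi_u: u\le s)\vee\sigma(B)\vee\sigma(\Xi^\mu_u: u\le (s-\bar\Upsilon)^+)\supseteq\mathcal{G}_s$, in which the three events are measurable, and then project down to $\mathcal{G}_s$; this is legitimate because $\widetilde\Xi_s$ is $\mathcal{G}_s$-measurable. Separately, the obstacle you flag at the end --- decoupling the $\bar\Xi$-trajectory from its terminal time $\bar\Upsilon$ --- is not the real issue: such independence is false in general and is never used. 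What is used is only that, given $\{s<\bar\Upsilon\}$ and $\mathcal{H}_s$, the post-$s$ triple $(\bar\Xi_{s+\cdot},B,\Xi^\mu)$ is a fresh instance of the construction started from $\bar\Xi_s$; this follows from the Markov property of the killed process $\bar\Xi$ at time $s$ (which already carries the residual lifetime $\bar\Upsilon-s\sim\exp[1/G_1]$) together with the independence of $(B,\Xi^\mu)$ from $\bar\Xi$. The paper sidesteps the filtration subtlety entirely by working with finite-dimensional distributions.
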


\begin{proof}
For $k\ge 1$ let $\xi_1,..,\xi_k\in \partial_\infty^{reg}$,
$\xi_0=\xi\in \bar{\partial}_\infty^{reg}$ and
$t_0=0<t_1<...<t_k<t_{k+1}=\infty$. We must prove
\begin{equation}
\label{2904}
\PP_\xi\{\widetilde \Xi_{t_u}\in d \xi_u, u=1,...,k\}=
\PP_\xi\{\Xi_{t_u}\in d \xi_u, u=1,...,k\}.
\end{equation}
Let us study the case $k=1$. For $\eta\in
\partial_\infty^{reg}\setminus {{\bar \partial}_\infty}$ we have
$$
\PP_\xi\{\widetilde \Xi_t \!\in \!d \eta\}=\left(1\!-\!{G_1\over
G_0}\right)
\! \int\limits_0^t\!
\PP_\mu\{\Xi_{t-u}\! \in \!d \eta\} {e^{-u/G_1}\over G_1} \; du\!=\!
\mu(d \eta)\left({1\over G_1} \!-\!{1\over G_0}\right) \!\int\limits_0^t
\!
e^{-(t-u)/G_0} e^{-u/G_1} \; du,
$$
where the last equality follows from the fact that $\mu$ is
quasi-stationary for $\Xi$ (see (\ref{qsd1})).
From (\ref{dens1}) we obtain
$$
\PP_\xi\{\widetilde \Xi_t \in d \eta\}=
\mu(d \eta)(e^{-t/G_0}-e^{-t/G_1})=\PP_\xi\{\Xi_t\in d \eta\}.
$$
\medskip

Now, when $\eta\in {\bar \partial}_\infty^{reg}$, we get again
from (\ref{qsd1})
$$
\begin{array}{ll}
\PP_\xi\{\widetilde \Xi_t \in d \eta \}&=
\PP_\xi\{\bar \Xi_t \in d \eta \}+
(1-{G_1\over G_0})\int\limits_0^t \PP_\mu\{\Xi_{t-u} \in d \eta\}
{e^{-u/G_1}\over G_1} \; du\\
&=\bar p(t,\xi,\eta)\;
\bar \mu(d\eta)+
(e^{-t/G_0}-e^{-t/G_1}) \mu(d\eta).
\end{array}
$$
Thus, from (\ref{ja1}) we find
\begin{equation}
\label{igualdad2} \bar p(t,\xi,\eta)\; \bar \mu(d\eta)=
(p(t,\xi,\eta)-(e^{-t/G_0}-e^{-t/G_1}))\mu(d\eta) \hbox{ for }\xi,
\eta \in {\bar \partial}_\infty^{reg} \;,
\end{equation}
so
$$
\PP_\xi\{\widetilde \Xi_t \in d \eta \}=
p(t,\xi,\eta)\; \mu(d\eta)=
\PP_\xi\{\Xi_t \in d \eta \},
$$
showing the case $k=1$.

\medskip

Assume that $k\ge 2$.
By a recursive argument it is sufficient to show
\begin{equation}
\label{recurr2}
\PP_\xi\{\widetilde \Xi_{t_u}\in d \xi_u, u=1,..,k\}=
\PP_\xi\{\widetilde \Xi_{t_u}\in d \xi_u, u=1,..,k-1\}
p(t_k-t_{k-1},\xi_{k-1},\xi_k).
\end{equation}
It is useful to consider the set
${\cal K}=\{u\le k: \xi_u\in
\partial_\infty^{reg}\setminus \bar{\partial}_\infty\}$. If
${\cal K}=\emptyset$ we define $\ell=k+1$ so $t_\ell=\infty$.
Otherwise we put
$\ell=\min {\cal K}$.
We have
\begin{eqnarray*}
\PP_\xi\{\widetilde \Xi_{t_u}\in d \xi_u, u=1,..,k\}=
\int_0^{t_\ell}
\PP_\xi\{\widetilde \Xi_{t_u}\in d \xi_u, u=1,..,k;
\bar\Upsilon\in dt\}.
\end{eqnarray*}
Observe that from the definition of $(\widetilde \Xi_t)$
we also have
\begin{equation}
\label{eq3112}
\{\widetilde \Xi_{t_u}\in d \xi_u, u=1,\dots,k; \bar \Upsilon>t_k\}=
\{\bar \Xi_{t_u}\in d \xi_u, u=1,\dots,k\}.
\end{equation}

\noindent ({\bf I}). Let us assume $\ell\le k$.
By definition of $(\widetilde \Xi_t)$ we find
\begin{eqnarray*}
&{}&\PP_\xi\{\widetilde \Xi_{t_u}\in d\xi_u, u=1,..,k\}\\
&=&
\sum_{j=1}^{\ell}\int_{t_{j-1}}^{t_j}
\PP_\mu\{\Xi_{t_u-t}\in d\xi_u, u=j,..,k\}
e^{-t/G_1}(G_1^{-1}\!-\!G_0^{-1})
\PP_\xi\{\bar \Xi_{t_u}\in d\xi_u, u=1,..,j\!-\!1\} dt,
\end{eqnarray*}
where it is implicit that
$\PP_\xi\{\bar \Xi_{t_u}\in d\xi_u, u=1,..,j\!-\!1\}=1$ if $j=1$.
From (\ref{qsd1}), we get
\begin{equation*}
\forall t\in (t_{j-1},t_j): \;
\PP_\mu\{\Xi_{t_u-t}\in d \xi_u, u\!=\!j,..,k\}=
\mu(d \xi_j)e^{-(t_j-t)/G_0}\PP_{\xi_j}\{\Xi_{t_u-t_j}\in d\xi_u,
u\!=\!j\!+\!1,..,k\}.
\end{equation*}

If $j\le k-1\land \ell$ and $t\in (t_{j-1},t_j)$, we can use the Markov
property of $\Xi_{t}$ to obtain,
\begin{eqnarray*}
&{}&\PP_\mu\{\Xi_{t_u-t}\in d \xi_u, u\!=\!j,..,k\}\\
&=&\mu(d \xi_j)e^{-(t_j-t)/G_0}
\PP_{\xi_j}\{\Xi_{t_u-t_j}\in d \xi_u, u=j\!+\!1,..,k\!-\!1\}
p(t_k-t_{k-1},\xi_{k-1},\xi_k)\mu(d \xi_k)\\
&=&  \PP_\mu\{\Xi_{t_u-t}\in d \xi_u, u=j,..,k\!-\!1\}
p(t_k\!-\!t_{k-1},\xi_{k-1},\xi_k)\mu(d \xi_k).
\end{eqnarray*}
Then
\begin{eqnarray}
\label{eqapoyo}
&{}&\PP_\xi\{\widetilde \Xi_{t_u}\in d\xi_u, u=1,..,k ;
\bar\Upsilon \le t_{k-1\land \ell}\}=\\
\nonumber
&{}&\PP_\xi\{\widetilde \Xi_{t_u}\in d\xi_u, u=1,..,k-1;
\bar\Upsilon \le t_{k-1\land \ell}\}
p(t_k-t_{k-1},\xi_{k-1},\xi_k) \mu(d \xi_k).
\end{eqnarray}

In the case $\ell\le k-1$ these last estimates lead to
\begin{eqnarray*}
\PP_\xi\{\widetilde \Xi_{t_u}\in d\xi_u, u=1,...,k\}=
\PP_\xi\{\widetilde \Xi_{t_u}\in d\xi_u, u=1,..,k\!-\!1\}
p(t_k\!-\!t_{k-1},\xi_{k-1},\xi_k)\mu(d\xi_k),
\end{eqnarray*}
so relation (\ref{recurr2}) is verified.

\medskip

To finish case ({\bf I}) we assume $\ell=k$. By decomposing on the events
$\{\bar\Upsilon < t_{k-1}\}$ and
$\{\bar\Upsilon \in (t_{k-1},t_k)\}$ we find
\begin{eqnarray*}
&{}&\PP_\xi\{\widetilde \Xi_{t_u}\in d\xi_u, u=1,..,k\}=\\
&{}&\PP_\xi\{\widetilde
\Xi_{t_u}\in d\xi_u, u=1,..,k\!-\!1, \bar\Upsilon \le t_{k-1}\}
p(t_k-t_{k-1},\xi_{k-1},\xi_k)\mu(d \xi_k)+\\
&{}&\PP_\xi\{\bar \Xi_{t_u}\in d\xi_u, u=1,..,k\!-\!1\}
\left(\int_{t_{k-1}}^{t_k}\!\!\!
e^{-(t-t_{k-1})/G_1}(G_1^{-1}\!-\!G_0^{-1}) e^{-(t_k-t)/G_0}dt\right)
\mu(d \xi_k).
\end{eqnarray*}

Now, we use (\ref{eq3112}) to obtain
\begin{eqnarray*}
&{}&\PP_\xi\{\widetilde \Xi_{t_u}\in d\xi_u, u=1,..,k-1,
\bar\Upsilon \le t_{k-1}\}=\\
&{}&\PP_\xi\{\widetilde \Xi_{t_u}\in d\xi_u, u=1,..,k-1\}
-\PP_\xi\{\bar \Xi_{t_u}\in d\xi_u, u=1,..,k-1\}.
\end{eqnarray*}
Therefore
\begin{eqnarray*}
\PP_\xi\{\widetilde \Xi_{t_u}\in d\xi_u, u=1,..,k\}&=&
\PP_\xi\{\widetilde \Xi_{t_u}\in d\xi_u, u=1,..,k\!-\!1\}
p(t_k\!-\!t_{k-1},\xi_{k-1},\xi_k)\mu(d \xi_k)\\
&{}& -A(t_{k-1},t_k, \xi_{k-1},\xi_k)
\PP_\xi\{\bar \Xi_{t_u}\in d\xi_u, u\!=\!1,..,k\!-\!1\}
\mu(d \xi_k),
\end{eqnarray*}
with
$$
A(t_{k-1},t_k,\xi_{k-1},\xi_k)=
p(t_k\!-\!t_{k-1},\xi_{k-1},\xi_k)+
e^{-\frac{t_k}{G_0}+\frac{t_{k-1}}{G_1}}
\left(e^{-t_{k-1}(\frac{1}{G_1}-\frac{1}{G_0})}-e^{-t_{k}(\frac{1}{G_1}
-\frac{1}{G_0})}\right).
$$
From (\ref{dens1}) and since $|\xi_{k-1}\land \xi_k|=0$ we have
$p(t_k-t_{k-1},\xi_{k-1},\xi_k)=
e^{-(t_k-t_{k-1})/G_0}-e^{-(t_k-t_{k-1})/G_1}$.
A simple computation gives $A(t_{k-1},t_k,\xi_{k-1},\xi_k)=0$.
Hence, we have shown
$$
\PP_\xi\{\widetilde \Xi_{t_u} \in d\xi_u,
u=1,..,k\}=
\PP_\xi\{\widetilde \Xi_{t_u}\in d\xi_u, u=1,..,k\!-\!1\}
p(t_k\!-\!t_{k-1},\xi_{k-1},\xi_k) \mu(d \xi_k),
$$
and equality (\ref{recurr2}) holds.

\medskip

\noindent ({\bf II}) Assume $\ell=k+1$. By decomposing on the events
$\{\bar\Upsilon < t_{k-1}\}$,
$\{\bar\Upsilon \in (t_{k-1},t_k)\}$ and
$\{\bar\Upsilon > t_{k}\}$ we obtain
\begin{eqnarray*}
&{}&\PP_\xi\{\widetilde \Xi_{t_u}\in d \xi_u, u=1,...,k\}=\\
&{}&\quad  \PP_\xi\{\widetilde \Xi_{t_u}\!\in \!d\xi_u, u=1,..,k\!-\!1\}
p(t_k-t_{k-1},\xi_{k-1},\xi_k)\mu(d \xi_k)-\\
&{}& \quad \PP_\xi\{\bar \Xi_{t_u}\in d\xi_u,u=1,..,k\!-\!1\}
p(t_k-t_{k-1},\xi_{k-1},\xi_k)\mu(d \xi_k)+\\
&{}& \quad \PP_\xi\{\bar \Xi_{t_u}\in d\xi_u, u\!=\!1,..,k\!-\!1\}
e^{-\frac{t_k}{G_0}+\frac{t_{k-1}}{G_1}}
\left(e^{-t_{k-1}(\frac{1}{G_1}-\frac{1}{G_0})}-e^{-t_{k}(\frac{1}{G_1}
-\frac{1}{G_0})}\right)
\mu(d \xi_k)+\\
&{}& \quad \PP_\xi\{\bar \Xi_{t_u}\in d\xi_u, u\!=\!1,..,k\}.
\end{eqnarray*}
From equality (\ref{igualdad2}) we have
\begin{eqnarray*}
&{}&\PP_\xi\{\bar \Xi_{t_u}\in d\xi_u, u\!=\!1,..,k\}=
\PP_\xi\{\bar \Xi_{t_u}\in d\xi_u, u\!=\!1,..,k\!-\!1\}
\bar p(t_k\!-\!t_{k-1},\xi_{k-1}, \xi_k)
\bar \mu(d\xi_k)\\
&=&\PP_\xi\{\bar \Xi_{t_u}\in d\xi_u, u=1,..,k\!-\!1\}
\Bigg(p(t_k-t_{k-1},\xi_{k-1},\xi_k)-
\bigg(e^{-\frac{t_k-t_{k-1}}{G_0}}-e^{-\frac{t_k-t_{k-1}}{G_1}}\bigg)\Bigg)
\mu(d \xi_k).
\end{eqnarray*}
Hence, the proof is finished because
$A'(t_{k-1},t_k, \xi_{k-1},\xi_k)=0$ with
\begin{eqnarray*}
A'(t_{k-1},t_k, \xi_{k-1},\xi_k)&=&
 -p(t_k-t_{k-1},\xi_{k-1},\xi_k)+
e^{-\frac{t_k}{G_0}+\frac{t_{k-1}}{G_1}}
\left(e^{-t_{k-1}(\frac{1}{G_1}-\frac{1}{G_0})}-e^{-t_{k}(\frac{1}{G_1}
-\frac{1}{G_0})}\right)\\
&{}& + p(t_k-t_{k-1},\xi_{k-1},\xi_k)-
\bigg(e^{-\frac{t_k-t_{k-1}}{G_0}}-e^{-\frac{t_k-t_{k-1}}{G_1}}\bigg).
\end{eqnarray*}
$\Box$
\end{proof}

\medskip

Let us define the iterated of the above procedure. We fix
$\xi^*\in \partial_\infty^{reg}$ a regular point of the boundary
and consider the points $\xi^*(n)$ in its geodesic starting at the
root. Let ${}^n\!\mu=\mu(\bullet \;| \;C^n(\xi^*))$ be the
conditional measure to $C^n(\xi^*)$ and ${}^nU$ be the tree matrix
induced by the weight function ${}^n\omega$ satisfying the
recursion
$$
{}^n\!\omega_{-1}=0 \hbox{ and }
\Delta_k ({}^n\omega)=\mu(C^n(\xi^*))
\Delta_{k+n}(\omega)
\hbox{ for } k\ge 0.
$$
Consider the operator
$$
{}^n Wf(\xi)=\int {}^n U_{\xi \eta}\; f(\xi)\; {}^n\!\mu(d\eta)
\hbox{ on }L^2(\partial_\infty(\xi^*(n)),{}^n\!\mu).
$$
Denote by ${}^n\Xi$ the process with generator $-({}^nW)^{-1}$ and
coffin state ${}^n\dag$. When the process starts from the
distribution $\nu$ we put ${}^n\Xi^{\nu}$. So ${}^n\Xi^{\xi}$
denotes a version of the process starting at $\xi\in
\partial^{reg}_\infty(\xi^*(n))$. With this notation ${}^0\Xi=\Xi$ and
${}^1\Xi=\bar\Xi$.

\medskip

The lifetime of ${}^n\Xi$ is written ${}^n\Upsilon$ which verifies
${}^n\Upsilon\sim \exp[1/G_n]$. We have ${}^0\Upsilon=\Upsilon$,
${}^1\Upsilon=\bar \Upsilon$. For $\xi\in
\partial^{reg}_\infty(\xi^*(n+1))$ it holds $\PP_\xi\{{}^n\Upsilon\ge
{}^{n+1}\Upsilon\}=1$ and
$$
\PP_\xi\{{}^n\Upsilon>t>{}^{n+1}\Upsilon\}=
e^{-t/G_n}-e^{-t/G_{n+1}}.
$$

\medskip

The variable $\Upsilon$ is the exit time of $\Xi$ from
$\partial_\infty^{reg}$, but for $n\ge 1$, ${}^n\Upsilon$ is not
the exit time of $\Xi$ from $C^n(\xi)$. We write ${\cal R}_n :=
\inf\{t>0:\; \Xi_t \notin C^n(\xi^*)\}$, the exit time from
$C^n(\xi^*)$.

\medskip

\begin{proposition}
The exit time ${\cal R}_n$ from $C^n(\xi^*)$ starting from a
regular point $\xi\in C^n(\xi^*)$ is exponentially distributed
with parameter
\begin{equation}
\label{ja21} \beta_n(\xi^*)=\mu(C^n(\xi^*))\left[{1\over
G_0(\xi^*)}+\sum\limits_{k=1}^n {1\over G_k(\xi^*)}\left({1\over
\mu(C^k(\xi^*))}- {1\over \mu(C^{k-1}(\xi^*))}\right)\right]
\end{equation}
that is  $\PP_\xi\{{\cal R}_n>t\}=e^{-t\beta_n(\xi^*)}$.
\end{proposition}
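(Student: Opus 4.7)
The strategy is to identify $\mathbf 1_{C^n(\xi^*)}$ as an eigenfunction of the generator $L=-W^{-1}$ on the cell $C^n(\xi^*)$, with eigenvalue $-\beta_n(\xi^*)$, and then translate this into the exponential law of $\mathcal R_n$ by a stopping argument.

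\medskip

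The first step is to evaluate $W^{-1}\mathbf 1_{C^n(\xi^*)}(\xi)$ at a point $\xi\in C^n(\xi^*)$ by means of the spectral expansion (\ref{re33}). Since the atoms of ${\cal F}_k$ are the cells $C^k(\cdot)$, a direct inspection of whether $C^k(\xi)$ contains $C^n(\xi^*)$, equals $C^n(\xi^*)$, or is contained in it gives $\EE_\mu(\mathbf 1_{C^n(\xi^*)}\mid{\cal F}_k)(\xi)=\mu(C^n(\xi^*))/\mu(C^k(\xi^*))$ for $0\le k<n$ (with the convention $\EE_\mu(\cdot\mid{\cal F}_{-1})=0$) and $=1$ for $k\ge n$. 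Plugging these values into
$$
W^{-1}\mathbf 1_{C^n(\xi^*)}(\xi)=\sum_{k\ge 0}G_k^{-1}\bigl(\EE_\mu(\mathbf 1_{C^n(\xi^*)}\mid{\cal F}_k)-\EE_\mu(\mathbf 1_{C^n(\xi^*)}\mid{\cal F}_{k-1})\bigr)(\xi),
$$
the terms with $k>n$ vanish and the remaining $n+1$ terms regroup as
$$
\mu(C^n(\xi^*))\left[\frac 1{G_0(\xi^*)}+\sum_{k=1}^n\frac 1{G_k(\xi^*)}\Bigl(\frac 1{\mu(C^k(\xi^*))}-\frac 1{\mu(C^{k-1}(\xi^*))}\Bigr)\right]=\beta_n(\xi^*).
$$

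\medskip

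The second step reads this identity probabilistically. Since $-W^{-1}$ is the infinitesimal generator of the pure jump process $\Xi$ by Theorem \ref{nucleo}, the computation above asserts that
$$
L\mathbf 1_{C^n(\xi^*)}(\xi)=-\beta_n(\xi^*),\qquad \xi\in C^n(\xi^*);
$$
equivalently, the total rate (killing plus jump to $\partial_\infty\setminus C^n(\xi^*)$) at which $\Xi$ leaves $C^n(\xi^*)$ is the \emph{constant} $\beta_n(\xi^*)$ on the whole cell. This constancy is the rigidity produced by the ultrametric/tree structure.

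\medskip

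The third step converts the generator identity into the law of $\mathcal R_n$. Applying Dynkin's formula to $f=\mathbf 1_{C^n(\xi^*)}$ and the stopping time $\mathcal R_n\wedge t$, and using that $f(\Xi_{\mathcal R_n\wedge t})=\mathbf 1_{\{\mathcal R_n>t\}}$ while $Lf(\Xi_s)=-\beta_n(\xi^*)$ for every $s<\mathcal R_n$, gives
$$
\PP_\xi\{\mathcal R_n>t\}=1-\beta_n(\xi^*)\int_0^t\PP_\xi\{\mathcal R_n>s\}\,ds,
$$
whose unique solution is $e^{-t\beta_n(\xi^*)}$. The delicate point is the rigorous application of Dynkin's formula, since the jump intensity $-W^{-1}(\xi,\eta)\,\mu(d\eta)$ has infinite total mass; this can be handled either by a standard truncation/approximation, or bypassed entirely by a resolvent computation: checking that the constant $1/(\lambda+\beta_n(\xi^*))$ solves $(\lambda-L^{C^n(\xi^*)})g=\mathbf 1_{C^n(\xi^*)}$ on $C^n(\xi^*)$, from which $\EE_\xi[e^{-\lambda\mathcal R_n}]=\beta_n(\xi^*)/(\lambda+\beta_n(\xi^*))$ identifies the exponential law.
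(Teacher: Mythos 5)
Your proof is correct, and it takes a genuinely different route from the paper's. You observe that $\mathbf 1_{C^n(\xi^*)}$ is an ``eigenfunction'' of $-W^{-1}$ on $C^n(\xi^*)$ with eigenvalue $-\beta_n(\xi^*)$, and you read off the exponential law via Dynkin's formula. The algebra checks out: from relation (\ref{re34}), for $\xi\in C^n(\xi^*)$ all the indicators $\mathbf 1_{C^k}$ evaluate to $1$, and a summation by parts of
$$
W^{-1}\mathbf 1_{C^n}(\xi)=G_n^{-1}+\sum_{k=0}^{n-1}\bigl(G_k^{-1}-G_{k+1}^{-1}\bigr)\frac{\mu(C^n)}{\mu(C^k)}
$$
does indeed give $\beta_n(\xi^*)$. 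The paper proceeds quite differently: it uses the recursive representation of $\Xi$ from Theorem \ref{esc10} to write an integral (renewal) equation for $\psi(t)=\int_{\bar\partial_\infty}\PP_\eta\{\mathcal R_1>t\}\,\mu(d\eta)$, solves it, and then substitutes back; moreover it carries out the computation only for $n=1$ and asserts the general case is analogous. Your argument is cleaner in two respects: it treats all $n$ uniformly in a single computation, and it exposes the structural reason for the exponential law, namely the constancy of $W^{-1}\mathbf 1_{C^n}$ on the cell, which is precisely the ultrametric rigidity. The paper's argument has the virtue of relying only on the probabilistic decomposition already established in Theorem \ref{esc10}, without invoking the generator directly.

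One remark on your ``delicate point.'' The hedge about infinite jump intensity is probably overcautious here: $\mathbf 1_{C^n(\xi^*)}\in{\cal D}$, and Theorem \ref{nucleo} establishes that the generator of the Feller semigroup $P^W_t$ extends $-W^{-1}$ on ${\cal D}$ and that $W^{-1}\mathbf 1_{C^n}$ is itself a simple (hence bounded) function. For a right-continuous Feller process, Dynkin's formula applies to any $f$ in the domain of the generator and any bounded stopping time, which $\mathcal R_n\wedge t$ is; no truncation is needed. Your fallback resolvent computation is also fine, but the direct Dynkin route already closes.
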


\begin{proof}
For $n=0$, ${\cal R}_0$ is the lifetime of $\Xi$ which is
exponentially distributed with parameter $\beta_0=1/G_0$. Now, we
will do the computation only the case $n=1$, the general case is
proven analogously. From (\ref{repre1}) we compute the
distribution of ${\cal R}_1$ by
\begin{equation}
\label{dist1}
\PP_\xi\{{\cal R}_1>t\}=
e^{-t/G_1}+\left(1-{G_1\over G_0}\right)
\int\limits_0^t {e^{-u/G_1}\over G_1}
\int_{{\bar \partial}_\infty}
\PP_\eta\{{\cal R}_1>t-u\} \mu(d\eta)\; du.
\end{equation}
Integrating this relation with respect to
$\xi \in {\bar \partial}_\infty$
we obtain the following equation for
$\psi(t)=\int_{{\bar \partial}_\infty}
\PP_\eta\{{\cal R}_1>t\} \mu(d\eta)$
$$
\psi(t)=\mu({\bar \partial}_\infty)\left(e^{-t/G_1}+
\left(1-{G_1\over G_0}\right)
\int\limits_0^t {e^{-u/G_1}\over G_1} \psi(t-u) \; du\right)\;.
$$
The solution to this equation is given by
$\psi(t)=\mu({\bar \partial}_\infty) e^{-t\beta_1}$,
where
$$
\beta_1={1- (1-{G_1\over G_0})\mu({\bar \partial}_\infty)
\over G_1}=
{\mu(\partial_\infty \setminus {\bar \partial}_\infty)
\over G_1}+
{\mu({\bar \partial}_\infty)\over G_0}\in
\left({1\over G_1},{1\over G_0}\right).
$$
Replacing this expression on the right hand side of (\ref{dist1})
we obtain
$\PP_\xi\{{\cal R}_1>t\}=e^{-t\beta_1}$. $\Box$
\end{proof}

\begin{remark}
We notice that $\beta_n(\xi)=\beta_n(\xi^*)$ for all regular
points $\xi \in C^n(\xi^*)$.
\end{remark}

In what follows we explain in detail a scheme for
simulating the process $\Xi$ using exponential random variables,
and  a natural generalization of
Theorem \ref{esc10}. In this result we denote by ${}_0\widetilde
\Xi$ a copy of $\Xi$.

For an approximation of the process $\Xi$ using
the projections of its generator onto the spaces associated to the
filtration defined by the levels of the tree see \cite{MRSM}.

\medskip

\begin{theorem}
\label{essn} Let $n\ge 1$, $\xi \in {\partial}_\infty$ and $(B_k:
k\ge 1)$ be a sequence of independent Bernoulli random variables
with
$\PP_\xi\{B_k=1\}=1-\PP_\xi\{B_k=0\}=1-G_k(\xi)/G_{k-1}(\xi)$.
Then, under $\PP_\xi$ the following Markov process, defined
recursively,
\begin{equation}
\label{repre2} {}_n\widetilde\Xi^{\xi}_t\!=\!
\begin{cases}
{}^n\Xi^\xi_t &\hbox{ if } t < {}^n\Upsilon\\
\dag &\hbox{ if } t \ge  {}^n\Upsilon \hbox{ and }
B_k = 0  \hbox{ for } 1 \le k \le n \\
{}^{k}\widetilde \Xi^{\;{}^k\! \mu}_{t- {}^n  \Upsilon} &\hbox{ if
} t \ge {}^n\Upsilon \hbox{ and }B_{k+1}=1, B_{p} = 0 \hbox{ for }
k+1< p \le n,
\end{cases}
\end{equation}
is a copy of $\Xi^\xi$ (recall that $\; {}^k\! \mu=\mu(\bullet\,|
C^k(\xi)))$.
\end{theorem}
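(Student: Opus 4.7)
The plan is induction on $n$. The base case $n=1$ is Theorem~\ref{esc10} applied with $r_1=\xi^*(1)$: its scheme (run $\bar\Xi^\xi={}^1\Xi^\xi$ up to $\bar\Upsilon={}^1\Upsilon$, then die if $B=0$ or restart with $\Xi^\mu$ if $B=1$) matches (\ref{repre2}) at $n=1$ once one recalls that ${}_0\widetilde\Xi^{{}^0\mu}$ is by definition a copy of $\Xi^\mu$. For $n\ge 2$ I would assume ${}_{n-1}\widetilde\Xi^\zeta\stackrel{d}{=}\Xi^\zeta$ for every $\zeta\in\partial^{reg}_\infty(\xi^*(n-1))$ (and by averaging, for any law supported there). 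Since $\xi\in\partial^{reg}_\infty(\xi^*(n))\subseteq\partial^{reg}_\infty(\xi^*(n-1))$, this yields a first representation $\Xi^\xi\stackrel{d}{=}{}_{n-1}\widetilde\Xi^\xi$: run ${}^{n-1}\Xi^\xi$ up to ${}^{n-1}\Upsilon$, then use Bernoullis $B_1,\ldots,B_{n-1}$ to decide whether to die or to restart with some ${}_{m-1}\widetilde\Xi^{{}^{m-1}\mu}$.

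Next I would apply Theorem~\ref{esc10} inside the subtree rooted at $\xi^*(n-1)$, with first-level sub-root $\xi^*(n)$, to further decompose the ${}^{n-1}\Xi^\xi$-piece; the rescalings $\Delta_k({}^{n-1}\omega)=\mu(C^{n-1})\Delta_{k+n-1}(\omega)$ and ${}^{n-1}\mu=\mu(\bullet\mid C^{n-1})$ give ${}^{n-1}G_j=G_{n-1+j}$, so the Bernoulli produced has parameter $1-G_n/G_{n-1}$, which I call $B_n$. Substituting this into ${}_{n-1}\widetilde\Xi^\xi$ and splitting on $B_n$ should close the induction. When $B_n=0$, the lifetime ${}^{n-1}\Upsilon$ collapses to ${}^n\Upsilon$ and the outer scheme governed by $B_1,\ldots,B_{n-1}$ fires at ${}^n\Upsilon$, producing exactly the branches of (\ref{repre2}) whose largest active index lies in $\{1,\ldots,n-1\}$ (with the all-zero branch giving death). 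When $B_n=1$, the trajectory continues from ${}^n\Upsilon$ as ${}^{n-1}\Xi^{{}^{n-1}\mu}$ until its own lifetime and then triggers the outer scheme; by construction this concatenation is ${}_{n-1}\widetilde\Xi^{{}^{n-1}\mu}$ time-shifted by ${}^n\Upsilon$, which the inductive hypothesis averaged over ${}^{n-1}\mu$ identifies with $\Xi^{{}^{n-1}\mu}$, matching the last branch of (\ref{repre2}).

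The hard part will be the bookkeeping in the $B_n=1$ case: one has to verify that $B_n$ is independent of $B_1,\ldots,B_{n-1}$ and of the processes previously assembled (which is built into Theorem~\ref{esc10}), and that the original $B_1,\ldots,B_{n-1}$ may be reused as the internal Bernoullis of ${}_{n-1}\widetilde\Xi^{{}^{n-1}\mu}$. The latter is where one uses that $G_j$ is ${\cal F}_{j-1}$-measurable and hence constant on $C^{n-1}$ for $j\le n-1$, so the parameters $1-G_j/G_{j-1}$ are unchanged when evaluated at a ${}^{n-1}\mu$-random point rather than at $\xi$; this renders the reused Bernoullis indistinguishable, for distributional purposes, from the fresh ones a standalone ${}_{n-1}\widetilde\Xi^{{}^{n-1}\mu}$ would employ. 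Once this identification is secured the case analysis matches (\ref{repre2}) term by term, closing the induction.
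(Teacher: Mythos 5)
The paper states Theorem~\ref{essn} without proof, merely calling it ``a natural generalization of Theorem~\ref{esc10},'' so there is no in-paper argument to compare against; I will therefore assess your argument on its own terms. Your induction is the natural way to make that remark precise, and it is correct. The base case is exactly Theorem~\ref{esc10} read with $r_1=\xi(1)$, once one notes ${}^0\!\mu=\mu$ and that ${}_0\widetilde\Xi$ is by definition a copy of $\Xi$. For the inductive step you correctly compute ${}^{n-1}G_j=G_{n-1+j}$ from the rescaling $\Delta_k({}^{n-1}\omega)=\mu(C^{n-1})\,\Delta_{k+n-1}(\omega)$ and ${}^{n-1}\!\mu=\mu(\cdot\mid C^{n-1})$, so that applying Theorem~\ref{esc10} inside the level-$(n-1)$ subtree with sub-root $\xi(n)$ yields a Bernoulli $B_n$ of parameter $1-G_n/G_{n-1}$, independent of $({}^n\Xi^\xi,\,{}^{n-1}\Xi^{{}^{n-1}\!\mu})$ and (by construction) of the outer-layer objects $B_1,\dots,B_{n-1}$ and $\Xi^{{}^k\!\mu}$. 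Your case split on $B_n$ then reproduces~(\ref{repre2}) branch by branch, with the death branch coming from $B_1=\cdots=B_n=0$. You also flag and correctly resolve the one genuinely delicate step: reassembling the $B_n=1$ branch into a time-shifted copy of ${}_{n-1}\widetilde\Xi^{{}^{n-1}\!\mu}$ requires reusing $B_1,\dots,B_{n-1}$, which is legitimate because $G_j$ is ${\cal F}_{j-1}$-measurable, hence constant on $C^{n-1}(\xi)$ for $j\le n-1$, so the parameters $1-G_j/G_{j-1}$ do not depend on whether the start point is $\xi$ or a ${}^{n-1}\!\mu$-distributed point. I see no gap.
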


\bigskip

\medskip

Therefore if we could define properly $\lim\limits_{n\to \infty}
{}_n\widetilde\Xi$, we would get this limit is also distributed as
$\Xi$. We will achieve this by using a backward construction of
the process ${\Xi}$. First we state a result on exponential
variables whose proof is left to the reader.

\begin{lemma}
\label{exps}
Let $0<\lambda_0<\lambda_1$.

\medskip

\noindent (i) Let $\Theta_1$, $\Theta_0$ and $B$ be independent
random variables such that $\Theta_1\sim \exp[\lambda_1]$,
$\Theta_0\sim \exp[\lambda_0]$ and $B\sim
Ber(1-\lambda_0/\lambda_1)$. Then the variable
${\Gamma}_0=\Theta_1+B \Theta_0$ is distributed as $
\exp[\lambda_0]$.

\medskip

\noindent (ii) Let ${\Gamma}_0$, ${\Gamma '}_0$ and $Z_1$ be
independent random variables such that
${\Gamma}_0\sim {\Gamma '}_0\sim \exp[\lambda_0]$ and
$Z_1\sim \exp[\lambda_1-\lambda_0]$. Consider the
random vector $(\Theta_1, \Theta_0, B)$
defined in the following conditional way
$$
\Theta_1={\Gamma}_0, \Theta_0={\Gamma '}_0, B=0 \hbox{ if }
Z_1\ge {\Gamma}_0 \hbox{ and }
\Theta_1=Z_1, \Theta_0={\Gamma}_0-Z_1, B=1
\hbox{ if } Z_1< {\Gamma}_0.
$$
Then $\Theta_1$, $\Theta_0$ and $B$ are
independent random variables that verify
$\Theta_1\sim \exp[\lambda_1]$, $\Theta_0\sim \exp[\lambda_0]$,
$B\sim Ber(1-\lambda_0/\lambda_1)$ and
${\Gamma}_0=\Theta_1+B \Theta_0$.
\end{lemma}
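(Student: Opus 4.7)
The strategy is to verify the claimed distributional identities by direct Laplace-transform or joint-density computations, exploiting the memoryless property of the exponential distribution.

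For (i), I would condition on $B$ and compute the Laplace transform. Writing $\varphi_\lambda(s)=\lambda/(\lambda+s)$ for the transform of $\exp[\lambda]$, independence gives
$$
\EE[e^{-s\Gamma_0}] \;=\; \tfrac{\lambda_0}{\lambda_1}\,\varphi_{\lambda_1}(s) \;+\; \bigl(1-\tfrac{\lambda_0}{\lambda_1}\bigr)\,\varphi_{\lambda_1}(s)\,\varphi_{\lambda_0}(s)
\;=\;\frac{\lambda_0(\lambda_0+s)+(\lambda_1-\lambda_0)\lambda_0}{(\lambda_0+s)(\lambda_1+s)}
\;=\;\varphi_{\lambda_0}(s),
$$
from which $\Gamma_0\sim\exp[\lambda_0]$ follows by injectivity of Laplace transforms.

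For (ii), the cleanest route is to view the construction as a standard competing-exponentials (thinning) argument. Interpret $Z_1$ and $\Gamma_0$ as two independent alarm clocks of rates $\lambda_1-\lambda_0$ and $\lambda_0$ respectively. On both atoms of $B$, the variable $\Theta_1$ equals $\min(Z_1,\Gamma_0)$, so $\Theta_1\sim\exp[\lambda_1]$; moreover $B=\mathbf 1_{\{Z_1<\Gamma_0\}}$ is the indicator that the first clock rings, independent of $\Theta_1$ with $\PP(B=1)=(\lambda_1-\lambda_0)/\lambda_1$. To handle $\Theta_0$ and establish full independence, I would verify the factorization of the joint tail on each atom. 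For $b=0$,
$$
\PP(B=0,\Theta_1>a,\Theta_0>c) \;=\; \PP(\Gamma_0'>c)\!\int_a^{\infty}\!\!\lambda_0 e^{-\lambda_0 x}\,\PP(Z_1\ge x)\,dx \;=\; \tfrac{\lambda_0}{\lambda_1}\,e^{-\lambda_1 a}\,e^{-\lambda_0 c},
$$
and for $b=1$, using the memoryless property of $\Gamma_0$ at the stopping level $Z_1=z$,
$$
\PP(B=1,\Theta_1>a,\Theta_0>c) \;=\; \int_a^{\infty}\!\!(\lambda_1-\lambda_0)e^{-(\lambda_1-\lambda_0)z}\,e^{-\lambda_0(z+c)}\,dz \;=\; \tfrac{\lambda_1-\lambda_0}{\lambda_1}\,e^{-\lambda_1 a}\,e^{-\lambda_0 c}.
$$
Both expressions factor into $\PP(B=b)\cdot e^{-\lambda_1 a}\cdot e^{-\lambda_0 c}$, which simultaneously yields the claimed marginals $\Theta_1\sim\exp[\lambda_1]$, $\Theta_0\sim\exp[\lambda_0]$, $B\sim \mathrm{Ber}(1-\lambda_0/\lambda_1)$, and their mutual independence.

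The identity $\Gamma_0=\Theta_1+B\Theta_0$ holds by the case analysis built into the construction ($\Gamma_0=\Theta_1$ when $B=0$, and $\Gamma_0=Z_1+(\Gamma_0-Z_1)=\Theta_1+\Theta_0$ when $B=1$). There is no real obstacle; the only point requiring some care is that in the $B=1$ branch one must argue, via the memoryless property applied conditionally on $Z_1=z$, that $\Gamma_0-z$ given $\{\Gamma_0>z\}$ is again $\exp[\lambda_0]$ and independent of $Z_1$, which is precisely what yields the joint independence of $\Theta_0$ with both $\Theta_1$ and $B$.
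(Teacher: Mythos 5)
Your proof is correct. Note that the paper in fact leaves this lemma without a proof (``whose proof is left to the reader''), so there is nothing to compare against; both the Laplace-transform verification of (i) and the tail-factorization argument via competing exponentials in (ii) are standard and carried out correctly, establishing the marginals, the independence of $(\Theta_1,\Theta_0,B)$, and the algebraic identity $\Gamma_0=\Theta_1+B\Theta_0$ on each atom of $B$.
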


\medskip

Now we introduce the elements involved in the simulation of the
process. First, for $t>0$ we will denote by
${\bf K}_{t}\,{}^n\Xi^\xi$
a copy of the process ${}^n\Xi^\xi$,
conditioned to the fact that the killing time ${}^n\Upsilon$
verifies ${}^n\Upsilon=t$. In particular ${\bf K}_{t}\, \Xi^\xi$
denotes a copy of the process $\Xi^\xi$, conditioned to be killed
at time $\Upsilon=t$.

\medskip

Now, consider the following set of sites
$$
{\cal M}=\{\vec k=(k_0,..,k_n): 0= k_0\le .. \le k_n, k_i\in \NN,
n\in \NN \}.
$$
Let $\vec k=(k_0,..,k_n)\in {\cal M}$. We put $|\vec k|=n$ and
call it the length of $\vec k$. ${\cal M}$ is the set of sites of
a tree with root $(0)$ and where every site $\vec k$ has a
countable number of successors $(\vec k,m)=(k_0,..,k_n,m)$ with
$m\ge k_n$. If $|\vec k|\ge 1$ we denote by ${\vec
k}^-=(k_0,..,k_{n-1})$ its predecessor. We call level $n$ the
class of sites with length $n$. We define $\vec k +1$ as follows,
$$
(0)+1=(1) \hbox{ and } \vec k +1=({\vec k}^-, k_n+1)
\hbox{ for } |\vec k|=n\ge 1.
$$
We denote ${\cal M}+1=\{\vec k +1: \vec k \in {\cal M} \}$.
Observe that $\vec k +1$ is in ${\cal M}$ except in the case $\vec
k=(0)$. On the other hand, if $\vec k\in {\cal M}$ and
$|\vec k|=n\ge 1$, then $\vec k\in {\cal M}+1$ if and only if
$k_n>k_{n-1}$. We put $|(1)|=0$ so $|\vec k +1|=|\vec k|$ holds
for all $\vec k \in {\cal M}$.

\medskip

Now, we will define a countable random set of points
$\Lambda=\left(\Lambda({\vec k}): \vec k\in {\cal M}+1\right)$
taking values in $\partial_\infty$. We will do it in a recursive
way on the length of $\vec k$. First we fix
$$
\Lambda({(1)})=\xi\in \partial_\infty,
$$
For the other levels these random variables satisfy the following
conditional laws. Let $n\ge 0$. For level $n+1$ and $|\vec k|=n$ we put,
$$
\PP\{ \Lambda({(\vec k,m)})\in A_m: m > k_n \;|\; \Lambda({\vec
k'}), {\vec k'}\in {\cal M}+1, |\vec k'| \le n \}= \prod_{m> k_n}
\mu\{ A_m | C^{m-1} (\Lambda({\vec k +1})) \};
$$
with $A_m$ a measurable set, $A_m\subseteq C^{m-1}
(\Lambda({\vec k +1}))$, $m\ge 1$. In particular
$\Lambda({(\vec k,m)})$ given $\Lambda({\vec k'}),
{\vec k'}\in {\cal M}+1, |\vec k'| \le n$, is distributed
according to $\mu(\cdot | C^{m-1} (\Lambda({\vec k +1})))$.

\medskip

Conditionally on $\Lambda$, we consider the following countable
family of independent random variables
$\left({\bf Z}_{m}^{\vec k}: \vec k=(k_0,..,k_n)\in
{\cal M}, m>k_n \right)$,
whose marginal distributions verify
$$
{\bf Z}_{m}^{\vec k}\sim exp
\left[\frac{1}{G_{m}(\Lambda({\vec k +1}))}-
\frac{1}{G_{m-1}(\Lambda({\vec k +1}))}\right] , \; m> k_n.
$$

\medskip

\begin{lemma}
\label{exp100}
We have
\begin{equation}
\label{exp101}
\PP\{\forall {\vec k}\in {\cal M}:
\liminf\limits_{m\to \infty}
{\bf Z}_{m}^{\vec k}=0 \; | \; \Lambda \}=1
\end{equation}
\end{lemma}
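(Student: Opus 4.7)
Since $\mathcal M$ is countable, it suffices by a countable intersection to fix an arbitrary $\vec k \in \mathcal M$ and prove that $\liminf_{m\to\infty} \mathbf Z_m^{\vec k}=0$ almost surely conditionally on $\Lambda$. Write $\eta = \Lambda(\vec k+1)\in \partial_\infty$ and, for $m>k_n$, abbreviate $\lambda_m = G_m(\eta)^{-1}-G_{m-1}(\eta)^{-1}$, so that conditionally on $\Lambda$ the variables $(\mathbf Z_m^{\vec k})_{m>k_n}$ are independent with $\mathbf Z_m^{\vec k}\sim\exp[\lambda_m]$.

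The crucial input is divergence of the series $\sum_m\lambda_m$. Because $\mathcal M+1$ is built from the conditional laws $\mu(\,\cdot\mid C^{m-1}(\cdot))$, the point $\eta$ lies in the closed support of $\mu$; in particular $\eta\in\partial_\infty^{reg}$ a.s., so by Lemma \ref{70}(i) we have $W\mathbf 1(\eta)=G_0(\eta)=\alpha<\infty$. The sequence $G_m(\eta)$ defined in (\ref{basta1}) is therefore the tail of a convergent series, hence $G_m(\eta)\downarrow 0$ as $m\to\infty$. Telescoping then gives
\[
\sum_{m>k_n}\lambda_m \;=\; \lim_{M\to\infty}\Bigl(\tfrac{1}{G_M(\eta)}-\tfrac{1}{G_{k_n}(\eta)}\Bigr)\;=\;+\infty.
\]

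Given this, fix $\varepsilon>0$ rational. Using the elementary inequality $1-e^{-x}\ge \tfrac12\min(x,1)$ for $x\ge 0$, we split the indices into $A=\{m:\lambda_m\varepsilon\le 1\}$ and $A^c$: on $A^c$ each term contributes at least $1/2$, while on $A$ each term contributes at least $\lambda_m\varepsilon/2$. Since $\sum_m\lambda_m=\infty$ at least one of these two sums diverges, so
\[
\sum_{m>k_n}\PP\{\mathbf Z_m^{\vec k}\le\varepsilon\mid\Lambda\}\;=\;\sum_{m>k_n}\bigl(1-e^{-\lambda_m\varepsilon}\bigr)\;=\;+\infty
\quad\text{a.s.}
\]
By the conditional second Borel--Cantelli lemma (applicable because, given $\Lambda$, the $\mathbf Z_m^{\vec k}$ are independent), $\PP\{\mathbf Z_m^{\vec k}\le\varepsilon \text{ i.o.}\mid \Lambda\}=1$ almost surely, so $\liminf_m \mathbf Z_m^{\vec k}\le \varepsilon$ a.s.

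Intersecting over a countable sequence $\varepsilon_n\downarrow 0$ yields $\liminf_m\mathbf Z_m^{\vec k}=0$ a.s.\ conditionally on $\Lambda$, and a further countable intersection over $\vec k\in\mathcal M$ delivers (\ref{exp101}). The only real point to verify carefully is that $\eta=\Lambda(\vec k+1)$ is almost surely regular, so that $G_m(\eta)\to 0$ and the telescoping argument applies; this is the one place where the construction of the random points $\Lambda$ via the conditional measures $\mu(\,\cdot\mid C^{m-1}(\cdot))$ enters in an essential way.
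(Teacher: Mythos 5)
Your proof is correct and rests on the same essential observation as the paper's: conditionally on $\Lambda$ the variables $\mathbf Z_m^{\vec k}$ are independent exponentials with rates $\lambda_m = G_m^{-1}-G_{m-1}^{-1}$, and since $\Lambda(\vec k+1)$ is almost surely a regular point the series $\sum_m \lambda_m$ telescopes to $+\infty$. Where you diverge is in how you turn that divergence into the conclusion. The paper computes the product directly,
$$\PP\{\forall m\ge m_0: \mathbf Z_m^{\vec k}\ge x\mid\Lambda\}=\prod_{m\ge m_0}e^{-x\lambda_m}=e^{-x\sum_{m\ge m_0}\lambda_m}=0,$$
and then takes a countable union over $m_0$ and a countable intersection over rational $x\downarrow 0$. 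You instead invoke the conditional second Borel--Cantelli lemma after bounding $1-e^{-\lambda_m\varepsilon}$ from below by $\tfrac12\min(\lambda_m\varepsilon,1)$. The two arguments are equivalent in content (the $\log$ of the paper's vanishing product is exactly your divergent Borel--Cantelli sum), but the paper's route is slightly shorter since for exponentials the tail product collapses to a single exponential and no auxiliary inequality is needed. One small imprecision on your side: you say $\eta=\Lambda(\vec k+1)$ lies in the closed support of $\mu$, which is the definition of \emph{accessible}, and then conclude it is \emph{regular}. What you actually need, and what the construction delivers, is that $\eta$ is distributed according to a conditional law $\mu(\cdot\mid C)$ with $\mu(C)>0$, so by Lemma \ref{70}(i) it is regular $\mu$-a.s.; regularity then gives accessibility (hence $\mu(C^m(\eta))>0$, i.e.\ $G_m(\eta)>0$ for all $m$) via Lemma \ref{70}(iii), and $G_m(\eta)\downarrow 0$ makes the telescoping diverge. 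The conclusion is right, but the logical chain is regularity $\Rightarrow$ accessibility, not the other way around.
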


\medskip

\begin{proof}
Since $\Lambda({\vec k})$ is a regular point,  we get
$G_m(\Lambda({\vec k +1}))>G_{m+1}(\Lambda({\vec k +1}))$ and
$\lim\limits_{n\to \infty} 1/G_n(\Lambda({\vec k +1}))=\infty$.
Then, the telescopic property gives
$$
\forall x>0, \forall m_0> k_n:
\PP\{\forall m\ge m_0: {\bf Z}_{m}^{\vec k}\ge x
\; | \; \Lambda\}=
\prod_{m\ge m_0}
e^{-x\left(\frac{1}{G_m(\Lambda({\vec k +1}))}-
\frac{1}{G_{m-1}(\Lambda({\vec k +1}))}\right)}=0.
$$
Therefore
$$
\PP\{\liminf\limits_{m\to \infty}
{\bf Z}_{m}^{\vec k}=0 \; | \; \Lambda \}=1.
$$
Since $\cal M$ is a countable set, the result is shown. $\Box$
\end{proof}

\medskip

Hence, we can assume $\liminf\limits_{m\to \infty}
{\bf Z}_{m}^{\vec k}=0$. We will denote
${\bf Z}^{\vec k}= \left({\bf Z}_{m}^{\vec k}: m>k_n \right)$.
In particular
${\bf Z}^{(0)}= \left({\bf Z}_{m}^{(0)}: m>0 \right)$.
\medskip

Now, we need to introduce some operations in the class of strictly
positive sequences having $0$ as an accumulation point. Let
$\ell\ge 0$ be a positive integer, $b>a\ge 0$ and ${\bf z}=({\bf
z}_n: n> \ell)$ be a strictly positive sequence verifying
$\liminf\limits_{n\to \infty} {\bf z}_n=0$. Consider the strictly
increasing sequence $({\underline \gamma}_{m}:={\underline
\gamma}_m[{\bf z}, \ell;a,b] : m\ge \ell)$ given by
\begin{eqnarray*}
&{}&
{\underline \gamma}_\ell:=\ell, \;\;
{\underline \gamma}_{\ell+1}:=\inf\{n>{\underline \gamma}_{\ell}:
{\bf z}_{n}<b-a \} \hbox{ and } \\
&{}&
{\underline \gamma}_{m+1}:=\inf\{n>{\underline \gamma}_{m}:
{\bf z}_{n}<{\bf z}_{{\underline \gamma}_{m}} \} \hbox{ for } m> \ell+1.
\end{eqnarray*}
Associated to it, we define a new sequence ${\bf z}':={\bf
z}[\ell; a,b]$ whose elements ${\bf z}'=({\bf z}'_n: n\ge \ell)$
are given by
\begin{equation}
\label{exp12}
{\bf z}'_\ell=b \hbox{ and }
{\bf z}'_{m}=a+{\bf z}_{{\underline \gamma}_{m}} \hbox{ for } m> \ell.
\end{equation}
We also introduce the following sequence of integers
$$
{\overline \gamma}_m=
{\overline \gamma}_m[{\bf z}, \ell;a,b] :=
{\underline \gamma}_{m+1}-1 \hbox{ for }
m\ge \ell.
$$
Therefore ${\underline \gamma}_m\le {\overline \gamma}_m$. Notice
that the sequence $({\bf z}'_n: n\ge \ell)$ strictly decreases to
$a$.

\medskip

The next step consists in defining a countable random set of times
${\bf t}=\{{\bf t}_{\vec k}: {\vec k}\in {\cal M}\cup \{(1)\}\}$
taking values in $\RR_+$, conditioned to $\Lambda$. Also, to each
point ${\bf t}_{\vec k}$ we associate a point $\xi_{\vec k}\in
\partial_\infty$.

\medskip

This construction will be done in a recursive way on the levels of
${\cal M}$. For level $0$ we put ${\bf t}_{(1)}=0$ and we choose
${\bf t}_{(0)}\sim \exp[1/G_0]$. We will also put
${\underline \gamma}_0=0$.
We define $\xi_{(1)}=\xi$ and $\xi_{(0)}=\dag$.

\medskip

Let us define ${\bf t}_{\vec k}$ for level $1$, that is when
$|\vec k|=1$. From Lemma \ref{exp100} we have
$\liminf\limits_{m\to \infty} {\bf Z}_{m}^{\vec k}=0$, then we can
define
$$
{\bf t}_{(0,m)}={\bf Z}'_m \hbox{ where } {\bf Z}'= {\bf Z}^{(0)}
[0; 0,{\bf t}_{(0)}].
$$
Therefore the sequence ${\bf t}_{(0,m)}$ starts from ${\bf t}_{(0,0)}=
{\bf t}_{(0)}$ and it is strictly decreasing to
$0$. We introduce the sequences
$$
{\underline \gamma}_m^{(0)}= {\underline \gamma}_m[{\bf Z}^{(0)},
0; 0, {\bf t}_{(0)}] \hbox{ and } {\overline \gamma}_m^{(0)}=
{\overline \gamma}_m[{\bf Z}^{(0)}, 0; 0, {\bf t}_{(0)}]
\hbox{ for } m\ge 0.
$$
By definition ${\underline \gamma}_m^{(0)}\le {\overline
\gamma}_m^{(0)}={\underline \gamma}_{m+1}^{(0)}-1$,
${\underline \gamma}_0^{(0)}=0$, ${\underline \gamma}_1^{(0)}=
\inf\{m>0: {\bf Z}^{(0)}_m<{\bf t}_{(0)}\}$ and
$$
{\bf t}_{(0,m)}={\bf Z}^{(0)}_{{\underline \gamma}_m^{(0)}},
\hbox{ for } m\ge 1.
$$
We associate to ${\bf t}_{(0,0)}$ the value
$\xi_{(0,0)}=\xi_{(0)}=\dag$ and for $m\ge 1$ we
associate to each ${\bf t}_{(0,m)}$ the value
$\xi_{(0,m)}=\Lambda((0,{\underline \gamma}_m^{(0)}))$. In each
interval $[{\bf t}_{(0,m)},{\bf t}_{(0,m-1)})$ we put a copy of
the process
$$
{\bf K}_{{\bf t}_{(0,m-1)}-{\bf t}_{(0,m)}}\, {}^{{\overline
\gamma}_{m-1}^{(0)}}\Xi^{\xi_{(0,m)}},
$$
that is a copy of the process of level
${\overline \gamma}_{m-1}^{(0)}$,
that starts at time ${\bf t}_{(0,m)}$ at the
point $\xi_{(0,m)}$, conditioned that its lifetime is ${\bf
t}_{(0,m-1)}-{\bf t}_{(0,m)}$.  From Theorem \ref{essn} and Lemma
\ref{exps} we get that the whole process defined on $[0, {\bf
t}_{(0)}]$ is a copy of ${\bf K}_{{\bf t}_{(0)}}\Xi^\xi$. The
intervals of level $1$ are, from right to left,
$[{\bf t}_{(0,1)},{\bf t}_{(0,0)})$, $[{\bf t}_{(0,2)},
{\bf t}_{(0,1)}), \dots ,[{\bf t}_{(0,m)},{\bf t}_{(0,m-1)}),\dots$.
Their left extremes are
respectively ${\bf t}_{(0,1)},{\bf t}_{(0,2)},\dots,{\bf
t}_{(0,m)},\dots$ and the points on the boundary associated are
$\xi_{(0,1)},\xi_{(0,2)},\dots,\xi_{(0,m)},\dots$.
We associate to each $\vec k=(0,m)$ the index
$\vec k^*=:(0,\overline \gamma^{(0)}_m)$, then
$\xi_{\vec k +1}=\Lambda(\vec k^* +1)$.

\bigskip
\bigskip

\centerline{\resizebox{15cm}{!}{\begin{picture}(0,0)%
\includegraphics{firststep.pstex}%
\end{picture}%
\setlength{\unitlength}{4144sp}%
\begingroup\makeatletter\ifx\SetFigFont\undefined%
\gdef\SetFigFont#1#2#3#4#5{%
  \reset@font\fontsize{#1}{#2pt}%
  \fontfamily{#3}\fontseries{#4}\fontshape{#5}%
  \selectfont}%
\fi\endgroup%
\begin{picture}(11584,1360)(-179,-1267)
\put(-179,-736){\makebox(0,0)[lb]{\smash{\SetFigFont{17}{20.4}{\rmdefault}{\mddefault}{\updefault}{\color[rgb]{0,0,0}$t_{(1)}=0$}%
}}}
\put(-179,-1186){\makebox(0,0)[lb]{\smash{\SetFigFont{17}{20.4}{\rmdefault}{\mddefault}{\updefault}{\color[rgb]{0,0,0}$\xi_{(1)}=\xi$}%
}}}
\put(10666,-781){\makebox(0,0)[lb]{\smash{\SetFigFont{17}{20.4}{\rmdefault}{\mddefault}{\updefault}{\color[rgb]{0,0,0}$t_{(0)}\sim [1/G_0]$}%
}}}
\put(10936,-1186){\makebox(0,0)[lb]{\smash{\SetFigFont{17}{20.4}{\rmdefault}{\mddefault}{\updefault}{\color[rgb]{0,0,0}$\xi_{(0)}=\dag$}%
}}}
\put(1936,-1186){\makebox(0,0)[lb]{\smash{\SetFigFont{17}{20.4}{\rmdefault}{\mddefault}{\updefault}{\color[rgb]{0,0,0}$\xi_{(0,3)}\sim\mu | C^2(\xi)$}%
}}}
\put(4816,-1186){\makebox(0,0)[lb]{\smash{\SetFigFont{17}{20.4}{\rmdefault}{\mddefault}{\updefault}{\color[rgb]{0,0,0}$\xi_{(0,2)}\sim \mu | C^1(\xi)$}%
}}}
\put(7471,-1186){\makebox(0,0)[lb]{\smash{\SetFigFont{17}{20.4}{\rmdefault}{\mddefault}{\updefault}{\color[rgb]{0,0,0}$\xi_{(0,1)}\sim\mu | C^0(\xi)$}%
}}}
\put(2476,-736){\makebox(0,0)[lb]{\smash{\SetFigFont{17}{20.4}{\rmdefault}{\mddefault}{\updefault}{\color[rgb]{0,0,0}$t_{(0,3)}$}%
}}}
\put(5176,-736){\makebox(0,0)[lb]{\smash{\SetFigFont{17}{20.4}{\rmdefault}{\mddefault}{\updefault}{\color[rgb]{0,0,0}$t_{(0,2)}$}%
}}}
\put(7966,-736){\makebox(0,0)[lb]{\smash{\SetFigFont{17}{20.4}{\rmdefault}{\mddefault}{\updefault}{\color[rgb]{0,0,0}$t_{(0,1)}$}%
}}}
\end{picture}
}}

\medskip

\centerline{Figure 4: First Step of Simulation}

\bigskip

Now we iterate this procedure. We assume the construction has been
made up to some $n\ge 1$. Consider an interval
$[{\bf t}_{\vec k +1}, {\bf t}_{\vec k})$ of the level $n$
characterized by $\vec k=(k_0,\dots,k_n)\in {\cal M}$ and its
corresponding $\vec k^*\in {\cal M}$. The associated point
to its left extreme ${\bf t}_{\vec k +1}$ is
$\xi_{\vec k+1}=\Lambda(\vec k^* +1)$.
In this interval we need to simulate a
copy of the conditional process
$$
{\bf K}_{{\bf t}_{\vec k}-{\bf t}_{\vec k+1}}\,{}^{\overline
\gamma^{{\vec k}^-}_{k_n}} \Xi^{\xi_{\vec k+1}}.
$$
This requires to simulate exponential random variables distributed
as
$$
exp[1/G_{m+1}(\xi_{\vec k+1})-1/G_m(\xi_{\vec k+1})],\;
m > \overline \gamma^{{\vec k}^-}_{k_n}.
$$
That is, we should consider the variables
$$
Z^{\vec k^*}_m \; \hbox{ for }
m > \overline \gamma^{{\vec k}^- }_{k_n}.
$$
We put ${\bf t}_{(\vec k,k_n)}={\bf t}_{\vec k}$,
$\xi_{(\vec k,k_n)}=\xi_{\vec k}$ and for
$m > k_n$
$$
{\bf t}_{(\vec k,m)}={\bf t}_{\vec k +1}+{\bf Z}'_m \hbox{ with }
{\bf Z}'= {\bf Z}^{{\vec k}^*}[{\overline \gamma}_{k_n}^{{\vec k}^-};
{\bf t}_{\vec k +1}, {\bf t}_{\vec k}].
$$
We also set
$$
{\underline \gamma}_m^{\vec k}=
{\underline \gamma}_m[{\bf Z}^{{\vec k}^*},
{\overline \gamma}_{k_n}^{{\vec k}^-};
{\bf t}_{\vec k +1}, {\bf t}_{\vec k}] \hbox{ and }
{\overline \gamma}_m^{\vec k }=
{\overline \gamma}_m[{\bf Z}^{{\vec k}^*},
{\overline \gamma}_{k_n}^{{\vec k}^-};
{\bf t}_{\vec k +1}, {\bf t}_{\vec k}].
$$

In the interval whose index is $\vec h=:(\vec k ,m-1)$, we
associate to the left extreme ${\bf t}_{(\vec k ,m)}$ the point
$\xi_{(\vec k ,m)}=
\Lambda\left((\vec k^*,\underline \gamma_m^{\vec k })\right)=
\Lambda\left((\vec k^*,\overline \gamma_{m-1}^{\vec k })+1\right)$
that belongs to
$C^{\overline \gamma^{\vec k}_{m-1}}(\Lambda(\vec k^*+1))$
which was  chosen in this set in a uniform way according to $\mu$.
In this way we define
$\vec h^*=(\vec k,m-1)^*=:
(\vec k^*,\overline \gamma^{\vec k}_{m-1})$
obtaining that
$$
\xi_{\vec h+1}=\Lambda\left(\vec h^*+1\right).
$$
In the interval considered we put a copy of the killed process
$$
{\bf K}_{{\bf t}_{\vec h}-{\bf t}_{\vec h+1}}\,{}^{\overline
\gamma ^{{\vec h}^-}_{m-1}}\Xi^{\xi_{\vec h+1}}.
$$

By construction we have
$$
\lim\limits_{m\to \infty}
\mathop{\downarrow}{\bf t}_{(\vec k,m)}={\bf t}_{\vec k +1}
$$
Therefore every point ${\bf t}_{\vec k}$, ${\vec k}\in {\cal M}+1$,
is an accumulation point of $({\bf t}_{({\vec k},m)})$. On
the other hand for $m>k_n$ above construction gives
$$
\xi_{(\vec k,m)}\in  C^{\overline \gamma^{\vec k}_{m-1}}
\left(\Lambda({\vec k^* +1})\right).
$$
Then $\lim\limits_{m\to \infty}\xi_{(\vec k,m)}=
\Lambda({\vec k^* +1})=\xi_{\vec k+1}$.

\medskip

In the sequel we adopt the following notation: for $k\ge 0$
and $p\ge 1$ by $k^{[p]}$ we mean the sequence
of $p$ symbols $k$, that is $k^{[p]}=\underbrace{k,\dots,k}_{p}$.

\begin{lemma}
\label{medible}
For every $\vec k=(k_0,...,k_n)$ the set of random variables
$\left(Z_{k_n+1}^{({\vec k},k_n^{[p]})}: p\ge 1 \right)$ are independent
and identically distributed.
\end{lemma}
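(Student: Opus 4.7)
Set $\vec h_p=(\vec k, k_n^{[p]})$ for $p\ge 1$, so that the variables in question are $(Z_{k_n+1}^{\vec h_p})_{p\ge 1}$. Each $\vec h_p$ has last coordinate $k_n$, hence $k_n+1>k_n$ and $Z_{k_n+1}^{\vec h_p}$ is indeed a member of the family $(\mathbf Z_m^{\vec \ell})$. A direct computation using $\vec \ell+1=(\vec \ell^{\,-},\ell_{|\vec \ell|}+1)$ gives the uniform description
\begin{equation*}
\vec h_p+1=(\vec k, k_n^{[p-1]}, k_n+1),\qquad p\ge 1,
\end{equation*}
(with the convention that $k_n^{[0]}$ is empty when $p=1$). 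Since the indices $\vec h_p$, $p\ge 1$, are pairwise distinct, conditionally on $\Lambda$ the variables $Z_{k_n+1}^{\vec h_p}$ are independent; the statement will follow once I show they have a common distribution. The conditional law of $Z_{k_n+1}^{\vec h_p}$ given $\Lambda$ is exponential with parameter
\begin{equation*}
\lambda_p=\frac{1}{G_{k_n+1}(\Lambda(\vec h_p+1))}-\frac{1}{G_{k_n}(\Lambda(\vec h_p+1))},
\end{equation*}
so everything reduces to showing that $\lambda_p$ does not depend on $p$.

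The first ingredient is the predictability of $(G_n)$: as noted just after \eqref{basta1}, $G_n$ is $\mathcal{F}_{n-1}$-measurable on $\partial_\infty$, so $G_{k_n+1}(\eta)$ and $G_{k_n}(\eta)$ depend on $\eta$ only through the truncation $(\eta(0),\dots,\eta(k_n))$, i.e.\ only through the atom of $\mathcal{F}_{k_n}$ containing $\eta$. It therefore suffices to verify that all the points $\Lambda(\vec h_p+1)$, $p\ge 1$, lie in a common atom of $\mathcal{F}_{k_n}$.

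The second ingredient is the recursive construction of $\Lambda$, which guarantees $\Lambda((\vec \ell, m))\in C^{m-1}(\Lambda(\vec \ell+1))$ whenever $(\vec \ell,m)\in\mathcal{M}$; equivalently, the paths of $\Lambda((\vec \ell,m))$ and $\Lambda(\vec \ell+1)$ in $\partial_\infty$ agree up to level $m-1$. I apply this with $\vec \ell=(\vec k, k_n^{[p-1]})$ and $m=k_n+1$: since $\vec \ell+1=\vec h_{p-1}+1$, this gives
\begin{equation*}
\Lambda(\vec h_p+1)\in C^{k_n}(\Lambda(\vec h_{p-1}+1)),\qquad p\ge 2.
\end{equation*}
Iterating, all $\Lambda(\vec h_p+1)$ lie in the single atom $C^{k_n}(\Lambda(\vec h_1+1))$ of $\mathcal{F}_{k_n}$, so $\lambda_p$ is independent of $p$ and the conditional i.i.d.\ property follows.

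The only real obstacle is notational: matching the index pattern $\vec h_p+1=(\vec k, k_n^{[p-1]}, k_n+1)$ with the recursive rule $\Lambda((\vec \ell,m))\in C^{m-1}(\Lambda(\vec \ell+1))$, so that the ``successor at coordinate $k_n+1$'' operation always feeds through the same atom at level $k_n$. Once this is aligned with the $\mathcal{F}_{k_n}$-measurability of $G_{k_n+1}$ and $G_{k_n}$, the claim is immediate.
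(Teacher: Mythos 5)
Your proof is correct and uses the same two key ingredients as the paper's: the recursive conditional construction of $\Lambda$, which forces $\Lambda(\vec h_p+1)\in C^{k_n}(\Lambda(\vec h_{p-1}+1))$, and the $\mathcal{F}_{k_n}$-measurability (predictability) of $G_{k_n}$ and $G_{k_n+1}$. The only difference is presentational: the paper reduces by induction to comparing the cases $p=1$ and $p=2$, while you run the same atom-membership argument uniformly for all $p$, which is slightly cleaner.
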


\medskip

\begin{proof}
We must only show they are identically distributed.
An inductive argument implies that it suffices to show
that $Z_{k_n+1}^{({\vec k},k_n)}$ and
$Z_{k_n+1}^{({\vec k},k_n,k_n)}$
have the same distribution.

\medskip

We have
\begin{eqnarray*}
Z_{k_n+1}^{({\vec k},k_n)}&\sim& \exp
\left[\frac{1}{G_{k_n+1}(\Lambda({\vec k}+1))}-
\frac{1}{G_{k_n}(\Lambda({\vec k}+1))}\right],\\
Z_{k_n+1}^{({\vec k},k_n,k_n)}&\sim& \exp
\left[\frac{1}{G_{k_n+1}(\Lambda(({\vec k}, k_n+1)))}-
\frac{1}{G_{k_n}(\Lambda(({\vec k}, k_n+1)))}\right].
\end{eqnarray*}

We notice that by construction $\Lambda({\vec k}+1)\in
C^{k_n}(\Lambda({\vec k^-}+1))$, and $\Lambda(({\vec k},k_n+1))\in
C^{k_n}(\Lambda({\vec k}+1))=C^{k_n}(\Lambda({\vec k^-}+1))$.
Since $G_{k_n+1}, G_{k_n}$ are
${\cal F}_{k_n}-$measurable we deduce
$$
G_{k_n+1}(\Lambda({\vec k}+1))=(G_{k_n+1}(\Lambda(({\vec k}, k_n+1)))
$$
and similarly
$G_{k_n}(\Lambda({\vec k}+1))=(G_{k_n}(\Lambda(({\vec k}, k_n+1)))$,
proving the result. $\Box$
\end{proof}

\medskip

\begin{corollary}
\label{medida0}
We have
$$
\PP\{\forall x>0, \forall {\vec k}\in {\cal M}:
\exists p\ge 1, Z_{k_n+1}^{({\vec k},k_n^{[p]})}>x \; | \; \Lambda \}=1.
$$
\end{corollary}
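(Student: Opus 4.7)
The plan is to reduce the statement to a Borel--Cantelli argument applied, for each fixed $\vec k\in{\cal M}$, to the i.i.d.\ sequence provided by Lemma~\ref{medible}, and then to take a countable intersection over $\vec k$.

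First I would fix $\vec k=(k_0,\dots,k_n)\in{\cal M}$ and, conditionally on $\Lambda$, set
$$\lambda_{\vec k}=\frac{1}{G_{k_n+1}(\Lambda({\vec k}+1))}-\frac{1}{G_{k_n}(\Lambda({\vec k}+1))},$$
which is strictly positive $\PP(\,\cdot\,|\,\Lambda)$-a.s.\ since $\Lambda(\vec k+1)$ is a regular point and the predictable process $(G_m)$ is strictly decreasing on the geodesic of a regular point. By Lemma~\ref{medible}, the sequence $\big(Z_{k_n+1}^{({\vec k},k_n^{[p]})}:p\ge 1\big)$ is, conditionally on $\Lambda$, i.i.d.\ with common distribution $\exp[\lambda_{\vec k}]$. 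Hence for every fixed $x>0$ the events $\{Z_{k_n+1}^{({\vec k},k_n^{[p]})}>x\}$ are independent conditionally on $\Lambda$, with $\sum_{p\ge 1}e^{-\lambda_{\vec k}x}=\infty$, so the second Borel--Cantelli lemma yields
$$\PP\Big\{Z_{k_n+1}^{({\vec k},k_n^{[p]})}>x\ \text{i.o.\ in }p\ \Big|\ \Lambda\Big\}=1.$$

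Next I would remove the dependence on $x$. Taking the intersection of the above events over $x=N\in\NN^*$ (which determines the statement for all real $x>0$ by monotonicity), I obtain
$$\PP\Big\{\forall x>0,\ \exists p\ge 1:\ Z_{k_n+1}^{({\vec k},k_n^{[p]})}>x\ \Big|\ \Lambda\Big\}=1,$$
equivalently $\sup_{p\ge 1}Z_{k_n+1}^{({\vec k},k_n^{[p]})}=\infty$ a.s. Finally, since ${\cal M}$ is countable, intersecting over $\vec k\in{\cal M}$ preserves full measure, and the conclusion of the corollary follows.

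There is no real obstacle; the only small point requiring care is checking that $\lambda_{\vec k}>0$ almost surely on the conditioning event, which is immediate from the fact that, by the recursive construction of $\Lambda$, the points $\Lambda(\vec k+1)$ lie on the geodesic of regular points of $\partial_\infty$, where $(G_m)$ is strictly decreasing.
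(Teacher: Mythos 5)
Your proof is correct and, in substance, the same argument the paper intends: the paper's own proof is the single line ``It is obtained directly from the last Lemma,'' and your Borel--Cantelli argument (i.i.d.\ exponentials with strictly positive rate conditionally on $\Lambda$, hence each level $x$ is exceeded infinitely often a.s., then intersect over $x\in\NN^*$ and over the countable set ${\cal M}$) is exactly the natural way to unpack that phrase. Your parenthetical check that $\lambda_{\vec k}>0$ a.s.\ because $\Lambda(\vec k+1)$ is a regular point where $(G_m)$ is strictly decreasing is the right point to verify and is handled correctly.
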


\medskip

\begin{proof}
It is obtained directly from the last Lemma. $\Box$
\end{proof}

\medskip

Therefore we can assume that, conditioned to $\Lambda$,
for every fixed $x>0$ and ${\vec k}\in {\cal M}$, there
exists $p\ge 1$ such that $Z_{k_n+1}^{({\vec k},k_n^{[p]})}>x$.

\medskip

\begin{corollary}
\label{finitud}
In every interval $[{\bf t}_{\vec k+1}, {\bf t}_{\vec k})$
and for every $\ell\ge 0$ there exists only a finite number
of points ${\bf t}_{\vec h}$ in its interior,
that is ${\vec h}=({\vec k},k_{n+1},...,k_s)$,
such that ${\underline \gamma}^{{\vec h}^-}_{k_s}=\ell$.

\end{corollary}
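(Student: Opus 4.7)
The plan is to exhibit the set in question as a subset of a finite combinatorial tree of descendants of $\vec k$, finitely branching and of bounded depth, and then conclude by K\"onig's Lemma.

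First I would show that the nominal indices are bounded by $\ell$. For any parent $\vec k'$, the sequence ${\underline \gamma}^{\vec k'}_m$ is a strictly increasing integer sequence beginning at a value $M$ with ${\underline \gamma}^{\vec k'}_M=M$, hence ${\underline \gamma}^{\vec k'}_m\ge m$ throughout its domain. Consequently, if ${\underline \gamma}^{\vec h^-}_{k_s}=\ell$, then $k_s\le\ell$, and the monotonicity $\gamma_r:={\underline \gamma}^{\vec k^{(r-1)}}_{k_r}\le\gamma_{r+1}$ along a descending chain $\vec k=\vec k^{(n)}\prec\cdots\prec\vec k^{(s)}=\vec h$ (which holds because the indexing for ${\underline \gamma}^{\vec k^{(r)}}$ starts at ${\overline \gamma}^{\vec k^{(r-1)}}_{k_r}\ge\gamma_r$) gives $k_r\le\gamma_r\le\ell$ for every $r$.

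Second, the chain-tree is finitely branching at each node. By Lemma \ref{exp100}, for any parent $\vec k'$ one has $\liminf_m {\bf Z}^{(\vec k')^*}_m=0$, so the record sequence ${\underline \gamma}^{\vec k'}_m$ is unbounded; thus only finitely many indices $m$ satisfy ${\underline \gamma}^{\vec k'}_m\le\ell$, and each node admits only finitely many children along which a chain can continue with $\gamma_r\le\ell$.

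The main difficulty is bounding the depth, i.e., ruling out infinite chains on which the non-decreasing integer sequence $\gamma_r$ stabilizes at some $c\le\ell$. Past a stabilization index $r_0$, preserving $\gamma_{r+1}=\gamma_r=c$ forces the ``trivial'' extension $k_{r+1}=k_r$ together with the equality ${\overline \gamma}^{\vec k^{(r-1)}}_{k_r}=c$ at every subsequent step; this in turn requires that the exponential clock $Z^{(\vec k^{(r_0)},k_{r_0}^{[p]})}_{k_{r_0}+1}$ remain below the fixed running interval length ${\bf t}_{\vec k^{(r_0)}}-{\bf t}_{\vec k^{(r_0)}+1}$ for every $p\ge 1$. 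This directly contradicts Corollary \ref{medida0}, which produces $p$ such that $Z^{(\vec k^{(r_0)},k_{r_0}^{[p]})}_{k_{r_0}+1}$ exceeds any prescribed threshold. Hence no such infinite chain can exist; the subtree of chains satisfying $\gamma_r\le\ell$ at every step has bounded depth, is finitely branching, and so is finite by K\"onig's Lemma, and the set in the statement is contained in it. The technical obstacle is the precise bookkeeping between the nominal indices $k_r$ and the true ${\bf Z}$-positions encoded by the $\gamma$-maps, needed to convert the stabilization of $\gamma_r$ into the $Z$-clock inequality that is defeated by Corollary \ref{medida0}.
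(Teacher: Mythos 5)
Your argument is correct and in substance reconstructs the paper's ``recurrence argument,'' but it organizes the combinatorics through a tree-plus-K\"onig's-Lemma argument rather than a direct induction on $\ell$. Both proofs hinge on Corollary \ref{medida0}: the paper observes that $T_0$ is a singleton, bounds $T_1$ via the inclusion $\{|T_1|=\infty\}\subseteq\{Z_1^{(0^{[r]})}<t_{(0)} \hbox{ for all } r\ge 1\}$, and then iterates; you instead collect the descendants of $\vec k$ with $\gamma_r\le\ell$ throughout into a tree, establish finite branching from the unboundedness of the strictly increasing records ${\underline\gamma}^{\vec k'}$ (Lemma \ref{exp100}), rule out infinite chains via stabilization of the nondecreasing bounded $\gamma_r$ together with Corollary \ref{medida0}, and conclude by K\"onig. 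The explicit monotonicity $\gamma_{r+1}\ge{\overline\gamma}^{\vec k^{(r-1)}}_{k_r}\ge\gamma_r$ and the reduction ``stabilization of $\gamma$ forces an infinite run of small clocks'' spell out what the paper's one-line recurrence leaves implicit, and K\"onig's Lemma frees you from having to produce a uniform bound on chain depth, since pathwise finiteness suffices. Two bookkeeping corrections are needed in the stabilization step: after stabilization at value $c$ from index $r_0$, the forced continuation is $k_{r+1}=c$ for every $r\ge r_0$, which agrees with $k_r$ only from $r\ge r_0+1$ on (since $k_{r_0}$ may still be $<c$); and the i.i.d.\ clocks that must all stay below the fixed interval length ${\bf t}_{\vec k^{(r_0)}}-{\bf t}_{\vec k^{(r_0)}+1}$ are indexed through the starred chain $(\vec k^{(r)})^*$, namely they are $Z^{(\vec q,\,c^{[p]})}_{c+1}$ with $\vec q:=(\vec k^{(r_0)})^*$ (whose last coordinate is forced to equal $c$), not $Z^{(\vec k^{(r_0)},\,k_{r_0}^{[p]})}_{k_{r_0}+1}$ as you wrote. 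With these adjustments Corollary \ref{medida0} applies verbatim and your proof closes.
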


\medskip

\begin{proof}
Notice that $T_0=:\{{\bf t}_{\vec k}: {\underline \gamma}^{{\vec k}^-}_{k_n}=0\}=\{{\bf t}_{(0)}\}$.
The fact that the set
$T_1=:\{{\bf t}_{\vec k}: {\underline \gamma}^{{\vec k}^-}_{k_n}=1\}$
is finite follows from the inclusion of events
$\{|T_1|=\infty\}\subseteq \{Z_1^{(0^{[r]})}<t_{(0)}: r\ge 1\}$
and last Corollary.
A recurrence
argument using Corollary \ref{medida0} finishes the proof. $\Box$
\end{proof}

\medskip

\begin{theorem}
\label{sim100}
The process $(\Xi_t: t< \Upsilon)$
has a version that is right continuous with left limits
and in the set of points
$[0,\Upsilon)\setminus \{{\bf t}_{\vec k} :{\vec k}\in {\cal M}\}$
it is continuous.
\end{theorem}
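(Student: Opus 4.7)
The plan is to analyze the paths produced by the recursive construction and to identify $\{\mathbf t_{\vec k}:\vec k\in\mathcal M\}$ as the jump set of $\Xi$. Since Proposition \ref{beurlingf} shows that the process has no diffusive part, it suffices to produce a version that is right continuous at each of these countably many times, has left limits at each of them, and is constant in state in a punctured neighborhood of every point outside this set; this will give the assertion of the theorem.

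The only delicate case is right continuity at an accumulation point $\mathbf t_{\vec k+1}$ with $\vec k\in\mathcal M$. I would argue as follows: each such time is the decreasing limit of the killing times $\mathbf t_{(\vec k,m)}$ as $m\to\infty$, and on the sub-interval $[\mathbf t_{(\vec k,m)},\mathbf t_{(\vec k,m-1)})$ the construction places a copy of ${}^{\overline\gamma^{\vec k}_{m-1}}\Xi^{\xi_{(\vec k,m)}}$. By the sampling rule for $\Lambda$ the starting point $\xi_{(\vec k,m)}$ lies in the cell $C^{\overline\gamma^{\vec k}_{m-1}}(\xi_{\vec k+1})$, and the conditioned process stays in that cell because its state space is $\partial_\infty(\xi^*(\overline\gamma^{\vec k}_{m-1}))$. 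By the recursive definition (\ref{exp12}) the integer sequence $\overline\gamma^{\vec k}_{m-1}$ is strictly increasing in $m$ and diverges to infinity, so these cells shrink in the tree topology down to $\{\xi_{\vec k+1}\}$. Consequently $\Xi_t\to \xi_{\vec k+1}=\Xi_{\mathbf t_{\vec k+1}}$ as $t\downarrow\mathbf t_{\vec k+1}$, yielding right continuity. The left limit at each $\mathbf t_{\vec k}$ with $\vec k\in\mathcal M$ follows from the Feller property (Theorem \ref{nucleo}) of the inserted conditioned process on the sub-interval terminating at $\mathbf t_{\vec k}$.

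For continuity at a point $t\in[0,\Upsilon)\setminus\{\mathbf t_{\vec k}:\vec k\in\mathcal M\}$, I would use that such a $t$ sits in the open interior of a nested sequence of sub-intervals $[\mathbf t_{\vec k_n+1},\mathbf t_{\vec k_n})$ at every level $n$. Corollary \ref{finitud} bounds, within any fixed such interval, the number of finer subdivision points of any given absolute level; combined with the pure-jump constancy between jumps, this shows that $\Xi$ is constant in state on a punctured neighborhood of $t$, so $\lim_{s\to t}\Xi_s=\Xi_t$. The main obstacle I foresee is the bookkeeping needed to translate between the two indexing conventions (the recursive killing level $\overline\gamma^{\vec k}_m$ on one hand and the cell depth $C^n$ along the geodesic $\xi^*(n)$ on the other), and to exclude the pathological possibility that subdivision points accumulate at $t$ itself — a situation ruled out by the combination of Corollaries \ref{medida0} and \ref{finitud}.
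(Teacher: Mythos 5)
Your argument for right continuity at the killing times $\mathbf t_{\vec k+1}$ — the shrinking-cell argument, that $\Xi_t$ is trapped in $C^{\overline\gamma^{\vec k}_{m-1}}(\xi_{\vec k+1})$ on the $m$th sub-interval and these cells decrease to $\{\xi_{\vec k+1}\}$ — is correct and is essentially what the paper does. However, there is a genuine gap in your treatment of continuity at points $t\in[0,\Upsilon)\setminus\{\mathbf t_{\vec k}\}$. You claim that Corollaries \ref{medida0} and \ref{finitud} rule out accumulation of subdivision points at such $t$, and conclude that $\Xi$ is constant on a punctured neighborhood of $t$. Neither is true. Corollary \ref{finitud} only bounds the number of subdivision points of \emph{each fixed} recursive level $\ell$ within an interval; summing over all $\ell$ gives a countably infinite set, and accumulation does occur — indeed it is generic in this construction. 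For instance, when $L^{\vec k}$ is infinite the maxima $\mathbf t_{\vec h_\ell}$ form a strictly increasing sequence with limit $t^\ast\le \mathbf t_{\vec k}$, and if $t^\ast<\mathbf t_{\vec k}$ then $t^\ast$ is a point of $[0,\Upsilon)\setminus\{\mathbf t_{\vec k}\}$ at which jump times accumulate from the left; the process is not constant on any punctured left neighborhood of $t^\ast$. The paper's proof explicitly treats such accumulation points (from the right, from the left, and two-sided), again via a shrinking-cell argument: it shows that as the subdivision times approach $t$, the associated values $\xi_{\vec h}$ are trapped in nested cells that collapse, so the left and right limits coincide with $\Xi_t$. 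Your proposal is missing this case and would fail there.

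A secondary, less serious issue: you justify left limits at $\mathbf t_{\vec k}$ by invoking the Feller property of Theorem \ref{nucleo}, but the process inserted on the sub-interval ending at $\mathbf t_{\vec k}$ is a \emph{conditioned} process (conditioned to die at a prescribed time), and you do not argue that the conditioning preserves the path regularity. The paper avoids this by working directly with the combinatorics of $T^{\vec k}_\ell$ and $L^{\vec k}$ and showing the process is constant on a terminal sub-interval (when $L^{\vec k}$ is finite, or $t^\ast<\mathbf t_{\vec k}$), or else invoking cell nesting. You should replace the Feller appeal with an argument of that kind.
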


\medskip

\begin{proof}
Let us fix an interval $[{\bf t}_{\vec k+1}, {\bf t}_{\vec k})$.
We denote by $H=\{{\vec h}:
{\bf t}_{\vec h}\in ({\bf t}_{\vec k+1},{\bf t}_{\vec k})\}$
and a generic ${\vec h}\in H$ is denoted
by ${\vec h}=({\vec k},k_{n+1},...,k_s)$.
To each $\ell\ge 0$ we associate the set
$T^{\vec k}_\ell=\{{\bf t}_{\vec h}\in
({\bf t}_{\vec k+1},{\bf t}_{\vec k}):
{\underline \gamma}^{{\vec h}^-}_{k_s}=\ell\}$.
Consider the set of nonnegative integers
$L^{\vec k}=\{\ell: T^{\vec k}_\ell \neq \emptyset\}$,
and for each $\ell\in L^{\vec k}$ denote
${\bf t}_{{\vec h}_\ell} = \max T^{\vec k}_\ell$
and put ${\vec h}_\ell=({\vec k},k_{n+1},...,k_{s_\ell})$.
By construction ${\bf t}_{{\vec h}_\ell}$
strictly increases along $\ell\in L^{\vec k}$.

\medskip

Assume $L^{\vec k}$ is finite and let $\ell^*$
be its maximal value. We necessarily have
$\Xi_t=\xi_{{\vec h}_{\ell^*}}$
for $t\in [{\bf t}_{{\vec h}_{\ell^*}}, {\bf t}_{\vec k})$,
because in the contrary there would be some time
$\tilde t\in ({\bf t}_{{\vec h}_{\ell^*}}, {\bf t}_{\vec k})$
for which $\tilde t={}^r\dag$, contradicting the maximality
of ${\bf t}_{{\vec h}_{\ell^*}}$.

\medskip

Now assume  $L^{\vec k}$ is infinite. Since
${\bf t}_{{\vec h}_{\ell}}$
is increasing, there exists
$t^*=\lim\limits_{{\ell\to \infty}\atop {\ell\in L^{\vec k}}}
{\bf t}_{{\vec h}_{\ell}}$.
Observe that for every $\ell\in L^{\vec k}$ and
$t\in ({\bf t}_{{\vec h}^{\ell}}, {\bf t}_{\vec k})$
we have $\Xi_t\in C^{k_{s_\ell}-1}(\Lambda({{\vec h}_\ell}^-))$.
Then there exists
$\xi^*=\lim\limits_{{\ell\to \infty}\atop {\ell\in L^{\vec k}}}
\xi_{{\vec h}_\ell}$.
If $t^* < {\bf t}_{\vec k}$, we can show as before that necessarily
$\Xi_t=\xi^*$ for $t\in [t^*, {\bf t}_{\vec k})$.

\medskip

Let us summarize. We have shown that at every point
$\{{\bf t}_{\vec h}: {\vec h}\in H\}$ the killed process is
continuous from the right with a limit at the left.
Now we take
$t\in ({\bf t}_{\vec k+1}, {\bf t}_{\vec k})\setminus \{{\bf t}_{\vec
h}: {\vec h}\in H\}$. Assume it is an
accumulation point of $\{{\bf t}_{\vec h}: {\vec h}\in H\}$.

\medskip

If it is not an accumulation point from the right we
put ${\vec h}^*$ the closest element of
$\{ {\bf t}_{\vec h}: {\vec h}\in H\}$
to the right of $t$, also let ${\bf t}_{{\vec h}_n}$ be an
increasing sequence converging to $t$. By the same
arguments as before there exists
$\xi^*=\lim\limits_{n\to \infty} \xi_{{\vec h}_n}$
and we also have
$\Xi_t=\xi^*$ for $t\in [t, {\bf t}_{{\vec h}^*})$.
If it is not an accumulation point from the left we
put ${\vec h}^*$ the closest element of
$\{{\bf t}_{\vec h}: {\vec h}\in H\}$
to the left of $t$. Therefore, by construction, we can assume
that the decreasing sequence ${\bf t}_{{\vec h}_n}$
in $\{ {\bf t}_{\vec h}: {\vec h}\in H\}$
converging to $t$, verifies $\xi_{{\vec h}_n}\in
C^{\ell_n}(\xi_{{\vec h}^*})$, with $\ell_n$ increasing
to $\infty$ as $n$ does. Therefore
$\xi_{{\vec h}^*}=\lim\limits_{n\to \infty}\xi_{{\vec h}_n}$.
Hence $\Xi_t=\xi^*$ for $t\in [{\bf t}_{{\vec h}^*},t]$.

\medskip

Now assume $t$ is an accumulation point from the right
and the left. Let
${\bf t}_{{\vec h}_n}$ be a decreasing sequence and
${\bf t}_{{\vec l}_n}$ be an increasing sequence,
in $\{ {\bf t}_{\vec h}: {\vec h}\in H\}$,
converging to $t$. For $n$ sufficiently large there exists
$m_n$ and $\ell_n$, both converging to $\infty$ as $n$ does,
such that $\xi_{{\vec h}_n}\in C^{\ell_n}(\xi_{{\vec l}_{m_n}})$.
Therefore
$\lim\limits_{n\to \infty}\xi_{{\vec h}_n}=
\lim\limits_{n\to \infty}\xi_{{\vec l}_n}$ and then
$\xi_t$ is this common limit.

\medskip

We have shown our construction fulfills the properties stated
in the Theorem. $\Box$
\end{proof}

\medskip

\begin{remark}
We notice that the set of discontinuities for the process $\Xi$ is given by
$\{{}^n\!\Upsilon :  n\ge 0\}=\{{\bf t}_{\vec k} :{\vec k}\in {\cal M}\}$.
\end{remark}

\begin{theorem}
\label{sim101}
If the measure $\mu$ is atomless then the process $(\Xi_t: t< \Upsilon)$
has no interval of constancy.
\end{theorem}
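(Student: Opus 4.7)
The plan is to combine Theorem \ref{nucleo}, which identifies $p(t,\xi,\eta)$ as the density of the transition kernel with respect to $\mu$, with the right-continuity of sample paths (Theorem \ref{sim100}) and the Markov property. The starting observation is pointwise: for every $\xi\in\partial_\infty^{reg}$ and every $t>0$ one has
\[
\PP_\xi\{\Xi_t=\xi\}=\int_{\{\xi\}}p(t,\xi,\eta)\,\mu(d\eta)=p(t,\xi,\xi)\,\mu(\{\xi\})=0,
\]
because $\mu$ is atomless. Thus, starting from $\xi$, the chain is almost surely not at $\xi$ at any prescribed positive time.

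Next I would upgrade this pointwise statement to an interval statement by a standard separability argument. For rationals $0<q<q'$ put
\[
A_{q,q'}=\{q<\Upsilon\}\cap\{\Xi_s=\Xi_q\text{ for every rational }s\in[q,q']\}.
\]
By the right-continuity of paths furnished by Theorem \ref{sim100}, any realization for which $\Xi$ is constant on some subinterval $[a,b]\subset[0,\Upsilon)$ with $a<b$ satisfies $A_{q,q'}$ for every pair of rationals $a<q<q'<b$. Consequently
\[
\{\Xi\text{ has an interval of constancy in }[0,\Upsilon)\}\ \subseteq\ \bigcup_{q,q'\in\QQ,\,0<q<q'}A_{q,q'}.
\]

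The remaining step is an application of the Markov property. Conditioning on ${\cal F}_q$ gives
\[
\PP(A_{q,q'})\;\le\;\EE\bigl[{\bf 1}_{\{q<\Upsilon\}}\,\PP_{\Xi_q}\{\Xi_{q'-q}=\Xi_q\}\bigr]\;=\;0,
\]
where the inner probability vanishes by the first display since on $\{q<\Upsilon\}$ we have $\Xi_q\in\partial_\infty^{reg}$ and $\mu(\{\Xi_q\})=0$ almost surely. Summing the vanishing probabilities over the countable family of rational pairs yields the theorem. The only mild subtlety, for which the right-continuity is invoked, is the passage from the uncountable ``interval of constancy'' event to the countable union of the $A_{q,q'}$'s; I do not expect any substantive obstacle beyond this.
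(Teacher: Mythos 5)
Your argument is correct, but it follows a genuinely different route from the paper's. The paper exploits the explicit cascade construction of $\Xi$ developed in the preceding pages: fixing $\xi$, the atomlessness of $\mu$ produces a strictly decreasing chain $C^{n_i}(\xi)\downarrow\{\xi\}$, at each level-$n_i$ killing time ${}^{n_i}\!\Upsilon$ the process redraws its position according to $\mu(\cdot\mid C^{n_i-1}(\xi))$, and since ${}^{n_i}\!\Upsilon\sim\exp[1/G_{n_i}(\xi)]$ with $G_{n_i}(\xi)\downarrow 0$, these redraws occur at times accumulating at $0$, destroying any initial interval of constancy; the Markov property then propagates this to all times. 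Your proof bypasses the simulation entirely and uses only the absolute continuity of the one-dimensional marginals (Theorem~\ref{nucleo}) to get $\PP_\eta\{\Xi_t=\eta\}=\int_{\{\eta\}}p(t,\eta,\cdot)\,d\mu=0$, then upgrades this to an interval statement by right-continuity (Theorem~\ref{sim100}), a countable rational grid, and the Markov property at a fixed rational time. Your version is more abstract and portable (it would apply to any right-continuous Markov process whose transition kernel has a density with respect to a diffuse reference measure), while the paper's version is more constructive and also reveals \emph{how} the jumps accumulate, which is exactly the mechanism underlying the stated remark that the discontinuity set is $\{{}^n\!\Upsilon: n\ge 0\}$. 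One small point of hygiene: in the definition of $A_{q,q'}$ you may want to require $q'<\Upsilon$ (or adopt the convention $\Xi_s=\dag$ for $s\ge\Upsilon$ so the constancy event forces $q'<\Upsilon$ automatically), and in the Markov-property display the inner quantity is the function $\eta\mapsto\PP_\eta\{\Xi_{q'-q}=\eta\}$ evaluated at $\eta=\Xi_q$, which vanishes identically on $\partial_\infty^{reg}$ under the atomless hypothesis; neither affects the validity of the argument.
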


\medskip

\begin{proof} Using the Markov property it is enough to prove
that for almost all $\xi$ and all $t>0$ we have
$$
\PP_\xi\{\forall_{0< s <t}\; \Xi_s=\xi \}=0.
$$
Since $\mu$ has no atoms we obtain the existence of a strictly
increasing sequence of integers $(n_i)$, such that
$C^0(\xi)\varsupsetneqq C^{n_i}(\xi)\varsupsetneqq
C^{n_{i+1}}(\xi)\downarrow \{\xi\}$. We
consider the random times $^{n_i}\Upsilon$. At these
times the process makes a random selection on
$C^{n_i-1}(\xi)$, then we have to prove
$$
\PP_\xi\{^{n_i}\Upsilon >t\; \hbox{ for all } i\}=0.
$$
We notice that each of these random variables is exponentially
distributed with parameter $1/G_{n_i}\uparrow \infty$
and the result follows.
$\Box$
\end{proof}

\subsection{The Markov Process in the Boundary under reflection at the root}
\label{sp210}
\medskip

The operator ${\underline {\bf
W}}^{-1}=W^{-1}-G_0^{-1}\EE_{\mu}=\sum_{n\ge 1} G_n^{-1}
\big(\EE_{\mu}(\; | {\cal F}_n)-\EE_{\mu}(\; | {\cal
F}_{n-1})\big)$ generates a (conservative) Markov process. Notice
that ${\underline {\bf W}}^{-1}$ has the same form as
$$
W^{-1}=\sum_{n\ge 0} G_n^{-1} \big(\EE_{\mu}(\; | {\cal
F}_n)-\EE_{\mu}(\; | {\cal F}_{n-1})\big)
$$
where in the last expression $G_0\equiv \infty$. Therefore the
analogous of Theorem \ref{nucleo} holds.

\begin{theorem}
\label{nucleo2} The symmetric kernel
\begin{equation}
\label{dens110} p(t,\xi,\eta)= 1-e^{-t/G_{1}(\xi)} +
\sum\limits_{n=1}^{|\xi\land \eta|}
{e^{-t/G_n(\xi)}-e^{-t/G_{n+1}(\xi)} \over \mu(C^n(\xi))},\;
(\xi,\eta)\in \partial_\infty^{reg}\times \partial_\infty^{reg},\;
t>0,
\end{equation}
is Markovian (with total mass $1$) and the Markov semigroup
$P^{{\underline {\bf W}}}_t$ induced in $L^2(\mu)$ verifies
$$
P^{{\underline {\bf W}}}_t  f= \sum\limits_{n\ge 1} e^{-t/G_n}
\Big(\EE_{\mu} (f | {\cal F}_n)- \EE_{\mu} (f | {\cal
F}_{n-1})\Big).
$$
The infinitesimal generator of this semigroup is an extension of
$-{\underline {\bf W}}^{-1}$ defined on $\cal D$.
\end{theorem}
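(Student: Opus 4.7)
The plan is to transcribe the proof of Theorem \ref{nucleo} with the formal convention $G_0\equiv +\infty$ and $e^{-t/G_0}\equiv 1$ suggested in Remark \ref{ultima}: under this convention ${\underline {\bf W}}^{-1}$ has the same spectral expansion as $W^{-1}$, except that the $n=0$ term becomes a time-independent projection onto the constants rather than a decaying exponential, and the entire argument carries over essentially verbatim.

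First I would check the elementary properties of $p$. Because $G_1$ is ${\cal F}_0$-measurable and ${\cal F}_0$ is trivial (since $C^0(\xi)=\partial_\infty$ for every $\xi$), the additive term $1-e^{-t/G_1(\xi)}$ does not actually depend on $\xi$, so it is symmetric for free. The sum over $1\le n\le |\xi\wedge\eta|$ is symmetric by the identities $G_n(\xi)=G_n(\eta)$ and $\mu(C^n(\xi))=\mu(C^n(\eta))$ valid for $n\le |\xi\wedge\eta|$ (cf. (\ref{igual})), exactly as in Theorem \ref{nucleo}. Integrating $p(t,\xi,\cdot)$ against $\mu$ produces $(1-e^{-t/G_1(\xi)})\mu(\partial_\infty)$ plus the telescoping series $\sum_{n\ge 1}(e^{-t/G_n(\xi)}-e^{-t/G_{n+1}(\xi)})=e^{-t/G_1(\xi)}$ (using that $G_n(\xi)\downarrow 0$ on regular points), so the total mass is $1$ and $p$ is Markovian.

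The second step is to build the semigroup on ${\cal D}$ by the iterative scheme (\ref{4001}). Using the ${\cal F}_{n-1}$-measurability of $G_n$ for $n\ge 1$, one obtains termwise
\[
e^{-t{\underline {\bf W}}^{-1}}f = \EE_\mu f + \sum_{n\ge 1} e^{-t/G_n}\bigl(\EE_\mu(f\,|\,{\cal F}_n)-\EE_\mu(f\,|\,{\cal F}_{n-1})\bigr),
\]
where the $\EE_\mu f$ is the $n=0$ contribution under $e^{-t/G_0}=1$. Monotonicity of $(G_n)$ gives $\|e^{-t{\underline {\bf W}}^{-1}}f\|_2\le \|f\|_2$, hence extension to $L^2(\mu)$, and ${\underline {\bf W}}^{-1}{\bf 1}=0$ yields $e^{-t{\underline {\bf W}}^{-1}}{\bf 1}={\bf 1}$, confirming conservativeness. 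Matching $P^{\underline {\bf W}}_t$ with the kernel $p$ then reduces to checking
\[
\int p(t,\xi,\eta){\bf 1}_{C^m(\eta_0)}(\eta)\,\mu(d\eta) = P^{\underline {\bf W}}_t{\bf 1}_{C^m(\eta_0)}(\xi)
\]
for the generating family of atoms $C^m(\eta_0)$: both sides equal $\mu(C^m(\eta_0))\bigl[1-e^{-t/G_1(\xi)}+\sum_{n=1}^{|\xi\wedge\eta_0|\wedge(m-1)}(e^{-t/G_n(\xi)}-e^{-t/G_{n+1}(\xi)})/\mu(C^n(\xi))\bigr]$, the right-hand side after reindexing the telescoping of the series expansion and using $\mu(C^0(\xi))=1$. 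The atomic case $f={\bf 1}_{\{\xi^*\}}$ with $\mu(\{\xi^*\})>0$ is handled exactly as in (\ref{eqq30}).

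Finally, differentiating the series expansion at $t=0$ on ${\cal D}$ recovers $-{\underline {\bf W}}^{-1}f$ term by term, so the $L^2$-generator extends $-{\underline {\bf W}}^{-1}|_{{\cal D}}$. The main subtlety—and the only substantive departure from the proof of Theorem \ref{nucleo}—is the bookkeeping of the $n=0$ spectral term, now a constant rather than a decaying exponential: one must verify that the formal substitution $e^{-t/G_0}\to 1$ is compatible with the iterative construction of the semigroup on ${\cal D}$ and with the telescoping that yields total mass one. Once this is in place, the remainder of the argument mirrors Theorem \ref{nucleo} line by line.
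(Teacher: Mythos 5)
Your proposal is correct and follows exactly the paper's intended route: the paper itself gives no separate proof of Theorem \ref{nucleo2}, stating only that $\underline{\mathbf{W}}^{-1}$ has the same spectral form as $W^{-1}$ with the formal substitution $G_0\equiv\infty$, so that ``the analogous of Theorem \ref{nucleo} holds.'' You have simply carried out that substitution explicitly — the symmetry of the $n=0$ term via triviality of ${\cal F}_0$, the telescoping total-mass computation giving $1$, the series construction on ${\cal D}$, the identification on atoms $C^m(\eta_0)$, and the differentiation at $t=0$ — which is precisely the bookkeeping the authors left to the reader.
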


\begin{remark} The formula (\ref{dens110}) shares some similarities
with the formula (3.1) in \cite{albeverio2} (see also (2.9) in
\cite{albeverio3}) developed for random walks on the p-adic field.
Nevertheless, in our case no homogeneity of the tree is needed.
\end{remark}

Let ${\underline \Xi}=({\underline \Xi}_t)$ be the Markov (conservative)
process
associated to the Markov semigroup $P^{{\underline {\bf W}}}_t$.
To simulate the process starting from $\xi$, we first generate a
sequence of independent identically distributed random variables
$(Y_n: n\ge 1)$ with law $exp[1/G_1]$, and we select a sequence of
points $(\xi_n: n\ge 1)$ independent identically distributed in
$\partial_\infty$ with law $\mu$. We define ${}^1\! \Upsilon_0=0$,
$\xi_0=\xi$, ${}^1\! \Upsilon_k=Y_1+...+Y_k$. In each random
interval $[{}^1\! \Upsilon_k, {}^1\! \Upsilon_{k+1})$ we put a
copy of the process ${\bf K}_{{}^1\! \Upsilon_{k+1}-{}^1\!
\Upsilon_k}\,{}^1\Xi^{\xi_k}$, which is the process
${}^1\Xi^{\xi_k}$ conditioned to the fact that the killing time
${}^1\Upsilon$ verifies ${}^1\Upsilon={}^1\! \Upsilon_{k+1}-{}^1\!
\Upsilon_k$. We summarize the main properties of $\underline \Xi$ in
the following result.

\begin{theorem}
\label{sim110} The process $({\underline \Xi}_t: t\ge 0)$ has a
version that is right continuous with left limits. The set of
points of continuity is the complement of $\{{}^n\!\Upsilon_k :
n\ge 1, k\ge 0\}$.
\end{theorem}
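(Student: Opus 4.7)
The plan is to reduce the statement to Theorem \ref{sim100} by analyzing the explicit concatenation construction given just before the theorem. First, I would verify that the prescribed simulation—regenerating at an $\exp[1/G_1]$ clock from an independent $\mu$-distributed point, and running a killed copy of ${}^1\Xi^{\xi_k}$ in between—does produce a Markov process whose semigroup is $P_t^{\underline{\mathbf W}}$. This is the analog of Theorem \ref{esc10}, adapted to the conservative setting: since $-\underline{\mathbf W}^{-1}$ lacks the $G_0^{-1}\EE_\mu$ term, the process is not killed but instead restarted from $\mu$, while between two regenerations it is governed by $-({}^1W)^{-1}$, i.e.\ by the semigroup of ${}^1\Xi$. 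The verification proceeds by the same case-by-case computation of finite-dimensional distributions as in the proof of Theorem \ref{esc10}, using the quasi-stationarity $\PP_{\,{}^1\!\mu}\{{}^1\Xi_t\in A\}=e^{-t/G_1}\,{}^1\!\mu(A)$ (which mirrors (\ref{qsd1})) and the symmetric kernel formula (\ref{dens110}); the atom-by-atom bookkeeping that in Theorem \ref{esc10} produced the cancellation $A(t_{k-1},t_k,\xi_{k-1},\xi_k)=0$ has an exact analog here with the cemetery replaced by a fresh $\mu$-start.

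Second, on each random interval $[{}^1\Upsilon_k,{}^1\Upsilon_{k+1})$ the process $\underline\Xi$ coincides with the killed piece $\mathbf K_{{}^1\Upsilon_{k+1}-{}^1\Upsilon_k}\,{}^1\Xi^{\xi_k}$. Applying Theorem \ref{sim100} to the subtree hanging from $\xi_k(1)$, with its own boundary measure ${}^1\!\mu$ and weights ${}^1\!\omega$, yields a right-continuous with left limits version of this killed piece whose discontinuities inside the open interval $({}^1\Upsilon_k,{}^1\Upsilon_{k+1})$ form a discrete subset of times indexed by the set $\mathcal M$ of the subtree. Translating these indices back, they are exactly the random times ${}^n\Upsilon_{k'}$ with $n\geq 2$ falling in the interval. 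At each endpoint ${}^1\Upsilon_k$ itself, the construction sets $\underline\Xi_{{}^1\Upsilon_k}=\xi_k$, which is right-continuous by definition of the next piece, while the left limit $\underline\Xi_{{}^1\Upsilon_k^-}$ exists because the preceding piece is cadlag up to its killing time by Theorem \ref{sim100}.

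Third, since the increments ${}^1\Upsilon_{k+1}-{}^1\Upsilon_k$ are i.i.d.\ $\exp[1/G_1]$ with $G_1<\infty$, we have ${}^1\Upsilon_k\uparrow\infty$ almost surely, so no Zeno-type accumulation occurs at finite time and the concatenated process is well-defined and cadlag on all of $[0,\infty)$. Collecting the intervals across all $k\geq 0$ shows that the full set of potential discontinuities is contained in $\{{}^n\Upsilon_k:n\geq 1,\;k\geq 0\}$ and that $\underline\Xi$ is continuous on its complement.

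The main obstacle is step one: the literal statement of Theorem \ref{esc10} cannot be invoked off the shelf because $\underline{\mathbf W}^{-1}$ is conservative and the ``coffin'' $\dag$ is replaced by a genuine point of $\partial_\infty$ drawn from $\mu$. The cleanest way around this is to reprove the finite-dimensional-distribution identity for the simulation directly, following the three-case decomposition ($\ell\le k-1$, $\ell=k$, $\ell=k+1$) in the proof of Theorem \ref{esc10}; once this identification is in place, steps two and three are straightforward applications of Theorem \ref{sim100} and standard concatenation.
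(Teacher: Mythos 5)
The paper states Theorem \ref{sim110} with no proof at all; it is left implicit that the argument mirrors the absorbed case. Your proposal correctly supplies the intended argument: re-verify the finite-dimensional identification of the simulated renewal process with the semigroup $P^{\underline{\bf W}}_t$ (as you rightly observe, Theorem \ref{esc10} cannot be cited off the shelf, though the conservative version is actually simpler, since the Bernoulli-death branch drops out and only the regeneration branch of the case analysis survives); apply Theorem \ref{sim100} to each killed cycle on the subtree rooted at $\xi_k(1)$, noting that conditioning on the killing time does not disturb the pathwise cadlag property; and observe that $G_1$ is a constant in $(0,\infty)$ (being ${\cal F}_0$-measurable with ${\cal F}_0$ trivial), so the i.i.d.\ $\exp[1/G_1]$ increments give ${}^1\Upsilon_k\uparrow\infty$ a.s.\ and the concatenation is cadlag on all of $[0,\infty)$. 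One caveat worth recording: the theorem asserts that the continuity set \emph{equals} the complement of $\{{}^n\Upsilon_k:n\ge1,k\ge0\}$, whereas your argument (exactly like the proof of Theorem \ref{sim100}) establishes only the containment; the converse, that a genuine jump occurs a.s.\ at each such time, is in the paper stated without argument in the Remark following Theorem \ref{sim100}, so your treatment is at the same level of completeness as the source.
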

\bigskip


\begin{thebibliography}{99}

\bibitem{albeverio1} S. Albeverio, W. Karwowski, Diffusion on the
$p$-adic numbers. {\it Gaussian Random Fields (Nagoya 1990)},
World Scientific (1991) 86-99.

\bibitem{albeverio2} S. Albeverio, W. Karwowski, A random walk on
$p$-adics the generator and its spectrum. {\it Stochastic
Processes and their Applications} {\bf 53} (1994), 1--22.

\bibitem{albeverio3} S. Albeverio, X. Zhao, Measure-valued
branching processes associated with random walks on $p$-adics.
{\it Annals of Probability} {\bf 28} (2000), 1680--1710.

\bibitem{albeverio4} S. Albeverio, X. Zhao, On the relation
between different constructions of random walks on $p$-adics. {\it
Markov Processes and Related Fields} {\bf 6} (2000), 239--255.

\bibitem{aldous1999} D. Aldous, S. Evans, Dirichlet forms on
totally disconnected spaces and bipartite Markov chains. {\it
Journal Theoretical Probability} {\bf 12} (1999), 839--857.

\bibitem{anderson} W.J.~Anderson, {\it Continuous-time Markov
chains: an application-oriented approach}. Springer Series in
Statistics, Springer--Verlag (1991).

\bibitem{berge1962} C.~Berge and A.~Ghouila-Houri,
{\it Programmes, jeux et r\'eseaux de transport}. Dunod (1962).


\bibitem{benzecri1973}J.P.~Benz\'ecri, {\it L'Analyse des
donn\'ees}. Dunod (1973).

\bibitem{Blumenthal1968} R. Blumenthal, R. Getoor, {\it Markov
Processes and Potential Theory}. Academic Press (1968).

\bibitem{bouleau1989} N.~Bouleau, Autour de la variance comme
forme de Dirichlet. {\it S\'eminaire de Th\'eorie du Potentiel
{8}, Lecture Notes in Mathematics} {\bf 1235} (1989), 39--53.

\bibitem{cartier} P.~Cartier, \ Fonctions harmoniques
sur un arbre. {\it Symposia Mathematica {IX}}, Academic Press
(1972), 203--270.

\bibitem{sawyer1991} D.~Cartwright and S.~Sawyer, The Martin
boundary for general isotropic random walks in a tree. {\it J.
Theoretical Probability} {\bf 4} (1991), 111-136.

\bibitem{chung1995} K.L.~Chung and Z.~Zhao, {\it From Brownian
Motion to Schr\"odinger's equation}. Springer Verlag (1995).


\bibitem{dart1988} P.~Dartnell, S.~Mart\'{\i}nez and
J.~San Mart\'{\i}n, \ Op\'erateurs filtr\'es et cha\^{\i}nes de
tribus invariantes sur un espace probabilis\'e d\'enombrable. {\it
S\'eminaire de Probabilit\'es {XXII} Lecture Notes in Mathematics}
{\bf 1321}, Springer-Verlag (1988).

\bibitem{dell1996} C.~Dellacherie, S.~Mart\'{\i}nez and
J.~San Mart\'{\i}n, \ Ultrametric matrices and induced Markov
chains. {\it Advances in Applied Mathematics} {\bf 17} (1996),
169--183.

\bibitem{dell1998} C.~Dellacherie, S.~Mart\'{\i}nez and
J.~San Mart\'{\i}n and D. Ta\"{\i}bi. Noyaux potentiels associ\'es
\`a une filtration. {\it Ann. Inst. Henri Poincar\'e Prob. et
Stat.} {\bf 34} (1998), 707--725.

\bibitem{dell1977} C.~Dellacherie and C.~Stricker,
Changement de temps et int\'egrales stochastiques. {\it
S\'eminaire de Probabilit\'es {XI} Lectures Notes in Mathematics}
{\bf 581}, Springer-Verlag (1977).

\bibitem{derr1981} B.~Derrida, Random--energy model: an exactly solvable
model of disordered systems. {\it Phys. Review B} {\bf 24} (1981),
2613-2626.


\bibitem{evans1989} S. Evans, Local Properties of L\'evy processes
on a totally disconnected group. {\it Journal Theoretical
Probability} {\bf 2} (1989), 209--259.

\bibitem{Fukushima1994} M. Fukushima, Y. Oshima, M. Takeda, {\it
Dirichlet Forms and Symmetric Markov Processes}. Walter de Gruyter
(1994).

\bibitem{yu1998} Y.~Hu, Potentiel kernels associated with a filtration
and forward-backward SDE's. {\it Potential Analysis} {\bf 10}
(1998), 103--118.

\bibitem{hughes} B.~Hughes, Trees and ultrametric spaces: a categorical
equivalence. {\it Advances in Mathematics} {\bf 189} (2004),
148--191.

\bibitem{lyons} R.~Lyons with Y.~Peres {\it Probability on Trees and Networks}.
http://mypage.iu.edu/~rdlyons/prbtree/prbtree.html (2005).


\bibitem{KSK} J. Kemeny, J. Snell, A. Knapp, {\it Denumerable
Markov Chains}. Springer-Verlag, GTM {\bf 40}, $2^{\hbox{nd}}$
Edition (1976).

\bibitem{Khr} A.~Khrennikov, {\it $p$-Adic Valued Distributions in
Mathematical Physics}. Kluwer Acad. Publ. (1994).

\bibitem{kochubei1} A. Kochubei, Hausdorff measure for stable-like
process over an infinite extension of a local field. {\it Journal
Theoretical Probability} {\bf 15} (2002), 951--972.

\bibitem{kochubei2} A. Kochubei, Stochastic integrals and
stochastic differential equations over the field of $p$-adic
numbers. {\it Potential Analysis} {\bf 6} (1997), 105--125.

\bibitem{lyons1} R. Lyons, Random walks and percolation on trees.
{\it Annals of Probability} {\bf 18} (1990), 931-958.

\bibitem{lyons2} R. Lyons, Random walks, capacity and percolation on
trees. {\it Annals of Probability} {\bf 20} (1992), 2043-2088.

\bibitem{tlyons} T. Lyons, A simple criterion for transience of a reversible Markov
chain, {\it Annals of Probability} {\bf 11} (1983), 393-402.

\bibitem{martinez1994} S.~Mart\'{\i}nez,  G.~Michon and
J.~San Mart\'{\i}n, Inverses of ultrametric matrices are of
Stieltjes types. {\it SIAM J. Matrix Analysis and its
Applications,} {\bf 15} (1994), 98--106.

\bibitem{MRSM} S.~Mart\'{\i}nez,  D.~Remenik and
J.~San Mart\'{\i}n, {\it Level-wise approximation of a Markov
process associated to the boundary of an infinite tree}. Accepted
in Journal Theoretical Probability (2006).

\bibitem{mcdonald1995} J.J.~McDonald, M.~Neumann, H.~Schneider and
M.J.~Tsatsomeros, Inverse $M-$matrix inequalities and generalized
ultrametric matrices. {\it Linear Algebra and its Applications,}
{\bf 220} (1995), 321--341.

\bibitem{mezard1987} M.~M\'ezard, G. Parisi and M. Virasoro,
{\it Spin Glasses and Beyond}. World Scientific (1987).

\bibitem{nabben1994} R.~Nabben and R.S.~Varga,  A linear algebra
proof that the inverse of a strictly ultrametric matrix is a
strictly diagonally dominant Stieljes matrix.  {\it SIAM J. Matrix
Analysis and its Applications} {\bf 15} (1994), 107--113.

\bibitem{nabben1995} R.~Nabben and R.S.~Varga, Generalized
ultrametric matrices -- a class of inverse $M$-matrices. {\it
Linear Algebra and its Applications} {\bf 220} (1995), 365--390.

\bibitem{massimo1987} M.~Picardello and W.~Woess, Martin
boundaries of random walks: ends of trees and groups. {\it
Transactions of the AMS} {\bf 302} (1987), 185-205.

\bibitem{Rammal} R.~Rammal, G.~Toulouse, M.A.~Virasoro,
Ultrametricity for physicists. {\it Rev. Mod. Physics} {\bf 58}
(1986), 765--788.

\bibitem{sawyer1997} S.~Sawyer, Martin Boundaries and Random
Walks. {\it Contemporary Mathematics} {\bf 206} (1997), 17--44.

\end{thebibliography}
\end{document}